\definecolor{darkred}{HTML}{880000}
\definecolor{darkblue}{HTML}{000088}
\renewcommand{\le}{\leqslant}
\renewcommand{\leq}{\leqslant}
\renewcommand{\ge}{\geqslant}
\renewcommand{\geq}{\geqslant}
\renewcommand{\Tilde}{\widetilde}
\newcommand{\eps}{\varepsilon}
\newcommand{\ups}{\upsilon}
\newcommand{\Ups}{\Upsilon}
\newcommand{\rmd}{{\rm d}}
\newcommand{\F}{\rm F}
\newcommand{\diag}{{\rm diag}}
\newcommand{\Tr}{{\rm Tr}}
\newcommand{\Var}{{\rm Var}}
\newcommand{\rmvec}{{\rm\bf vec}}
\newcommand{\sfp}{\mathsf{p}}
\newcommand{\E}{\mathbb E}
\newcommand{\p}{\mathbb P}
\newcommand{\R}{\mathbb R}
\newcommand{\bbX}{\mathbb X}
\newcommand{\1}{\mathbbm 1}
\newcommand{\ttD}{{\mathtt{D}}}
\newcommand{\ttR}{{\mathtt{R}}}
\newcommand{\ttQ}{{\mathtt{Q}}}
\newcommand{\ttL}{{\mathtt{L}}}
\newcommand{\ttr}{{\mathtt{r}}}
\newcommand{\cE}{{\mathcal{E}}}
\newcommand{\KL}{{\mathcal{KL}}}
\newcommand{\cL}{{\mathcal{L}}}
\newcommand{\cN}{{\mathcal{N}}}
\newcommand{\cO}{{\mathcal{O}}}
\newcommand{\cQ}{{\mathcal{Q}}}
\newcommand{\cT}{{\mathcal{T}}}
\newcommand{\cU}{{\mathcal{U}}}
\newcommand{\cV}{{\mathcal{V}}}
\newcommand{\cX}{{\mathcal{X}}}
\newcommand{\m}{\mathfrak m}
\newcommand{\z}{\mathfrak z}
\newcommand{\sfA}{\mathsf{A}}
\newcommand{\sfH}{\mathsf{H}}
\newcommand{\ba}{{\bf a}}
\newcommand{\bb}{{\bf b}}
\newcommand{\be}{{\bf e}}
\newcommand{\bt}{{\bf t}}
\newcommand{\bu}{{\bf u}}
\newcommand{\bv}{{\bf v}}
\newcommand{\bw}{{\bf w}}
\newcommand{\bx}{{\bf x}}
\newcommand{\bs}{{\bf s}}
\newcommand{\bU}{{\bf U}}
\newcommand{\bX}{{\bf X}}
\newcommand{\bY}{{\bf Y}}
\newcommand{\bZ}{{\bf Z}}
\newcommand{\bz}{{\bf \z}}
\newcommand{\bgamma}{\boldsymbol \gamma}
\newcommand{\beps}{\boldsymbol \eps}
\newcommand{\bups}{\boldsymbol \ups}
\newcommand{\btheta}{\boldsymbol \theta}
\newcommand{\bfeta}{\boldsymbol \eta}
\newcommand{\bnu}{\boldsymbol \nu}
\newcommand{\btau}{\boldsymbol \tau}
\newcommand{\bxi}{\boldsymbol \xi}
\newcommand{\bzeta}{\boldsymbol \zeta}
\newcommand{\bnabla}{\boldsymbol \nabla}
\newcommand{\bzero}{{\bf 0}}
\newcommand{\stochasticDelta}{{\Delta}}
\newcommand{\Uniform}{\operatorname{Uniform}}
\newcommand{\avg}[1]{\left \langle #1 \right \rangle}
\newcommand{\biasInner}[1]{\avg{#1}} 
\newcommand{\myendproof}{\hfill$\square$}
\def\argmin{\operatornamewithlimits{argmin}}
\newtheorem{Th}{Theorem}[section]
\newtheorem{Lem}[Th]{Lemma}
\newtheorem{Prop}[Th]{Proposition}
\newtheorem{Co}[Th]{Corollary}
\newtheorem{Rem}[Th]{Remark}
\newtheorem{As}[Th]{Assumption}
\title{Dimension-free bounds in high-dimensional linear regression\\via error-in-operator approach}
\author{
Fedor Noskov\thanks{HSE University, Russian Federation, fnoskov@hse.ru}
\and
Nikita Puchkin\thanks{HSE University, Russian Federation, npuchkin@hse.ru}
\and
Vladimir Spokoiny
\thanks{HSE University, Russian Federation; WIAS Berlin and Humboldt University, Germany, spokoiny@wias-berlin.de}
}
\date{}
\begin{document}

\maketitle

\begin{abstract}
    We consider a problem of high-dimensional linear regression with random design. We suggest a novel approach referred to as error-in-operator which does not estimate the design covariance $\Sigma$ directly but incorporates it into empirical risk minimization. We provide an expansion of the excess prediction risk and derive non-asymptotic dimension-free bounds on the leading term and the remainder. This helps us to show that auxiliary variables do not increase the effective dimension of the problem, provided that parameters of the procedure are tuned properly. We also discuss computational aspects of our method and illustrate its performance with numerical experiments.
\end{abstract}

\tableofcontents
\newpage

\section{Introduction}

Recent advances in supervised machine learning devoted to understanding of deep neural networks revealed surprising effects going beyond the classical statistics theory. In contrast to the standard intuition that a learner should search for a trade-off between approximation and estimation errors, researchers empirically observed that large interpolating rules may still have small test error. Moreover, when the number of parameters exceeds sample size the prediction risk of neural networks passes a U-shaped curve and decreases again \citep{zhang17, nakkiran20}. A bit later it became clear that benign overfitting and double descent are not distinctive features of deep learning. Similar phenomena are ubiquitous for overparametrized models such as random forests and random feature models \citep{belkin19, mei22}, kernel methods \citep{belkin18, liang20}, and linear regression \citep{bartlett2020benign, hastie2022surprises} to name a few.
In \citep{belkin18}, the authors reasonably suggested that we must study more tractable ``shallow'' methods better before diving into deep learning theory.

In the present paper, we consider a classical linear regression problem, where a learner aims to estimate an unknown vector $\btheta^\circ \in \R^d$ from i.i.d. pairs $\{(\bX_i, Y_i) : 1 \leq i \leq n\} \subset \R^d \times \R$ generated from the model
\begin{equation}
    \label{eq:model}
    Y_i = \bX_i^\top \btheta^\circ + \eps_i,
    \quad 1 \leq i \leq n.
\end{equation}
We do not impose any assumptions on the ambient dimension $d$ (in particular, it may be much larger than $n$) or on the structure of $\btheta^\circ$, but we require $\E \|\bX_1\|^2$ to be finite. Given an independent copy $(\bX, Y)$ of $(\bX_1, Y_1)$, the quality of an estimate $\widetilde \btheta$ is measured with its quadratic risk
\begin{align*}
    R(\widetilde\btheta)
    &
    = \E_{(\bX, Y)} (Y - \bX^\top \widetilde\btheta)^2
    \\&
    = \E_{(\bX, Y)} (Y - \bX^\top \btheta^\circ)^2 + \left\|\Sigma^{1/2} (\widetilde\btheta - \btheta^\circ) \right\|^2,
    \quad \text{where} \quad
    \Sigma = \E \bX \bX^\top \in \R^{d \times d}.
\end{align*}
We can rewrite  \eqref{eq:model} in a more compact vector form
\begin{equation}
    \label{eq:model_vector}
    \bY = \bbX^\top \btheta^\circ + \beps,
\end{equation}
where $\bY = (Y_1, \dots, Y_n)^\top$, $\beps = (\eps_1, \dots, \eps_n)^\top$, and $\bbX \in \R^{d \times n}$ is a matrix with columns $\bX_1, \dots, \bX_n$. The problem \eqref{eq:model_vector} provides a convenient framework for studying complex high-dimensional phenomena. For this reason, it has recently got considerable attention.
Researchers put a lot of efforts into careful and subtle analysis of popular minimum norm least squares and ridge regression estimates defined as
\[
    \widehat\btheta{}^{(LS)} = (\bbX \bbX^\top)^\dag \, \bbX \bY
    \quad \text{and} \quad
    \widehat\btheta{}^{(R)} = (\bbX \bbX^\top + \tau I_d)^{-1} \bbX \bY,
\]
respectively. Here and further in the paper, $B^\dag$ stands for the Moore-Penrose pseudoinverse of a matrix $B$.
In \citep{bartlett2020benign, chinot2022robustness, chinot22b}, the authors relied on concentration of measure and statistical learning theory and established dimension-free rates of convergence for $\widehat\btheta{}^{(LS)}$ depending on the eigenvalues of $\Sigma = \E \bX_1 \bX_1^\top$ rather than on the ambient dimension $d$. In the same spirit, \cite{tsigler2023benign} derived non-asymptotic dimension-free bounds on the squared prediction risk of $\widehat\btheta{}^{(R)}$ and supported observations of \cite{kobak2020optimal} that the optimal value of the penalization parameter $\tau$ may be negative due to the implicit regularization. Another line of research (see, for instance, \citep{dobriban2018high, el2018impact, taheri2021fundamental, richards2021asymptotics, mahdaviyeh2019risk, derezinski2020exact, hastie2022surprises, wu20, bach2024high}) examined the least squares and ridge regression estimates through the lens of random matrix theory and studied $\widehat\btheta{}^{(LS)}$ and $\widehat\btheta{}^{(R)}$ in the so-called proportional regime, that is, when the ratio $d / n$ tends to some constant $\gamma \in (0, \infty)$ as $n$ approaches infinity. We would like to emphasize that the aforementioned approaches complement each other. While \cite{bartlett2020benign} argue that $\E \|\bX_1\|^2 = \Tr(\Sigma) = o(n)$, $n \rightarrow \infty$, is a necessary condition for benign overfitting, in the proportional regime one usually assumes the opposite: $\Tr(\Sigma) = \Omega(n)$. However, in a recent paper \citep{cheng2022}, the authors combined resolvent analysis with concentration inequalities and obtained non-asymptotic risk expansion for the ridge regression estimate. 

In the context of overparametrized models, explicit regularization has a very nice property previously unseen in the classical underparametrized setup. In \citep{nakkiran21}, the author noticed that optimally regularized least squares estimate can mitigate double descent. This fact is of great importance, because in the presence of double (or, more generally, multiple descent, see, for instance, \citep{chen21, meng24}) adding new batches to the training data may lead to worse generalization error (see \citep[Section 7]{nakkiran20}).
In the present paper, we study properties of an estimate with double regularization. Let us note that both $\widehat\btheta{}^{(LS)}$ and $\widehat\btheta{}^{(R)}$ have a form $(\breve \Sigma)^\dag \bZ$, where $\bZ = \bbX \bY / n$ and $\breve \Sigma$ is an estimate of $\Sigma = \E \bX_1 \bX_1^\top$. Instead of estimating $\Sigma$ directly, we incorporate learning of $\Sigma$ into an optimization problem. To be more specific, we introduce an objective function
\begin{equation}
    \label{eq:eio_objective}
    \ttL(\bups)
    =
    \ttL(\btheta, \bfeta, A)
    = \frac12 \|\bZ - \bfeta\|^2 + \frac12 \|\bfeta - A \btheta\|^2 + \frac{\mu^2}2 \|\widehat \Sigma - A\|_{\F}^2 + \frac{\lambda}2 \|\btheta\|^2,
\end{equation}
where
\[
    \bZ = \frac1n \bbX \bY,
    \quad
    \widehat\Sigma = \frac1n \bbX \bbX^\top,
\]
and consider a regularized empirical risk minimizer
\begin{equation}
\label{eq: initial optimization problem}
    \widehat \bups
    = (\widehat \btheta, \widehat \bfeta, \widehat A) 
    \in \argmin\limits_{(\btheta, \bfeta, A)} \ttL(\btheta, \bfeta, A).
\end{equation}
Similar ideas were used in linear inverse problems \citep{hoffmann08, trabs18} and Bayesian inference \citep{spokoiny23}.
In the context of kernel ridge regression, \cite{chen23} replaced the standard Euclidean distances $\|x - x'\|$ by $\|\Sigma^{1/2} (x - x')\|$ and considered a family of Gram matrices parametrized by $\Sigma$. However, this is very different from our setup, because, in contrast to \citep{hoffmann08, trabs18, chen23}, we do not assume any low-dimensional parametrization of the operator $A \in \R^{d \times d}$. This means that $\widehat\bups$ estimates a parameter $\bups^\circ = (\btheta^\circ, \Sigma \btheta^\circ, \Sigma)$ of much larger dimension than $d$. At the first glance, we dramatically increased the problem complexity. However, let us recall that we are interested in the target parameter $\btheta^\circ \in \R^d$ rather than in the whole vector $\bups^\circ \in \R^{2d + d^2}$. In the simple case $\mu = +\infty$, it is straightforward to check that the estimate $\widehat\btheta$ corresponds to the plug-in one
\[
    \widehat\btheta_{\infty} = (\widehat\Sigma^2 + 2 \lambda I_d)^{-1} \widehat \Sigma \bZ,
    \quad \text{where} \quad
    \bZ = \frac1n \bbX \bY
    \quad \text{and} \quad
    \widehat \Sigma = \frac1n \bbX \bbX^\top = \frac1n \sum\limits_{i = 1}^n \bX_i \bX_i^\top,
\]
which is nothing but the ridge regression estimate applied to the model
\begin{equation}
    \label{eq:u}
    \bZ = \widehat \Sigma \btheta^\circ + \bU,
    \quad \text{where} \quad
    \bU = \frac1n \bbX \beps.
\end{equation}
Hence, if we take the parameter $\mu$ sufficiently large, the rate of convergence of $\widehat\btheta$ will not suffer (though a rigorous proof of this fact is quite challenging). Moreover, additional regularization with respect to operator helps $\widehat\btheta$ to be less susceptible to double descent. Following \citep{spokoiny23}, we call $\widehat\btheta$ \emph{error-in-operator estimate}.

\medskip

\noindent\textbf{Contribution.}
\quad
The main contributions can be summarized as follows.
\begin{itemize}
    \item We suggest a novel approach for the problem of random design linear regression. We specify leading terms in the bias and variance expansions of the error-in-operator estimate $\widehat\btheta$ and derive an explicit high-probability upper bound on the remainder. In contrast to the standard ridge regression estimate, $\widehat\btheta$ does not have a simple closed-form expression. For this reason, we have to use more sophisticated tools for its analysis.
    \item We establish non-asymptotic dimension-free bounds on the excess quadratic risk $R(\widehat\btheta) - R(\btheta^\circ) = \|\Sigma^{1/2} (\widehat\btheta - \btheta^\circ)\|^2$ and show that the influence of the introduced operator $A$ is negligible once the parameter $\mu$ is tuned properly.
    \item We discuss minimization the non-convex objective \eqref{eq:eio_objective} and prove that a simple alternating optimization procedure converges to the error-in-operator estimate $\widehat\btheta$.
    \item We illustrate the validity of statistical properties of $\widehat\btheta$ with numerical experiments. We also show that the generalization error of the error-in-operator estimate is smaller than the ones of $\widehat\btheta{}^{(R)}$ and $\widehat\btheta_\infty$ and that the risk of $\widehat \btheta$ is less susceptible to double descent.
\end{itemize}

\medskip

\noindent\textbf{Notation.}
\quad
Throughout the paper, $\|B\|_{\F}$ and $\|B\|$ stand for the Frobenius and the operator norm of $B$, respectively. If the matrix $B$ is symmetric positive semidefinite and $B \neq O$, where $O$ is the matrix with zero entries, we denote its effective rank by
\[
    \ttr(B) = \Tr(B) / \|B\|.
\]
For any matrices $A \in \R^{p \times q}$ and $B \in \R^{r \times s}$, their Kronecker product $A \otimes B$ is a matrix of size $pr \times qs$, defined as
\[
    \begin{pmatrix}
        a_{11} B & \dots & a_{1q} B \\
        \vdots & \ddots & \vdots \\
        a_{p1} B & \dots & a_{pq} B. 
    \end{pmatrix}
\]
For any matrix $U \in \R^{p \times q}$ with columns $\bu_1, \dots, \bu_q$, the vectorization operator is given by $\rmvec(U) = (\bu_1^\top, \dots, \bu_q^\top)^\top \in \R^{pq}$. Here and further in the paper, the bold font is reserved for vectors, while matrices and scalars are displayed in regular font. The operator $\nabla_A$ stands for the gradient with respect to $\rmvec(A^\top)$. Thus, if $\ba_1, \dots, \ba_d$ are the rows of $A$ and $f(A)$ is a smooth function, then $\nabla_A f = (\bnabla_{\ba_1} f^
\top, \dots, \bnabla_{\ba_d} f^\top)^\top \in \R^{d^2}$.
For a random variable $\xi$ and a random vector $\bnu$, their Orlicz $\psi_s$-norms, $s \geq 1$, are defined as
\[
    \|\xi\|_{\psi_s} = \inf\left\{ t > 0 : \E e^{|\xi|^s / t^s} \leq 2 \right\}
    \quad \text{and} \quad
    \|\bnu\|_{\psi_s} = \sup\limits_{\|\bu\| = 1} \|\bu^\top \bnu\|_{\psi_s}.
\]
The expressions $(a \vee b)$ and $(a \wedge b)$ denote $\max\{a, b\}$ and $\min\{a, b\}$, respectively.
Sometimes, instead of the standard $\cO$ notation, we use $f \lesssim g$ or $g \gtrsim f$, which mean that there is a universal constant $c > 0$, such that $f \leq c g$.

\medskip

\noindent\textbf{Paper organization.}
\quad
The rest of the paper is organized as follows. In Section~\ref{sec:statistical_properties}, we present our main results concerning statistical properties of the error-in-operator estimate $\widehat\btheta$. In Section \ref{sec:computational_aspects}, we discuss computational aspects of optimizing the non-convex objective \eqref{eq:eio_objective}. We illustrate performance of the suggested procedure with numerical experiments in Section \ref{section: numerical_experiments}. Sections \ref{section: proof of the bias theorem} and \ref{sec:th_stoch_term_proof} are devoted to the proofs of bias and variance expansions of the estimate $\widehat\btheta$. Derivations of numerous auxiliary results are moved to appendix.

\section{Statistical properties of the error-in-operator estimate}
\label{sec:statistical_properties}

In the present section, we discuss main statistical properties of the error-in-operator estimate \eqref{eq: initial optimization problem}. In particular, we specify leading terms in expansion of $\widehat \btheta$ and provide an upper bound on its quadratic risk.
We assume that the covariates $\bX, \bX_1, \dots, \bX_n$ meet the following requirements.
\begin{As}
    \label{as:quadratic}
    There exists a positive constant $C_X$ such that, for any matrix $M$ and any $\lambda$ satisfying the inequality $C_X |\lambda| \leq 1 / \|M\|_{\F}$, it holds that
    \[
        \log \E \exp \left\{ \lambda (\Sigma^{-1/2} \bX)^\top M (\Sigma^{-1/2} \bX) \right\}
        \leq \lambda \Tr(M) + C_X^2 \lambda^2 \|M\|_{\F}^2,
        \quad \text{where} \quad
        \Sigma = \E \bX \bX^\top.
    \]
\end{As}

In \citep{puchkin24}, the authors imposed the same assumption on the distribution of $\bX_1, \dots$, $\bX_n$ in the problem of covariance estimation, and an equivalent condition appeared in \citep{puchkin23}. We refer a reader to these two papers for a comprehensive list of distributions satisfying Assumption \ref{as:quadratic}. In particular, it is fulfilled when $\bX$ has a Gaussian distribution or when $\Sigma^{-1/2} \bX$ has independent sub-Gaussian components. Such conditions often appear in the context of high-dimensional linear regression (see, for instance, \citep{bartlett2020benign, nakkiran21, chinot2022robustness, bach2024high}). More interesting examples include situations when $\bX$ has a convex concentration property, like in \citep{cheng2022}, or when the underlying distribution meets the logarithmic Sobolev inequality (such as the uniform distribution on a sphere \citep{meng24}). Finally, Assumption \ref{as:quadratic} holds in nonlinear feature models of the form $\bX = \varphi(W \bxi)$, where $\varphi$ is a Lipschitz map and $\bxi$ has either a standard Gaussian distribution \citep{hastie2022surprises} or the uniform distribution on a unit sphere \citep{mei22} (the distribution of $\bX$ should be considered conditionally on $W$). This follows from the fact that $\bxi$ with Gaussian  or uniform distribution on a sphere satisfies the logarithmic Sobolev inequality. Then, due to the Herbst argument, it possesses a Lipschitz concentration property. This yields that $\bX$ has the same property as well, and, hence, it satisfies the Hanson-Wright inequality (see \citep{adamczak15}). Let us note that it is enough to check Assumption \ref{as:quadratic} for symmetric matrices only. Indeed, due to the convexity of the squared Frobenius norm, we have
\[
    \left\|0.5 M + 0.5 M^\top \right\|_{\F}^2 \leq 0.5 \|M\|_{\F}^2 + 0.5 \|M^\top\|_{\F}^2 = \|M\|_{\F}^2,
\]
and then it holds that
\begin{align*}
    \log \E \exp \left\{ \lambda (\Sigma^{-1/2} \bX)^\top M (\Sigma^{-1/2} \bX) \right\}
    &
    = \log \E \exp \left\{ \lambda (\Sigma^{-1/2} \bX)^\top (0.5 M + 0.5 M^\top) (\Sigma^{-1/2} \bX) \right\}
    \\&
    \leq \lambda \Tr(0.5 M + 0.5 M^\top) + C_X^2 \lambda^2 \|0.5 M + 0.5 M^\top\|_{\F}^2
    \\&
    \leq \lambda \Tr(M) + C_X^2 \lambda^2 \|M\|_{\F}^2.
\end{align*}
Assumption \ref{as:quadratic} means that the vector $\Sigma^{- 1/2}\bX$ is sub-Gaussian and fulfils the $\psi_2$-$L_2$-equivalence condition (see \citep[Lemma B.2]{puchkin24}) often arising in high-dimensional statistics. However, we would like to mention that in \citep[Proposition 2.3]{puchkin23} the authors gave an example of a random vector satisfying the $\psi_2$-$L_2$-equivalence requirement, which does not meet Assumption \ref{as:quadratic}. We also impose the following condition on the noise distribution.
\begin{As}
    \label{as:noise}
    The i.i.d. pairs $\{(\bX_i, \eps_i) : 1 \leq i \leq n\}$ are such that
    \[
        \E (\eps_1 \,\vert\, \bX_1) = 0,
        \quad 
        \E \bX_1 \bX_1^\top = \Sigma,
        \quad \text{and}
        \quad \|\Sigma^{-1/2} \bX_1 \eps_1\|_{\psi_1}
        = \sigma < +\infty.
    \]
\end{As}
For instance, Assumption \ref{as:noise} holds if $\eps_1$ is Gaussian (or, more generally, sub-Gaussian) and independent of $\bX_1$ \citep{hsu12, nakkiran21}. However, we do not need independence of $\eps_i$ from $\bX_i$ for our purposes.

Two main results of this section are devoted to bias and variance of the estimate $\widehat\btheta$ defined in \eqref{eq: initial optimization problem}. For this reason, let us define a population counterpart of the objective \eqref{eq:eio_objective} given by
\begin{align*}
    \cL(\bups)
    &
    = \frac12 \|\Sigma \btheta^\circ - \bfeta\|^2 + \frac12 \|\bfeta - A \btheta\|^2 + \frac{\mu^2}2 \|\Sigma - A\|_{\F}^2 + \frac{\lambda}2 \|\btheta\|^2
\end{align*}
and introduce the best parametric fit
\begin{equation}
    \label{eq:ups_star}
    \bups^* = (\btheta^*, \bfeta^*, A^*) \in \argmin\limits_{\bups} \cL(\bups).
\end{equation}
One of the key properties of the functional $\cL(\bups)$ we will often exploit in our analysis is that its gradient differs from $\bnabla \ttL(\bups)$ by a parameter-free vector $\z$:
\[
    \bnabla \ttL(\bups) = \bnabla \cL(\bups) - \bz,
    \quad \text{where}
    \quad \bz =
    \begin{pmatrix}
        \bzero\\
        \bZ - \E \bZ\\
        \rmvec(\widehat \Sigma - \Sigma)
    \end{pmatrix}
    \in \R^{2d + d^2}.
\]
In other words, for any $\bups$ the difference $\ttL(\bups) - \cL(\bups)$ is an affine function of $(\bZ - \E \bZ)$ and $(\widehat \Sigma - \Sigma)$. We are ready to present bias and variance expansions of the error-in-operator estimate $\widehat \btheta$.
\begin{Th}
\label{theorem: bias}
    Let the parameters $\mu$ and $\lambda$ be non-negative. Assume that the following inequalities hold:
    \[
        \Vert \btheta^\circ \Vert \le \frac{\mu}{420},
        \quad \text{and} \quad
         \Vert \btheta^\circ \Vert \Vert \Sigma \Vert \le \frac{\mu \sqrt{\lambda}}{168}.
    \]
    Then the vector $\btheta^*$ defined in \eqref{eq:ups_star} satisfies
    \begin{align*}
        \Vert \btheta^* - \btheta^\circ - \bb_\lambda  \Vert \le 210 \left (\frac{\Vert \btheta^\circ \Vert}{\mu} + \frac{\Vert \Sigma \Vert \Vert \bb_\lambda \Vert}{\mu \sqrt{\lambda}} \right ) \left \Vert \bb_\lambda \right \Vert,
        \quad \text{where} \quad
        \bb_\lambda = -\lambda (\Sigma^2/ 2 + \lambda I_d)^{-1} \btheta^\circ.
    \end{align*}
\end{Th}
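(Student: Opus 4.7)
The plan is to solve the first-order conditions for $\bups^*$ explicitly, which reduces the theorem to an eigenvalue comparison, and then to control the residual via a priori bounds that follow from comparing $\cL(\bups^*)$ to $\cL(\btheta^\circ,\Sigma\btheta^\circ,\Sigma)$.

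First I would profile out $\bfeta^*$ and $A^*$. Setting $\bnabla_\bfeta \cL = 0$ gives $\bfeta^* = \tfrac12(\Sigma\btheta^\circ + A^*\btheta^*)$, so that $\bfeta^* - A^*\btheta^* = \tfrac12(\Sigma\btheta^\circ - A^*\btheta^*) =: \br/2$, where I set $\br = \Sigma\btheta^\circ - A^*\btheta^*$. The condition $\nabla_A \cL = 0$ then becomes the rank-one relation $\mu^2(A^* - \Sigma) = \tfrac12\,\br\,(\btheta^*)^\top$, which after applying both sides to $\btheta^*$ rearranges to $\br = \tfrac{2\mu^2}{2\mu^2 + \|\btheta^*\|^2}\Sigma(\btheta^\circ - \btheta^*)$. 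The condition $\nabla_\btheta \cL = 0$, namely $(A^*)^\top \br = 2\lambda\btheta^*$, combined with $(A^*)^\top \br = \Sigma \br + \|\br\|^2\btheta^*/(2\mu^2)$, yields $\Sigma\br = (2\lambda - \|\br\|^2/(2\mu^2))\btheta^*$. Substituting the expression for $\br$ into this identity gives the closed form
\[
\btheta^* = (cI_d + \Sigma^2)^{-1}\Sigma^2\btheta^\circ,\qquad c := \bigl(2\lambda - \beta\bigr)\bigl(1+\alpha\bigr),\quad \alpha := \|\btheta^*\|^2/(2\mu^2),\ \beta := \|\br\|^2/(2\mu^2).
\]

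Second, I compare this with $\btheta^\circ + \bb_\lambda = (2\lambda I_d + \Sigma^2)^{-1}\Sigma^2\btheta^\circ$ in the eigenbasis of $\Sigma$. A direct computation per eigenvalue $\sigma_i$ gives
\[
\bigl|(\btheta^* - \btheta^\circ - \bb_\lambda)_i\bigr| = \frac{|2\lambda - c|\,\sigma_i^2}{(c+\sigma_i^2)(2\lambda + \sigma_i^2)}|\btheta^\circ_i| \le \frac{|2\lambda - c|}{2\lambda}\cdot\frac{2\lambda}{2\lambda + \sigma_i^2}|\btheta^\circ_i| = \frac{|2\lambda - c|}{2\lambda}\bigl|(\bb_\lambda)_i\bigr|,
\]
using $\sigma_i^2/(c+\sigma_i^2)\le 1$ (valid once $c>0$ is verified). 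Summing squares yields $\|\btheta^* - \btheta^\circ - \bb_\lambda\| \le (|2\lambda - c|/(2\lambda))\,\|\bb_\lambda\|$, so everything reduces to controlling $|2\lambda - c| = |2\lambda\alpha - \beta(1+\alpha)| \le 2\lambda\alpha + \beta(1+\alpha)$.

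Third, I would extract a priori bounds from $\cL(\bups^*) \le \cL(\btheta^\circ, \Sigma\btheta^\circ, \Sigma) = \tfrac{\lambda}{2}\|\btheta^\circ\|^2$. Each non-negative term in $\cL(\bups^*)$ is then $\le \tfrac{\lambda}{2}\|\btheta^\circ\|^2$, giving at once $\|\btheta^*\| \le \|\btheta^\circ\|$ and, since $\Sigma\btheta^\circ - \bfeta^* = \br/2$, the bound $\|\br\| \le 2\sqrt{\lambda}\,\|\btheta^\circ\|$. Consequently $\alpha \le \|\btheta^\circ\|^2/(2\mu^2)$ and $\beta/(2\lambda) \le \|\btheta^\circ\|^2/\mu^2$, which already establishes a crude bound $|2\lambda-c|/(2\lambda) \lesssim \|\btheta^\circ\|^2/\mu^2$, sufficient to conclude under the hypothesis $\|\btheta^\circ\| \le \mu/420$. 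To recover the second term $\|\Sigma\|\|\bb_\lambda\|/(\mu\sqrt{\lambda})$ in the claimed bound, I would then refine the estimate of $\beta$: using $\|\br\| \le \|\Sigma(\btheta^\circ - \btheta^*)\| \le \|\Sigma\|(\|\bb_\lambda\| + \|\btheta^*-\btheta^\circ-\bb_\lambda\|)$ together with a short bootstrap (closed by the crude bound just obtained), one gets $\beta \lesssim \|\Sigma\|^2\|\bb_\lambda\|^2/\mu^2$ and hence $\beta/(2\lambda) \lesssim (\|\Sigma\|\|\bb_\lambda\|/(\mu\sqrt{\lambda}))^2$, which under $\|\Sigma\|\|\btheta^\circ\|\le \mu\sqrt{\lambda}/168$ is a small multiple of $\|\Sigma\|\|\bb_\lambda\|/(\mu\sqrt{\lambda})$.

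The main obstacle is the derivation of the closed form for $\btheta^*$: because the three FOCs couple $\btheta, \bfeta, A$ through the non-convex rank-one term $\br\,(\btheta^*)^\top$, some care is needed to unfold them into a single scalar-coefficient identity $\btheta^* = (cI_d + \Sigma^2)^{-1}\Sigma^2\btheta^\circ$. A secondary subtlety is the need to check $c > 0$ (so that the inverse is defined on the null space of $\Sigma$), which follows from $\beta < 2\lambda$ under the assumption $\|\btheta^\circ\|\le \mu/420$. Once the closed form is in hand, the remaining work is elementary scalar estimation, and the constants $420$ and $168$ in the hypotheses are essentially slack chosen to make the two a priori bounds comfortably dominate all quadratic-in-$\alpha,\beta$ remainders.
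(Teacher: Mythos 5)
Your proposal is correct, and it takes a genuinely different and substantially more direct route than the paper.

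The paper's proof goes through the Newton--Leibniz formula applied to $\bnabla\cL(\bups^*) - \bnabla\cL(\bups^\circ)$, expresses $\btheta^* - \btheta^\circ = -\lambda(\avg{H}/\avg{H}_{\chi\chi})^{-1}\btheta^\circ$ where $\avg{H}$ is the averaged Hessian along the segment from $\bups^\circ$ to $\bups^*$, and then shows that $(\Sigma^2/2 + \lambda I_d)^{-1}(\avg{H}/\avg{H}_{\chi\chi}) - I_d$ has small operator norm. This requires the block structure of the inverse of $\avg H_{\chi\chi}$ (Proposition~\ref{proposition: inverse of nuisance parameters}), a fairly elaborate Lemma~\ref{lemma: Schur complement bound for bias} bounding the normalized Schur complement, and the localization machinery from Lemma~\ref{lemma: localization bias corollary}. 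Your proof bypasses all of that: the observation that the first-order conditions force $\mu^2(A^*-\Sigma)$ to be rank one collapses the coupled system into the exact scalar-coefficient identity $\Sigma^2\btheta^\circ = (\Sigma^2 + cI_d)\btheta^*$ with $c = (1+\alpha)(2\lambda-\beta)$, after which the eigenbasis comparison against $\btheta^\circ + \bb_\lambda = (\Sigma^2 + 2\lambda I_d)^{-1}\Sigma^2\btheta^\circ$ is a one-line computation, and the a priori bounds $\|\btheta^*\|\le\|\btheta^\circ\|$, $\|\br\|\le 2\sqrt{\lambda}\|\btheta^\circ\|$ from $\cL(\bups^*)\le\cL(\bups^\circ)$ control $|2\lambda - c|$. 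Your crude bound already gives $\|\btheta^*-\btheta^\circ-\bb_\lambda\| \le 2(\|\btheta^\circ\|/\mu)^2\|\bb_\lambda\|$, which is quadratic in $\|\btheta^\circ\|/\mu$ and so strictly stronger than the linear-in-$\|\btheta^\circ\|/\mu$ bound in the theorem; the bootstrap refinement of $\beta$ you sketch is not even needed to match the stated constant $210$. Two small caveats worth a sentence in a final writeup: the derivation divides by $\mu^2$, so the degenerate case $\mu = 0$ (which forces $\btheta^\circ=\bzero$ under the hypothesis) should be dispatched trivially; and $c>0$ should be stated explicitly as following from $\beta \le 2\lambda\|\btheta^\circ\|^2/\mu^2 < 2\lambda$ so that $(\Sigma^2+cI_d)^{-1}$ is well defined even when $\Sigma$ is singular. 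What the paper's heavier approach buys, and the reason it is structured this way, is that the identical machinery (averaged Hessian, Schur complement, localization into $\Ups(\rho_0)$) is reused in the proof of Theorem~\ref{th:stoch_term}, where $\widehat\bups$ has no closed form and a perturbative Newton--Leibniz argument is unavoidable; your closed-form solve is special to the deterministic minimizer $\bups^*$.
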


\begin{Th}
    \label{th:stoch_term}
    Grant Assumptions \ref{as:quadratic} and \ref{as:noise}.
    Let us fix an arbitrary $\delta \in (0, 1)$. Suppose that $4 \ttr(\Sigma)^2 + \log(4 / \delta)$ and $2^{12} (1 + C_X)^2 (\ttr(\Sigma) + \log(2/\delta))$ do not exceed the sample size $n$. Assume that the parameters 
    $\mu \geq 112 \|\btheta^\circ\|$ and $\lambda > 0$ satisfy the inequalities
    \begin{equation}
        \label{eq:stoch_term_mu_lambda_conditions}
        \|\Sigma\| \|\btheta^\circ\| \leq \frac{\mu \sqrt{\lambda}}{8 \cdot 18 \cdot 56},
        \quad
        5 \sigma \|\Sigma\|^{1/2} \sqrt{\frac{\ttr(\Sigma) + \log(4 / \delta)}n} \leq \|\btheta^\circ\| (\sqrt{\lambda} \land \|\Sigma\|),
    \end{equation}
    \begin{equation}
        \label{eq:stoch_term_mu_lambda_conditions_2}
        17 (1 + C_X) \|\Sigma\| \sqrt{\frac{4 \ttr(\Sigma) + \log(2/\delta)}{n}} \le \sqrt{\lambda},
    \end{equation}
    and
    \begin{equation}
        \label{eq:stoch_term_mu_lambda_n_conditions}
        \mu \sqrt{\lambda} \geq 395 \cdot 8 \sigma \|\Sigma\| \sqrt{\frac{\ttr(\Sigma) + \log(4 / \delta)}n},
        \quad
        \sqrt{\lambda} \geq 395 \cdot 8 C_X \|\Sigma\| \sqrt{\frac{\ttr(\Sigma) + \log(4 / \delta)}n}.
    \end{equation}
    Then, with probability at least $(1 - 13\delta / 2)$, the vector
    \[
        \bzeta
        = \Sigma \bU - \Sigma (\widehat \Sigma - \Sigma) \bb_\lambda - (\widehat \Sigma - \Sigma) \Sigma \bb_{\lambda}
        \quad \text{with} \quad
        \bU = \frac1n \bbX \beps = \frac1n \sum\limits_{i = 1}^n \bX_i \eps_i
    \]
    satisfies the inequality
    \begin{align*}
        \left\| \Sigma^{1/2} \left(\widehat \btheta - \btheta^* - \left( \Sigma^2 + 2 \lambda I_d\right)^{-1} \bzeta \right) \right\|
        &
        \leq \frac{19 \|\btheta^\circ\|}{\sqrt{3} \lambda^{3/4}} \left(1 + \sqrt{\frac{\Psi(n, \delta)}{27 \lambda}} \right) \Psi(n, \delta)
        \\&\quad
        + \frac{\|\btheta^\circ\|}{\lambda^{1/4}} \left( \left( \frac{17 \|\btheta^\circ\|}{\mu}\right)^2 + \frac{\|\bb_\lambda\|}{160 \mu} \right) \sqrt{ \Psi(n, \delta)},
    \end{align*}
    where
    \begin{align*}
        \Psi(n, \delta) = 196 \left( \frac{\sigma \|\Sigma\|^{1/2}}{\|\btheta^\circ\|} + 2 C_X \|\Sigma\| \right)^2 \cdot \frac{\ttr(\Sigma)^2 + \log(4 / \delta)}n.
    \end{align*}
\end{Th}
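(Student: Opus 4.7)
The argument rests on the identity $\nabla\ttL(\bups) = \nabla\cL(\bups) - \bz$ recorded just before the theorem, together with the first-order conditions $\nabla\ttL(\widehat\bups) = \bzero$ and $\nabla\cL(\bups^*) = \bzero$. These give the error equation $\nabla\cL(\widehat\bups) = \bz$. A second-order Taylor expansion of the smooth objective $\cL$ around $\bups^*$ converts this into
\[
    \bH\,(\widehat\bups - \bups^*) \;=\; \bz \;-\; \bR,\qquad \bH \,:=\, \nabla^2\cL(\bups^*),
\]
where $\bR$ is quadratic in $\widehat\bups - \bups^*$ and controlled by the third derivatives of $\cL$. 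Consequently $\widehat\bups - \bups^* = \bH^{-1}\bz + \bH^{-1}\bR$, and the target quantity $\Sigma^{1/2}(\widehat\btheta - \btheta^*)$ is the $\Sigma^{1/2}$-times-$\btheta$-block of this expression.

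\medskip

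A preliminary step is to derive a crude a priori bound $\|\widehat\bups - \bups^*\| \lesssim \|\bH^{-1}\bz\|$ that holds with high probability. Strong convexity of $\cL$ in the $(\bfeta, A)$ directions is immediate from the $\tfrac12\|\bfeta - A\btheta\|^2$ and $\tfrac{\mu^2}2\|\Sigma - A\|_{\F}^2$ terms, while strong convexity in $\btheta$ comes from $\tfrac\lambda2\|\btheta\|^2$ plus the block $A^{*\top}A^*$ of $\bH$. Conditions \eqref{eq:stoch_term_mu_lambda_conditions}--\eqref{eq:stoch_term_mu_lambda_n_conditions}, combined with Theorem \ref{theorem: bias}, force $A^* \approx \Sigma$ and $\btheta^* \approx \btheta^\circ + \bb_\lambda$, quantifying these convexity constants. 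Concentration of $\bU = \bZ - \E\bZ = n^{-1}\bbX\beps$ follows from Assumption \ref{as:noise} by a vector sub-exponential Bernstein inequality, while concentration of $\widehat\Sigma - \Sigma$ in operator and Frobenius norms follows from Assumption \ref{as:quadratic} via the matrix Bernstein-type bound of \citep{puchkin24}. Together these furnish the scale $\Psi(n,\delta)$ appearing in the claim.

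\medskip

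The heart of the proof is projecting $\bH^{-1}\bz$ onto the $\btheta$-coordinate and identifying the leading part as $(\Sigma^2 + 2\lambda I_d)^{-1}\bzeta$. In the $(\btheta, \bfeta, A)$ block decomposition, one computes
\[
    \partial^2_{\btheta\btheta}\cL = A^{*\top}A^* + \lambda I,\quad
    \partial^2_{\bfeta\bfeta}\cL = 2I,\quad
    \partial^2_{AA}\cL = I_d\otimes\btheta^*\btheta^{*\top} + \mu^2 I_{d^2},\quad
    \partial^2_{\btheta\bfeta}\cL = -A^{*\top},
\]
with cross-derivatives $\partial^2_{\bfeta A}\cL$ and $\partial^2_{\btheta A}\cL$ that involve $\btheta^*$ and $\bfeta^* - A^*\btheta^*$ linearly. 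Eliminating $\bfeta$ by its trivial Schur complement (since $\partial^2_{\bfeta\bfeta} = 2I$) reduces the $\btheta$-block curvature to $\tfrac12 A^{*\top}A^* + \lambda I$ and propagates $\bz_\bfeta = \bU$ into a contribution $\tfrac12 A^{*\top}\bU \approx \tfrac12\Sigma\bU$ along $\btheta$. Eliminating $A$ next via the $\mu^2 I$-dominant Schur complement: the $\bz_A$ piece, proportional to $\rmvec(\widehat\Sigma - \Sigma)$, flows through $\partial^2_{\btheta A}\cL$ and, after substituting $\btheta^* = \btheta^\circ + \bb_\lambda + O(\text{bias remainder})$ and $A^* = \Sigma + O(\|\Sigma\|\|\bb_\lambda\|/\mu)$, produces the two bilinear pieces $-\tfrac12\Sigma(\widehat\Sigma - \Sigma)\bb_\lambda$ and $-\tfrac12(\widehat\Sigma - \Sigma)\Sigma\bb_\lambda$. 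Multiplication by the effective inverse $2(\Sigma^2 + 2\lambda I)^{-1}$ produces exactly $(\Sigma^2 + 2\lambda I)^{-1}\bzeta$.

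\medskip

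The two residuals in the final bound then correspond to the Taylor remainder and the bias approximation. The Taylor remainder $\bH^{-1}\bR$ is bounded via $\|\nabla^3\cL\| \lesssim \|A^*\| + \|\btheta^*\| \lesssim \|\Sigma\|$ together with the a priori bound $\|\widehat\bups - \bups^*\|^2 \lesssim \|\btheta^\circ\|^2\Psi(n,\delta)/\lambda$; projecting onto the $\btheta$-direction and multiplying by $\Sigma^{1/2}$ yields the first summand of order $\|\btheta^\circ\|\Psi(n,\delta)/\lambda^{3/4}$. The approximation error from replacing $\btheta^*$ by $\btheta^\circ + \bb_\lambda$ inside $\bzeta$ is supplied by Theorem \ref{theorem: bias} (which gives $\|\btheta^* - \btheta^\circ - \bb_\lambda\| \lesssim ((\|\btheta^\circ\|/\mu)^2 + \|\bb_\lambda\|/\mu)\|\bb_\lambda\|$) multiplied by the fluctuation scale $\sqrt{\Psi(n,\delta)}$, giving the second summand. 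The principal obstacle is the block inversion: the $d^2\times d^2$ $A$-block is only well-conditioned because of the $\mu^2 I$ regularization, and the cross-derivative $\partial^2_{\btheta A}\cL$ must couple the high-dimensional $A$-direction to the target $\btheta$-direction in exactly the way needed to produce the bilinear correction $-\Sigma(\widehat\Sigma - \Sigma)\bb_\lambda - (\widehat\Sigma - \Sigma)\Sigma\bb_\lambda$. A secondary difficulty is the dimension-free concentration of bilinear forms like $\bu^\top(\widehat\Sigma - \Sigma)\Sigma\bb_\lambda$, which requires the full quadratic-form version of Assumption \ref{as:quadratic} rather than a plain operator-norm bound on $\widehat\Sigma - \Sigma$.
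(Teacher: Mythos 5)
Your proposal follows the same architecture as the paper's proof: localization to a region of strong convexity (Lemma \ref{lemma: localization_probabilistic_conditions}), a Taylor/Newton-step expansion of $\widehat\bups - \bups^*$ around the fixed point $\bups^*$ controlled by smoothness of $\nabla^2\cL$ (Lemma \ref{lem:stoch_term_expansion}), Schur-complement elimination of $\bfeta$ and $A$ to express the $\btheta$-component of $H_*^{-1}\bz$ as a linear transform of $\bU$ and $\widehat\Sigma - \Sigma$ (Lemma \ref{lem:semiparametric_score_explicit_representation}), then substitution of $\btheta^* \approx \btheta^\circ + \bb_\lambda$ and $A^*\btheta^* - \bfeta^* \approx \tfrac12\Sigma\bb_\lambda$ to recover $(\Sigma^2 + 2\lambda I_d)^{-1}\bzeta$, with the concentration Theorems \ref{th:covariance_concentration}--\ref{th:noise_concentration} supplying $\Psi(n,\delta)$. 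The paper's version is more delicate in two respects your sketch glosses over — it preconditions by block-diagonal matrices $\ttD_0,\ttD_*$ rather than working with raw norms (the naive bound $\|\nabla^3\cL\|\lesssim\|\Sigma\|$ you invoke would not yield the $\mu^{-1}\lambda^{-1/2}$ scaling; Lemma \ref{lem:second_derivative_lipschitzness} gives the correct anisotropic Lipschitz bound), and it explicitly places $\widehat\bups$ in the localization region with high probability before strong convexity can be invoked — but these are refinements of the same plan rather than a different route.
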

The proofs of Theorem \ref{theorem: bias} and Theorem \ref{th:stoch_term} are postponed to Sections~\ref{section: proof of the bias theorem} and \ref{sec:th_stoch_term_proof}, respectively. These results ensure that
\begin{equation}
    \label{eq:eio_estimate_expansion}
    \widehat\btheta - \btheta^\circ
    \approx
    \bb_\lambda + (\Sigma^2 + 2 \lambda I_d)^{-1} \left(\Sigma \bU - \Sigma (\widehat \Sigma - \Sigma) \bb_\lambda - (\widehat \Sigma - \Sigma) \Sigma \bb_{\lambda} \right)
\end{equation}
with high probability provided that the sample size $n$ and the penalization parameter $\mu$ are large enough. It is not surprising that the leading terms in the expansion \eqref{eq:eio_estimate_expansion} correspond to the ones in decomposition of the estimate $\widehat \btheta_\infty$ defined as
\[
    (\widehat \btheta_\infty, \widehat \bfeta_\infty) \in \argmin\limits_{(\btheta, \bfeta)} \left\{ \frac12 \|\bZ - \bfeta\|^2 + \frac12 \|\bfeta - \widehat \Sigma \btheta\|^2 + \frac{\lambda}2 \|\btheta\|^2 \right\}.
\]
Hence, the insertion of the operator $A$ has no negative influence on the quadratic risk $R(\widehat\btheta)$ provided that the parameter $\mu$ is sufficiently large. Theorems \ref{theorem: bias} and \ref{th:stoch_term} essentially reduce the analysis of the excess risk $R(\widehat\btheta) - R(\btheta^\circ) = \|\Sigma^{1/2}(\widehat \btheta - \btheta^\circ)\|^2$ to large deviation bounds on the right-hand side of \eqref{eq:eio_estimate_expansion}, which easily follow from the next theorems.

\begin{Th}
    \label{th:covariance_concentration}
    Grant Assumption \ref{as:quadratic} and let $A \in \R^{p \times d}$ and $B \in \R^{q \times d}$ be arbitrary matrices. Let us fix any $\delta \in (0, 1)$ satisfying the inequality
    \[
        \ttr(\Sigma^{1/2} A^\top A \Sigma^{1/2}) \, \ttr(\Sigma^{1/2} B^\top B \Sigma^{1/2}) + \log(2 / \delta) \leq 4n.
    \]
    Then, with probability at least $(1 - \delta)$, it holds that
    \begin{align*}
        \|B (\widehat \Sigma - \Sigma) A^\top \|_{\F}
        &
        \leq 4 C_X \|A \Sigma^{1/2}\| \, \|B \Sigma^{1/2}\|
        \sqrt{ \frac{\ttr(\Sigma^{1/2} A^\top A \Sigma^{1/2}) \, \ttr(\Sigma^{1/2} B^\top B \Sigma^{1/2}) + \log(2 / \delta)}n}.
    \end{align*}
\end{Th}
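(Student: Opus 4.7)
The plan is a Gaussian-randomization (PAC-Bayes style) argument that turns the Frobenius norm into a linear functional amenable to the Hanson--Wright MGF bound of Assumption~\ref{as:quadratic}. First I would reduce to the isotropic case: setting $\widetilde A = A\Sigma^{1/2}$, $\widetilde B = B\Sigma^{1/2}$ and $W_i = \Sigma^{-1/2}\bX_i$, one gets
\[
    B(\widehat\Sigma - \Sigma)A^\top = \widetilde B(\widehat S - I)\widetilde A^\top,
    \qquad
    \widehat S = \frac1n \sum_{i=1}^n W_i W_i^\top,
\]
where $W_i$ is isotropic and still satisfies Assumption~\ref{as:quadratic} (with $\Sigma$ replaced by $I$ and the same constant $C_X$). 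The quantities in the bound translate to $\|\widetilde A\|$, $\|\widetilde B\|$, $\ttr(\widetilde A\widetilde A^\top) = \|\widetilde A\|_F^2/\|\widetilde A\|^2$ and analogously for $B$.

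The core step introduces an independent Gaussian matrix $U \in \R^{q \times p}$ with i.i.d.\ $N(0,1)$ entries, and uses the exact identity $\E_U \exp(\lambda \langle U, M\rangle) = \exp(\lambda^2 \|M\|_F^2 /2)$. Writing $T = \|B(\widehat\Sigma - \Sigma)A^\top\|_F$ and applying this identity to the random $M = \widetilde B(\widehat S - I)\widetilde A^\top$, Fubini yields
\[
    \E_W \exp\!\left(\tfrac{\lambda^2 T^2}{2}\right)
    = \E_U \E_W \exp\!\bigl(\lambda \Tr(M_U(\widehat S - I))\bigr),
    \qquad M_U = \widetilde A^\top U^\top \widetilde B.
\]
For each fixed $U$, the inner expectation is the MGF of a centered sum of sub-exponential quadratic forms in the $W_i$, and Assumption~\ref{as:quadratic} (isotropic form) gives $\E_W \exp(\lambda \Tr(M_U(\widehat S - I))) \leq \exp(C_X^2 \lambda^2 \|M_U\|_F^2/n)$ on the range $|\lambda| \leq n/(C_X \|M_U\|_F)$.

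To close the loop, I would integrate this bound over $U$. A direct vectorization gives $\|M_U\|_F^2 = \rmvec(U)^\top (\widetilde A\widetilde A^\top \otimes \widetilde B\widetilde B^\top) \rmvec(U)$, so the $U$-integral is a Gaussian chaos MGF:
\[
    \E_U \exp\!\left(\tfrac{C_X^2 \lambda^2 \|M_U\|_F^2}{n}\right)
    = \det\!\left(I - \tfrac{2 C_X^2 \lambda^2}{n}\, \widetilde A\widetilde A^\top \otimes \widetilde B\widetilde B^\top\right)^{-1/2}.
\]
For $2 C_X^2 \lambda^2 \|\widetilde A\|^2\|\widetilde B\|^2/n$ sufficiently small, expanding $-\log(1-x) \leq x + x^2$ gives a bound of the form $\exp(c\, C_X^2 \lambda^2 \|\widetilde A\|_F^2\|\widetilde B\|_F^2/n)$, and using $\|\widetilde A\|_F^2 \|\widetilde B\|_F^2 = \|\widetilde A\|^2\|\widetilde B\|^2 \, \ttr(\widetilde A\widetilde A^\top)\, \ttr(\widetilde B\widetilde B^\top)$ makes the effective-rank product appear. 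Combining with Markov's inequality and optimizing in $\lambda$ produces a sub-Gaussian tail at level $\delta$ and, after collecting constants, yields the stated bound with constant $4$.

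\textbf{Main obstacle.} The delicate point is that the admissible range $|\lambda| \leq n/(C_X \|M_U\|_F)$ in the per-$U$ Hanson--Wright bound depends on the random $U$ and cannot hold uniformly under the Gaussian measure. Handling this requires a truncation: split the $U$-integration at a cutoff $R \sim \|\widetilde A\|\|\widetilde B\|\sqrt{\ttr(\widetilde A\widetilde A^\top)\ttr(\widetilde B\widetilde B^\top) + \log(2/\delta)}$, apply the chaos MGF on $\{\|M_U\|_F \leq R\}$, and control the complementary event by Hanson--Wright applied directly to the pure Gaussian chaos $\|M_U\|_F^2$. The hypothesis $\ttr(\Sigma^{1/2}A^\top A\Sigma^{1/2})\,\ttr(\Sigma^{1/2}B^\top B\Sigma^{1/2}) + \log(2/\delta) \leq 4n$ is exactly what guarantees that the optimal $\lambda$ stays below the cutoff $n/(C_X R)$, so that the truncation is lossless and the final bound remains dimension-free.
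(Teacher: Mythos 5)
Your plan shares the paper's overall strategy: replace the Frobenius norm by a linear functional $\langle U, M\rangle$ against a Gaussian matrix $U$, isotropize, and feed the resulting quadratic form $\bxi^\top \Phi \bxi$ into Assumption~\ref{as:quadratic}. The paper also truncates the Gaussian exactly for the reason you identify. However, the specific Fubini mechanism you describe has a genuine gap that the truncation as proposed cannot repair.

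The problem is that the identity $\E_W \exp\bigl(\tfrac{\lambda^2 T^2}{2}\bigr) = \E_U \E_W \exp\bigl(\lambda \Tr(M_U(\widehat S - I))\bigr)$ is an equality of two nonnegative quantities (Tonelli), and under Assumption~\ref{as:quadratic} both sides are $+\infty$ for every $\lambda > 0$. The random variable $T = \|B(\widehat\Sigma - \Sigma)A^\top\|_{\F}$ is sub-exponential, not sub-Gaussian: each $\|B\bX_i\|\,\|A\bX_i\|$ is a product of two sub-Gaussians, so $\p(T > t) \asymp \exp(-c t)$ for large $t$, and $\exp(\lambda^2 t^2/2)\exp(-ct) \to \infty$. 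Equivalently, on the right-hand side, the set of $U$ with $\|M_U\|_F > n/(C_X |\lambda|)$ has positive Gaussian measure, and on that set $\E_W\exp(\lambda\Tr(M_U(\widehat S - I)))$ is itself infinite. Restricting the $U$-integral to $\{\|M_U\|_F \le R\}$ only makes the right-hand side \emph{smaller}, so it produces a lower bound on $\E_W\exp(\lambda^2 T^2/2)$, not the upper bound you would need for Markov's inequality. Applying Hanson--Wright to the pure Gaussian chaos $\|M_U\|_F^2$ controls $\p(\|M_U\|_F > R)$, but that is not the issue: the integrand on that event is infinite, so a small probability does not rescue the integral.

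The paper's PAC-Bayesian route (Lemma~\ref{lem:pac-bayes}) sidesteps this because it never takes an exponential moment of $T$. Instead it bounds $T$ itself, in probability, by the Donsker--Varadhan-type inequality: for a prior $\mu$ (a Gaussian) and any posterior $\rho_U$ (a Gaussian centered at $U$ and \emph{truncated} to a ball of radius $R$), with probability $1-\delta$ simultaneously over all $U$,
\[
    \E_{\Phi\sim\rho_U}\,\frac1n\sum_i f(\bX_i,\Phi) \le \E_{\Phi\sim\rho_U}\log\E_\bX e^{f(\bX,\Phi)} + \frac{\KL(\rho_U,\mu)+\log(1/\delta)}{n}.
\]
Because $\rho_U$ has compact support, every $\Phi$ it produces satisfies a deterministic bound $\|\Phi\|_F^2 \le 2\|U\|_F^2 + 2R^2$, so the MGF term on the right is controlled by Assumption~\ref{as:quadratic} for the single fixed value of $\lambda$, pointwise in $\Phi$, with no conditioning on the random $U$. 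The truncation lives entirely inside the posterior, and the price is only a $\log 2$ term in the $\KL$ (Lemma~\ref{lem:kl}); the full Gaussian MGF identity for $\langle U, M\rangle$ is never invoked. If you want to salvage a Fubini-style argument, you would have to work with moments $\E_W T^{2k}$ rather than the exponential moment, but that is a substantially different argument; as written, the truncation step you outline does not close the gap.
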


\begin{Th}
    \label{th:noise_concentration}
    Suppose that a sample $\{(\bX_i, \eps_i) : 1 \leq i \leq n\} \subset \R^d \times \R$ consists of i.i.d. pairs of random elements satisfying Assumption \ref{as:noise}.
    Let us fix an arbitrary matrix $B \in \R^{q \times d}$ and any $\delta \in (0, 1)$ such that
    \[
        \ttr(\Sigma^{1/2} B^\top B \Sigma^{1/2}) + \log(2 / \delta) \leq n.
    \]
    Then, with probability at least $(1 - \delta)$, it holds that
    \[
        \left\| \frac1n \sum\limits_{i = 1}^n B \bX_i \eps_i \right\|
        \leq 8 \sigma \left\|B \Sigma^{1/2} \right\| \sqrt{\frac{\ttr(\Sigma^{1/2} B^\top B \Sigma^{1/2}) + \log(2 / \delta)}n}.
    \]
\end{Th}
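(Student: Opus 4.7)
The plan is to reduce the problem to a supremum of a sub-exponential empirical process indexed by the unit ball, and then control that supremum by a PAC-Bayesian / Laplace-type device, in the same spirit as Theorem \ref{th:covariance_concentration} and following the PAC-Bayes approach of \citep{puchkin23, puchkin24}. First, set $W_i = \Sigma^{-1/2} \bX_i \eps_i$. By Assumption \ref{as:noise}, the $W_i$ are i.i.d., centered (since $\E[\eps_i \mid \bX_i] = 0$), and satisfy $\|W_i\|_{\psi_1} \leq \sigma$. Writing $\bar{W}_n = n^{-1}\sum_{i=1}^{n} W_i$ and using $\|\cdot\| = \sup_{\|\bu\|=1} \bu^\top (\cdot)$, the quantity of interest becomes
\[
    \sup_{\|\bu\|=1} \bv(\bu)^\top \bar{W}_n,
    \qquad
    \bv(\bu) = \Sigma^{1/2} B^\top \bu,
    \qquad \|\bv(\bu)\| \le \|B \Sigma^{1/2}\|,
\]
and by sub-multiplicativity of the Orlicz norm $\|\bv(\bu)^\top W_i\|_{\psi_1} \le \sigma \|B \Sigma^{1/2}\|$. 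The index set $\{\bv(\bu) : \|\bu\| \le 1\}$ is an ellipsoid whose geometric complexity is governed by $\Tr(B \Sigma B^\top) = \|B \Sigma^{1/2}\|_{\F}^2$ and $\|B \Sigma^{1/2}\|^2$, hence by $\ttr(\Sigma^{1/2} B^\top B \Sigma^{1/2})$.

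Second, for each fixed $\bu$, Bernstein's inequality for centered $\psi_1$ sums yields sub-exponential deviation bounds for $\bv(\bu)^\top \bar{W}_n$ with variance proxy of order $\sigma^2 \|\bv(\bu)\|^2 / n$. To take the supremum over $\bu$, I would apply a PAC-Bayesian / variational argument: start from the MGF inequality $\E \exp(\lambda \bv(\bu)^\top \bar{W}_n - n \psi(\lambda, \bu)) \le 1$ with $\psi$ the sub-exponential cumulant bound, integrate against a Gaussian prior on $\bu$ with covariance adapted to the ellipsoid's geometry (e.g.\ $\cN(\mathbf{0}, c(B \Sigma B^\top + \tau I_q)^{-1})$ with $\tau$ tuned to the sample size), and apply the Donsker--Varadhan formula with a Dirac-like posterior concentrated at the empirical maximiser. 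A careful choice of the prior makes the KL cost exactly of order $\ttr(\Sigma^{1/2} B^\top B \Sigma^{1/2}) + \log(1/\delta)$, and together with the hypothesis $\ttr(\Sigma^{1/2} B^\top B \Sigma^{1/2}) + \log(2/\delta) \le n$ this keeps us in the quadratic (sub-Gaussian) branch of Bernstein, delivering the claimed bound with constant~$8$ in front of $\sigma \|B \Sigma^{1/2}\| \sqrt{(\ttr + \log(2/\delta))/n}$.

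The main obstacle is this third step: lifting pointwise sub-exponential concentration to a uniform bound over the ellipsoid without either picking up a factor depending on the ambient dimension $d$ or falling into the linear branch of Bernstein (which would degrade the rate from $\sqrt{(\ttr + \log)/n}$ to $(\ttr + \log)/n$). A naive $\varepsilon$-net union bound on $\{\bu : \|\bu\| \le 1\}$ pays cardinality exponential in $d$; the PAC-Bayesian route sidesteps this by tailoring the prior covariance to the shape of $B \Sigma B^\top$, so that only directions carrying meaningful variance are charged in the KL term and the effective rank emerges naturally. Matching the explicit numerical constant $8$ will further require careful bookkeeping of the absolute constants through the MGF derivation, the KL computation, and the final inversion from deviation level to high-probability bound.
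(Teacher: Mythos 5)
Your high-level strategy matches the paper's: rewrite the norm as a supremum over the ellipsoid $\{\Sigma^{1/2}B^\top\bv : \|\bv\|\le 1\}$, apply the PAC-Bayesian variational inequality (Lemma~\ref{lem:pac-bayes}) with a geometry-adapted Gaussian prior, and invoke a $\psi_1$ MGF bound (\citep[Lemma~2]{zhivotovskiy21}) to control each fixed linear functional of $\Sigma^{-1/2}\bX_i\eps_i$. However, the specific prior you propose, $\cN(\mathbf{0}, c(B\Sigma B^\top + \tau I_q)^{-1})$, is inverted relative to what the calculation needs. The KL of a shifted posterior $\cN(\bu_0,C)$ against the prior $\cN(\mathbf{0},C)$ is $\tfrac12\,\bu_0^\top C^{-1}\bu_0$, and making this of order $\ttr(\Sigma^{1/2}B^\top B\Sigma^{1/2})$ uniformly over unit $\bu_0$, while simultaneously keeping the Gaussian second moment in the MGF term proportional to $\|B\Sigma^{1/2}\|^2$, requires $C\propto I_q$ in the $\bu$-parametrization, i.e.\ a prior with covariance proportional to $\Sigma^{1/2}B^\top B\Sigma^{1/2}$ when pushed forward onto the ellipsoid --- exactly the paper's choice. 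Your inverse formula inflates variance along the \emph{thin} axes of the ellipsoid, so the KL scales as $(\|B\Sigma^{1/2}\|^2+\tau)/(2c)$ and the pushforward covariance trace depends on the full interplay of $\tau$ with all eigenvalues of $B\Sigma B^\top$ (of order $c\cdot\operatorname{rank}$ for small $\tau$), rather than reducing cleanly to the effective rank.

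A second repair is needed: a ``Dirac-like posterior'' has infinite KL against any absolutely continuous Gaussian prior, so it cannot be plugged into the Donsker--Varadhan bound. The paper instead uses a \emph{truncated shift} of the prior --- same covariance, recentered at $\bu$, restricted to the ball $\{\bx : \|\bx-\bu\|\le\|B\Sigma^{1/2}\|\}$ --- whose KL is bounded by $\log 2 + \ttr(\Sigma^{1/2}B^\top B\Sigma^{1/2})$ via Lemma~\ref{lem:kl}. The truncation is not merely a formal device: it is what keeps $\|\bxi\|$ almost surely bounded, which is required to stay inside the sub-Gaussian branch of the $\psi_1$ MGF bound under the hypothesis $\ttr(\Sigma^{1/2}B^\top B\Sigma^{1/2}) + \log(2/\delta)\le n$, the very obstacle you correctly flagged as the crux of the argument.
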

The proofs of Theorems \ref{th:covariance_concentration} and \ref{th:noise_concentration} are moved to Appendix \ref{sec:concentration_inequalities}. They rely on the PAC-Bayesian variational inequality (see, e.g., \citep[Proposition 2.1]{catoni17}). In \citep{zhivotovskiy21, puchkin24, abdalla22}, the authors use similar arguments in the context of covariance estimation. Let us note that the upper bound in Theorem \ref{th:covariance_concentration} agrees with the results of \cite{bunea15} and \cite{puchkin23} in the case $A = B = I_d$ and with Theorem 16 of \cite{hsu12} when $A = B = (\Sigma + \lambda I_d)^{-1/2}$.

Theorems \ref{th:covariance_concentration} and \ref{th:noise_concentration} applied to the vector $\bzeta$ from Theorem \ref{th:stoch_term} help us to quantify the excess risk of the error-in-operator estimate in terms of
\begin{equation}
    \label{eq:rqk}
    r_q(k) = \sum\limits_{j > k} \left( \frac{\sigma_j}{\sigma_{k + 1}} \right)^q,
    \quad k \in \{1, \dots, d\},
\end{equation}
where $\sigma_1 \geq \sigma_2 \geq \dots \geq \sigma_d$ denote the eigenvalues of $\Sigma$. In Appendix \ref{sec:co_risk_bound_proof}, we prove the following upper bound on the norm of $\Sigma^{1/2}(\widehat\btheta - \btheta^\circ)$ with explicit constants.

\begin{Co}
    \label{co:risk_bound}
    Assume the conditions of Theorem \ref{theorem: bias} and Theorem \ref{th:stoch_term}. Let $\sigma_1, \dots, \sigma_d$ stand for the eigenvalues of $\Sigma$ and define
    \begin{equation}
        \label{eq:k_star}
        k^* = k^*(\lambda) = \max\left\{ k \in \mathbb N: \sigma_{k}^2 \geq 2 \lambda \right\}.
    \end{equation}
    Fix an arbitrary $\delta \in (0, 1)$ such that
    \begin{equation}
        \label{eq:k_star_condition}
        \left(1 + \frac{2\lambda}{\|\Sigma\|^2} \right)^2 \big( k^* + r_4(k^*) \big) + \log(4/\delta) \leq n
        \quad \text{and} \quad
        k^* + r_2(k^*) + 0.25 \log(4/\delta) \leq n
    \end{equation}
    with $r_2(k)$ and $r_4(k)$ given by \eqref{eq:rqk}. Then, with probability at least $(1 - 8\delta)$, it holds that
    \begin{align}
        \label{eq:risk_bound}
        \left\| \Sigma^{1/2} (\widehat \btheta - \btheta^\circ) \right\|
        &\notag
        \leq \left\|\Sigma^{1/2} \bb_\lambda\right\| + 4 \left(2\sigma + C_X \|\Sigma^{1/2} \bb_\lambda\| \right) \sqrt{\frac{k^* + r_4(k^*) + \log(4 / \delta)}n}
        \\&\quad
        + \frac{2 C_X \|\Sigma^{3/2} \bb_{\lambda}\|}{\sqrt{2\lambda}} \sqrt{\frac{4 k^* + 4 r_2(k^*) + \log(4 / \delta)}n}
        + \diamondsuit,
    \end{align}
    where $\bb_\lambda = -\lambda (\Sigma^2/2 + \lambda I_d)^{-1} \btheta^\circ$ and
    \begin{align}
        \label{eq:risk_expansion_remainder}
        \diamondsuit
        &\notag
        = 210 \left (\frac{\Vert \btheta^\circ \Vert}{\mu} + \frac{\Vert \Sigma \Vert \Vert \bb_\lambda \Vert}{\mu \sqrt{\lambda}} \right ) \|\Sigma\|^{1/2} \left \Vert \bb_\lambda \right \Vert
        + \frac{\|\btheta^\circ\|}{\lambda^{1/4}} \left( \left( \frac{17 \|\btheta^\circ\|}{\mu}\right)^2 + \frac{\|\bb_\lambda\|}{160 \mu} \right) \sqrt{ \Psi(n, \delta)}
        \\&\quad
        + \frac{19 \|\btheta^\circ\|}{\sqrt{3} \lambda^{3/4}} \left(1 + \sqrt{\frac{\Psi(n, \delta)}{\lambda}} \right) \Psi(n, \delta).
    \end{align}
\end{Co}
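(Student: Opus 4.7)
The plan is to assemble Theorems~\ref{theorem: bias}--\ref{th:noise_concentration} by the triangle inequality. Starting from the decomposition
\[
\widehat\btheta - \btheta^\circ = \bb_\lambda + (\btheta^* - \btheta^\circ - \bb_\lambda) + \bigl(\widehat\btheta - \btheta^* - (\Sigma^2 + 2\lambda I_d)^{-1}\bzeta\bigr) + (\Sigma^2 + 2\lambda I_d)^{-1}\bzeta
\]
and multiplying by $\Sigma^{1/2}$, the first piece contributes $\|\Sigma^{1/2}\bb_\lambda\|$, Theorem~\ref{theorem: bias} deterministically bounds the second piece by the first summand of $\diamondsuit$, and Theorem~\ref{th:stoch_term} bounds the third piece by the remaining two summands of $\diamondsuit$ on an event of probability at least $1 - 13\delta/2$. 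What is left is a high-probability bound on $\|\Sigma^{1/2}(\Sigma^2 + 2\lambda I_d)^{-1}\bzeta\|$ that reproduces the two square-root terms in \eqref{eq:risk_bound}.

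I would split this remaining quantity along $\bzeta = \Sigma\bU - \Sigma(\widehat\Sigma - \Sigma)\bb_\lambda - (\widehat\Sigma - \Sigma)\Sigma\bb_\lambda$ into a noise piece and two covariance pieces. Theorem~\ref{th:noise_concentration} applies directly to the noise piece with $B = \Sigma^{3/2}(\Sigma^2+2\lambda I_d)^{-1}$ at confidence $\delta/2$. For each covariance piece I view the vector $B(\widehat\Sigma - \Sigma)\bv$ as a $d\times 1$ matrix whose Frobenius norm equals its Euclidean norm, and invoke Theorem~\ref{th:covariance_concentration} with the $1\times d$ matrix $A = \bv^\top$, so that $\ttr(\Sigma^{1/2}A^\top A\Sigma^{1/2}) = 1$; the natural choice is $B = \Sigma^{3/2}(\Sigma^2+2\lambda I_d)^{-1}$ for the $(\widehat\Sigma - \Sigma)\bb_\lambda$ contribution and $B = \Sigma^{1/2}(\Sigma^2+2\lambda I_d)^{-1}$ for the $(\widehat\Sigma - \Sigma)\Sigma\bb_\lambda$ contribution, each again at confidence $\delta/2$. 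The three concentration applications add $3\delta/2$ to the failure probability, for a total of $13\delta/2 + 3\delta/2 = 8\delta$, and the choice of confidence turns each $\log(2/\delta')$ into $\log(4/\delta)$.

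The main calculation, and the only delicate point, is expressing the operator-norm prefactors and effective ranks that appear in these three applications in terms of $k^*$, $r_2(k^*)$ and $r_4(k^*)$. The key algebraic identity is $\|B\Sigma^{1/2}\|^2 \cdot \ttr(\Sigma^{1/2}B^\top B\Sigma^{1/2}) = \Tr(\Sigma^{1/2}B^\top B\Sigma^{1/2})$, which lets me absorb the prefactor $\|B\Sigma^{1/2}\|$ inside the square root of the concentration bound and work with the trace rather than the effective rank. For $B = \Sigma^{3/2}(\Sigma^2+2\lambda I_d)^{-1}$ the eigenvalues of $\Sigma^{1/2}B^\top B\Sigma^{1/2}$ in the eigenbasis of $\Sigma$ are $\sigma_j^4/(\sigma_j^2+2\lambda)^2$: these are $\leq 1$ for $j \leq k^*$ (since $\sigma_j^2 + 2\lambda \geq \sigma_j^2$) and $\leq (\sigma_j/\sigma_{k^*+1})^4$ for $j > k^*$ (since $\sigma_j^2 + 2\lambda \geq 2\lambda > \sigma_{k^*+1}^2$), hence $\Tr \leq k^* + r_4(k^*)$ while $\|B\Sigma^{1/2}\|^2 = 1/(1 + 2\lambda/\|\Sigma\|^2)^2$; this matches the first part of \eqref{eq:k_star_condition} and, after merging the noise and first-covariance outputs under a common square root, yields the coefficient $4(2\sigma + C_X\|\Sigma^{1/2}\bb_\lambda\|)$ in \eqref{eq:risk_bound}. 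For $B = \Sigma^{1/2}(\Sigma^2+2\lambda I_d)^{-1}$ the same $k^*$-splitting gives $\Tr \leq (k^* + r_2(k^*))/(2\lambda)$, and the AM-GM inequality $\sigma_j^2 + 2\lambda \geq 2\sigma_j\sqrt{2\lambda}$ yields $\|B\Sigma^{1/2}\|^2 \leq 1/(8\lambda)$; this matches the second part of \eqref{eq:k_star_condition} and, after pulling $1/\sqrt{2\lambda}$ out of the square root, produces the prefactor $\|\Sigma^{3/2}\bb_\lambda\|/\sqrt{2\lambda}$ together with the $4k^*+4r_2(k^*)$ inside the second square root in \eqref{eq:risk_bound}. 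The hardest bookkeeping step is ensuring that the noise and first-covariance outputs, which share the same $B$ and therefore the same trace bound $k^*+r_4(k^*)$, merge cleanly into a single square-root term with the combined constant $4(2\sigma + C_X\|\Sigma^{1/2}\bb_\lambda\|)$; the $\|B\Sigma^{1/2}\|^2 \cdot \ttr = \Tr$ identity is what makes this bookkeeping possible with matching constants.
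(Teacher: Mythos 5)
Your proposal is correct and follows essentially the same route as the paper: split $\widehat\btheta-\btheta^\circ$ into $\bb_\lambda$, the bias remainder (Theorem~\ref{theorem: bias}), the stochastic remainder (Theorem~\ref{th:stoch_term}), and $(\Sigma^2+2\lambda I_d)^{-1}\bzeta$; then decompose $\bzeta$ into three pieces and apply Theorems~\ref{th:covariance_concentration} and~\ref{th:noise_concentration} with exactly the same choices of $A$ and $B$, with the union bound giving $13\delta/2 + 3\delta/2 = 8\delta$. The paper bounds the effective ranks $\ttr(\Sigma^4(\Sigma^2+2\lambda I_d)^{-2})$ and $\ttr(\Sigma^2(\Sigma^2+2\lambda I_d)^{-2})$ via a separate Lemma~\ref{lem:effective_rank_bounds} and then cancels the operator-norm prefactor, whereas you absorb the prefactor through the identity $\|B\Sigma^{1/2}\|^2\,\ttr(\Sigma^{1/2}B^\top B\Sigma^{1/2})=\Tr(\Sigma^{1/2}B^\top B\Sigma^{1/2})$ and bound the trace directly against $k^*+r_q(k^*)$ -- an algebraically equivalent bookkeeping.
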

Corollary \ref{co:risk_bound} gives an intuition for the choice of $\mu$. It should be chosen in such a way that $\diamondsuit = \cO(1/\mu)$, $\mu \rightarrow \infty$, does not exceed the leading terms in the risk bound \eqref{eq:risk_bound}.
We would like to note that the expression in the right-hand side of \eqref{eq:risk_bound} can be simplified in view Lemma \ref{lem:bias_upper_bound} from Appendix \ref{sec:bias_properties} below. Let
\[
    \Sigma = \sum\limits_{j = 1}^d \sigma_j \bv_j \bv_j^\top
\]
be the eigenvalue decomposition of $\Sigma$. For any $k \in \{1, \dots, d\}$, we define $\beta_k = \bv_k^\top \btheta^\circ$ and the projections of $\btheta$ onto the linear span of $\bv_1, \dots, \bv_k$ and its orthogonal complement, respectively:
\[
    \btheta_{\leq k}^\circ = \sum\limits_{j = 1}^k \beta_j \bv_j
    \quad \text{and} \quad
    \btheta_{>k}^\circ = \btheta^\circ - \btheta_{\leq k}^\circ.
\]
Then, under the conditions of Corollary \ref{co:risk_bound}, with probability at least $(1 - 8\delta)$ it holds that
\begin{align}
    \label{eq:risk_bound_simplified}
    \left\| \Sigma^{1/2} (\widehat \btheta - \btheta^\circ) \right\|
    &\notag
    \leq \left( \sigma_{k^*}^2 \left\|\Sigma^{-1/2} \btheta^\circ_{\leq k^*}\right\|^2 + \left\|\Sigma^{1/2} \btheta^\circ_{> k^*}\right\|^2 \right)^{1/2} \left(1 + 8 C_X \sqrt{\frac{k^* + r_2(k^*) + \log(4 / \delta)}n} \right)
    \\&
    + 8\sigma \sqrt{\frac{k^* + r_4(k^*) + \log(4 / \delta)}n}
    + \diamondsuit,
\end{align}
where $\diamondsuit$ is defined in \eqref{eq:risk_expansion_remainder}. The first term in the right-hand side of \eqref{eq:risk_bound_simplified} corresponds to the bias and the random design effect while the second one reflects the noise influence. It is worth mentioning that the variance of the standard ridge regression estimate
\begin{equation}
    \label{eq:ridge}
    \widehat\btheta{}^{(R)} = \argmin\limits_{\btheta} \left\{ \left\|\bY - \bbX^\top \btheta \right\|^2 + \tau \|\btheta\|^2 \right\} 
\end{equation}
is of order (see, for instance, \citep{hsu12, cheng2022, bach2024high})
\begin{align}
\label{eq: variance ridge}
    \Var\left( \widehat\btheta{}^{(R)} \,\big\vert\, \bbX \right) = \cO\left( \frac{\Tr\big( \Sigma^2 (\Sigma + \tau I_d)^2 \big)}n \right)
    = \cO\left( \frac{\widetilde k + r_2(\widetilde k)}n \right), \quad n \rightarrow \infty,
\end{align}
where
\[
    \widetilde k = \widetilde k(\tau) = \max\left\{ k \in \mathbb N : \sigma_k \geq \tau \right\}.
\]
\begin{Rem}
    To be more precise, in \citep{cheng2022, bach2024high}, the variance is expressed through the effective regularization parameter $\varkappa(\tau)$. In the present paper, we focus on the case $\ttr(\Sigma) = \cO(\sqrt{n}) = o(n)$ (see Theorem \ref{th:stoch_term}). In this situation, $\varkappa(\tau)$ and $\tau$ do not differ too much. In contrast, \cite{cheng2022} are interested in the opposite scenario. 
\end{Rem}
Let us note that $\widetilde k(\tau)$ coincides with $k^*(\lambda)$ (see \eqref{eq:k_star}) when $\tau^2 = 2 \lambda$. This suggests us to compare the performance of the standard ridge estimate \eqref{eq:ridge} with the prediction risk of the error-in-operator estimate \eqref{eq: initial optimization problem} with $\lambda = \tau^2 / 2$. First, observe that the bound \eqref{eq:risk_bound_simplified} yields that
\[
    \Var\left( \widehat\btheta \,\big\vert\, \bbX \right)
    = \cO\left( \frac{k^* + r_4(k^*)}n + \diamondsuit^2 \right),
    \quad n \rightarrow \infty.
\]
Since $r_q(k)$ monotonously decreases in $q > 0$, $\Var\left( \widehat\btheta \,\big\vert\, \bbX \right)$ exhibits a better behaviour than $\Var\left( \widehat\btheta{}^{(R)} \,\big\vert\, \bbX \right)$. At the same time, the first term in the right-hand side of \eqref{eq:risk_bound_simplified} is of the same order as the upper bound on the bias of $\widehat\btheta{}^{(R)}$ obtained in \citep[Proposition 2.2]{cheng2022}. Moreover, one can easily show that the leading bias term $\|\Sigma^{1/2} \bb_\lambda\|$, $\lambda = \tau^2 / 2$ of the error-in-operator estimate $\widehat\btheta$ does not exceed $\sqrt{2} \tau \|\Sigma^{1/2} (\Sigma + \tau I_d)^{-1} \btheta^\circ\|$ (see Lemma \ref{lem:ridge_comparison}).

\section{Computational aspects}
\label{sec:computational_aspects}

Since the objective \eqref{eq:eio_objective} of the error-in-operator estimate \eqref{eq: initial optimization problem} is non-convex, we must elaborate on computational aspects of the suggested procedure. We defined the estimator $\widehat \btheta$ as a minimizer of a multivariate polynomial of degree $4$. Optimizing a multivariate fourth-degree polynomial is intractable in general. Therefore, to find $\widehat \btheta$, we should harness the inner structure of our problem. First, we reduce the inference to a convex optimization problem. Let us fix an arbitrary $\rho_0$ from $(0, 1/3]$ and define
\begin{equation}
    \label{eq:upsilon}
    \Ups(\rho_0) = \left\{ \bups = (\btheta, \bfeta, A) : (1 - \rho_0^2) \|\btheta\|^2 \leq \rho_0^2 \mu^2, \|\bfeta - A\btheta\|^2 \leq \rho_0^2 \mu^2 \lambda \right\}.
\end{equation}
The introduced set plays a crucial role in our analysis, because, the function $\ttL(\btheta, \bfeta, A)$ has a positive definite Hessian on $\Ups(\rho_0)$ (see Lemma~\ref{lem:block-diagonal_lower_bound} below).
However, note that $\Ups(\rho_0)$ is not convex. For this reason, we restrict our attention on its convex subset of the form (see Proposition~\ref{proposition: product convex subset of Ups})
\begin{align*}
    \Theta \times \sfH \times \sfA \subset \Ups(\rho_0),
\end{align*}
where
\begin{align*}
   \Theta
   & = \left \{\btheta \in \R^d :
   \Vert \btheta \Vert \le \rho_0 \mu \; \text{and} \; \Vert \Sigma \Vert \Vert \btheta - \btheta^* \Vert \le 5 \rho_0 \mu \sqrt{\lambda} / 96\right \},
   \\
   \sfH & = \left \{ \bfeta \in \R^d : \Vert \bfeta - \bZ \Vert \le \rho_0 \mu \sqrt{\lambda} / 3 \right \},
   \\
   \sfA & = \left \{ A \in \R^{d \times d} : \Vert A \Vert \le 3 \Vert \Sigma \Vert + \sqrt{\lambda} / 3   \right \}.
\end{align*}
Next, we show that under mild assumptions the estimate $\widehat \bups$ belongs to $\Theta \times \sfH \times \sfA$.
\begin{Lem}
\label{lemma: localization_probabilistic_conditions}
    Grant Assumptions \ref{as:quadratic} and \ref{as:noise} and fix an arbitrary $\delta \in (0, 1)$ such that $n \ge 2^{12} (1 + C_X)^2 \left ( \ttr(\Sigma) + \log(2/\delta) \right )$. Let $\rho_0 \le 1/8$ and let the sets $\Theta$, $\sfH$, $\sfA$ be as defined above. Assume that the non-negative parameters $\lambda$ and $\mu$ satisfy the inequalities
    \begin{align*}
        \Vert \btheta^\circ \Vert \le \frac{\rho_0 \mu}7,
        \quad &
        \Vert \Sigma \Vert \Vert \btheta^\circ \Vert \le \frac{\rho_0 \mu \sqrt{\lambda}}{56 \cdot 18},
        \quad 
        \frac{2^{11} \sigma \Vert \Sigma \Vert^{1/2}}{\rho_0 \mu} \sqrt{\frac{\ttr(\Sigma) + \log(4/\delta)}{n}}
        \leq \frac{\lambda}{\|\Sigma\|} \land \sqrt{\lambda}, \\
    \end{align*}
    and
    \[
        \frac{2^{13} (1 + C_X) \Vert \Sigma \Vert^2 \Vert \btheta^\circ \Vert}{\rho_0 \mu} \sqrt{\frac{4 \ttr(\Sigma) + \log(2/\delta)}{n}} \le \lambda. 
    \]
    Then, $(\btheta^*, \bfeta^*, A^*) \in \Theta \times \sfH \times \sfA$ and, with probability at least $(1 - \delta)$, the triplets $(\bzero, \bZ, \widehat \Sigma)$ and $(\widehat \btheta, \widehat \bfeta, \widehat A)$ belong to the set $\Theta \times \sfH \times \sfA$ as well.
\end{Lem}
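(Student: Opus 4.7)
The three inclusions will be established on a single high-probability event $\cE$, on which concentration bounds for $\|\widehat \Sigma - \Sigma\|$, $\|\bU\|$, and $\|(\widehat \Sigma - \Sigma) \btheta^\circ\|$ are simultaneously in force. The event $\cE$ is obtained by applying Theorem \ref{th:covariance_concentration} (with appropriate choices of $A,B$ to control both a vector norm of $(\widehat\Sigma - \Sigma)\btheta^\circ$ and, via an $\eps$-net or Frobenius relaxation, the operator norm of $\widehat\Sigma - \Sigma$) and Theorem \ref{th:noise_concentration} applied to $\bU = \bbX\beps/n$. The hypothesis $n \ge 2^{12}(1+C_X)^2(\ttr(\Sigma) + \log(2/\delta))$ ensures $\p(\cE) \ge 1 - \delta$ after a union bound.

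\textbf{Placement of $\bups^*$.} The assumed bounds $\|\btheta^\circ\| \le \rho_0\mu/7$ and $\|\Sigma\|\|\btheta^\circ\| \le \rho_0\mu\sqrt\lambda/(56 \cdot 18)$ imply (using $\rho_0 \le 1/8$) the hypotheses of Theorem \ref{theorem: bias}, so $\|\btheta^* - \btheta^\circ - \bb_\lambda\|$ is small; combined with $\|\bb_\lambda\| \le \|\btheta^\circ\|$ this yields $\|\btheta^*\| \le \rho_0\mu$, hence $\btheta^* \in \Theta$. For $A^*$ and $\bfeta^*$, I would read off the stationarity conditions $\bnabla_\bfeta \cL(\bups^*) = 0$ and $\bnabla_A \cL(\bups^*) = 0$, which give $\bfeta^* = (\Sigma\btheta^\circ + A^*\btheta^*)/2$ and $A^* = \Sigma + (\bfeta^* - A^*\btheta^*)(\btheta^*)^\top/\mu^2$; iterating these produces the rank-one estimate $\|A^* - \Sigma\| \lesssim \|\Sigma\|\|\btheta^\circ\|^2/\mu^2$, which places $A^* \in \sfA$. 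For $\bfeta^* \in \sfH$, decompose
\[
    \bfeta^* - \bZ = (\bfeta^* - \Sigma\btheta^\circ) - (\widehat\Sigma - \Sigma)\btheta^\circ - \bU,
\]
bound the first summand via the KKT formula and Step~1, and control the other two on $\cE$; the quantitative assumptions on $n$ and $\mu\sqrt\lambda$ force the total to be at most $\rho_0\mu\sqrt\lambda/3$.

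\textbf{Placement of $(\bzero, \bZ, \widehat \Sigma)$.} The first constraint of $\Theta$ is trivial since $\|\bzero\|=0$; the second uses $\|\Sigma\|\|\bzero - \btheta^*\| = \|\Sigma\|\|\btheta^*\|$ together with the bound on $\|\btheta^*\|$ from the previous step and the hypothesis on $\|\Sigma\|\|\btheta^\circ\|$. The inclusion $\bZ \in \sfH$ holds by construction since $\|\bZ - \bZ\| = 0$. Finally, on $\cE$ one has $\|\widehat \Sigma\| \le \|\Sigma\| + \|\widehat\Sigma - \Sigma\| \le 3\|\Sigma\|$, so $\widehat \Sigma \in \sfA$.

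\textbf{Placement of $\widehat\bups$ and the main obstacle.} This is the principal difficulty, since $\ttL$ is a non-convex degree-four polynomial and a priori has no reason to place its global minimizer close to $\bups^*$. I plan a two-stage localization. First, I would show $\widehat\bups \in \Ups(\rho_0)$ by an energy comparison with the reference point: direct computation gives $\ttL(\bzero, \bZ, \widehat\Sigma) = \|\bZ\|^2/2$, and on $\cE$ this is of order $(\|\Sigma\|\|\btheta^\circ\| + \|\bU\|)^2$, which the assumed inequalities force to be much smaller than $\rho_0^2\mu^2\lambda/2$. On the other hand, any $\bups \notin \Ups(\rho_0)$ violates either $(1 - \rho_0^2)\|\btheta\|^2 \le \rho_0^2\mu^2$ or $\|\bfeta - A\btheta\|^2 \le \rho_0^2\mu^2\lambda$, and in either case the corresponding term of $\ttL(\bups)$ already exceeds $\rho_0^2\mu^2\lambda/2 > \ttL(\bzero, \bZ, \widehat\Sigma)$, contradicting the minimality of $\widehat\bups$. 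Second, once $\widehat\bups \in \Ups(\rho_0)$ is established, Lemma \ref{lem:block-diagonal_lower_bound} provides a positive-definite Hessian of $\ttL$ on the convex set $\Theta \times \sfH \times \sfA \subset \Ups(\rho_0)$ (Proposition \ref{proposition: product convex subset of Ups}). Since $\bnabla\ttL(\widehat\bups) = 0$ and $\bnabla\ttL(\bups^*) = -\bz$ with $\bz$ small on $\cE$, local strong convexity converts these into a bound on $\widehat\bups - \bups^*$ tight enough to keep $\widehat\bups$ within $\Theta \times \sfH \times \sfA$. The main obstacle is the first stage: ruling out far-away minimizers without access to global convexity, and calibrating the numerical constants so that the one-line energy comparison passes under the precise hypotheses imposed.
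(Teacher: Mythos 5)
The proposal is correct and close in spirit for most of the lemma, but the way you handle $\widehat\bups$ has a real gap. Your placements of $\bups^*$ and of $(\bzero, \bZ, \widehat\Sigma)$ follow the same route as the paper (KKT identities for $\bfeta^*$ and $A^*$, concentration for $\|\widehat\Sigma - \Sigma\|$, $\|\bU\|$), and your single-line energy comparison $\ttL(\widehat\bups) \le \ttL(\bzero,\bZ,\widehat\Sigma) = \|\bZ\|^2/2$ is a neat way to get $\widehat\bups \in \Ups(\rho_0)$ and to control $\|\widehat\btheta\|$, $\|\widehat A - \widehat\Sigma\|_{\F}$, and $\|\widehat\bfeta - \bZ\|$ simultaneously; the paper does the same comparison componentwise in its Steps 1, 4 and 5.

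The gap is in your second stage. The energy comparison does not yield the second constraint defining $\Theta$, namely $\|\Sigma\|\,\|\widehat\btheta - \btheta^*\| \le 5\rho_0\mu\sqrt\lambda/96$, since the naive bound $\|\widehat\btheta - \btheta^*\| \le \|\widehat\btheta\| + \|\btheta^*\|$ is too coarse when $\sqrt\lambda / \|\Sigma\|$ is small. You propose to close this via local strong convexity of $\ttL$ between $\widehat\bups$ and $\bups^*$, invoking Lemma~\ref{lem:block-diagonal_lower_bound} on the convex set $\Theta\times\sfH\times\sfA$. But this is circular: you do not yet know that $\widehat\bups \in \Theta\times\sfH\times\sfA$, and $\Ups(\rho_0)$ itself is \emph{not} convex (the constraint $\|\bfeta - A\btheta\|^2 \le \rho_0^2\mu^2\lambda$ involves the bilinear map $A\btheta$), so knowing both endpoints lie in $\Ups(\rho_0)$ does not guarantee that the segment $(1-t)\bups^* + t\widehat\bups$ stays in a region where the Hessian is positive-definite. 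The cross term $t(1-t)(A^*-\widehat A)(\btheta^*-\widehat\btheta)$ that arises when expanding $\bfeta^{(t)} - A^{(t)}\btheta^{(t)}$ along the segment is exactly what could carry you outside $\Ups$. The paper sidesteps this entirely: instead of strong convexity it subtracts the two first-order optimality systems $\bnabla\ttL(\widehat\bups) = \bzero$ and $\bnabla\cL(\bups^*) = \bzero$ and solves the resulting finite-difference system for $\widehat\btheta - \btheta^*$ and $\widehat A - A^*$ algebraically (Lemma~\ref{lemma: localization lemma}); that argument needs only the crude norm bounds $\|\widehat\btheta\|, \|\btheta^*\|, \|\btheta^\circ\| \le \rho_0\mu$, which you already have from the energy comparison, and never requires convexity along a segment. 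You would either need to adopt that algebraic route, or supply a separate argument (e.g.\ a bootstrap on a larger $\Ups(c\rho_0)$ with an explicit control of the bilinear cross term) to justify the strong-convexity step.
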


\begin{Rem}
    Careful examination of the proof of Lemma \ref{lemma: localization_probabilistic_conditions} reveals that one can replace Assumption \ref{as:quadratic} with a bit milder $\psi_2$-$L_2$-equivalence condition.    
\end{Rem}

Lemma~\ref{lemma: localization_probabilistic_conditions} implies that $\ttL(\cdot)$ possesses local convexity property, which is widely used in non-convex optimization (see the survey \citep{jain_non_convex_2017}). Thus, instead of~\eqref{eq: initial optimization problem}, we can study the problem
\begin{align}
\label{eq: convex problem}
    \ttL(\btheta, \bfeta, A) \rightarrow \min_{\btheta, \bfeta, A}
    \quad \text{ subject to } (\btheta, \bfeta, A) \in \Theta \times \sfH \times \sfA.
\end{align}
The problem~\eqref{eq: convex problem} could be solved using constrained convex optimization methods, if the sets $\Theta, \sfH$ and $\sfA$ were observable. Unfortunately, it is not the case, but we prove below that the following procedure converges to $(\widehat \btheta, \widehat A)$:
\begin{align*}
    \btheta_t & \in \argmin_{\btheta \in \R^d} \ttL(\btheta, (A_{t - 1} \btheta + \bZ)/2, A_{t - 1}), \\
    A_{t} & \in \argmin_{A \in \R^{d \times d}} \ttL(\btheta_{t}, (A \btheta_t + \bZ)/2, A)
\end{align*}
The estimates $\btheta_t$ and $A_t$ admit explicit expressions, and $\widehat \bfeta$ can be computed as 
\begin{align*}
    \widehat \bfeta = \frac{1}{2} (\bZ + \widehat A \widehat \btheta).
\end{align*}
We summarize the optimization procedure in Algorithm~\ref{algo: main scheme}.

\begin{algorithm}[H]
\caption{Error-in-operator regression}
\label{algo: main scheme}
\begin{algorithmic}[1]
    \Require observations $\bY$, feature matrix $\bbX$, regularization parameters $\mu, \lambda > 0$, number of iterations $T$.
    \State Initialization: set $\bZ = \bbX^\top \bY / n$, $\widehat \Sigma = \bbX^\top \bbX / n$, and $A_0 = \widehat \Sigma$.
    \For{$t = 1, \dots, T$}
        \State Compute
        \[
            \btheta_t = (A_{t - 1}^\top A_{t - 1} + 2 \lambda I_d)^{-1} A_{t - 1}^\top \bZ
        \]
        \State and
        \[
            A_{t} = \bZ \btheta_t^\top \left (2 \mu^2 I_d + \btheta_t \btheta_t^\top \right )^{-1} + \widehat \Sigma \left (I_d + \frac{\btheta_t \btheta_t^\top}{2 \mu^2}\right )^{-1}.
        \]
    \EndFor
    \State \Return $\btheta_T$.
\end{algorithmic}
\end{algorithm}
The following theorem justifies convergence of Algorithm \ref{algo: main scheme} to the error-in-operator estimate $\widehat \btheta$ defined in \eqref{eq: initial optimization problem}.

\begin{Th}
\label{theorem: rates of convergence}
    Grant Assumptions~\ref{as:quadratic} and \ref{as:noise}. Fix arbitrary positive numbers $\delta < 1$ and $\rho_0 \leq 1/8$. Suppose that $n \ge 2^{12} (1 + C_X)^2 \left ( \ttr(\Sigma) + \log(2/\delta) \right )$ and assume that the parameters $\mu$ and $\lambda$ satisfy the conditions
    \begin{equation}
        \label{eq:rho_0_conditions_1}
        \Vert \btheta^\circ \Vert \le \rho_0 \mu/7,
        \quad
        \Vert \Sigma \Vert \Vert \btheta^\circ \Vert \le \rho_0 \mu \sqrt{\lambda} / (56 \cdot 18),
    \end{equation}
    \begin{equation}
        \label{eq:rho_0_conditions_2}
        \lambda \ge \frac{2^{13} (1 + C_X) \Vert \Sigma \Vert^2 \Vert \btheta^\circ \Vert}{\rho_0 \mu} \sqrt{\frac{4 \ttr(\Sigma) + \log(2/\delta)}{n}},
    \end{equation}
    and
    \begin{equation}
        \label{eq:rho_0_conditions_3}
        \frac{\lambda}{\Vert \Sigma \Vert} \wedge \sqrt{\lambda}  \ge \frac{2^{11} \sigma \Vert \Sigma \Vert^{1/2}}{\rho_0 \mu} \sqrt{\frac{\ttr(\Sigma) + \log(4/\delta)}{n}}.
    \end{equation}
    Then, with probability at least $1 - \delta$, Algorithm~\ref{algo: main scheme} outputs $\btheta_T$ such that
    \begin{align*}
       \Vert \btheta_T - \widehat \btheta \Vert \le \rho_0^{2(T - 1)} \left (\Vert \btheta^\circ \Vert +8 \sigma \Vert \Sigma \Vert^{1/2} \, \sqrt{\frac{ \ttr(\Sigma) + \log(4/\delta)}{\lambda n}}\right ).
    \end{align*}
\end{Th}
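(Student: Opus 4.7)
The strategy is to reduce the non-convex optimization \eqref{eq: initial optimization problem} to a constrained convex problem on $\Theta \times \sfH \times \sfA$ and to show that Algorithm~\ref{algo: main scheme} is a contraction with rate $\rho_0^2$ per outer iteration on this set. My first step would be to invoke Lemma~\ref{lemma: localization_probabilistic_conditions}: the hypotheses \eqref{eq:rho_0_conditions_1}--\eqref{eq:rho_0_conditions_3} coincide with those of the lemma, so on an event $\mathcal E$ of probability at least $1 - \delta$ both the starting triple $(\bzero, \bZ, \widehat\Sigma)$ and the joint minimizer $(\widehat\btheta, \widehat\bfeta, \widehat A)$ lie in $\Theta \times \sfH \times \sfA$. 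Since the inner minimization over $\bfeta$ has the explicit solution $\bfeta = (A\btheta + \bZ)/2$, I would recast the alternating scheme as alternating minimization of the reduced functional
\[
    g(\btheta, A) = \tfrac{1}{4} \|\bZ - A\btheta\|^2 + \tfrac{\mu^2}{2} \|\widehat\Sigma - A\|_{\F}^2 + \tfrac{\lambda}{2} \|\btheta\|^2,
\]
and use monotonicity of $\ttL$ along the iterates together with local strong convexity inside $\Ups(\rho_0)$ (Lemma~\ref{lem:block-diagonal_lower_bound}) to conclude that each iterate $(\btheta_t, (A_t\btheta_t + \bZ)/2, A_t)$ remains in $\Theta \times \sfH \times \sfA$.

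The heart of the argument is the one-step contraction
\[
    \|\btheta_{t+1} - \widehat\btheta\| \le \rho_0^2 \, \|\btheta_t - \widehat\btheta\|.
\]
To establish it, I would compare the first-order conditions of the two $\btheta$-minimizations at $(\btheta_{t+1}, A_t)$ and $(\widehat\btheta, \widehat A)$: the explicit update $\btheta = (A^\top A + 2\lambda I_d)^{-1} A^\top \bZ$ together with the analogous optimality condition for $\widehat\btheta$ gives
\[
    \btheta_{t+1} - \widehat\btheta = (A_t^\top A_t + 2\lambda I_d)^{-1} A_t^\top \bZ - (\widehat A^\top \widehat A + 2\lambda I_d)^{-1} \widehat A^\top \bZ,
\]
and a resolvent identity bounds the right-hand side by a linear map applied to $(A_t - \widehat A)$ whose operator norm scales like $(\|\bZ - A_t\btheta_{t+1}\| + \|A_t\|\|\widehat\btheta\|)/\lambda$. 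The definitions of $\sfH$, $\sfA$ and $\Theta$ then give $\|\bZ - A\btheta\| \le \|\bZ - \bfeta\| + \|\bfeta - A\btheta\| \lesssim \rho_0 \mu \sqrt{\lambda}$, so this Lipschitz constant is of order $\rho_0 \mu/\sqrt{\lambda}$. The $A$-update, via the closed form $A = \bZ\btheta^\top(2\mu^2 I_d + \btheta\btheta^\top)^{-1} + \widehat\Sigma(I_d + \btheta\btheta^\top/(2\mu^2))^{-1}$ and another resolvent expansion, produces a complementary factor $\|\btheta\|/\mu \le \rho_0$ (with $\sqrt{\lambda}/\mu$ arising from the prefactor in front of the $\btheta$-difference). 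Multiplying these two factors, while absorbing the quadratic remainders using the smallness from the localization, delivers the per-iteration contraction $\rho_0^2$.

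The final step is to bound the initial error $\|\btheta_1 - \widehat\btheta\|$. Since $A_0 = \widehat\Sigma$, the first iterate is $\btheta_1 = (\widehat\Sigma^2 + 2\lambda I_d)^{-1} \widehat\Sigma \bZ$. Decomposing $\bZ = \widehat\Sigma \btheta^\circ + \bU$ and using $\|(\widehat\Sigma^2 + 2\lambda I_d)^{-1} \widehat\Sigma^2\| \le 1$ and $\|(\widehat\Sigma^2 + 2\lambda I_d)^{-1} \widehat\Sigma\| \le 1/(2\sqrt{2\lambda})$, I obtain $\|\btheta_1\| \le \|\btheta^\circ\| + \|\bU\|/(2\sqrt{2\lambda})$. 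Theorem~\ref{th:noise_concentration} applied with $B = I_d$ bounds $\|\bU\|$ by $8\sigma\|\Sigma\|^{1/2}\sqrt{(\ttr(\Sigma) + \log(4/\delta))/n}$ on an event of probability at least $1 - \delta/2$, and the localization bound $\|\widehat\btheta\| \le \rho_0 \mu$ is absorbed via the parameter restrictions into the bracketed quantity in the statement. Iterating the per-step contraction $T - 1$ times then yields the claim.

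I expect the main obstacle to be proving the one-step contraction with the sharp constant $\rho_0^2$. Alternating minimization is locally contractive on any strongly convex function, but pinning down this specific factor requires carefully exploiting the product structure of $\Theta \times \sfH \times \sfA$ (rather than an arbitrary sublevel set) to control the cross-Hessian, and balancing the nonlinear remainders in both the $\btheta$- and $A$-updates against their linear leading terms. The tuning conditions \eqref{eq:rho_0_conditions_1}--\eqref{eq:rho_0_conditions_3} are designed exactly so that these remainders never overwhelm the leading contractive terms.
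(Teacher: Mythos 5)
Your proposal correctly identifies the localization via Lemma~\ref{lemma: localization_probabilistic_conditions}, the explicit elimination of $\bfeta$, and the use of the resolvent/Lipschitz structure of the two updates. However, there is a genuine gap in where you seed the geometric decay, and a structural mismatch with how the contraction actually closes.

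The paper's recursion is established on the $A$-variable, not directly on $\btheta$. The inductive invariant is $\|A_t - \widehat A\| \le \rho_0^{2t}\|\widehat A - \widehat\Sigma\|$, obtained by chaining the $A$-update Lipschitz bound (with its quadratic remainder) through the $\btheta$-update Lipschitz bound within the same $t$. Only at the very end is this transferred to $\btheta$ by applying the $\btheta$-update Lipschitz bound once more, with Lipschitz constant $\|\widehat\btheta\|/\sqrt{\lambda} + \|\widehat A\widehat\btheta - \bZ\|/(2\lambda)$. The reason this ordering matters is the initial condition: the algorithm starts from $A_0 = \widehat\Sigma$, and the localization gives the clean initial bound $\|A_0 - \widehat A\| = \|\widehat\Sigma - \widehat A\| \le \sqrt{\lambda}/3$. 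Seeding the recursion at $A_0$ is what makes the bracketed constant in the theorem come out to $\|\btheta^\circ\| + \|\bU\|/\sqrt{\lambda}$ (up to constants) after multiplying $\sqrt{\lambda}/3$ by the $\btheta$-Lipschitz factor and using $\|\widehat A\widehat\btheta - \bZ\| \le \|\bU\| + \sqrt{2\lambda}\|\btheta^\circ\|$ and $\|\widehat\btheta\| \lesssim \|\btheta^\circ\| + \|\bU\|/\sqrt{\lambda}$.

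Your proposal instead claims a contraction on $\|\btheta_t - \widehat\btheta\|$ and then needs to bound the initial error $\|\btheta_1 - \widehat\btheta\|$. But you only bound $\|\btheta_1\|$ and suggest that $\|\widehat\btheta\| \le \rho_0\mu$ ``is absorbed via the parameter restrictions.'' This cannot work: the condition $\|\btheta^\circ\| \le \rho_0\mu/7$ means $\rho_0\mu \ge 7\|\btheta^\circ\|$, so a triangle-inequality bound $\|\btheta_1 - \widehat\btheta\| \le \|\btheta_1\| + \|\widehat\btheta\|$ is at least $7\|\btheta^\circ\|$, which overshoots the claimed bracket $\|\btheta^\circ\| + 8\sigma\|\Sigma\|^{1/2}\sqrt{(\ttr(\Sigma)+\log(4/\delta))/(\lambda n)}$ by a large margin. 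To repair this you must bound $\|\btheta_1 - \widehat\btheta\|$ directly via the $\btheta$-update Lipschitz constant applied to $\|A_0 - \widehat A\|$, which is exactly the paper's seed; at that point you have effectively reproduced the $A$-recursion. A secondary remark: you also propose to verify that all iterates remain in $\Theta\times\sfH\times\sfA$ by monotonicity of $\ttL$ plus local strong convexity — the paper avoids this by never needing the iterates to lie in the localized set; only $\widehat\bups$ must, because every quantity in the recursion ($\|\widehat\btheta\|$, $\|\widehat A\|$, $\|\widehat A\widehat\btheta - \bZ\|$) is evaluated at the fixed point.
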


Let us note that larger values of $\mu$ yield weaker conditions \eqref{eq:rho_0_conditions_1}--\eqref{eq:rho_0_conditions_3} on admissible $\rho_0$.
This means that the algorithm converges faster as $\mu$ increases. This is the behavior one should expect, because in the case $\mu = \infty$ the objective $\ttL(\cdot)$ can be minimized explicitly in one step.

\section{Numerical experiments}
\label{section: numerical_experiments}

In this section, we illustrate our studies with numerical experiments. First, we show that Theorems~\ref{theorem: bias},\ref{th:stoch_term} present correct leading terms in the expansion of $R(\widehat \btheta) - R(\btheta^\circ)$. We fix the following parameters of the random design linear regression problem:
\begin{align}
    d & = 200, \nonumber \\
    \bX_i & = \left ( k^{-1/8} \cdot \sin(\pi k \cdot \xi_k) \right )_{k = 1}^d, \label{eq:experiment setup} \\
    \btheta_k^\circ & = k^{-3}, \quad k = 1, \ldots, d, \nonumber
\end{align}
where $\xi_k$ are i.i.d. samples from $ \Uniform[-1;1]$. It is clear, that $\Sigma$ is diagonal with eigenvalues $k^{-1/4} / 2$, $k = 1, \ldots, d$. We set $\varepsilon_1, \ldots, \varepsilon_n$ to be i.i.d. Gaussian random variables with the standard deviation $0.09$.

For this set of parameters, we plot the dependence of ratio $\Vert \Sigma^{1/2} \bb_\lambda\Vert / \Vert \Sigma^{1/2} (\btheta^* - \btheta^\circ)\Vert$ on $\mu, \lambda$ in Figure~\ref{fig:leading-term-studies} (left). As implied by Theorem~\ref{theorem: bias}, this ratio is close to $1$ for large values of $\mu$.

To illustrate Theorem~\ref{th:stoch_term}, we fix $\mu = 10^8$. We denote
\begin{align*}
    \tilde \bzeta = (\Sigma^2 + 2 \lambda I_d)^{-1} \bzeta.
\end{align*}
If Theorem~\ref{th:stoch_term} yields the correct leading term of $\Sigma^{1/2} (\widehat \btheta - \btheta^*)$, then the ratio $\Vert \Sigma^{1/2} \tilde \bzeta \Vert / \Vert \Sigma^{1/2}(\widehat \btheta - \btheta^*) \Vert$ should be close to $1$ for large enough $n$. We plot the dependence of $\Vert \Sigma^{1/2} \tilde \bzeta \Vert / \Vert \Sigma^{1/2}(\widehat \btheta - \btheta^*) \Vert$ on $\lambda$ and $n$ in Figure~\ref{fig:leading-term-studies} (middle and right). To obtain smoother curves, we averaged the studied ratio over $40$ generations. On the right picture, $\lambda_{opt}(n)$ is estimated by the grid search for each $n$. As one can see, our experiments agree with suggested theory.

Then, we compare the performance of our estimator $\widehat \btheta$ and the ridge estimator $\widehat \btheta{}^{(R)}$ defined in~\eqref{eq:ridge}. We estimate the optimal hyperparameters $\mu, \lambda$ and $\tau$ by the grid search. Due to~\eqref{eq:risk_bound_simplified}, the variance leading term $(\Sigma^2 + 2 \lambda I_d) \bzeta$ of $\widehat \btheta$ depends on the fourth powers of covariance eigenvalues, while the performance of $\widehat \btheta{}^{(R)}$ depends on the second powers. Thus, we expect that our estimator $\widehat \btheta$ should outperform $\widehat\btheta{}^{(R)}$ for $\Sigma$ with heavy-tailed eigenvalues. We support these observations by Figure~\ref{fig:ridge_comparison} (left plot). A reader can observe that our estimator $\widehat \btheta$ outperforms the ridge estimator $\widehat{\btheta}{}^{(R)}$ for the suggested setup~\eqref{eq:experiment setup}.

Next, we study the impact of finite $\mu$ on the performance of $\widehat \btheta$ in the considered setup~\eqref{eq:experiment setup}. Empirically, we show that finite $\mu$ can smooth the double descent curve.  Figure~\ref{fig:ridge_comparison} (right) displays the dependence of the risk $R(\widehat \btheta) -  R(\btheta^\circ)$ on the sample size $n \in \{50, 100, \ldots, 500\}$. For each $n$ and $\mu = \infty$, we find $\lambda_{opt}(n) \in \{1.3^{-40}, 1.3^{-39}, \ldots, 1.3^{39}\}$ by the grid search. Then, for each $n$ and $\lambda \in \lambda_{opt}(n) \cdot \{10^{-6}, 10^{-4}, 10^{-2}, 1 , 2\}$, we estimate $\mu_{opt}(\lambda, n)$ by the grid search on $\{2^0, 2^1, \ldots, 2^{29}\}$. We plot the dependence of the excess risk $R(\widehat \btheta) - R(\btheta^\circ )$ on $n$ and $\lambda$ for $\mu = \infty$ by the dashed line. The solid line corresponds to $\mu = \mu_{opt}(\lambda, n)$. We did not obtain the impact of finite $\mu$ on the risk curve when $\lambda \ge \lambda_{opt}(n)$, but finite $\mu$ can significantly mitigate the effect of double descent if $\lambda < \lambda_{opt}$.

\begin{figure}
    \centering
    \includegraphics[width=\linewidth]{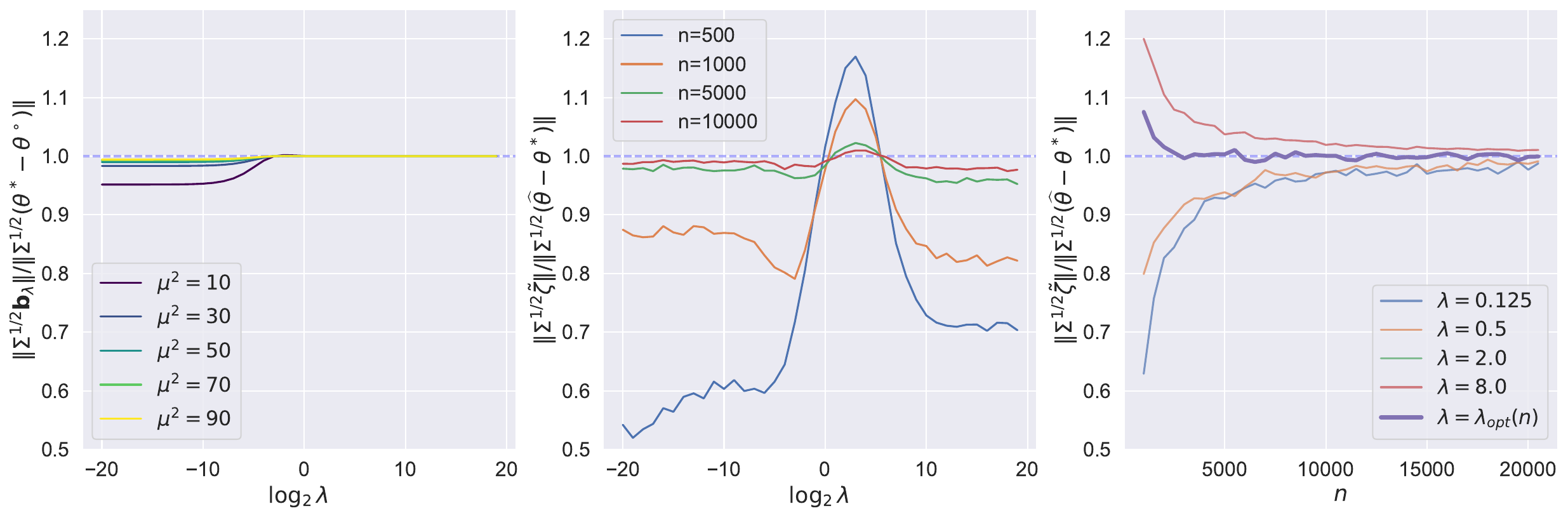}
    \caption{\textit{Left:} The ratio between the bias component $\Vert \Sigma^{1/2}(\btheta^* - \btheta^\circ)\Vert$ of the risk and its leading term established in Theorem~\ref{theorem: bias}. \textit{Middle and Right:} The ratio between the variance component of the risk $\Vert \Sigma^{1/2}(\widehat{\btheta} - \btheta^*) \Vert$ and its leading term established in Theorem~\ref{th:stoch_term}.}
    \label{fig:leading-term-studies}
\end{figure}

\begin{figure}[h!]
    \centering
    \includegraphics[width=\linewidth]{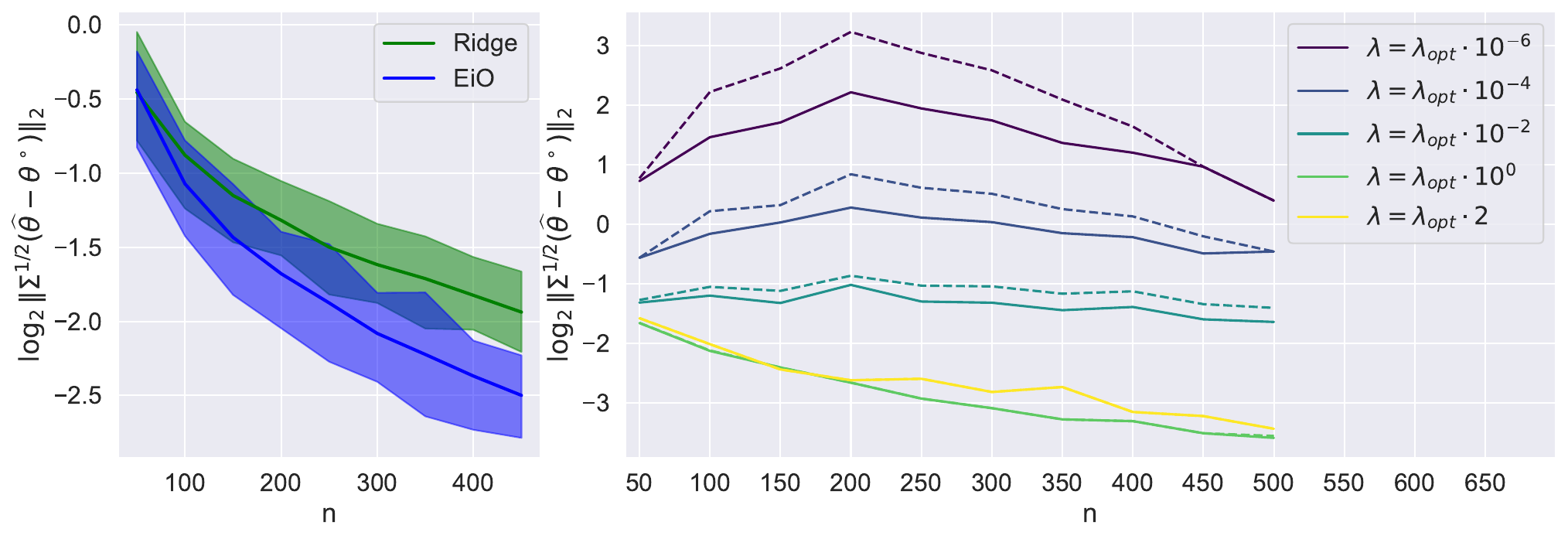}
    \caption{Numerical studies of Error-in-Operator estimator $\widehat \btheta$. {\it Left:} The comparison of the Error-in-Operator estimator and the standard ridge estimator. {\it Right:} The effect of finite $\mu$ on the double descent curve.}
    \label{fig:ridge_comparison}
\end{figure}

\section{Proof of Theorem~\ref{theorem: bias}}
\label{section: proof of the bias theorem}

We start with studying the difference $\bups^* - \bups^\circ$. For any $t \in [0, 1]$, let us define $\bu^{(t)} = (1 - t) \bups^\circ + t \bups^*$. Due to the Newton-Leibniz formula, we have
\begin{align*}
    \bnabla \cL(\bups^*) - \bnabla \cL(\bups^\circ) = \bnabla \cL(\bu^{(1)}) - \bnabla \cL(\bu^{(0)}) = \int_{0}^1 \nabla^2 \cL(\bu^{(t)}) (\bups^* - \bups^\circ) \, \rmd t.
\end{align*}
Let us denote $H(\bu) = \nabla^2 \cL(\bu)$ and $\biasInner{H} = \int_0^1 H(\bu^{(t)}) \rmd t$ for brevity. Generally, for a matrix-valued function $f(\bu) = f(\btheta, \bfeta, A)$, we let
\begin{align*}
    \biasInner{f} = \int_0^1 f(\bu^{(t)}) \rmd t.
\end{align*}
We are going to apply Lemma~\ref{lemma: localization bias corollary} with $\rho_0 = 1/7$. To satisfy its conditions, we bound $\Vert \Sigma \Vert \Vert \btheta^\circ \Vert$ as follows:
\begin{align}
\label{eq: sigma theta circ bound}
    \Vert \Sigma \Vert \Vert \btheta^\circ \Vert \le  \mu \sqrt{\lambda} / (7 \cdot 24) = \frac{\rho_0 \mu \sqrt{\lambda}}{24}.
\end{align}
Then,  Lemma~\ref{lemma: localization bias corollary} and Lemma~\ref{lem:block-diagonal_lower_bound} imply that the matrix $\biasInner{H}$ is invertible. With the introduced notations we have
\begin{align*}
    \bups^* - \bups^\circ = \biasInner{H}^{-1} (\bnabla \cL(\bups^*) - \bnabla \cL(\bups^\circ)).
\end{align*}
Since $\bups^*$ minimizes $\cL(\bups)$, we obtain that
\[
    \bnabla \cL(\bups^*) - \bnabla \cL(\bups^\circ)
    = -\bnabla \cL(\bups^\circ).
\]
It is straightforward to check that the only non-zero components of $\nabla \cL(\bups^\circ)$ correspond to the first $d$ coordinates. For this reason, we can restrict our attention on $\btheta^* - \btheta^\circ$.
Let
\[
    \avg H_{\chi \chi} =
    \begin{pmatrix}
        \avg H_{\bfeta\bfeta} & \avg H_{\bfeta A} \\
        \avg H_{A\bfeta} & \avg H_{AA}
    \end{pmatrix}
\]
stand for the block of $\avg H$ corresponding to the nuisance parameter $\chi = (\bfeta, A)$.
Due to the block-inversion formula \eqref{eq:h_block_inversion}, we have
\begin{equation}
    \label{eq:theta_star_representation}
    \btheta^* - \btheta^\circ = (\biasInner{H} / \biasInner{H}_{\chi \chi})^{-1} (\bnabla_{\btheta} \cL(\bups^*) - \bnabla_{\btheta} \cL(\bups^\circ)) = - \lambda (\biasInner{H} / \biasInner{H}_{\chi \chi})^{-1} \btheta^\circ,
\end{equation}
where $(\biasInner{H} / \biasInner{H}_{\chi \chi})$ is the Schur complement of the submatrix $\biasInner{H}_{\chi \chi}$. Rearranging the terms, we obtain that
\begin{align}
    \label{eq: bias without simplification of Schur complement}
    \big\Vert \btheta^* - \btheta^\circ - \bb_\lambda \big\Vert
    &\notag
    = \big\Vert \btheta^* - \btheta^\circ +  \lambda (\Sigma^2/2 + \lambda I_d)^{-1} \btheta^\circ \big\Vert
    \\&\notag
    = \lambda \left \Vert \left[ (\biasInner{H} / \biasInner{H}_{\chi \chi})^{-1} (\Sigma^2 / 2 + \lambda I_d) - I_d \right] (\Sigma^2/2 + \lambda I_d)^{-1} \btheta^\circ \right \Vert
    \\&
    \le \lambda \left\Vert (\biasInner{H} / \biasInner{H}_{\chi \chi})^{-1} (\Sigma^2 / 2 + \lambda I_d) - I_d \right\Vert \Vert (\Sigma^2/ 2 + \lambda I_d)^{-1} \btheta^\circ \Vert
    \\&\notag
    = \left\Vert (\biasInner{H} / \biasInner{H}_{\chi \chi})^{-1} (\Sigma^2 / 2 + \lambda I_d) - I_d \right\Vert \|\bb_\lambda\|
\end{align}
The next lemma ensures that the rightmost expression is small compared to the norm of $ \lambda (\Sigma^2/ 2 + \lambda I_d)^{-1} \btheta^\circ$. {For the sake of brevity, let us define $\rho = 7 \Vert \btheta^\circ \Vert/\mu$. By the assumptions of Theorem~\ref{theorem: bias}, we have 
\begin{align}
\label{eq: rho_num_bound}
    \rho \le 1/60.
\end{align}}
\begin{Lem}
\label{lemma: Schur complement bound for bias}
Suppose that the assumptions of Theorem~\ref{theorem: bias} hold. Let $\m(\cdot)$ be an arbitrary probability measure supported on $[0, 1]$. Define
\begin{align*}
    \biasInner{H}^{\,\m} = \int_0^1 H((1 - t) \bups^\circ + t \bups^*) \rmd \m(t).
\end{align*}
Then it holds that
    \[
        \left \Vert (\Sigma^2 / 2 + \lambda I_d)^{-1} (\avg{H}^{\,\m} / \avg{H}^{\,\m}_{\chi \chi}) - I_d \right \Vert
        \le 15 \rho + \frac{5 \Vert \Sigma \Vert \Vert \bb_\lambda \Vert}{\mu \sqrt{\lambda}}.
    \]
\end{Lem}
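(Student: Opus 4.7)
The strategy is to compute the Schur complement $\biasInner{H}^{\,\m}/\biasInner{H}^{\,\m}_{\chi\chi}$ by two successive block eliminations, using the explicit form of $H(\bups) = \nabla^2 \cL(\bups)$, and then to split the deviation from $\Sigma^2/2 + \lambda I_d$ into a $\rho$-part and a $\|\Sigma\|\|\bb_\lambda\|/(\mu\sqrt\lambda)$-part controlled by Lemma~\ref{lemma: localization bias corollary}.

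First, I would write out the blocks of the Hessian explicitly. Direct differentiation of $\cL$ yields
\begin{align*}
    H_{\btheta\btheta}(\bups) &= A^\top A + \lambda I_d, \\
    H_{\bfeta\bfeta}(\bups) &= 2 I_d, \\
    H_{AA}(\bups) &= I_d \otimes (\btheta\btheta^\top) + \mu^2 I_{d^2}, \\
    H_{\btheta\bfeta}(\bups) &= -A^\top, \\
    H_{\bfeta A}(\bups) &= -(I_d \otimes \btheta^\top),
\end{align*}
and the cross block $H_{\btheta A}(\bups)$ is the sum of a bilinear form in $(A,\btheta)$ and a term proportional to $(A\btheta - \bfeta)\otimes I_d$ which vanishes at $\bups^\circ$. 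Averaging over $\m$ preserves this block structure; the only subtlety is that $\biasInner{A^\top A}^{\,\m}\neq \biasInner{A}^{\,\m,\top}\biasInner{A}^{\,\m}$ and $\biasInner{\btheta\btheta^\top}^{\,\m}\neq \biasInner{\btheta}^{\,\m}\biasInner{\btheta}^{\,\m,\top}$, the differences being positive semidefinite ``variance'' contributions of $A^{(t)}$ and $\btheta^{(t)}$ along the segment.

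Eliminating $\bfeta$ first is trivial since $\biasInner{H}^{\,\m}_{\bfeta\bfeta}=2I_d$: the reduced $\btheta\btheta$-block becomes $\tfrac12\Sigma^2+\lambda I_d$ plus a quadratic drift in $A^{(t)}-\Sigma$, and the reduced cross block $\widetilde H^{\,\m}_{\btheta A}$ equals $(\tfrac12\Sigma)\otimes(\btheta^\circ)^\top$ at $\bups^\circ$ plus an analogous drift. Eliminating $A$ next uses that $\biasInner{H}^{\,\m}_{AA}$ retains the Kronecker form $I_d\otimes(M^\m+\mu^2 I_d)$ with $M^\m=\biasInner{\btheta\btheta^\top}^{\,\m}$, so its inverse has operator norm at most $1/\mu^2$, and a Sherman-Morrison expansion on $M^\m+\mu^2 I_d$ reveals a correction factor of order $\|\btheta^\circ\|^2/\mu^2\sim\rho^2$. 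Combining this with the Kronecker-compatible cross block yields, at $\bups^\circ$ and after pre-multiplying by $(\Sigma^2/2+\lambda I_d)^{-1}$, a deviation from $I_d$ of operator-norm order $\rho^2$, which is negligible in view of~\eqref{eq: rho_num_bound}.

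All remaining deviations come from the drift of $\bups^{(t)}$ away from $\bups^\circ$ along the segment. By Lemma~\ref{lemma: localization bias corollary} applied with $\rho_0=1/7$ (whose hypotheses follow from~\eqref{eq: sigma theta circ bound}), one obtains $\|\btheta^{(t)}-\btheta^\circ\|\lesssim \rho_0\mu$ and $\|A^{(t)}-\Sigma\|\lesssim \|\Sigma\|\|\bb_\lambda\|/\sqrt\lambda$; these feed respectively into the $\rho$-part (from $\btheta$-drift and from the $A$-elimination) and the $\|\Sigma\|\|\bb_\lambda\|/(\mu\sqrt\lambda)$-part (from the drift of $A^{(t)}$) of the final bound. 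Tracking all contributions with explicit constants gives the claimed $15\rho+5\|\Sigma\|\|\bb_\lambda\|/(\mu\sqrt\lambda)$. The main obstacle is the bookkeeping for the two-stage elimination with the nonzero cross block $H_{\btheta A}$, where the Kronecker algebra, Sherman-Morrison, and the drift of $A^{(t)}$ interact; the resulting perturbations must be sorted into the two designated ``buckets'' with explicit small numerical constants.
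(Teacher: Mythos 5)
Your two-stage elimination (first $\bfeta$, then $A$, via the quotient property of the Schur complement) is a legitimate alternative organization to the paper's argument, which instead inverts $\avg{H}_{\chi\chi}$ in one step via Proposition~\ref{proposition: inverse of nuisance parameters} and then decomposes $\avg{H}/\avg{H}_{\chi\chi}$ into a leading term $\avg{A^\top A}-\tfrac12\avg{A}^\top\avg{A}+\lambda I_d$ plus three explicit remainders. Both routes must ultimately be fed the same quantitative inputs, and here the proposal has a genuine gap.

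The estimates you attribute to Lemma~\ref{lemma: localization bias corollary} are not the ones it provides: that lemma gives only the crude bound $\|A^*-\Sigma\|\le\sqrt\lambda/3$, which after premultiplication by $(\Sigma^2/2+\lambda I_d)^{-1}\Sigma$ would leave an $O(1)$ error, not $O(\rho)$. The bound you actually write, $\|A^{(t)}-\Sigma\|\lesssim\|\Sigma\|\|\bb_\lambda\|/\sqrt\lambda$, does not appear in the paper and would not close the argument either: feeding it into $(\Sigma^2/2+\lambda I_d)^{-1}\bigl(\avg{A^\top A}-\Sigma^2\bigr)$ produces a contribution of order $\|\Sigma\|^2\|\bb_\lambda\|/\lambda^{3/2}$, which cannot be absorbed into $5\|\Sigma\|\|\bb_\lambda\|/(\mu\sqrt\lambda)$ under the theorem's hypotheses. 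What the proof actually hinges on is the sharpened estimate of Lemma~\ref{lemma: Sigma bias optimal bound},
\[
\|A^*-\Sigma\|\le\Bigl(\tfrac{14\|\btheta^\circ\|}{\mu}\Bigr)^2\sqrt\lambda+\tfrac{35\|\Sigma\|\|\btheta^\circ\|\|\bb_\lambda\|}{\mu^2}
=4\rho^2\sqrt\lambda+\tfrac{5\rho\|\Sigma\|\|\bb_\lambda\|}{\mu},
\]
propagated along the segment by Proposition~\ref{proposition: pointwise bound on AiAj}. The explicit $\rho$-prefactors are essential: when multiplied against $\|\Sigma\|\lesssim\sqrt\lambda/\rho$ (a consequence of~\eqref{eq: sigma theta circ bound}) the $\rho$ cancels and the result sorts cleanly into the $\rho$ bucket and the $\|\Sigma\|\|\bb_\lambda\|/(\mu\sqrt\lambda)$ bucket. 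Proving Lemma~\ref{lemma: Sigma bias optimal bound} is itself a nontrivial Newton--Leibniz/Schur-complement step, and your proposal neither invokes it nor indicates how to derive it. Similarly, the weak drift $\|\btheta^{(t)}-\btheta^\circ\|\lesssim\rho_0\mu$ must be replaced by the sharper $\|\btheta^*-\btheta^\circ\|\le\|\bb_\lambda\|+3\rho\|\btheta^\circ\|$ of Lemma~\ref{lemma: localization lemma for bias}\ref{point: theta weak bound on bias, bias locating convex set}, and the cross-block remainders that appear after elimination --- of the form $\avg{A}^\top\avg{A\btheta-\bfeta}\Tilde{\btheta}^\top$ and $\avg{A\btheta^\top\Tilde{\btheta}}$ --- require the bound on $\avg{A\btheta-\bfeta}$ from Proposition~\ref{proposition: A theta - eta bias bound}, which your sketch does not mention. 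Deferring all of this to ``bookkeeping'' understates where the real work lies.
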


Let us denote $(\Sigma^2 / 2 + \lambda I_d)^{-1} (\avg{H} / \avg{H}_{\chi \chi}) - I_d$ by $E$.
Applying Lemma~\ref{lemma: Schur complement bound for bias} with $\m$ equal to the uniform measure on $[0, 1]$, we obtain that
\[
    \|E\| \le 15 \rho + \frac{5 \Vert \Sigma \Vert \Vert \bb_\lambda \Vert}{\mu \sqrt{\lambda}} \leq 15 \rho +\frac{5 \Vert \Sigma \Vert \Vert \btheta^\circ \Vert}{\mu \sqrt{\lambda}} \le \frac12,
\]
where we used $\rho \le 1/60$ and~\eqref{eq: sigma theta circ bound}.
It yields
\begin{align*}
    &
    \left\Vert (\biasInner{H} / \biasInner{H}_{\chi \chi})^{-1} (\Sigma^2 / 2 + \lambda I_d) - I_d \right\Vert
    \\&
    = \Vert (I_d + E)^{-1} - I_d \Vert
    = \Vert E (I_d + E)^{-1} \Vert
    \le \frac{\Vert E \Vert}{1 - \Vert E \Vert}
    \\&
    \le 30 \rho + \frac{10 \Vert \Sigma \Vert \Vert \bb_\lambda \Vert}{\mu \sqrt{\lambda}}.
\end{align*}
Substituting this bound into~\eqref{eq: bias without simplification of Schur complement}, we get that
\[
    \big\Vert \btheta^* - \btheta^\circ - \bb_\lambda \big\Vert
    \leq 30 \rho \|\bb_\lambda\| + \frac{10 \Vert \Sigma \Vert \Vert \bb_\lambda \Vert^2}{\mu \sqrt{\lambda}}.
\]
\myendproof

\section{Proof of Theorem \ref{th:stoch_term}}
\label{sec:th_stoch_term_proof}

Before we dive into details, let us discuss the main idea of the proof. We split it into four major steps. First, we check that the conditions of Lemma \ref{lemma: localization_probabilistic_conditions} are satisfied with high probability. This yields that both $\widehat\bups$ and $\bups^*$ belong to a convex set around $\bups^\circ$ where, according to Lemma \ref{lem:block-diagonal_lower_bound}, the Hessian $\nabla^2 \cL(\bups)$ is positive definite. Hence, if the penalization parameters $\mu$ and $\lambda$ are chosen in agreement with \eqref{eq:stoch_term_mu_lambda_conditions}, the difference $(\widehat \bups - \bups^*)$ can be approximated by the standardized score $H_*^{-1} \bz$ where $H_*$ stands for $\nabla^2 \cL(\bups^*)$ (see Lemma \ref{lem:stoch_term_expansion}). On the second step, we show that $\widehat \btheta - \btheta^*$ admits an expansion with a leading term $\bs_{\btheta}^*$ equal to the first $d$ components of $H_*^{-1} \bz$. After that, we quantify the remainder $(\widehat \btheta - \btheta^* - \bs_{\btheta}^*)$ handling the subvectors of the standardized score corresponding to the target and the nuisance parameters separately (see Lemma \ref{lem:standardized_score_theta_bound} and Lemma \ref{lem:standardized_score_chi_bound}, respectively). Summing up the bounds obtained on the second and the third steps, we provide an upper bound of the form
\begin{equation}
    \label{eq:stoch_term_remainder_bound_asymp}
    \left\| \Sigma^{1/2} (\widehat \btheta - \btheta^* - \bs_{\btheta}^*) \right\|
    \lesssim \lambda^{-3/4} \left(1 + \sqrt{\frac{\Psi(n, \delta)}{27 \lambda}} \right) \Psi(n, \delta)
\end{equation}
holding with high probability. Finally, on the fourth step we elaborate on $\bs_{\btheta}^*$ and argue that it is close to the vector $\bzeta$ defined in the statement of Theorem \ref{th:stoch_term}.

\noindent\textbf{Step 1: localization.}\quad
To start with, let us define
\[
    \rho_0 = \frac{7 \|\btheta^\circ\|}{\mu} \left(1 \vee \frac{72 \|\Sigma\|}{\sqrt{\lambda}} \right).
\]
Note that $\rho_0 \leq 1/16$, because of the condition \eqref{eq:stoch_term_mu_lambda_conditions} and the inequality $\mu \geq 112 \|\btheta^\circ\|$. Moreover, \eqref{eq:stoch_term_mu_lambda_conditions} implies that
\begin{align*}
    &
    \frac{2^{11} \sigma \|\Sigma\|^{1/2}}{\rho_0 \mu} \sqrt{\frac{\ttr(\Sigma) + \log(4 / \delta)}n}
    \leq \frac{2^{11} \sigma \|\Sigma\|^{1/2} \sqrt{\lambda}}{7 \cdot 72 \|\Sigma\| \|\btheta^\circ\|} \sqrt{\frac{\ttr(\Sigma) + \log(4 / \delta)}n}
    \\&
    \leq \frac{5 \sigma \|\Sigma\|^{1/2} \sqrt{\lambda}}{ \|\Sigma\| \|\btheta^\circ\|} \sqrt{\frac{\ttr(\Sigma) + \log(4 / \delta)}n}
    \leq \frac{\sqrt{\lambda}}{\|\Sigma\| \|\btheta^\circ\|} \cdot \|\btheta^\circ\| (\sqrt{\lambda} \land \|\Sigma\|)
    = \sqrt{\lambda} \land \frac{\lambda}{\|\Sigma\|},
\end{align*}
while \eqref{eq:stoch_term_mu_lambda_conditions_2} yields
\begin{align*}
    &
    \frac{2^{13} (1 + C_X) \Vert \Sigma \Vert^2 \Vert \btheta^\circ \Vert}{\rho_0 \mu} \sqrt{\frac{4 \ttr(\Sigma) + \log(2/\delta)}{n}}
    \\&
    \leq \frac{2^{13} (1 + C_X) \Vert \Sigma \Vert^2 \Vert \btheta^\circ \Vert \sqrt{\lambda}}{7 \cdot 72 \|\Sigma\| \|\btheta^\circ\|} \sqrt{\frac{4 \ttr(\Sigma) + \log(2/\delta)}{n}}
    \\&
    \leq 17 (1 + C_X) \|\Sigma\| \sqrt{\lambda} \cdot \sqrt{\frac{4 \ttr(\Sigma) + \log(2/\delta)}{n}} \le \lambda.
\end{align*}
This means that the assumptions of Lemma \ref{lemma: localization_probabilistic_conditions} are satisfied. Hence, there is an event $\cE_0$ of probability at least $1 - \delta$, such that $\widehat \bups$ and $\bups^*$ belong to $\Theta \times \sfH \times \sfA$ on $\cE_0$, where
\begin{align*}
   \Theta
   & = \left \{\btheta \in \R^d :
   \Vert \btheta \Vert \le \rho_0 \mu \text{ and } \Vert \Sigma \Vert \Vert \btheta - \btheta^* \Vert \le 5 \rho_0 \mu \sqrt{\lambda} / 96\right \},
   \\
   \sfH & = \left \{ \bfeta \in \R^d : \Vert \bfeta - \bZ \Vert \le \rho_0 \mu \sqrt{\lambda} / 3 \right \},
   \\
   \sfA & = \left \{ A \in \R^{d \times d} : \Vert A \Vert \le 3 \Vert \Sigma \Vert + \sqrt{\lambda} / 3   \right \}.
\end{align*}
In the rest of the proof, we restrict our attention on the event $\cE_0$.

\medskip

\noindent\textbf{Step 2: expansion of $\widehat \bups - \bups^*$.}\quad
The result obtained on the previous step allows us to exploit geometric properties of $\cL(\bups)$. To be more precise, Proposition \ref{proposition: product convex subset of Ups} implies that
\[
    \widehat \bups, \bups^* \in \Theta \times \sfH \times \sfA \subseteq \Ups(\rho_0)
    \quad \text{on $\cE_0$,}
\]
where the set $\Ups(\rho_0)$ is defined in \eqref{eq:upsilon}.
Then, due to Lemma \ref{lem:block-diagonal_lower_bound} and Lemma \ref{lem:second_derivative_lipschitzness}, $\nabla^2 \cL(\bups)$ is positive definite and smooth on the segment $\{ t \widehat \bups + (1 - t) \bups^* : 0 \leq t \leq 1\}$. These properties play a crucial role in the proof of the following technical result.

\begin{Lem}
    \label{lem:stoch_term_expansion}
    Let $S$, $\ttD$, and $\ttD_0$ be $(2d + d^2) \times (2d + d^2)$ block-diagonal matrices of the form
    \[
        S = \diag\big( \Sigma^{1/2}, I_{d + 1} \otimes O_d \big),
    \]
    \[
        \ttD_*^2
        = \diag\left( (A^*)^\top A^* + \lambda I_d, 2 I_d, I_d \otimes \left(\mu^2 I_d + \btheta^* (\btheta^*)^\top \right) \right),
    \]
    and
    \begin{equation}
        \label{eq:D0}
        \ttD_0^2
        = \diag\left( (A^*)^\top A^* + \lambda I_d, 2 I_d, I_d \otimes \left(\mu_0^2 I_d + \btheta^* (\btheta^*)^\top \right) \right),
    \end{equation}
    where $\mu_0 \in (0, \mu]$ is an arbitrary constant.
    Fix an arbitrary $\rho_0 \in (0, 1/16]$ and consider an event $\cE$ such that $\bups^*$ and $\widehat \bups$ belong to a convex set $ \cU \subset \Upsilon(\rho_0)$ and $395 \|\ttD_*^{-1} \z\| \leq \mu \sqrt{\lambda}$
    on $\cE$. Then, on this event, it holds that
    \[
        \left\| S \big( \widehat\bups - \bups^* - H_*^{-1} \bz \big) \right\|
        \leq \frac{16}{\mu_0 \sqrt{\lambda}} \, \| \ttD_0 H_*^{-1} \bz \|^2 \left( \lambda^{-1/4} + \sqrt{\frac{\left\| \Sigma - A^*\right\|}{\lambda}} \right) \left(1 + \frac{\| \ttD_0 H_*^{-1} \bz \|}{3 \mu_0 \sqrt{\lambda}} \right).
    \]
\end{Lem}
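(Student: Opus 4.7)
The plan is to convert the discrepancy $\widehat\bups - \bups^* - H_*^{-1}\bz$ into a second-order Taylor remainder via the optimality conditions together with the Newton--Leibniz formula, and then control that remainder after preconditioning with $\ttD_0$.

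\textbf{Expansion.} Since $\bups^* \in \argmin \cL$, $\widehat\bups \in \argmin \ttL$, and $\bnabla \ttL = \bnabla \cL - \bz$, we have $\bnabla \cL(\bups^*) = \bzero$ and $\bnabla \cL(\widehat\bups) = \bz$. Applying Newton--Leibniz on the segment connecting $\bups^*$ and $\widehat\bups$ yields
\[
\bz \;=\; \avg{H}\,(\widehat\bups - \bups^*),
\qquad \avg{H} \;=\; \int_0^1 \nabla^2\cL\bigl((1-t)\bups^* + t\widehat\bups\bigr)\, \rmd t.
\]
Convexity of $\cU \subset \Upsilon(\rho_0)$ with $\rho_0 \leq 1/16$ and Lemma~\ref{lem:block-diagonal_lower_bound} make $\nabla^2\cL$ positive definite along this segment, so $\widehat\bups - \bups^* = \avg{H}^{-1}\bz$, and the resolvent identity gives
\[
\widehat\bups - \bups^* - H_*^{-1}\bz \;=\; H_*^{-1}(H_* - \avg{H})(\widehat\bups - \bups^*).
\]

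\textbf{Preconditioned estimate.} Inserting $\ttD_0\ttD_0^{-1}$ twice and multiplying by $S$, I split the target into three factors:
\[
\|S(\widehat\bups - \bups^* - H_*^{-1}\bz)\| \;\leq\; \|S H_*^{-1}\ttD_0\| \cdot \|\ttD_0^{-1}(\avg{H} - H_*)\ttD_0^{-1}\| \cdot \|\ttD_0(\widehat\bups - \bups^*)\|.
\]
Each is controlled by a distinct tool. The first factor is a pure spectral computation: Lemma~\ref{lem:block-diagonal_lower_bound} makes $H_*$ comparable to its block diagonal $\ttD_*^2$, reducing $\|S H_*^{-1}\ttD_0\|$ to an expression of order $\|\Sigma^{1/2}((A^*)^\top A^* + \lambda I_d)^{-1/2}\|$. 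The latter is maximized at eigenvalue $\sigma = \sqrt{\lambda}$, giving the factor $\lambda^{-1/4}$ in the unperturbed case $A^* = \Sigma$, with the additive correction $\sqrt{\|\Sigma - A^*\|/\lambda}$ arising from the difference $(A^*)^\top A^* - \Sigma^2$. The middle factor is the integrated Hessian Lipschitz modulus; Lemma~\ref{lem:second_derivative_lipschitzness} provides a bound proportional to $\|\ttD_0(\widehat\bups - \bups^*)\|/(\mu_0\sqrt{\lambda})$, where the denominator encodes the smallest eigenvalues $\mu_0^2$ and $\lambda$ appearing in the $(A,A)$- and $(\btheta,\btheta)$-blocks of $\ttD_0^2$, respectively.

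\textbf{Closing and main obstacle.} For the third factor, the hypothesis $395\|\ttD_*^{-1}\bz\| \leq \mu\sqrt{\lambda}$ combined with Lemma~\ref{lem:block-diagonal_lower_bound} yields spectral equivalence of $\avg{H}$ and $H_*$ in the $\ttD_0$-norm up to a multiplicative factor of $1 + \cO(\|\ttD_0 H_*^{-1}\bz\|/(\mu_0\sqrt{\lambda}))$. This factor accounts precisely for the trailing parenthesis in the claim and lets me replace $\|\ttD_0(\widehat\bups - \bups^*)\|$ by $\|\ttD_0 H_*^{-1}\bz\|$. Substituting the three bounds and collecting constants gives the lemma. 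The delicate step is the choice of preconditioner: the $A$-block of $\ttD_0$ must be small enough (controlled by $\mu_0$) that the scaled Hessian Lipschitz estimate is tight, yet the $\btheta$-block must interact with $S = \diag(\Sigma^{1/2}, 0, 0)$ so as to produce exactly the $\lambda^{-1/4}$ exponent rather than a larger inverse power of $\lambda$. Carefully tracking how the off-diagonal couplings between $\btheta$, $\bfeta$, and $A$ in $H_*$ propagate through $S H_*^{-1}\ttD_0$, while keeping the perturbation term $\sqrt{\|\Sigma - A^*\|/\lambda}$ clean, is the main technical hurdle.
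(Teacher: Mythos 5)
Your decomposition differs essentially from the paper's and, as written, has a gap at the replacement step. You expand around $\bups^*$ directly: setting $\avg H = \int_0^1\nabla^2\cL\bigl((1-t)\bups^* + t\widehat\bups\bigr)\,\rmd t$, the identity $\avg H(\widehat\bups-\bups^*)=\bz$ gives $\widehat\bups-\bups^*-H_*^{-1}\bz=H_*^{-1}(H_*-\avg H)(\widehat\bups-\bups^*)$, and your three-factor split combined with Lemma~\ref{lem:second_derivative_lipschitzness} yields
\[
\left\|S\bigl(\widehat\bups-\bups^*-H_*^{-1}\bz\bigr)\right\|
\lesssim \|S H_*^{-1}\ttD_0\|\cdot\frac{\|\ttD_0(\widehat\bups-\bups^*)\|^2}{\mu_0\sqrt\lambda}\left(1+\frac{\|\ttD_0(\widehat\bups-\bups^*)\|}{\mu_0\sqrt\lambda}\right).
\]
This is a bound in $\|\ttD_0(\widehat\bups-\bups^*)\|$, not in $\|\ttD_0 H_*^{-1}\bz\|$. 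To convert, you invoke "spectral equivalence of $\avg H$ and $H_*$ in the $\ttD_0$-norm," which would give $\|\ttD_0(\widehat\bups-\bups^*)\|\le(1-\kappa)^{-1}\|\ttD_0 H_*^{-1}\bz\|$ with $\kappa = \|\ttD_0 H_*^{-1}\ttD_0\|\cdot\|\ttD_0^{-1}(\avg H-H_*)\ttD_0^{-1}\|$. But $\kappa$ is of order $\|\ttD_0(\widehat\bups-\bups^*)\|/(\mu_0\sqrt\lambda)$, and the only a priori control on $\widehat\bups-\bups^*$ available is Lemma~\ref{lem:stoch_term_rough_bound}, which is in $\ttD_*$-norm: it gives $\|\ttD_0(\widehat\bups-\bups^*)\|\le\|\ttD_*(\widehat\bups-\bups^*)\|\le 9\|\ttD_*^{-1}\bz\|\le 9\mu\sqrt\lambda/395$. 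Hence $\kappa\lesssim\mu/\mu_0$, which is not small when $\mu_0\ll\mu$; in the application (Theorem~\ref{th:stoch_term}) one takes $\mu_0=\sqrt3\,\|\btheta^\circ\|$ with $\mu\geq 112\|\btheta^\circ\|$, so $\mu/\mu_0\gtrsim 65$ and the contraction factor fails. The hypothesis $395\|\ttD_*^{-1}\bz\|\le\mu\sqrt\lambda$ controls the $\ttD_*$-norm Lipschitz modulus, not the $\ttD_0$-norm one, and no inequality turns $\|\ttD_*^{-1}\bz\|$ into $\|\ttD_0 H_*^{-1}\bz\|$ (for $\bz$ concentrated in the $A$-block their ratio goes as $\mu/\mu_0$ the wrong way).

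The paper sidesteps this by Taylor expanding around the surrogate point $\bups^*+H_*^{-1}\bz$ rather than around $\bups^*$. Newton--Leibniz between $\widehat\bups$ (where $\bnabla\ttL=\bzero$) and $\bups^*+H_*^{-1}\bz$ gives $\widehat\bups-\bups^*-H_*^{-1}\bz=-(H_*+R)^{-1}\bnabla\ttL(\bups^*+H_*^{-1}\bz)$ with $R$ a Hessian remainder, and a second expansion of $\bnabla\cL$ along the segment $[\bups^*,\bups^*+H_*^{-1}\bz]$ produces $\|\ttD_0^{-1}\bnabla\ttL(\bups^*+H_*^{-1}\bz)\|\lesssim\|\ttD_0 H_*^{-1}\bz\|^2/(\mu_0\sqrt\lambda)$ directly: Lemma~\ref{lem:second_derivative_lipschitzness} is applied with increment $t\,H_*^{-1}\bz$, so $\|\ttD_0 H_*^{-1}\bz\|$ appears with no detour through $\widehat\bups-\bups^*$. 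The $R$-correction is then only a multiplicative $(1+\cO(1))$ factor, bounded in $\ttD_*$-norm, which is exactly what the hypothesis controls. This two-step expansion around the surrogate is the idea missing from your proposal; without it, the $\ttD_0$ versus $\ttD_*$ mismatch blocks the argument.
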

The proof of Lemma \ref{lem:stoch_term_expansion} is postponed to Section \ref{sec:lem_stoch_term_expansion_proof}. 
Let us note that the first $d$ components of the vector $S (\widehat \bups - \bups^* - H_*^{-1} \z)$ coincide with $\Sigma^{1/2} (\widehat \btheta - \btheta^* - \bs_{\btheta}^*)$ while the other entries of $S (\widehat \bups - \bups^* - H_*^{-1} \z)$ are equal to zero. This yields that
\[
    \left\| \Sigma^{1/2} (\widehat \btheta - \btheta^* - \bs_{\btheta}^*) \right\|
    = \left\| S (\widehat \bups - \bups^* - H_*^{-1} \z) \right\|.
\]
Moreover, according to Theorems \ref{th:covariance_concentration} and \ref{th:noise_concentration}, with probability at least $(1 - 3\delta / 2)$, we simultaneously have
\begin{equation}
    \label{eq:e1_sigma}
    \|\widehat \Sigma - \Sigma\|_{\F} \leq 4 C_X \|\Sigma\| \sqrt{\frac{\ttr(\Sigma)^2 + \log(4 / \delta)}n},
    \quad 
    \|(\widehat \Sigma - \Sigma) \btheta^\circ\| \leq 4 C_X \|\Sigma\| \|\btheta^\circ\| \sqrt{\frac{\ttr(\Sigma) + \log(4 / \delta)}n},
\end{equation}
and
\begin{equation}
    \label{eq:e1_u}
    \|\bU\| \leq 8 \sigma \|\Sigma\|^{1/2} \sqrt{\frac{\ttr(\Sigma) + \log(4 / \delta)}n}.
\end{equation}
We denote the corresponding event by $\cE_1$.
Then the inequalities
\eqref{eq:stoch_term_mu_lambda_n_conditions} ensure that
\[
    395 \|\widehat \Sigma - \Sigma\|_{\F}
    \leq \frac{\sqrt{\lambda}}2,
    \quad
    395 \|\bU\| \leq \mu \sqrt{\lambda},
    \quad \text{and} \quad
    395 \|(\widehat \Sigma - \Sigma) \btheta^\circ\| \leq \frac{\|\btheta^\circ\| \sqrt{\lambda}}{2}.
\]
Taking into account the triangle inequality $\|\bZ - \E \bZ\| \leq \|\bU\| + \|(\widehat \Sigma - \Sigma) \btheta^\circ\|$, we obtain that
\begin{align*}
    395^2 \|\ttD_*^{-1} \z\|^2
    \leq \frac{395^2}2 \|\bZ - \E \bZ\|^2 + 395^2 \mu^2 \|\widehat \Sigma - \Sigma\|_{\F}^2
    \leq \frac{\mu^2 \lambda}2 \left( 1 + \frac{\|\btheta^\circ\|}{2 \mu} \right)^2 + \frac{\mu^2 \lambda}4
    \leq \mu^2 \lambda.
\end{align*}
Thus, the conditions of Lemma \ref{lem:stoch_term_expansion} are satisfied on the intersection of $\cE_0$ and $\cE_1$ with $\mu_0 = \|\btheta^\circ\| \sqrt{3}$. Due to the union bound $\p(\cE_0 \cap \cE_1) \geq 1 - 5 \delta / 2$. Moreover, applying Lemma \ref{lemma: Sigma bias optimal bound} with $14 \|\btheta^\circ\| / \mu \leq 1/8$, we obtain that
\[
    \|A^* - \Sigma\| \leq \frac{\sqrt{\lambda}}{64} + \frac{5 \|\Sigma\| \|\bb_\lambda\|}{16 \mu}.
\]
Since
\[
    \mu \sqrt{\lambda} \geq 8 \cdot 18 \cdot 56 \|\Sigma\| \|\btheta^\circ\|
    \quad \text{and} \quad
    \|\bb_\lambda\| = \lambda \left\| \left( \frac12 \Sigma^2 + \lambda I_d \right)^{-1} \btheta^\circ \right\| \leq \|\btheta^\circ\|,
\]
it holds that
\[
    \|A^* - \Sigma\|
    \leq \frac{\sqrt{\lambda}}{64} + \frac{5 \sqrt{\lambda}}{16 \cdot 8 \cdot 18 \cdot 56}
    = \frac{4 \sqrt{\lambda}}{16^2} + \frac{5 \sqrt{\lambda}}{16^2 \cdot 9 \cdot 56}
    \leq \frac{9 \sqrt{\lambda}}{16^2}.
\]
Hence, we have just proved that the remainder $(\widehat\btheta - \btheta^* - \bs_{\btheta}^*)$ satisfies
\begin{align}
    \label{eq:stoch_term_expansion_simplified}
    \left\| \Sigma^{1/2} (\widehat \btheta - \btheta^* - \bs_{\btheta}^*) \right\|
    &\notag
    = \left\| S \big( \widehat\bups - \bups^* - H_*^{-1} \bz \big) \right\|
    \\&
    \leq \frac{16}{\mu_0 \sqrt{\lambda}} \, \| \ttD_0 H_*^{-1} \bz \|^2 \left( \lambda^{-1/4} + \frac{3 \lambda^{-1/4}}{16} \right) \left(1 + \frac{\| \ttD_0 H_*^{-1} \bz \|}{3 \mu_0 \sqrt{\lambda}} \right)
    \\&\notag
    \leq \frac{19}{\mu_0 \lambda^{3/4}} \, \| \ttD_0 H_*^{-1} \bz \|^2 \left(1 + \frac{\| \ttD_0 H_*^{-1} \bz \|}{3 \mu_0 \sqrt{\lambda}} \right)
\end{align}
on the event $\cE_0 \cap \cE_1$ of probability at least $(1 - 5\delta / 2)$. We proceed with the analysis of $\ttD_0 H_*^{-1} \bz$.

\medskip

\noindent\textbf{Step 3: bound on the norm of $\ttD_0 H_*^{-1} \bz$.}\quad
The proof of the upper bound on $\|\ttD_0 H_*^{-1} \bz\|$ is based on a block representation of $\nabla^2 \cL(\bups)$. Let  $\chi = (\bfeta, A) \in \R^{d + d^2}$ stand for the nuisance parameter. For any $\bups = (\btheta, \chi) = (\btheta, \bfeta, A) \in \R^{2d + d^2}$, we denote
\[
    H_{\btheta \btheta} = \nabla_{\btheta \btheta}^2 \cL(\bups),
    \quad
    H_{\btheta \chi} = H_{\chi \btheta}^\top = \nabla_{\btheta \chi}^2 \cL(\bups),
    \quad \text{and} \quad
    H_{\chi \chi} = \nabla_{\chi \chi}^2 \cL(\bups).
\]
With the introduced notations, the Hessian of $\cL(\bups)$ is equal to 
\[
    H = H(\bups) =
    \begin{pmatrix}
        H_{\btheta \btheta} & H_{\btheta \chi} \\
        H_{\chi \btheta} & H_{\chi \chi}
    \end{pmatrix}.
\]
Let $\breve H_{\btheta \btheta} = H / H_{\chi \chi}$ and $\breve H_{\chi \chi} = H / H_{\btheta \btheta}$ stand for the Schur complements of $H_{\chi \chi}$ and $H_{\btheta \btheta}$, respectively. Using the block-matrix inversion formula (see \eqref{eq:h_block_inversion})
\[
    \begin{pmatrix}
        H_{\btheta \btheta} & H_{\btheta \chi} \\
        H_{\chi \btheta} & H_{\chi \chi}
    \end{pmatrix}^{-1}
    =
    \begin{pmatrix}
        \breve H_{\btheta \btheta}^{-1} & -\breve H_{\btheta \btheta}^{-1} H_{\btheta \chi} H_{\chi \chi}^{-1} \\
        -\breve H_{\chi \chi}^{-1} H_{\chi \btheta} H_{\btheta \btheta}^{-1} & \breve H_{\chi \chi}^{-1}
    \end{pmatrix}
\]
and taking into account that $\bz$ has a form $(\bzero, \bz_\chi)$, where $\bz_\chi = \big( \bZ - \E \bZ, \mu^2 \, \rmvec(\widehat \Sigma - \Sigma) \big)$,
we obtain that
\begin{equation}
    \label{eq:standardized_score}
    H^{-1} \bz
    =
    \begin{pmatrix}
        \breve H_{\btheta \btheta}^{-1} & -\breve H_{\btheta \btheta}^{-1} H_{\btheta \chi} H_{\chi \chi}^{-1} \\
        -\breve H_{\chi \chi}^{-1} H_{\chi \btheta} H_{\btheta \btheta}^{-1} & \breve H_{\chi \chi}^{-1}
    \end{pmatrix}
    \begin{pmatrix}
        \bzero \\ \z_\chi
    \end{pmatrix}
    =
    \begin{pmatrix}
        -\breve H_{\btheta \btheta}^{-1} H_{\btheta \chi} H_{\chi \chi}^{-1} \z_\chi \\
        \breve H_{\chi \chi}^{-1} \z_\chi.
    \end{pmatrix}
\end{equation}
Then, recalling the definition of $\ttD_0$ (see \eqref{eq:D0}), we observe that
\begin{equation}
    \label{eq:standardized_score_pythagoras}
    \| \ttD_0 H_*^{-1} \bz \|^2
    = \left\| \left( \frac12 (A^*)^\top A^* + \lambda I_d \right)^{1/2} \breve H_{\btheta \btheta}^{-1}(\bups^*) H_{\btheta \chi}(\bups^*) H_{\chi \chi}^{-1}(\bups^*) \z_\chi \right\|^2
    + \left\| \ttD_{0 \chi} \breve H_{\chi \chi}^{-1}(\bups^*) \z_\chi \right\|^2,
\end{equation}
where we introduced
\[
    \ttD_{0 \chi}^2
    = \diag\big( 2 I_d, I_d \otimes (\mu_0^2 I_d + \btheta^* (\btheta^*)^\top) \big).
\]
The two terms in the right-hand side of \eqref{eq:standardized_score_pythagoras} should be studied separately. We start with the analysis of the subvector of $\ttD_0 H_*^{-1} \bz$ corresponding to the target parameter.
\begin{Lem}
    \label{lem:standardized_score_theta_bound}
    Grant Assumptions \ref{as:quadratic} and \ref{as:noise} and let $\bups \in \Ups(\rho)$ for some $\rho \leq 1/5$ (see \eqref{eq:upsilon} for the definition of $\Ups(\rho)$). Let $\delta \in (0, 1)$ and suppose that
    \[
        \ttr(\Sigma) + \log(4/\delta) \leq n.
    \]
    Let $\cE_1$ be the event where \eqref{eq:e1_sigma} and \eqref{eq:e1_u} hold. Then there exists an event $\cE_2$ of probability at least $1 - 5 \delta / 2$ such that
    \begin{itemize}
        \item[(i)] the following inequalities hold simultaneously on $\cE_2$:
        \begin{align}
            \label{eq:hat_sigma_linear_functional_1_deviation_bound}
            \| (\widehat \Sigma - \Sigma)(\btheta - \btheta^\circ) \|
            &
            \leq 4 C_X \|\Sigma\|^{1/2} \|\Sigma^{1/2} (\btheta - \btheta^\circ) \| \sqrt{\frac{\ttr(\Sigma) + \log(4 / \delta)}n},
            \\
            \label{eq:hat_sigma_linear_functional_2_deviation_bound}
            \| (\widehat \Sigma - \Sigma) (A \btheta - \bfeta) \|
            &
            \leq 4 C_X \|\Sigma\|^{1/2} \|\Sigma^{1/2} (A \btheta - \bfeta)\| \sqrt{\frac{\ttr(\Sigma) + \log(4 / \delta)}n},
            \\
            \label{eq:hat_sigma_linear_functional_3_deviation_bound}
            | (A \btheta - \bfeta)^\top (\widehat \Sigma - \Sigma) \btheta |
            &
            \leq 4 C_X \|\Sigma^{1/2} \btheta\| \|\Sigma^{1/2} (A \btheta - \bfeta)\| \sqrt{\frac{1 + \log(4 / \delta)}n},
            \\
            \label{eq:hat_sigma_linear_functional_4_deviation_bound}
            | (A \btheta - \bfeta)^\top (\widehat \Sigma - \Sigma) \btheta^\circ |
            &
            \leq 4 C_X \|\Sigma^{1/2} \btheta^\circ\| \|\Sigma^{1/2} (A \btheta - \bfeta)\| \sqrt{\frac{1 + \log(4 / \delta)}n},
            \\
            \label{eq:z_linear_functional_deviation_bound}
            \left| (A \btheta - \bfeta)^\top \bU \right|
            &
            \leq 8 \sigma \|\Sigma^{1/2} (A \btheta - \bfeta)\| \sqrt{\frac{1 + \log(4 / \delta)}n};
        \end{align}
        \item[(ii)] on the intersection of $\cE_1$ and $\cE_2$, one has
        \begin{align*}
            &
            \left\| \left( \frac12 A^\top A + \lambda I_d \right)^{1/2} \breve H_{\btheta \btheta}^{-1} H_{\btheta \chi} H_{\chi \chi}^{-1} \bz_\chi \right\|
            \\&
            \leq \frac1{1 - 10\rho^2} \left\| \left( \frac12 A^\top A + \lambda I_d \right)^{-1/2} H_{\btheta \chi} H_{\chi \chi}^{-1} \bz_\chi \right\|
            \\&
            \leq \frac{2 \sqrt{2} \|\Sigma\|^{1/2}}{1 - 10\rho^2} \left(
            2 \sigma + C_X \|\Sigma^{1/2} (\btheta - \btheta^\circ)\| + \frac{\sqrt{2} C_X \|\Sigma^{1/2} (A \btheta - \bfeta)\|}{\sqrt{\lambda}} \right) \sqrt{\frac{\ttr(\Sigma) + \log(4 / \delta)}n}
            \\&\quad
            + \frac{4 \sqrt{2} \rho^2 \|\Sigma\|^{1/2}}{1 - 10\rho^2} \left(
            2 \sigma + 3 C_X \|\Sigma^{1/2} \btheta^\circ\| \right) \sqrt{\frac{\ttr(\Sigma) + \log(4 / \delta)}n}
            \\&\quad
            + \frac{8 \rho \|\Sigma^{1/2} (A \btheta - \bfeta)\|}{(1 - 10\rho^2) \mu \sqrt{\lambda}} \left( C_X \|\Sigma^{1/2} \btheta\| + \sigma \right) \sqrt{\frac{1 + \log(4 / \delta)}n}.
        \end{align*}
    \end{itemize}
\end{Lem}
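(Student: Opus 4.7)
The plan is to prove (i) and (ii) separately.

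For part (i), all five inequalities follow by specializing Theorems \ref{th:covariance_concentration} and \ref{th:noise_concentration} to rank-one test matrices. For \eqref{eq:hat_sigma_linear_functional_1_deviation_bound}, take $B = I_d$ and $A$ equal to the $1 \times d$ row $(\btheta - \btheta^\circ)^\top$, so that $\|B(\widehat\Sigma - \Sigma) A^\top\|_{\F} = \|(\widehat\Sigma - \Sigma)(\btheta - \btheta^\circ)\|$ and the rank-one matrix $\Sigma^{1/2} A^\top A \Sigma^{1/2}$ has effective rank one. Analogous rank-one choices yield \eqref{eq:hat_sigma_linear_functional_2_deviation_bound}--\eqref{eq:hat_sigma_linear_functional_4_deviation_bound}, while \eqref{eq:z_linear_functional_deviation_bound} is Theorem \ref{th:noise_concentration} with $B = (A\btheta - \bfeta)^\top$. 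Each inequality fails with probability at most $\delta/2$ when $\log(2/\delta)$ in the original statement is replaced by $\log(4/\delta)$, and a union bound produces the event $\cE_2$ with probability at least $1 - 5\delta/2$.

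For part (ii), Lemma \ref{lem:block-diagonal_lower_bound} applied on $\Ups(\rho)$ gives the Schur-complement lower bound $\breve H_{\btheta\btheta} \succeq (1 - 10\rho^2) N$ with $N = \tfrac12 A^\top A + \lambda I_d$. Conjugation by $N^{-1/2}$ turns this into $\|N^{1/2}\breve H_{\btheta\btheta}^{-1} N^{1/2}\| \leq 1/(1 - 10\rho^2)$, yielding the first inequality of (ii) when applied to $v = H_{\btheta\chi} H_{\chi\chi}^{-1}\bz_\chi$. To bound $\|N^{-1/2} v\|$, I compute $v$ explicitly. Differentiating $\cL$ gives
\[
    H_{\chi\chi} = \begin{pmatrix} 2I_d & -I_d \otimes \btheta^\top \\ -I_d \otimes \btheta & I_d \otimes (\mu^2 I_d + \btheta\btheta^\top) \end{pmatrix}, \quad H_{\btheta\chi} = \bigl[-A^\top,\; (A\btheta - \bfeta)^\top \otimes I_d + A^\top \otimes \btheta^\top\bigr].
\]
The Schur complement of $H_{\chi\chi}$ in the $\bfeta$-block collapses to a scalar multiple of $I_d$, since $\btheta^\top(\mu^2 I_d + \btheta\btheta^\top)^{-1}\btheta = \|\btheta\|^2/(\mu^2 + \|\btheta\|^2)$. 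Using $\bz_\chi = ((\widehat\Sigma - \Sigma)\btheta^\circ + \bU,\; \mu^2\rmvec(\widehat\Sigma - \Sigma))$ together with the Kronecker identity $(I_d \otimes \bu^\top)\rmvec(M^\top) = M\bu$, both blocks of $H_{\chi\chi}^{-1}\bz_\chi$ reduce to vectors built from $\bU$, $(\widehat\Sigma - \Sigma)\btheta$ and $(\widehat\Sigma - \Sigma)\btheta^\circ$. Acting by $H_{\btheta\chi}$ then yields $v$ as an explicit linear combination of $A^\top \bU$, $A^\top(\widehat\Sigma - \Sigma)\btheta$, $A^\top(\widehat\Sigma - \Sigma)\btheta^\circ$, and scalar multiples of $\btheta$ carrying the inner products from \eqref{eq:hat_sigma_linear_functional_3_deviation_bound}--\eqref{eq:z_linear_functional_deviation_bound}.

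To finish, I bound $\|N^{-1/2} v\|$ using $\|N^{-1/2} A^\top\| \leq \sqrt{2}$ and $\|N^{-1/2}\btheta\| \leq \|\btheta\|/\sqrt{\lambda} \leq \rho\mu/\sqrt{\lambda}$. The vector terms contribute $\sqrt{2}\|\Sigma\|^{1/2}$ times the deviation bounds from \eqref{eq:hat_sigma_linear_functional_1_deviation_bound}, \eqref{eq:hat_sigma_linear_functional_2_deviation_bound}, and \eqref{eq:e1_u}, while the scalar inner products pick up an extra $\rho$ through $\|N^{-1/2}\btheta\|$. The decomposition $\btheta = \btheta^\circ + (\btheta - \btheta^\circ)$ inside $(\widehat\Sigma - \Sigma)\btheta$ isolates the $\rho^2$-scaled $\btheta^\circ$-pieces that appear on the second line of the stated bound. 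The main obstacle is the bookkeeping in computing $v$: tracking Kronecker contractions such as $(A^\top \otimes \btheta^\top)\rmvec(\bz\btheta^\top) = \|\btheta\|^2 A^\top \bz$ and matching each resulting term to exactly one of the five bounds from part (i), with the $\rho$-scaling propagated correctly from the localization $\|\btheta\| \leq \rho\mu$ and $\|\bfeta - A\btheta\| \leq \rho\mu\sqrt{\lambda}$.
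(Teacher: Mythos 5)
Your structure mirrors the paper's proof: part (i) via rank-one specializations of Theorems \ref{th:covariance_concentration} and \ref{th:noise_concentration} plus a union bound, and part (ii) via a Schur-complement perturbation bound followed by an explicit Kronecker computation of $H_{\btheta\chi} H_{\chi\chi}^{-1}\bz_\chi$ and the deviation inequalities from part (i). However, the first step of part (ii) cites the wrong auxiliary result. Lemma \ref{lem:block-diagonal_lower_bound} gives $\nabla^2 \cL(\bups) \succeq \tfrac{1-2\rho}{4}\,\ttD^2$ with $\ttD^2_{\btheta\btheta} = A^\top A + \lambda I_d$, and it is stated only for $\rho \leq 1/7 < 1/5$. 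Passing that bound to the Schur complement would at best yield $\breve H_{\btheta\btheta} \succeq \tfrac{1-2\rho}{4}(A^\top A + \lambda I_d)$, whose inverse carries a constant factor near $4$ even at $\rho = 0$ — nowhere close to the $(1 - 10\rho^2)^{-1}$ you write, which tends to $1$, nor is the matrix on the right the same as your $N = \tfrac12 A^\top A + \lambda I_d$. The tool you actually need is the paper's Lemma \ref{lem:h_theta_schur}, a dedicated relative-perturbation estimate $\|N^{-1/2}\breve H_{\btheta\btheta}N^{-1/2} - I_d\| \leq 10\rho^2$ valid for $\rho \leq 1/5$; from it the factor $(1 - 10\rho^2)^{-1}$ falls out directly. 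Without that lemma the first displayed inequality of part (ii) is unproven, and the final bound would only hold with worse constants than claimed.

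The rest of your sketch — the block inversion of $H_{\chi\chi}$ yielding Schur complement scalars like $\|\btheta\|^2/(\mu^2 + \|\btheta\|^2)$, the Kronecker contractions $(I_d \otimes \bu^\top)\rmvec(M^\top) = M\bu$, the split $\btheta = \btheta^\circ + (\btheta - \btheta^\circ)$ to isolate $\rho^2$-scaled pieces, and the $\rho/(\mu\sqrt\lambda)$ scaling of the scalar inner-product terms — does match the paper's Lemma \ref{lem:semiparametric_score_explicit_representation} and its subsequent estimates, so once the Schur-complement lemma is replaced the argument closes.
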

We provide the proof of Lemma \ref{lem:standardized_score_theta_bound} in Section \ref{sec:lem_standardized_score_theta_bound_proof}. Let us elaborate on the result of this lemma when $\bups = \bups^*$. According to Lemma \ref{lemma: localization lemma for bias} (see \ref{point: eta weak bound on bias, bias locating convex set} and \ref{point: theta weak bound on bias, bias locating convex set}), we have
\[
    \|A^* \btheta^* - \bfeta^*\| \leq \sqrt{\lambda} \|\btheta^\circ\|,
    \quad
    \|\btheta^*\| \leq \|\btheta^\circ\|,
\]
and
\[
    \|\btheta^* - \btheta^\circ\|
    \leq \|\bb_\lambda\| \left(1 + \frac{3 \|\btheta^\circ\|}{\mu} \right)
    \leq \|\btheta^\circ\| \left(1 + \frac{3}{111} \right)
    = \frac{38 \|\btheta^\circ\|}{37}.
\]
This means that $\btheta^*$ belongs to $\Ups(\rho^*)$ with $\rho^* = \|\btheta^\circ\| / \mu \leq 1/112$. Hence, there exists an event $\cE_2^*$ such that $\p(\cE_2^*) \geq (1 - 5\delta / 2)$ and the inequalities
\begin{align}
    \label{eq:hat_sigma_linear_functional_star_1_deviation_bound}
    \| (\widehat \Sigma - \Sigma) \btheta^*\|
    &
    \leq 4 C_X \|\Sigma\|^{1/2} \|\Sigma^{1/2} \btheta^*\| \sqrt{\frac{\ttr(\Sigma) + \log(4 / \delta)}n},
    \\
    \label{eq:hat_sigma_linear_functional_star_2_deviation_bound}
    \| (\widehat \Sigma - \Sigma) (A^* \btheta^* - \bfeta^*) \|
    &
    \leq 4 C_X \|\Sigma\|^{1/2} \|\Sigma^{1/2} (A^* \btheta^* - \bfeta^*)\| \sqrt{\frac{\ttr(\Sigma) + \log(4 / \delta)}n},
    \\
    \label{eq:hat_sigma_linear_functional_star_3_deviation_bound}
    | (A^* \btheta^* - \bfeta^*)^\top (\widehat \Sigma - \Sigma) \btheta^* |
    &
    \leq 4 C_X \|\Sigma^{1/2} \btheta^*\| \|\Sigma^{1/2} (A^* \btheta^* - \bfeta^*)\| \sqrt{\frac{1 + \log(4 / \delta)}n},
    \\
    \label{eq:hat_sigma_linear_functional_star_4_deviation_bound}
    | (A^* \btheta^* - \bfeta^*)^\top (\widehat \Sigma - \Sigma) \btheta^\circ |
    &
    \leq 4 C_X \|\Sigma^{1/2} \btheta^\circ\| \|\Sigma^{1/2} (A^* \btheta^* - \bfeta^*)\| \sqrt{\frac{1 + \log(4 / \delta)}n},
    \\
    \label{eq:z_linear_functional_star_deviation_bound}
    \left| (A^* \btheta^* - \bfeta^*)^\top \bU \right|
    &
    \leq 8 \sigma \|\Sigma^{1/2} (A^* \btheta^* - \bfeta^*)\| \sqrt{\frac{1 + \log(4 / \delta)}n}
\end{align}
hold simultaneously on $\cE_2^*$.
Moreover, on $\cE_1 \cap \cE_2^*$ it holds that
\begin{align}
    \label{eq:standardized_score_theta_bound_simplified_1}
    &\notag
    \left\| \left( \frac12 (A^*)^\top A^* + \lambda I_d \right)^{1/2} \breve H_{\btheta \btheta}^{-1}(\bups^*) H_{\btheta \chi}(\bups^*) H_{\chi \chi}^{-1}(\bups^*) \bz_\chi \right\|
    \\&
    \leq 1.001 \left\| \left( \frac12 (A^*)^\top A^* + \lambda I_d \right)^{-1/2} H_{\btheta \chi}(\bups^*) H_{\chi \chi}^{-1}(\bups^*) \bz_\chi \right\|
\end{align}
and
\begin{align}
    \label{eq:standardized_score_theta_bound_simplified_2}
    &\notag
    \left\| \left( \frac12 (A^*)^\top A^* + \lambda I_d \right)^{-1/2} H_{\btheta \chi}(\bups^*) H_{\chi \chi}^{-1}(\bups^*) \bz_\chi \right\|
    \\&\notag
    \leq 2 \sqrt{2} \|\Sigma\|^{1/2} \left(
    2 \sigma + \frac{38 C_X \|\Sigma\|^{1/2} \|\btheta^\circ\|}{37} + \sqrt{2} C_X \|\Sigma\|^{1/2} \|\btheta^\circ\| \right) \sqrt{\frac{\ttr(\Sigma) + \log(4 / \delta)}n}
    \\&\quad
    + \frac{4 \sqrt{2}\|\Sigma\|^{1/2}}{112^2} \left(
    2 \sigma + 3 C_X \|\Sigma\|^{1/2} \|\btheta^\circ\| \right) \sqrt{\frac{\ttr(\Sigma) + \log(4 / \delta)}n}
    \\&\quad\notag
    + \frac{8 \|\Sigma\|^{1/2}}{112^2} \left( C_X \|\Sigma\|^{1/2} \|\btheta^\circ\| + \sigma \right) \sqrt{\frac{1 + \log(4 / \delta)}n}
    \\&\notag
    \leq \left(4 \sqrt{2} + \frac{8 \sqrt{2}}{112^2} + \frac{8}{112^2} \right) \|\Sigma\|^{1/2}\left( \sigma + 2 C_X \|\Sigma\|^{1/2} \|\btheta^\circ\|\right) \sqrt{\frac{\ttr(\Sigma) + \log(4 / \delta)}n}.
\end{align}

Finally, the next lemma concerns the norm of $(\ttD_0 H_*^{-1} \bz)_{\chi}$ and finishes the third step of the proof.
\begin{Lem}
    \label{lem:standardized_score_chi_bound}
    Let $\rho \leq 1/16$ and fix an arbitrary $\bups \in \Ups(\rho)$, where $\Ups(\rho)$ is defined in \eqref{eq:upsilon}. Introduce
    \[
        \widetilde \ttD^2 = \diag\left(2 I_d, I_d \otimes (\widetilde\mu^2 I_d + \btheta \btheta^\top) \right) \in \R^{(d + d^2) \times (d + d^2)},
    \]
    where $\widetilde \mu$ is an arbitrary number from $[0, \mu]$. Finally, let $\cE_1$ be the event where \eqref{eq:e1_sigma} and \eqref{eq:e1_u} hold. Then, on this event, we have
    \begin{align*}
        \left\| \widetilde \ttD \breve H_{\chi \chi}^{-1} \z_\chi \right\|
        &
        \leq 8 \left(1 + \frac{9 \rho}2 \right) \sigma \|\Sigma\|^{1/2} \sqrt{\frac{\ttr(\Sigma) + \log(4 / \delta)}n}
        \\&\quad
        + 4 \left(1 + \frac{9 \rho}2 \right) C_X \|\Sigma\| \|\btheta^\circ\| \sqrt{\frac{\ttr(\Sigma) + \log(4 / \delta)}n}
        \\&\quad
        + 4 \left(\sqrt{\widetilde \mu^2 + \|\btheta\|^2} + \frac{9 \rho \mu}2\right) C_X \|\Sigma\| \sqrt{\frac{\ttr(\Sigma)^2 + \log(4 / \delta)}n}.
    \end{align*}
\end{Lem}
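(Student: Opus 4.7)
The plan is to precondition $\breve H_{\chi\chi}$ by the block-diagonal matrix matching the natural scaling of its diagonal, reduce the problem to simple block-wise computations, and then invoke the concentration inequalities on the event $\cE_1$.

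Concretely, introduce the block-diagonal part of $H_{\chi\chi}$,
\[
    \ttD_\chi^2 = \diag\bigl(2 I_d,\; I_d \otimes (\mu^2 I_d + \btheta\btheta^\top)\bigr).
\]
Using Lemma~\ref{lem:block-diagonal_lower_bound} together with the constraints $\|\btheta\| \le \rho\mu/\sqrt{1 - \rho^2}$ and $\|A\btheta - \bfeta\| \le \rho\mu\sqrt{\lambda}$ defining $\Ups(\rho)$, I expect a two-sided bound of the form $(1 - c\rho)\ttD_\chi^2 \preccurlyeq \breve H_{\chi\chi} \preccurlyeq (1 + c\rho)\ttD_\chi^2$. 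Indeed, both the off-diagonal coupling blocks of $H_{\chi\chi}$ (which involve $\btheta$) and the Schur correction $H_{\chi\btheta} H_{\btheta\btheta}^{-1} H_{\btheta\chi}$ (which involves $\btheta$, $A\btheta-\bfeta$, and $\lambda^{-1/2}$) are controlled by those same two small quantities. This permits the factorisation
\[
    \widetilde\ttD \breve H_{\chi\chi}^{-1} \bz_\chi = \bigl(\widetilde\ttD\,\ttD_\chi^{-1}\bigr) \cdot \bigl(\ttD_\chi \breve H_{\chi\chi}^{-1} \ttD_\chi\bigr) \cdot \bigl(\ttD_\chi^{-1} \bz_\chi\bigr),
\]
in which the middle factor has operator norm close to $1$, and the left factor is the identity on the $\bfeta$-block and, on the $A$-block, has norm at most $\sqrt{(\widetilde\mu^2 + \|\btheta\|^2)/(\mu^2 + \|\btheta\|^2)}$ after diagonalising the rank-one perturbation $\btheta\btheta^\top$.

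The rightmost factor $\ttD_\chi^{-1} \bz_\chi$ splits naturally along the block structure of $\bz_\chi = (\bZ - \E\bZ,\; \mu^2\rmvec(\widehat\Sigma - \Sigma))$. Writing $\bZ - \E\bZ = \bU + (\widehat\Sigma - \Sigma)\btheta^\circ$, the $\bfeta$-block becomes $\tfrac{1}{\sqrt{2}}\bigl(\bU + (\widehat\Sigma - \Sigma)\btheta^\circ\bigr)$, which is bounded on $\cE_1$ by the first two terms of the claim (the $\sigma$ contribution coming from \eqref{eq:e1_u} and the $C_X\|\Sigma\|\|\btheta^\circ\|$ contribution from the second inequality of \eqref{eq:e1_sigma}). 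The $A$-block equals $\mu^2\bigl(I_d \otimes (\mu^2 I_d + \btheta\btheta^\top)^{-1/2}\bigr)\rmvec(\widehat\Sigma - \Sigma)$, whose Euclidean norm is at most $\mu\|\widehat\Sigma - \Sigma\|_{\F}$; combined with the $\sqrt{\widetilde\mu^2 + \|\btheta\|^2}$ factor from $\widetilde\ttD\ttD_\chi^{-1}$ and the Frobenius bound in \eqref{eq:e1_sigma}, this produces the third term, which carries the $\sqrt{\ttr(\Sigma)^2 + \log(4/\delta)}$ rate.

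The main obstacle is the bookkeeping in the preconditioning step: the Schur correction mixes contributions scaled by $\btheta$, by $(A\btheta - \bfeta)$, and by $\lambda^{-1/2}$, so extracting a clean $(1 \pm c\rho)$ sandwich requires writing $H_{\chi\btheta}$ coordinate-wise and bounding each piece using the defining inequalities of $\Ups(\rho)$. A secondary but delicate point is absorbing the residual factor $(1 - c\rho)^{-1}$ and the $\widetilde\ttD\ttD_\chi^{-1}$ mismatch into the target coefficients $(1 + 9\rho/2)$ and the additive $9\rho\mu/2$ appearing in the claim; this should follow by expanding $(1 - c\rho)^{-1} = 1 + c\rho/(1 - c\rho)$, using $\rho \le 1/16$ to convert the multiplicative perturbation into the stated constants, and combining the two $A$-block terms via the triangle inequality $\sqrt{\widetilde\mu^2 + \|\btheta\|^2} \le \widetilde\mu + \|\btheta\| \le \widetilde\mu + \rho\mu/\sqrt{1-\rho^2}$.
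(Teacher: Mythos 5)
The central gap is the choice of preconditioner. You set $\ttD_\chi^2 = \diag\big(2I_d,\; I_d\otimes(\mu^2 I_d + \btheta\btheta^\top)\big)$ and claim a sandwich $(1-c\rho)\ttD_\chi^2 \preccurlyeq \breve H_{\chi\chi} \preccurlyeq (1+c\rho)\ttD_\chi^2$. This fails in the $(\bfeta,\bfeta)$ block. Since $H_{\bfeta\btheta}=-A$ and $H_{\btheta\btheta}=A^\top A + \lambda I_d$, the $(\bfeta,\bfeta)$ entry of $\breve H_{\chi\chi}$ equals $2I_d - A(A^\top A + \lambda I_d)^{-1}A^\top$, and the subtracted Schur term $A(A^\top A+\lambda I_d)^{-1}A^\top$ has operator norm up to $1$ — nearly $1$ whenever the singular values of $A$ dominate $\sqrt{\lambda}$. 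It is not $O(\rho)$, so the lower bound $(1-c\rho)\cdot 2I_d \preccurlyeq 2I_d - A(A^\top A+\lambda I_d)^{-1}A^\top$ is false for any $c\rho < 1/2$. Consequently, your middle factor $\ttD_\chi\breve H_{\chi\chi}^{-1}\ttD_\chi$ is not close to the identity (its $\bfeta\bfeta$ block ranges roughly between $I_d$ and $2 I_d$), and the small constants $(1+9\rho/2)$ and $9\rho\mu/2$ in the statement are unreachable. The fix, which the paper adopts, is to absorb precisely this term into the preconditioner: take $\ttD_\chi^2 = \diag\big(2I_d - A(A^\top A+\lambda I_d)^{-1}A^\top,\; H_{AA}\big)$, so that $Q=\breve H_{\chi\chi}-\ttD_\chi^2$ has zero $(\bfeta,\bfeta)$ block and the remaining blocks really are $O(\rho)$ by Lemma~\ref{lem:non-diagonal_blocks_bound}.

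A second, downstream issue: even with the correct $\ttD_\chi$, you cannot recover the $\sqrt{\widetilde\mu^2 + \|\btheta\|^2}$ coefficient in the third term by multiplying operator norms in the factorisation $\|\widetilde\ttD\ttD_\chi^{-1}\|\cdot\|\ttD_\chi\breve H_{\chi\chi}^{-1}\ttD_\chi\|\cdot\|\ttD_\chi^{-1}\z_\chi\|$, because $\|\ttD_\chi^{-1}\z_\chi\|$ already contains the larger factor $\mu\|\widehat\Sigma-\Sigma\|_\F$. The block-wise pairing you describe — multiplying the $A$-block piece $\sqrt{(\widetilde\mu^2+\|\btheta\|^2)/(\mu^2+\|\btheta\|^2)}$ of $\widetilde\ttD\ttD_\chi^{-1}$ against the $A$-block of $\ttD_\chi^{-1}\z_\chi$ — only becomes rigorous once the middle factor is expanded as $I+E$ with $\|E\|=O(\rho)$, yielding
\[
\widetilde\ttD\breve H_{\chi\chi}^{-1}\z_\chi \;=\; \widetilde\ttD\ttD_\chi^{-2}\z_\chi \;+\; \widetilde\ttD\ttD_\chi^{-1}\big((I+E)^{-1}-I\big)\ttD_\chi^{-1}\z_\chi .
\]
The first term is a block-diagonal operator applied to $\z_\chi$ and delivers $\sqrt{\widetilde\mu^2+\|\btheta\|^2}\,\|\widehat\Sigma-\Sigma\|_\F$ exactly as you want; the second inherits the coarser $\mu\|\widehat\Sigma-\Sigma\|_\F$ but with a multiplicative $O(\rho)$ from $\|E\|$, which is precisely where the additive $9\rho\mu/2$ in the claim originates. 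Writing out this split and quantifying $\|E\|$ is the mandatory step that your proposal leaves implicit; it is not just bookkeeping.
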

The proof of Lemma \ref{lem:standardized_score_chi_bound} is moved to Section \ref{sec:lem_standardized_score_chi_bound_proof}.
Similarly to Lemma \ref{lem:standardized_score_theta_bound}, we are going to apply Lemma \ref{lem:standardized_score_chi_bound} to $\bups = \bups^*$. As we discussed earlier, $\bups^*$ lies in $\Ups(\rho)$, where $\rho^* = \|\btheta^\circ\| / \mu \leq 1 / 112$. Taking $\widetilde \mu = \mu_0 = \|\btheta^\circ\| \sqrt{3}$, $\rho = \rho^* \leq 1 / 112$, and $\bups = \bups^*$, we obtain that
\begin{align}
    \label{eq:standardized_score_chi_bound_simplified}
    \left\| \ttD_{0 \chi} \breve H_{\chi \chi}^{-1}(\bups^*) \z_\chi \right\|
    &\notag
    \leq 8 \left(1 + \frac{9}{224} \right) \sigma \|\Sigma\|^{1/2} \sqrt{\frac{\ttr(\Sigma) + \log(4 / \delta)}n}
    \\&\quad\notag
    + 4 \left(1 + \frac{9}{224} \right) C_X \|\Sigma\| \|\btheta^\circ\| \sqrt{\frac{\ttr(\Sigma) + \log(4 / \delta)}n}
    \\&\quad
    + 4 \left( 2\|\btheta^\circ\| + \frac{9 \|\btheta^\circ\|}{224} \right) C_X \|\Sigma\| \sqrt{\frac{\ttr(\Sigma)^2 + \log(4 / \delta)}n}
    \\&\notag
    \leq \frac{17}2 \left( \sigma \|\Sigma\|^{1/2} + 2 C_X \|\Sigma\| \|\btheta^\circ\| \right) \sqrt{\frac{\ttr(\Sigma)^2 + \log(4 / \delta)}n}. 
\end{align}
on the event $\cE_1$. Here we used the bound $\|\btheta^*\| \leq \|\btheta^\circ\|$ following from the definition of $\btheta^*$ (see Lemma \ref{lemma: localization lemma for bias}\ref{point: theta weak bound on bias, bias locating convex set}). The inequalities \eqref{eq:standardized_score_pythagoras}, \eqref{eq:standardized_score_theta_bound_simplified_1}, \eqref{eq:standardized_score_theta_bound_simplified_2}, and \eqref{eq:standardized_score_chi_bound_simplified} immediately imply that
\begin{align*}
    \| \ttD_0 H_*^{-1} \bz \|
    &
    \leq 1.001 \left(4 \sqrt{2} + \frac{8 \sqrt{2}}{112^2} + \frac{8}{112^2} \right) \|\Sigma\|^{1/2}\left( \sigma + 2 C_X \|\Sigma\|^{1/2} \|\btheta^\circ\|\right) \sqrt{\frac{\ttr(\Sigma) + \log(4 / \delta)}n}
    \\&\quad
    +  8 \left(1 + \frac{9}{224} \right) \left( \sigma \|\Sigma\|^{1/2} + 2 C_X \|\Sigma\| \|\btheta^\circ\| \right) \sqrt{\frac{\ttr(\Sigma)^2 + \log(4 / \delta)}n}
    \\&
    \leq 14 \left( \sigma \|\Sigma\|^{1/2} + 2 C_X \|\Sigma\| \|\btheta^\circ\| \right) \sqrt{\frac{\ttr(\Sigma)^2 + \log(4 / \delta)}n}
    \equiv \|\btheta^\circ\| \sqrt{\Psi(n, \delta)}
\end{align*}
on the intersection of the events $\cE_0$, $\cE_1$, and $\cE_2^*$ (that is, with probability at least $1 - 5\delta$). In view of \eqref{eq:stoch_term_expansion_simplified}, on the same event, we have
\begin{align}
    \label{eq:stoch_term_remainder_bound}
    \left\| \Sigma^{1/2} (\widehat \btheta - \btheta^* - \bs_{\btheta}^*) \right\|
    &\notag
    = \left\| S (\widehat \bups - \bups^* - H_*^{-1} \z) \right\|
    \\&
    \leq \frac{19}{\mu_0 \lambda^{3/4}} \, \| \ttD_0 H_*^{-1} \bz \|^2 \left(1 + \frac{\| \ttD_0 H_*^{-1} \bz \|}{3 \mu_0 \sqrt{\lambda}} \right)
    \\&\notag
    \leq \frac{19 \|\btheta^\circ\|}{\sqrt{3} \lambda^{3/4}} \left(1 + \sqrt{\frac{\Psi(n, \delta)}{27 \lambda}} \right) \Psi(n, \delta).
\end{align}
and \eqref{eq:stoch_term_remainder_bound_asymp} follows.

\medskip

\noindent\textbf{Step 4: elaborating on $\bs_{\btheta}^*$.}
\quad
It remains to quantify the difference between $\bs_{\btheta}^*$ and
\[
    \frac12 \left(\frac12 \Sigma^2 + \lambda I_d \right)^{-1} \left( \Sigma (\bZ - \E\bZ) - \Sigma (\widehat \Sigma - \Sigma) \btheta^* - (\widehat \Sigma - \Sigma) \Sigma \bb_\lambda \right).
\]
We perform this task in three tiny steps. First, we note that $\bs_{\btheta}^*$ can be approximated by
\[
    -\left( \Sigma^2 + \lambda I_d \right)^{-1} H_{\btheta \chi}(\bups^*) H_{\chi \chi}^{-1}(\bups^*) \z_\chi.
\]
We support this claim by the following result.

\begin{Lem}
    \label{lem:standardized_score_simplified}
    Assume that the parameters $\mu$ and $\lambda$ fulfil \eqref{eq:stoch_term_mu_lambda_conditions} and that $\mu \geq 112 \|\btheta^\circ\|$. Then, on the event where \eqref{eq:standardized_score_theta_bound_simplified_2} holds, we also have
    \begin{align*}
        &
        \left\| \Sigma^{1/2} \bs_{\btheta}^* 
        + \Sigma^{1/2} \left( \Sigma^2 + \lambda I_d \right)^{-1} H_{\btheta \chi}(\bups^*) H_{\chi \chi}^{-1}(\bups^*) \z_\chi \right\|
        \\&
        \leq \frac{\|\btheta^\circ\|}{2 \lambda^{1/4}} 
         \left( \left( \frac{17 \|\btheta^\circ\|}{\mu}\right)^2 + \frac{\|\bb_\lambda\|}{160 \mu} \right) \sqrt{\Psi(n, \delta)},
    \end{align*}
    where
    \[
        \Psi(n, \delta) = 196 \left( \frac{\sigma \|\Sigma\|^{1/2}}{\|\btheta^\circ\|} + 2 C_X \|\Sigma\| \right)^2 \cdot \frac{\ttr(\Sigma)^2 + \log(4 / \delta)}n.
    \]
\end{Lem}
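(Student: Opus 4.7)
The starting point is the block-inversion formula \eqref{eq:standardized_score}, which gives $\bs_\btheta^* = -\breve H_{\btheta\btheta}^{-1}(\bups^*)\, T$ with $T := H_{\btheta\chi}(\bups^*)\, H_{\chi\chi}^{-1}(\bups^*)\, \z_\chi$. Writing $M = \Sigma^2 + \lambda I_d$ and $N = \breve H_{\btheta\btheta}(\bups^*)$, the quantity we wish to bound equals $\Sigma^{1/2}\bigl[M^{-1} - N^{-1}\bigr] T$, and the resolvent identity $M^{-1} - N^{-1} = M^{-1}(N - M)\, N^{-1}$ reduces the task to controlling three factors: the spectral weight $\Sigma^{1/2} M^{-1/2}$, the Schur-complement perturbation $M^{-1/2}(N - M)$, and the residual $N^{-1} T$.

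The spectral factor obeys $\|\Sigma^{1/2}(\Sigma^2 + \lambda I_d)^{-1/2}\| \leq 2^{-1/2} \lambda^{-1/4}$, which is the source of the $\lambda^{-1/4}$ in the claimed bound. The residual is controlled via the estimate already obtained in Step~3 of the proof of Theorem~\ref{th:stoch_term}: inequality \eqref{eq:standardized_score_theta_bound_simplified_2} together with the localization $\bups^* \in \Ups(\|\btheta^\circ\|/\mu)$ and Lemma~\ref{lemma: Schur complement bound for bias} yield $\|N^{-1/2} T\| \lesssim \|\btheta^\circ\|\sqrt{\Psi(n,\delta)}$, because $N$ is equivalent, up to a factor close to the identity, to $\tfrac{1}{2}(A^*)^\top A^* + \lambda I_d$.

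The heart of the argument is the Schur-complement perturbation $N - M$. Eliminating $\bfeta$ and $A$ from $\cL(\btheta,\bfeta,A)$ explicitly gives the profile function
\[
    \tilde\cL(\btheta) = \frac{\mu^2 \alpha(\btheta)}{4}\,\|\Sigma(\btheta^\circ - \btheta)\|^2 + \frac{\lambda}{2}\|\btheta\|^2, \qquad \alpha(\btheta) = \frac{1}{\mu^2 + \|\btheta\|^2/2},
\]
whose Hessian is precisely $\breve H_{\btheta\btheta}(\bups)$. At $\bups^\circ$ this evaluates to $(\mu^2\alpha(\btheta^\circ)/2)\Sigma^2 + \lambda I_d$, and the deviation of $\mu^2\alpha(\btheta^\circ)$ from unity is \emph{quadratic} in $\|\btheta^\circ\|/\mu$, producing the $(17\|\btheta^\circ\|/\mu)^2$ summand in the bound. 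The displacement from $\bups^\circ$ to $\bups^*$, for which $\btheta^* - \btheta^\circ \approx \bb_\lambda$ by Theorem~\ref{theorem: bias}, contributes an additional linear-in-$\|\bb_\lambda\|/\mu$ perturbation, extracted from Lemma~\ref{lemma: Schur complement bound for bias} applied with the Dirac measure $\m = \delta_1$; this yields the remaining summand $\|\bb_\lambda\|/(160\mu)$.

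Assembling the three bounds through the resolvent identity and tracking constants yields the stated inequality. The principal obstacle is the third step: the linear-in-$\|\btheta^\circ\|/\mu$ estimate of Lemma~\ref{lemma: Schur complement bound for bias} is not sharp enough, and the quadratic dependence needed here demands exploiting the exact cancellation of the first-order-in-$\|\btheta^\circ\|/\mu$ term in the expansion of the profile Hessian around $\bups^\circ$, all while propagating the precise numerical constants through the subsequent algebra so that the $17$ and $1/160$ in the statement come out correctly.
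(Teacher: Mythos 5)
Your structural outline is sound --- the resolvent-identity factorization into a spectral weight, a Schur-complement perturbation, and a standardized score --- and your explicit profile formula $\tilde\cL(\btheta) = \frac{\mu^2\alpha(\btheta)}{4}\|\Sigma(\btheta^\circ-\btheta)\|^2 + \frac{\lambda}{2}\|\btheta\|^2$ is a correct and useful observation. But the gap you flag at the end is genuine, and the tool you name to close it is the wrong one. Lemma~\ref{lemma: Schur complement bound for bias} with $\m = \delta_1$ gives $\|(\Sigma^2/2 + \lambda I_d)^{-1}\breve H_{\btheta\btheta}(\bups^*) - I_d\| \lesssim \rho$, which is \emph{linear} in $\rho = 7\|\btheta^\circ\|/\mu$; feeding that into your resolvent decomposition leaves an extra factor of order $\mu/\|\btheta^\circ\|$ compared to the claimed $(17\|\btheta^\circ\|/\mu)^2$ term. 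Your profile-Hessian expansion captures the leading $\mu^2\alpha(\btheta^\circ)\Sigma^2/2$ part at $\btheta^\circ$, but $\nabla^2\tilde\cL$ also contains cross terms $\propto \Sigma^2(\btheta^\circ-\btheta)\btheta^\top$ with nonvanishing first derivative, and you give no mechanism for controlling them at $\btheta^* \neq \btheta^\circ$; ``exact cancellation'' is asserted, not proved. There is also a normalization slip: you set $M = \Sigma^2 + \lambda I_d$ (tracking a typo in the printed statement), while your own profile Hessian is $\approx \Sigma^2/2 + \lambda I_d$. With $M = \Sigma^2 + \lambda I_d$ the perturbation $N - M$ contains a full $-\Sigma^2/2$ term of order $\|\Sigma\|^2$ and the estimate fails outright; the matrix should be $\Sigma^2/2 + \lambda I_d$, consistent with how the bound is consumed by Lemma~\ref{lem:semiparametric_remainder_1}.

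The paper's proof sidesteps both issues by inserting the \emph{intermediate} operator $(A^*)^\top A^*/2 + \lambda I_d$ and performing two comparisons. The first --- $\breve H_{\btheta\btheta}(\bups^*)$ against $(A^*)^\top A^*/2 + \lambda I_d$ --- is handled by Lemma~\ref{lem:h_theta_schur}, which you did not invoke and which is exactly the quadratic ingredient you were searching for: for any $\bups \in \Ups(\rho)$ one has $\left\|\left(\tfrac12 A^\top A + \lambda I_d\right)^{-1/2}\breve H_{\btheta\btheta}(\bups)\left(\tfrac12 A^\top A + \lambda I_d\right)^{-1/2} - I_d\right\| \leq 10\rho^2$, applied with $\rho^* = \|\btheta^\circ\|/\mu \leq 1/112$. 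The second --- $(A^*)^\top A^*/2 + \lambda I_d$ against $\Sigma^2/2 + \lambda I_d$ --- is controlled by Lemma~\ref{lem:diff_squares_op_norm_bound} together with the sharp bound $\|A^* - \Sigma\|/\sqrt{\lambda} \leq (14\|\btheta^\circ\|/\mu)^2 + \|\bb_\lambda\|/(230\mu)$ from Lemma~\ref{lemma: Sigma bias optimal bound}; after the multiplicative $(\sqrt{2} + 1/126)$ cost from Lemma~\ref{lem:diff_squares_op_norm_bound}, this is the actual source of the constants $17$ and $1/160$. Factoring through the intermediate matrix lets each lemma do the job it was designed for --- the Schur-complement bound never has to ``see'' $\Sigma$, and the $A^*$-vs-$\Sigma$ bound never has to ``see'' the Schur complement --- which your single-shot resolvent step cannot replicate without essentially reproving Lemma~\ref{lem:h_theta_schur} from scratch.
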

We postpone the proof of Lemma \ref{lem:standardized_score_simplified} to Section \ref{sec:lem_standardized_score_simplified_proof} and finish the proof of Theorem \ref{th:stoch_term} first. The next auxiliary lemmata ensure that $H_{\btheta \chi}(\bups^*) H_{\chi \chi}^{-1}(\bups^*) \z_\chi$ is close to
\[
    -\frac12 \left( \Sigma (\bZ - \E \bZ) - \Sigma (\widehat \Sigma - \Sigma) \btheta^* - (\widehat \Sigma - \Sigma) \Sigma \bb_\lambda \right).
\]
\begin{Lem}
    \label{lem:semiparametric_remainder_1}
    Assume that the parameters $\mu$ and $\lambda$ fulfil \eqref{eq:stoch_term_mu_lambda_conditions} and that $\mu \geq 112 \|\btheta^\circ\|$. Then, on the event $\cE_1 \cap \cE_2^*$, the vector
    \[
        \btau =
        (A^*)^\top \bU
        - (A^*)^\top (\widehat \Sigma - \Sigma) (\btheta^* - \btheta^\circ)
        - 2(\widehat \Sigma - \Sigma) (A^* \btheta^* - \bfeta^*)
    \]
    satisfies the inequality
    \[
        \left\| \Sigma^{1/2} \left( \Sigma^2 / 2 + \lambda I_d \right)^{-1} \left( H_{\btheta \chi}(\bups^*) H_{\chi \chi}^{-1}(\bups^*) \z_\chi + 0.5 \btau \right) \right\|
        \leq \frac{\|\btheta^\circ\|^3}{\mu^2 \lambda^{1/4}} \sqrt{\Psi(n, \delta)}.
    \]
\end{Lem}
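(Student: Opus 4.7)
The plan is to carry out a direct, term-by-term computation of $H_{\btheta\chi}(\bups^*) H_{\chi\chi}^{-1}(\bups^*) \z_\chi$ and check that its discrepancy with $-\btau/2$ is of the advertised size. Since $\cL$ is polynomial, the Hessian blocks at $\bups^*$ admit closed Kronecker-product forms: $H_{\bfeta\bfeta} = 2 I_d$, $H_{\bfeta A} = -I_d\otimes(\btheta^*)^\top$, $H_{AA} = I_d\otimes\bigl(\mu^2 I_d + \btheta^*(\btheta^*)^\top\bigr)$, and
\[
    H_{\btheta\chi}(\bups^*) = \Bigl(-(A^*)^\top,\;(A^*)^\top\otimes(\btheta^*)^\top - (\bfeta^*-A^*\btheta^*)^\top\otimes I_d\Bigr).
\]
I would invert $H_{\chi\chi}(\bups^*)$ by taking the Schur complement of $H_{AA}$; the Sherman--Morrison identity $\bigl(\mu^2 I_d + \tfrac12\btheta^*(\btheta^*)^\top\bigr)^{-1}\btheta^* = \tfrac{2}{2\mu^2+\|\btheta^*\|^2}\btheta^*$ collapses every Kronecker piece to an honest $d\times d$ operation controlled by the scalar $\gamma := 1/(2\mu^2+\|\btheta^*\|^2)$.

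Substituting $\z_\bfeta = (\widehat\Sigma-\Sigma)\btheta^\circ + \bU$ and $\z_A\propto(\widehat\Sigma-\Sigma)$, applying the standard Kronecker identities such as $(B\otimes(\btheta^*)^\top)\rmvec(\widehat\Sigma-\Sigma) = B(\widehat\Sigma-\Sigma)\btheta^*$ and $\bigl((\bfeta^*-A^*\btheta^*)^\top\otimes I_d\bigr)\rmvec(\widehat\Sigma-\Sigma) = (\widehat\Sigma-\Sigma)(\bfeta^*-A^*\btheta^*)$, and collecting like terms, the expected outcome is
\begin{align*}
    H_{\btheta\chi}H_{\chi\chi}^{-1}\z_\chi
    &= -\gamma\mu^2(A^*)^\top\bigl[\bU + (\widehat\Sigma-\Sigma)(\btheta^\circ - \btheta^*)\bigr] - (\widehat\Sigma-\Sigma)(\bfeta^* - A^*\btheta^*)\\
    &\quad - \gamma\,\bigl[\bU + (\widehat\Sigma-\Sigma)(\btheta^\circ - \btheta^*)\bigr]^\top(\bfeta^* - A^*\btheta^*)\cdot\btheta^*.
\end{align*}
Since $\btau = (A^*)^\top\bU + (A^*)^\top(\widehat\Sigma-\Sigma)(\btheta^\circ-\btheta^*) + 2(\widehat\Sigma-\Sigma)(\bfeta^*-A^*\btheta^*)$, the $(\widehat\Sigma-\Sigma)(\bfeta^*-A^*\btheta^*)$ contributions cancel exactly in $H_{\btheta\chi}H_{\chi\chi}^{-1}\z_\chi + \tfrac12\btau$, leaving only $\bigl(\tfrac12-\gamma\mu^2\bigr)(A^*)^\top[\bU + (\widehat\Sigma-\Sigma)(\btheta^\circ-\btheta^*)]$, with $|1/2-\gamma\mu^2| = \|\btheta^*\|^2/(2(2\mu^2+\|\btheta^*\|^2))\le\|\btheta^\circ\|^2/(4\mu^2)$, plus the scalar-times-$\btheta^*$ correction.

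The last step is to take norms after multiplication by $\Sigma^{1/2}(\Sigma^2/2+\lambda I_d)^{-1}$ on the event $\cE_1\cap\cE_2^*$. The scalar maxima $\max_{\sigma\ge 0}\sigma^{3/2}/(\sigma^2/2+\lambda)\lesssim\lambda^{-1/4}$ and $\max_{\sigma\ge 0}\sigma^{1/2}/(\sigma^2/2+\lambda)\lesssim\lambda^{-3/4}$, combined with $\|A^*-\Sigma\|\lesssim\sqrt\lambda$ from Lemma \ref{lemma: Sigma bias optimal bound}, yield $\|\Sigma^{1/2}(\Sigma^2/2+\lambda I_d)^{-1}(A^*)^\top\|\lesssim\lambda^{-1/4}$ and $\|\Sigma^{1/2}(\Sigma^2/2+\lambda I_d)^{-1}\btheta^*\|\lesssim\|\btheta^\circ\|\lambda^{-3/4}$. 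The random residual $\|\bU + (\widehat\Sigma-\Sigma)(\btheta^\circ-\btheta^*)\|$ is $\lesssim\|\btheta^\circ\|\sqrt{\Psi(n,\delta)}$ by \eqref{eq:e1_sigma}--\eqref{eq:e1_u} together with $\|\btheta^\circ-\btheta^*\|\le 2\|\btheta^\circ\|$ (Theorem \ref{theorem: bias}), and $\|\bfeta^*-A^*\btheta^*\|\lesssim\sqrt\lambda\|\btheta^\circ\|$ because the FOC $\bfeta^*=(\Sigma\btheta^\circ+A^*\btheta^*)/2$ combined with Theorem \ref{theorem: bias} gives $\bfeta^*-A^*\btheta^*\approx -\tfrac12\Sigma\bb_\lambda$. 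Plugging in, both surviving terms are of order $\|\btheta^\circ\|^3\sqrt{\Psi(n,\delta)}/(\mu^2\lambda^{1/4})$.

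The main obstacle is the Kronecker bookkeeping in the explicit formula for $H_{\chi\chi}^{-1}$ and verifying the exact (not approximate) cancellation of the $(\widehat\Sigma-\Sigma)(\bfeta^*-A^*\btheta^*)$ contributions. Obtaining $\lambda^{-1/4}$ rather than $\lambda^{-3/4}$ is another subtlety: it requires the tight FOC-based bound $\|\bfeta^*-A^*\btheta^*\|\lesssim\sqrt\lambda\|\btheta^\circ\|$, since the naive estimate $\lesssim\lambda\|A^{*-1}\|\|\btheta^*\|$ would lose an extra factor of $\lambda^{-1/2}$.
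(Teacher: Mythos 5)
Your proposal is correct and takes essentially the same route as the paper's proof (via Lemma \ref{lem:semiparametric_score_explicit_representation} plus bounds on the residual), the one cosmetic improvement being that you compute the inversion coefficient $\gamma=(2\mu^2+\|\btheta^*\|^2)^{-1}$ in closed form for the point-mass Hessian rather than keeping the implicit $r_1,r_2'$ bounds of Proposition \ref{proposition: inverse of nuisance parameters}, which makes the exact cancellation of the three leading terms against $-\tfrac12\btau$ and the resulting $\bigl(\tfrac12-\gamma\mu^2\bigr)=\|\btheta^*\|^2/(2(2\mu^2+\|\btheta^*\|^2))$ factor transparent. You use plain Cauchy--Schwarz for the scalar inner products where the paper instead invokes the effective-dimension-one concentration inequalities \eqref{eq:hat_sigma_linear_functional_star_3_deviation_bound}--\eqref{eq:z_linear_functional_star_deviation_bound} from $\cE_2^*$; this only costs a constant (since $\Psi(n,\delta)$ already carries $\ttr(\Sigma)^2$) and stays within the stated budget, so the argument goes through.
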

\begin{Lem}
    \label{lem:semiparametric_remainder_2}
    Assume that the parameters $\mu \geq 112 \|\btheta^\circ\|$ and $\lambda > 0$ fulfil \eqref{eq:stoch_term_mu_lambda_conditions} and that $4n \geq \ttr(\Sigma) + \log(4 / \delta)$. Let $\btau$ be as defined in Lemma \ref{lem:semiparametric_remainder_2} and let
    \[
        \bzeta =
        \Sigma \bU
        - \Sigma (\widehat \Sigma - \Sigma) \bb_\lambda
        - (\widehat \Sigma - \Sigma) \Sigma \bb_\lambda.
    \]
    Then there exists an event $\cE_3$ of probability at least $(1 - 3\delta / 2)$ such that on $\cE_1 \cap \cE_3$, it holds that
    \[
        \|\Sigma^{1/2} \left( \Sigma^2 + 2 \lambda I_d \right)^{-1} (\btau - \bzeta)\| \leq \frac{12 \|\btheta^\circ\|}{25 \lambda^{1/4}} \left( \left(\frac{17 \|\btheta^\circ\|}\mu \right)^2 + \frac{\|\bb_\lambda\|}{160 \mu} \right) \sqrt{\Psi(n, \delta)}.
    \]
\end{Lem}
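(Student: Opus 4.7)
The plan is to bound $\|\Sigma^{1/2}(\Sigma^2 + 2\lambda I_d)^{-1}(\btau - \bzeta)\|$ by first decomposing $\btau - \bzeta$ into terms whose factors are individually small, and then applying the already-established bias bounds of Theorem~\ref{theorem: bias} and Lemma~\ref{lemma: Sigma bias optimal bound} together with the concentration inequalities from Theorems~\ref{th:covariance_concentration} and~\ref{th:noise_concentration}. The key algebraic ingredient is the first-order optimality condition $\nabla_\bfeta \cL(\bups^*) = \bzero$, which gives $\bfeta^* = (\Sigma\btheta^\circ + A^*\btheta^*)/2$, so that $2(A^*\btheta^* - \bfeta^*) = (A^* - \Sigma)\btheta^* + \Sigma(\btheta^* - \btheta^\circ)$. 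Substituting this into the definition of $\btau$ and inserting $\pm\Sigma$ inside the $[(A^*)^\top - \Sigma]$ factors and $\pm\bb_\lambda$ inside the $(\btheta^* - \btheta^\circ)$ factors, a routine rearrangement yields
\begin{align*}
    \btau - \bzeta
    &= [(A^*)^\top - \Sigma]\,\bU - (A^*)^\top(\widehat\Sigma - \Sigma)(\btheta^* - \btheta^\circ - \bb_\lambda)\\
    &\quad - [(A^*)^\top - \Sigma](\widehat\Sigma - \Sigma)\bb_\lambda - (\widehat\Sigma - \Sigma)(A^* - \Sigma)\btheta^*\\
    &\quad - (\widehat\Sigma - \Sigma)\Sigma(\btheta^* - \btheta^\circ - \bb_\lambda),
\end{align*}
in which every summand is a product of at least two small quantities.

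Next, I would bound each of the five summands after applying the smoothing factor $\Sigma^{1/2}(\Sigma^2 + 2\lambda I_d)^{-1}$, together with the elementary spectral bound $\|\Sigma^{1/2}(\Sigma^2 + 2\lambda I_d)^{-1}\| \lesssim \lambda^{-3/4}$ (the maximum of $\sigma^{1/2}/(\sigma^2 + 2\lambda)$ is attained at $\sigma = (2\lambda/3)^{1/2}$). The gradient condition $\nabla_A \cL(\bups^*) = O$ reads $\mu^2(\Sigma - A^*) = (A^*\btheta^* - \bfeta^*)(\btheta^*)^\top$, so $(A^*)^\top - \Sigma = -\btheta^*(A^*\btheta^* - \bfeta^*)^\top/\mu^2$ is rank one. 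This reduces the first and third summands to the scalar contractions $(A^*\btheta^* - \bfeta^*)^\top \bU$ and $(A^*\btheta^* - \bfeta^*)^\top(\widehat\Sigma - \Sigma)\bb_\lambda$, which are controlled by \eqref{eq:z_linear_functional_star_deviation_bound} and an analogue of \eqref{eq:hat_sigma_linear_functional_star_3_deviation_bound} respectively; the prefactor $\btheta^*/\mu^2$ combined with $\|A^*\btheta^* - \bfeta^*\| \lesssim \|\Sigma\|\|\bb_\lambda\|$ generates the $(\|\btheta^\circ\|/\mu)^2$ and $\|\bb_\lambda\|/\mu$ scalings that appear in the target. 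For the second and fifth summands I would combine Theorem~\ref{theorem: bias}, which supplies $\|\btheta^* - \btheta^\circ - \bb_\lambda\| \lesssim \|\bb_\lambda\|\big(\|\btheta^\circ\|/\mu + \|\Sigma\|\|\bb_\lambda\|/(\mu\sqrt{\lambda})\big)$, with an application of Theorem~\ref{th:covariance_concentration} in which $B = \Sigma^{1/2}(\Sigma^2 + 2\lambda I_d)^{-1}(A^*)^\top$ (respectively $B = \Sigma^{3/2}(\Sigma^2 + 2\lambda I_d)^{-1}$). The fourth summand uses the same rank-one factorization: $(\widehat\Sigma - \Sigma)(A^* - \Sigma)\btheta^* = -(\btheta^*)^\top\btheta^*/\mu^2 \cdot (\widehat\Sigma - \Sigma)(A^*\btheta^* - \bfeta^*)$, so a single invocation of Theorem~\ref{th:covariance_concentration} finishes it.

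The main obstacle is bookkeeping: in each use of Theorem~\ref{th:covariance_concentration} the contracting matrices must be chosen so that $\|A\Sigma^{1/2}\|\|B\Sigma^{1/2}\|\sqrt{(\ttr(\Sigma^{1/2}A^\top A\Sigma^{1/2})\,\ttr(\Sigma^{1/2}B^\top B\Sigma^{1/2}) + \log(2/\delta))/n}$ collapses into the common shape $\lesssim \|\Sigma\|^{3/2}\sqrt{(\ttr(\Sigma)^2 + \log(4/\delta))/n}$ absorbed into $\sqrt{\Psi(n,\delta)}$, while the two bias shapes in the target --- one of order $(\|\btheta^\circ\|/\mu)^2$, arising whenever two bias factors each contribute a $\|\btheta^\circ\|/\mu$, and one of order $\|\bb_\lambda\|/\mu$, arising from a single $\|A^* - \Sigma\|$ or a single $\|\btheta^* - \btheta^\circ - \bb_\lambda\|$ --- must be kept separate so that the constants consolidate into the advertised $12/25$, $17$, and $1/160$. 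The event $\cE_3$ is defined as the intersection of the three or four concentration events required in the argument, and a union bound gives $\p(\cE_3) \geq 1 - 3\delta/2$.
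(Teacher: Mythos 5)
Your decomposition of $\btau - \bzeta$ is algebraically correct and is a finer refactoring of the paper's: the paper leaves the block $(\widehat\Sigma - \Sigma)(2A^*\btheta^* - 2\bfeta^* - \Sigma\bb_\lambda)$ intact and controls the vector $A^*\btheta^* - \bfeta^* - \tfrac12\Sigma\bb_\lambda$ by Lemma~\ref{lem:a_theta_eta_expansion}, whereas you expand it into your terms~4 and~5 via the optimality identity $2(A^*\btheta^* - \bfeta^*) = (A^* - \Sigma)\btheta^* + \Sigma(\btheta^* - \btheta^\circ)$. The rank-one form $(A^*)^\top - \Sigma = -\mu^{-2}\btheta^*(A^*\btheta^* - \bfeta^*)^\top$ is correct and is exactly the structure underlying Lemma~\ref{lemma: Sigma bias optimal bound}. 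However, using it to turn your terms~1, 3, 4 into scalar contractions is where the plan opens a genuine gap in the probability accounting: each scalar contraction needs its own concentration event. Counting them --- one for $(A^*\btheta^* - \bfeta^*)^\top\bU$, one for $\|(\widehat\Sigma - \Sigma)(\btheta^* - \btheta^\circ - \bb_\lambda)\|$, one for $(A^*\btheta^* - \bfeta^*)^\top(\widehat\Sigma - \Sigma)\bb_\lambda$, and one for $\|(\widehat\Sigma - \Sigma)(A^*\btheta^* - \bfeta^*)\|$ --- you already have at least four events of probability $1 - \delta/2$, so the union bound only gives $\p(\cE_3) \geq 1 - 2\delta$, not the claimed $1 - 3\delta/2$. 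The paper stays at three events by treating the $(A^* - \Sigma)$-weighted summands deterministically, e.g.\ $\|\Sigma^{1/2}(\Sigma^2 + 2\lambda I_d)^{-1}(A^* - \Sigma)^\top\bU\| \leq \|\Sigma^{1/2}(\Sigma^2 + 2\lambda I_d)^{-1}\|\,\|A^* - \Sigma\|\,\|\bU\|$, where $\|A^* - \Sigma\|$ is the nonrandom bound of Lemma~\ref{lemma: Sigma bias optimal bound} and $\|\bU\|$ is already controlled on $\cE_1$, and by keeping $A^*\btheta^* - \bfeta^* - \tfrac12\Sigma\bb_\lambda$ as a single vector for one application of Theorem~\ref{th:covariance_concentration}. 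You should either do the same --- paying only for $\|(\widehat\Sigma - \Sigma)\bb_\lambda\|$, $\|(\widehat\Sigma - \Sigma)(\btheta^* - \btheta^\circ - \bb_\lambda)\|$, and one more event --- or explicitly accept a weaker probability constant, which the lemma does not allow.

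A second, smaller issue: you cite Theorem~\ref{theorem: bias} for $\|\btheta^* - \btheta^\circ - \bb_\lambda\|$, but its bound is $\lesssim \|\bb_\lambda\|\big(\|\btheta^\circ\|/\mu + \|\Sigma\|\|\bb_\lambda\|/(\mu\sqrt\lambda)\big)$, which is linear in $\|\btheta^\circ\|/\mu$. To reproduce the advertised $\big(\tfrac{17\|\btheta^\circ\|}{\mu}\big)^2$ piece in the target you need the quadratic bound of Lemma~\ref{lem:bias_expansion_rough_bound}, $\|\btheta^* - \btheta^\circ - \bb_\lambda\| \lesssim \|\btheta^\circ\|\big((\|\btheta^\circ\|/\mu)^2 + \|\Sigma\|\|\btheta^\circ\|\|\bb_\lambda\|/(\mu^2\sqrt\lambda)\big)$, which is what the paper's proof actually invokes; with Theorem~\ref{theorem: bias} the $\mu$-power is wrong and the constants cannot consolidate into $12/25$, $17$, $1/160$.
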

The proofs of Lemma \ref{lem:semiparametric_remainder_1} and Lemma \ref{lem:semiparametric_remainder_2} are moved to Sections \ref{sec:lem_semiparametric_remainder_1_proof} and \ref{sec:lem_semiparametric_remainder_2_proof}, respectively. Summing up Lemmata \ref{lem:standardized_score_simplified}, \ref{lem:semiparametric_remainder_1}, and \ref{lem:semiparametric_remainder_2}, we obtain that 
\begin{align}
    \label{eq:standardized_score_simplified_bound}
    \left\| \Sigma^{1/2} \bs_{\btheta}^* 
    - \Sigma^{1/2} \left( \Sigma^2 + 2\lambda I_d \right)^{-1} \bzeta \right\|
    &\notag
    \leq \left( \frac12 + \frac{12}{25} + \frac1{17^2} \right) \frac{\|\btheta^\circ\|}{\lambda^{1/4}} \left( \left(\frac{17 \|\btheta^\circ\|}\mu \right)^2 + \frac{\|\bb_\lambda\|}{160 \mu} \right) \sqrt{\Psi(n, \delta)}
    \\&
    \leq \frac{\|\btheta^\circ\|}{\lambda^{1/4}} \left( \left(\frac{17 \|\btheta^\circ\|}\mu \right)^2 + \frac{\|\bb_\lambda\|}{160 \mu} \right) \sqrt{\Psi(n, \delta)}
\end{align}
on the event $\cE_0 \cap \cE_1 \cap \cE_2^* \cap \cE_3$.
Finally, the triangle inequality and the bounds \eqref{eq:stoch_term_remainder_bound}, \eqref{eq:standardized_score_simplified_bound}, imply that, with probability at least $(1 - 13\delta / 2)$,
\begin{align*}
    \left\| \Sigma^{1/2} \left(\widehat \btheta - \btheta^* - \left( \Sigma^2 + 2 \lambda I_d\right)^{-1} \bzeta \right) \right\|
    &
    \leq \frac{19 \|\btheta^\circ\|}{\sqrt{3} \lambda^{3/4}} \left(1 + \sqrt{\frac{\Psi(n, \delta)}{27 \lambda}} \right) \Psi(n, \delta)
    \\&\quad
    + \frac{\|\btheta^\circ\|}{\lambda^{1/4}} \left( \left( \frac{17 \|\btheta^\circ\|}{\mu}\right)^2 + \frac{\|\bb_\lambda\|}{160 \mu} \right) \sqrt{ \Psi(n, \delta)}.
\end{align*}

\myendproof

\bibliographystyle{abbrvnat}
\bibliography{references}

\appendix

\section{Auxiliary results: properties of the Hessian of $\cL(\bups)$}
\label{sec:hessian}

Appendix \ref{sec:hessian} contains some useful properties of the Hessian of $\cL(\bups)$. Throughout this section, denote $\nabla^2 \cL(\bups)$ by $H = H(\bups)$ for brevity. The value of $\bups$ will always be clear from context. We will extensively use the block form of $H$:
\begin{equation}
    \label{eq:h_block_form}
    H =
    \begin{pmatrix}
        H_{\btheta\btheta} & H_{\btheta\bfeta} & H_{\btheta A} \\
        H_{\bfeta\btheta} & H_{\bfeta\bfeta} & H_{\bfeta A} \\
        H_{A\btheta} & H_{A\bfeta} & H_{AA}
    \end{pmatrix}.
\end{equation}
Here the diagonal blocks correspond to
\begin{align*}
    H_{\btheta\btheta} = A^\top A + \lambda I_d,
    \quad
    H_{\bfeta\bfeta} = 2 I_d,
    \quad \text{and} \quad
    H_{AA} = \diag \big(H_{\ba_1 \ba_1}, \dots, H_{\ba_d \ba_d}\big),
\end{align*}
while the non-diagonal ones are given by
\[
    H_{\btheta \bfeta} = H_{\bfeta \btheta}^\top = -A^\top,
    \quad
    H_{\btheta A} = H_{A \btheta}^\top = \begin{pmatrix} H_{\btheta\ba_1} & \dots & H_{\btheta\ba_d} \end{pmatrix},
    \quad
    H_{\bfeta A} = H_{A \bfeta}^\top = \begin{pmatrix} H_{\bfeta\ba_1} & \dots & H_{\bfeta\ba_d} \end{pmatrix},
\]
where, for any $j \in \{1, \dots, d\}$,
\begin{align*}
    H_{\ba_j \ba_j} = \mu^2 I_d + \btheta \btheta^\top,
    \quad
    H_{\btheta \ba_j} = (\ba_j^\top \btheta - \bfeta_j) I_d + \ba_j \btheta^\top
    \quad \text{and} \quad
    H_{\bfeta \ba_j} = - \be_j \btheta^\top.
\end{align*}
Throughout the proofs, we will often use a representation of $H$ in terms of the Kronecker product. Let us recall that, for any matrices $V \in \R^{p \times q}$ and $W \in \R^{r \times s}$, their Kronecker product $U \otimes W$ is a matrix of size $pq \times rs$, defined as
\[
    \begin{pmatrix}
        V_{11} W & \dots & V_{1q} W \\
        \vdots & \ddots & \vdots \\
        V_{p1} W & \dots & V_{pq} W 
    \end{pmatrix}.
\]
Then we can rewrite $H$ in the following form:
\begin{equation}
    \label{eq:h_block_form_kronecker}
    H =
    \begin{pmatrix}
        A^\top A + \lambda I_d & -A^\top & (A \btheta - \bfeta)^\top \otimes I_d + A^\top \otimes \btheta^\top \\
        -A & 2 I_d & -  I_d \otimes \btheta^\top \\
        (A \btheta - \bfeta) \otimes I_d + A \otimes \btheta & - I_d \otimes \btheta & I_d \otimes (\mu^2 I_d + \btheta \btheta^\top) \\
    \end{pmatrix}.
\end{equation}

The first result indicates that $\nabla^2 \cL(\bups)$ is positive definite on a set which is sufficiently large for our purposes. In particular, according to Lemma \ref{lemma: localization_probabilistic_conditions}, the proper choice of the parameters $\mu$, $\lambda$, and $\rho$ yields that both $\widehat \bups$ and $\bups^*$ belong to this set.

\begin{Lem}
    \label{lem:block-diagonal_lower_bound}
    Let us fix any $\rho \in [0, 1/7]$ and let $\bups = (\btheta, \bfeta, \rmvec(A))$ be any triplet from
    \[
        \Ups(\rho) = \left\{ \bups = (\btheta, \bfeta, \rmvec(A)) : (1 - \rho^2) \|\btheta\|^2 \leq \rho^2 \mu^2, \|\bfeta - A\btheta\|^2 \leq \rho^2 \mu^2 \lambda \right\}.
    \]
    Then it holds that
    \[
        \nabla^2 \cL(\bups) = H \succeq \frac{1 - 2 \rho}4 \, \ttD^2,
    \]
    where
    \[
        \ttD^2
        = \ttD^2(\bups)
        = \diag \left(H_{\btheta \btheta}, H_{\bfeta \bfeta}, H_{AA} \right)
        = \diag\left( A^\top A + \lambda I_d, 2 I_d, I_d \otimes (\mu^2 I_d + \btheta \btheta^\top) \right).
    \]
\end{Lem}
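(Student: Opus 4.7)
My plan is to reduce the operator inequality $H \succeq \frac{1-2\rho}{4}\ttD^2$ to an elementary quadratic-form estimate. Pick an arbitrary test direction $\bx = (\bv, \bw, V)$ with $\bv, \bw \in \R^d$ and $V \in \R^{d \times d}$ (viewed as the matrix-valued perturbation of $A$, with rows $\bv_1, \ldots, \bv_d$). Writing $\tfrac{1}{2}\|\bfeta - A\btheta\|^2 = \tfrac{1}{2}\|\bg\|^2$ with $\bg = \bfeta - A\btheta$ and differentiating twice along $\bx$, a short computation yields
\[
\bx^\top H \bx = \|\bw\|^2 + \mu^2 \|V\|_{\F}^2 + \lambda \|\bv\|^2 + \|\bw - A\bv - V\btheta\|^2 - 2 \bg^\top V \bv,
\]
while the target reads
\[
\bx^\top \ttD^2 \bx = \|A\bv\|^2 + \|V\btheta\|^2 + \lambda \|\bv\|^2 + 2 \|\bw\|^2 + \mu^2 \|V\|_{\F}^2
\]
(using $\sum_j (\bv_j^\top \btheta)^2 = \|V\btheta\|^2$).

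Next, I plan to dispose of the three ``bad'' quantities $\bg^\top V\bv$, the $V\btheta$ inside the residual, and the $\|V\btheta\|^2$ on the target right-hand side, using the two defining inequalities of $\Ups(\rho)$. By Cauchy--Schwarz and $\|\bg\| \leq \rho \mu \sqrt{\lambda}$,
\[
|2 \bg^\top V \bv| \leq 2 \rho \mu \sqrt{\lambda} \, \|V\|_{\F} \|\bv\| \leq \rho \mu^2 \|V\|_{\F}^2 + \rho \lambda \|\bv\|^2.
\]
The elementary bound $\|\mathbf{p} - \mathbf{q}\|^2 \geq \tfrac{1}{2} \|\mathbf{p}\|^2 - \|\mathbf{q}\|^2$ with $\mathbf{p} = \bw - A\bv$ and $\mathbf{q} = V\btheta$ peels $V\btheta$ off the residual, and the leftover $-\|V\btheta\|^2$ is absorbed via $\|V\btheta\|^2 \leq \|\btheta\|^2 \|V\|_{\F}^2 \leq \tfrac{\rho^2}{1-\rho^2} \mu^2 \|V\|_{\F}^2$, which uses the $\Ups(\rho)$ constraint on $\btheta$. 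Together these produce
\[
\bx^\top H \bx \geq \|\bw\|^2 + \tfrac{1}{2} \|\bw - A\bv\|^2 + (1-\rho) \lambda \|\bv\|^2 + \Bigl(1 - \rho - \tfrac{\rho^2}{1-\rho^2}\Bigr) \mu^2 \|V\|_{\F}^2.
\]

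Finally, to produce the $\|A\bv\|^2$ term required on the right-hand side, I split $\|\bw\|^2 = b \|\bw\|^2 + (1-b) \|\bw\|^2$ and invoke the identity $\min_{\bw}\{a_1 \|\bw\|^2 + a_2 \|\bw - \bu\|^2\} = \tfrac{a_1 a_2}{a_1 + a_2} \|\bu\|^2$, which yields $(1-b)\|\bw\|^2 + \tfrac{1}{2}\|\bw - A\bv\|^2 \geq \tfrac{1-b}{3-2b} \|A\bv\|^2$. The choice $b = (1-2\rho)/2$ saturates the $\|\bw\|^2$ coefficient of the target. After also using $\|V\btheta\|^2 \leq \tfrac{\rho^2}{1-\rho^2} \mu^2 \|V\|_{\F}^2$ inside $\bx^\top \ttD^2 \bx$, the lemma reduces to four scalar inequalities in $\rho$ (one per coefficient of $\|\bw\|^2$, $\lambda \|\bv\|^2$, $\|A\bv\|^2$, and $\mu^2 \|V\|_{\F}^2$). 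Three are immediate for $\rho \in [0,1/2]$; the binding one (for $\mu^2 \|V\|_{\F}^2$) rearranges to $3 - 2\rho - 8\rho^2 + 4\rho^3 \geq 0$, which holds comfortably for $\rho \leq 1/7$. The main obstacle is not conceptual but combinatorial: picking the AM--GM weights and the split parameter $b$ so that all four target coefficients land at or above $(1-2\rho)/4$ at once, with essentially no slack in the extraction of $\|A\bv\|^2$ -- which is what pins the overall constant at $1/4$.
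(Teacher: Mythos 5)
Your proof is correct and takes a genuinely different route from the paper's. The paper expands the quadratic form block by block and relies on a separate preparatory result, Lemma~\ref{lem:non-diagonal_blocks_bound}, which controls the operator norms of the normalized off-diagonal blocks $H_{\bfeta A}H_{AA}^{-1/2}$ and $H_{\btheta\btheta}^{-1/2}H_{\btheta A}H_{AA}^{-1/2}$; the cross terms are then absorbed into the diagonal by Cauchy--Schwarz with carefully tuned weights. You instead read $\bx^\top H\bx$ off the second-order expansion of $\cL$ along $(\bv,\bw,V)$, which collapses most cross terms into the single completed square $\|\bw-A\bv-V\btheta\|^2$ and leaves only the cubic remainder $-2\bg^\top V\bv$ to estimate; the two defining inequalities of $\Ups(\rho)$ then enter exactly where needed, to control $\|\bg\|$ and $\|\btheta\|$. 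This is more elementary (no preparatory lemma, no block normalization) and more transparent about the constant: with $b=(1-2\rho)/2$ both the $\|\bw\|^2$ and $\|A\bv\|^2$ coefficients saturate exactly at $\rho=0$, which is what forces $\tfrac{1-2\rho}{4}$. Incidentally, that means the cubic inequality $3-2\rho-8\rho^2+4\rho^3\ge 0$ you flag as binding actually has a great deal of slack on $[0,1/7]$; the genuine constraint is the $\|\bw\|^2$/$\|A\bv\|^2$ pair, though this does not affect correctness. The paper's extra machinery pays off elsewhere: Lemma~\ref{lem:non-diagonal_blocks_bound} is reused verbatim in the proof of Lemma~\ref{lem:standardized_score_chi_bound}, which your more direct argument would not supply.
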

We postpone the proof of Lemma \ref{lem:block-diagonal_lower_bound} to Section \ref{sec:lem_block_diagonal_lower_bound_proof} and move to the following auxiliary result. The error-in-operator estimate $\widehat \btheta$ obviously depends on the nuisance parameter $\chi = (\bfeta, A)$. Let
\[
    H_{\btheta \chi} = H_{\chi \btheta}^\top =
    \begin{pmatrix}
        H_{\btheta \bfeta} & H_{\btheta A}    
    \end{pmatrix}
    \quad \text{and} \quad
    H_{\chi \chi} =
    \begin{pmatrix}
        H_{\bfeta\bfeta} & H_{\bfeta A} \\
        H_{A\bfeta} & H_{AA}
    \end{pmatrix}
\]
stand for the corresponding blocks $H$. Our analysis of bias and variance of the error-in-operator estimate $\widehat \btheta$ requires examination of $H^{-1}$. According to the block-matrix inversion formula, it holds that
\begin{equation}
    \label{eq:h_block_inversion}
    H^{-1} =
    \begin{pmatrix}
        \breve H_{\btheta \btheta}^{-1} & -\breve H_{\btheta \btheta}^{-1} H_{\btheta \chi} H_{\chi \chi}^{-1} \\
        -\breve H_{\chi \chi}^{-1} H_{\chi \btheta} H_{\btheta\btheta}^{-1} & \breve H_{\chi \chi}^{-1}
    \end{pmatrix}
    =
    \begin{pmatrix}
        \breve H_{\btheta \btheta}^{-1} & -H_{\btheta \btheta}^{-1} H_{\btheta \chi} \breve H_{\chi \chi}^{-1} \\
        -H_{\chi \chi}^{-1} H_{\chi \btheta} \breve H_{\btheta\btheta}^{-1} & \breve H_{\chi \chi}^{-1}
    \end{pmatrix}.
\end{equation}
We will be particularly interested in
\[
    \breve H_{\btheta \btheta}
    = H_{\btheta \btheta} - H_{\btheta \chi} H_{\chi \chi}^{-1} H_{\chi \btheta} = H / H_{\chi \chi},
\]
which is nothing but the Schur complement of
$H_{\chi \chi}$. Before we proceed, we have to introduce additional notation. Let $\m(\cdot)$ be an arbitrary probability measure on the space of triplets $\bups = (\btheta, \bfeta, A)$. For a function $f(\bups)$ (scalar, vector- or matrix-valued), we denote
\[
    \biasInner f^{\, \m} = \int f(\bups) \, \rmd \m(\bups).
\]
In particular, $\biasInner{H}_{\chi \chi}^{\,\m}$ stands for 
\begin{equation}
    \label{eq:h_chi_chi_m}
    \biasInner{H}_{\chi \chi}^{\,\m} = \int H_{\chi \chi}(\bups) \, \rmd \m(\bups).
\end{equation}
The next proposition shows that the inverse of $\biasInner{H}_{\chi \chi}^{\,\m}$ (and, hence, $H_{\chi \chi}^{-1}$ as well) has a nice structure.

\begin{Prop}
    \label{proposition: inverse of nuisance parameters}
    Let $\rho \leq 1/2$ and let $\m(\cdot)$ be an arbitrary probability measure supported on
    \begin{align}
    \label{eq: norm theta set}
        \{(\btheta, \bfeta, A) \in \R^{d} \times \R^d \times \R^{d \times d} : \Vert \btheta \Vert \le \rho \mu \}.
    \end{align}
    Denote the inverse of $\biasInner{H}_{\chi \chi}^{\,\m}$ defined in \eqref{eq:h_chi_chi_m} by $J$. Then the matrix $J$ can be represented in the form
    \begin{align*}
        J =
        \begin{pmatrix}
            J_{\bfeta \bfeta} & J_{\bfeta A} \\
            J_{A \bfeta} & J_{AA}
        \end{pmatrix}
        =
        \begin{pmatrix}
            (0.5 + r_1) I_d & r_2 I_d \otimes \Tilde{\btheta}^\top \\
            r_2 I_d \otimes \Tilde{\btheta} & I_d \otimes R_3
        \end{pmatrix},
        \quad \text{where $\Tilde{\btheta} = \mu^2 \left( \mu^2 I_d + \avg{\btheta \btheta^\top}^{\m} \right)^{-1}\avg{\btheta}^{\,\m}$,}
    \end{align*}
    and the non-negative numbers $r_1, r_2 $ and the matrix $R_3 \in \R^{d \times d}$ satisfy the inequalities
    \[
        0 \le r_1 \le \rho^2,
        \quad
        0 \le r_2 \le \frac1{\mu^{2}},
        \quad \text{and} \quad
        \Vert R_3 \Vert \le \frac{2}{\mu^{2}}.
    \]
    In addition, there exists $r_2' \in [0, \rho^2 / \mu^2]$, $r_3' \in [0, \mu^{-4}]$, and $R_3' \in \R^{d \times d}$, $\Vert R_3'\Vert \le \mu^{-4}$, such that
    \[
        r_2 = \frac{1}{2\mu^2} + r_2',
        \quad \text{and} \quad
        R_3 = \mu^{-2} I_d + R_3' \biasInner{\btheta \btheta^\top}^{\m} + r_3' \Tilde{\btheta} \Tilde{\btheta}^\top.
    \]
\end{Prop}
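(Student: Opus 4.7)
\textbf{Proof plan for Proposition \ref{proposition: inverse of nuisance parameters}.} \;
My plan is to invert $\avg H_{\chi\chi}^{\,\m}$ explicitly using the block inversion formula, exploiting heavily the Kronecker product structure of each block. Linearity of averaging combined with the explicit form of $H_{\chi\chi}$ coming from \eqref{eq:h_block_form_kronecker} gives
\[
    \avg H_{\chi\chi}^{\,\m}
    = \begin{pmatrix}
        2 I_d & - I_d \otimes \bar\btheta^\top \\
        - I_d \otimes \bar\btheta & I_d \otimes M
    \end{pmatrix},
    \qquad
    \bar\btheta = \avg{\btheta}^{\,\m},
    \qquad
    M = \mu^2 I_d + \avg{\btheta\btheta^\top}^{\,\m}.
\]
The key algebraic identity I would use is
$\bigl(I_d \otimes \bar\btheta^\top\bigr)\bigl(I_d \otimes M^{-1}\bigr)\bigl(I_d \otimes \bar\btheta\bigr) = \bigl(\bar\btheta^\top M^{-1}\bar\btheta\bigr)\, I_d$,
since $\bar\btheta^\top M^{-1}\bar\btheta$ is a scalar. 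Writing $s = \bar\btheta^\top M^{-1}\bar\btheta$, the Schur complement of the $AA$-block collapses to $(2-s) I_d$, so all blocks of $J$ will be scalar multiples of $I_d$ or of $I_d \otimes (\cdot)$, matching the claimed shape.

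Applying the standard block-inversion formula then yields
\[
    J_{\bfeta\bfeta} = \frac{1}{2-s}\, I_d,
    \qquad
    J_{\bfeta A} = \frac{1}{\mu^2(2-s)}\, I_d \otimes \Tilde\btheta^{\,\top},
    \qquad
    J_{AA} = I_d \otimes \!\Bigl(M^{-1} + \frac{1}{\mu^4(2-s)}\,\Tilde\btheta \Tilde\btheta^{\,\top}\Bigr),
\]
with $\Tilde\btheta = \mu^2 M^{-1}\bar\btheta$ exactly as in the statement. From here I would read off $r_1 = s/(2(2-s))$, $r_2 = 1/(\mu^2(2-s))$, and $R_3 = M^{-1} + r_3' \Tilde\btheta\Tilde\btheta^\top$ with $r_3' = 1/(\mu^4(2-s))$. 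The bounds on $r_1, r_2, r_3'$ then follow from the basic estimate $0 \leq s \leq \rho^2 \leq 1/4$, which holds because Jensen's inequality gives $\|\bar\btheta\| \leq \rho \mu$ on the support of $\m$ and $M \succeq \mu^2 I_d$ gives $M^{-1} \preceq \mu^{-2} I_d$. The decomposition $r_2 = 1/(2\mu^2) + r_2'$ is immediate from $1/(2-s) - 1/2 = s/(2(2-s))$.

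The only piece needing a separate argument is the Woodbury-type expansion of $M^{-1}$. Since $M - \mu^2 I_d = \avg{\btheta\btheta^\top}^{\,\m}$ commutes with $M$, one has
\[
    M^{-1} - \mu^{-2} I_d = -\mu^{-2} M^{-1} \avg{\btheta\btheta^\top}^{\,\m},
\]
so the choice $R_3' = -\mu^{-2} M^{-1}$ gives the required representation $R_3 = \mu^{-2} I_d + R_3' \avg{\btheta\btheta^\top}^{\,\m} + r_3' \Tilde\btheta\Tilde\btheta^\top$ with $\|R_3'\| \leq \mu^{-4}$. The operator-norm bound $\|R_3\| \leq 2/\mu^2$ then drops out from $\|\Tilde\btheta\| \leq \|\bar\btheta\| \leq \rho\mu$ together with $r_3' \leq \mu^{-4}$, giving $\|r_3' \Tilde\btheta\Tilde\btheta^\top\| \leq \rho^2/\mu^2 \leq 1/(4\mu^2)$. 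I expect no real obstacle here; the computation is routine once one commits to tracking the $I_d \otimes (\cdot)$ structure throughout, and the hardest part is purely notational bookkeeping of Kronecker identities.
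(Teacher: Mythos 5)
Your proposal is correct and takes essentially the same route as the paper: apply the standard block-inversion formula to the Kronecker-structured $\avg{H}_{\chi\chi}^{\m}$, observe that the Schur complement collapses to the scalar multiple $(2-s)I_d$ with $s = \bar\btheta^\top M^{-1}\bar\btheta$, read off the three blocks, and bound the remainders using $\|\bar\btheta\|\leq\rho\mu$ (Jensen) and $M \succeq \mu^2 I_d$. Your direct estimate $s \leq \mu^{-2}\|\bar\btheta\|^2 \leq \rho^2$ is slightly cleaner than the Sherman--Morrison route the paper takes, and your sign $R_3' = -\mu^{-2}M^{-1}$ is actually the consistent one (the paper's displayed $R_3'$ drops a minus sign, though this is harmless since only $\|R_3'\|\leq\mu^{-4}$ is used).
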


We move the proof of Proposition \ref{proposition: inverse of nuisance parameters} to Section \ref{sec: prop inverse of nuisance parameters} and proceed with a lemma allowing us to control the smoothness of the Hessian of $\cL(\bups)$.

\begin{Lem}
    \label{lem:second_derivative_lipschitzness}
    Let us fix arbitrary $\lambda_0 > 0$ and $\mu_0 > 0$ and denote
    \[
        \ttD_0^2
        = \diag\left( (A^*)^\top A^* + \lambda_0 I_d, 2 I_d, I_d \otimes \big(\mu_0^2 I_d + \btheta^* (\btheta^*)^\top \big) \right).
    \]
    Let $\bups^*$ be as defined in \eqref{eq:ups_star}.
    Then, for any vectors $\bu$ and $\bups$ of dimension $2d + d^2$, it holds that
    \[
        \left| \bu^\top \left( \nabla^2 \cL(\bups^* + \bups) - \nabla^2 \cL(\bups^*) \right) \bu \right|
        \leq \frac{4}{\mu_0 \sqrt{\lambda_0}} \, \| \ttD_0 \bups \| \| \ttD_0 \bu\|^2 + \frac{2}{\mu_0^2 \lambda_0} \| \ttD_0 \bups \|^2 \| \ttD_0 \bu\|^2.
    \]
\end{Lem}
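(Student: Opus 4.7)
Because $\cL$ is a polynomial of degree four in $\bups$, the map $\bgamma \mapsto \bu^\top \nabla^2 \cL(\bups^* + \bgamma) \bu$ is a polynomial of degree two in $\bgamma$ for every fixed $\bu$. Hence the increment $\bu^\top (\nabla^2 \cL(\bups^* + \bgamma) - \nabla^2 \cL(\bups^*)) \bu$ is the exact sum of a piece linear in $\bgamma$ and a piece quadratic in $\bgamma$, which must then be bounded by the first and second summands of the claimed inequality, respectively. Throughout the argument I write $\bu = (\bu_\theta, \bu_\eta, \bu_A)$ with $\bu_A = \rmvec(U^\top)$ for some $U \in \R^{d \times d}$, and similarly decompose the perturbation as $\bgamma = (\btheta_\gamma, \bfeta_\gamma, A_\gamma)$.

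The key algebraic input is the sum-of-squares identity
\[
    \bu^\top \nabla^2 \cL(\bups) \bu
    = \lambda \|\bu_\theta\|^2 + \|\bu_\eta\|^2 + \mu^2 \|U\|_{\F}^2 + \|A \bu_\theta + U\btheta - \bu_\eta\|^2 + 2 (A\btheta - \bfeta)^\top U \bu_\theta,
\]
which I would verify directly from the block-Kronecker form \eqref{eq:h_block_form_kronecker}, grouping together the contributions of $A \bu_\theta$, $\bu_\eta$, and $U\btheta$ into the single square. The first three summands are independent of $\bups$, and the remaining two depend on $\bups$ only through the compound quantities $A \btheta - \bfeta$ and $A \bu_\theta + U \btheta$. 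Setting
\[
    \bv^* = A^* \bu_\theta + U \btheta^* - \bu_\eta, \quad \tilde \bv = A_\gamma \bu_\theta + U \btheta_\gamma, \quad \tilde\bw_1 = A^*\btheta_\gamma + A_\gamma \btheta^* - \bfeta_\gamma, \quad \tilde \bw_2 = A_\gamma \btheta_\gamma,
\]
an elementary substitution and expansion then yields
\[
    \bu^\top \bigl(\nabla^2 \cL(\bups^* + \bgamma) - \nabla^2 \cL(\bups^*) \bigr) \bu
    = \underbrace{2 (\bv^*)^\top \tilde \bv + 2 \tilde \bw_1^\top U \bu_\theta}_{\text{linear in } \bgamma}
    + \underbrace{\|\tilde \bv\|^2 + 2 \tilde \bw_2^\top U \bu_\theta}_{\text{quadratic in } \bgamma}.
\]

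Next I would bound each of the four summands by Cauchy-Schwarz combined with the elementary $\ttD_0$-norm comparisons $\max\{\|A^* \bu_\theta\|, \|U\btheta^*\|, \sqrt 2 \|\bu_\eta\|, \sqrt{\lambda_0}\|\bu_\theta\|, \mu_0 \|U\|_{\F}\} \le \|\ttD_0 \bu\|$ read off from the definition of $\ttD_0$, together with the analogous five inequalities for $\bgamma$. These deliver the useful estimates $\|\tilde \bv\| \le 2 \|\ttD_0 \bgamma\|\|\ttD_0 \bu\|/(\mu_0 \sqrt{\lambda_0})$, $\|\tilde \bw_2\| \le \|\ttD_0 \bgamma\|^2/(\mu_0 \sqrt{\lambda_0})$, $\|U \bu_\theta\| \le \|\ttD_0 \bu\|^2/(\mu_0 \sqrt{\lambda_0})$, and bounds of the form $\|\bv^*\|, \|\tilde\bw_1\| \lesssim \|\ttD_0 \bu\|$ and $\|\ttD_0 \bgamma\|$ respectively; plugging these into the displayed splitting produces an inequality of exactly the announced shape
\[
    \frac{C_1}{\mu_0 \sqrt{\lambda_0}} \|\ttD_0 \bgamma\| \|\ttD_0 \bu\|^2 + \frac{C_2}{\mu_0^2 \lambda_0} \|\ttD_0 \bgamma\|^2 \|\ttD_0 \bu\|^2.
\]

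The one real obstacle is constant tracking: a naive triangle-inequality estimate on $\|\bv^*\|$ or $\|\tilde \bw_1\|$ loses a factor of order $\sqrt 3$ across their three summands, and a coarse Cauchy-Schwarz on $\|\tilde \bv\|^2$ loses a further factor of $2$, so the naive estimate produces constants significantly larger than the announced $C_1 = 4$ and $C_2 = 2$. To obtain the declared values I would expand the linear part into its nine scalar summands, rewrite each as a Frobenius inner product $\langle X, \ba \bb^\top \rangle_{\F}$ with $\|X\|_{\F}$ controlled by one $\ttD_0$-normalised quantity and $\|\ba \bb^\top\|_{\F} = \|\ba\|\|\bb\|$ contributing the remaining two $\ttD_0$-normalised factors, and similarly exploit the cross-term identity $\|\tilde\bv\|^2 = \|A_\gamma \bu_\theta\|^2 + 2 \langle A_\gamma \bu_\theta, U \btheta_\gamma\rangle + \|U\btheta_\gamma\|^2$ for the quadratic part. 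All other steps are routine.
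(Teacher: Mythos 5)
Your exact decomposition of $\bu^\top(\nabla^2\cL(\bups^*+\bgamma)-\nabla^2\cL(\bups^*))\bu$ into a part linear in $\bgamma$ and a part quadratic in $\bgamma$ is correct and is, up to relabeling, identical to the paper's split into the cubic term $\cT$ and the quartic term $\cQ$. The sum-of-squares identity you write for $\bu^\top\nabla^2\cL(\bups)\bu$ is correct (expanding $\|\bu_\eta - A\bu_\theta - U\btheta\|^2$ reproduces the $2\|\bu_\eta\|^2$ block), and the $\ttD_0$-norm comparisons you list all follow directly from the block form of $\ttD_0^2$, since $\|\ttD_0\bu\|^2 = \|A^*\bu_\theta\|^2 + \lambda_0\|\bu_\theta\|^2 + 2\|\bu_\eta\|^2 + \mu_0^2\|U\|_{\F}^2 + \|U\btheta^*\|^2$.

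The gap is in the constant tracking, and it is wider than your own account suggests. For the linear-in-$\bgamma$ part, your stated estimates give $\|\bv^*\|\le\sqrt{5/2}\,\|\ttD_0\bu\|$, $\|\tilde\bv\|\le\tfrac{2}{\mu_0\sqrt{\lambda_0}}\|\ttD_0\bgamma\|\|\ttD_0\bu\|$, $\|\tilde\bw_1\|\le\sqrt{5/2}\,\|\ttD_0\bgamma\|$, and $\|U\bu_\theta\|\le\tfrac{1}{\mu_0\sqrt{\lambda_0}}\|\ttD_0\bu\|^2$; plugging these into $2|(\bv^*)^\top\tilde\bv| + 2|\tilde\bw_1^\top U\bu_\theta|$ produces a constant of roughly $6\sqrt{5/2}\approx 9.5$, far above the required $4$. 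Refining this by expanding into nine scalar summands and applying Cauchy--Schwarz term by term does not help, because each single term with its factor of $2$ already saturates a bound of order $\tfrac{2}{\mu_0\sqrt{\lambda_0}}\|\ttD_0\bgamma\|\|\ttD_0\bu\|^2$, and the nine terms draw on overlapping pieces of the $\|\ttD_0\bu\|^2$ and $\|\ttD_0\bgamma\|^2$ budgets. What is actually needed, and what your plan is missing, is a joint aggregation step: the paper rewrites the whole cubic form as $\|\ttD_0\bgamma\|\cdot\bw^\top\Phi\,\bw$ where $\bw = \big(\sqrt{\|A^*\bu_\theta\|^2+\lambda_0\|\bu_\theta\|^2},\ \sqrt{2}\|\bu_\eta\|,\ \sqrt{\mu_0^2\|U\|_{\F}^2+\|U\btheta^*\|^2}\big)^\top$ satisfies $\|\bw\|^2=\|\ttD_0\bu\|^2$, and then verifies by direct computation that the fixed $3\times3$ matrix $\Phi$ has $\|\Phi\|<2$, which is exactly where the constant $4$ comes from. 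The quadratic-in-$\bgamma$ part is handled analogously, by bounding the operator norm of a $2\times 2$ matrix parametrized by an angle. These operator-norm steps are the crux of the lemma; dismissing them as ``routine'' after the decomposition is where the proposal breaks down.
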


The proof of Lemma \ref{lem:second_derivative_lipschitzness} is moved to Section \ref{sec:lem_second_derivative_lipschitzness_proof}.

\subsection{Proof of Lemma \ref{lem:block-diagonal_lower_bound}}
\label{sec:lem_block_diagonal_lower_bound_proof}

Note that, for any $\bu, \bv \in \R^d$ and $\bw \in \R^{d^2}$, the quadratic form
\[
    \begin{pmatrix} \bu^\top & \bv^\top & \bw^\top \end{pmatrix}
    \begin{pmatrix}
    H_{\btheta\btheta} & H_{\btheta\bfeta} & H_{\btheta A} \\
    H_{\bfeta\btheta} & H_{\bfeta\bfeta} & H_{\bfeta A} \\
    H_{A\btheta} & H_{A\bfeta} & H_{AA}
    \end{pmatrix}
    \begin{pmatrix} \bu \\ \bv \\ \bw \end{pmatrix}
\]
is equal to
\begin{align}
    \label{eq:hessian_quadratic_form}
    &\notag
    \bu^\top H_{\btheta\btheta} \bu + \bv^\top H_{\bfeta\bfeta} \bv + \bw^\top H_{AA} \bw
    + 2\bu^\top H_{\btheta\bfeta} \bv + 2\bu^\top H_{\btheta A} \bw + 2\bv^\top H_{\bfeta A} \bw
    \\&
    = \|A \bu\|^2 + \lambda \|\bu\|^2 + 2 \|\bv\|^2 + \|H_{AA}^{1/2} \bw\|^2 + 2 \bu^\top A^\top \bv + 2\bu^\top H_{\btheta A} \bw + 2\bv^\top H_{\bfeta A} \bw. 
\end{align}
Our goal is to show that the right-hand side of \eqref{eq:hessian_quadratic_form} is not smaller than
\[
    \frac{1 - 2\rho}4 \left( \|A \bu\|^2 + \lambda \|\bu\|^2 + 2 \|\bv\|^2 + \|H_{AA}^{1/2} \bw\|^2 \right).
\]
For this purpose, we prove the following lemma in Appendix \ref{sec:lem_non-diagonal_blocks_bound_proof}.
\begin{Lem}
    \label{lem:non-diagonal_blocks_bound}
    Under the conditions of Lemma \ref{lem:block-diagonal_lower_bound}, it holds that
    \[
        \|H_{\bfeta A} H_{AA}^{-1/2}\| \leq \rho
        \quad \text{and} \quad
        \|H_{\btheta \btheta}^{-1/2} H_{\btheta A} H_{AA}^{-1/2}\| \leq \rho \sqrt{2}.
    \]
\end{Lem}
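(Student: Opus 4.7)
\textbf{Plan for the proof of Lemma~\ref{lem:non-diagonal_blocks_bound}.}
\emph{First bound.} The plan is to directly compute $\|H_{\bfeta A} H_{AA}^{-1/2}\|$ by exploiting the Kronecker-product expressions $H_{\bfeta A} = -I_d \otimes \btheta^\top$ and $H_{AA} = I_d \otimes (\mu^2 I_d + \btheta \btheta^\top)$, so that
$H_{\bfeta A} H_{AA}^{-1/2} = -I_d \otimes \bigl[\btheta^\top (\mu^2 I_d + \btheta \btheta^\top)^{-1/2}\bigr]$
and therefore $\|H_{\bfeta A} H_{AA}^{-1/2}\| = \|\btheta^\top (\mu^2 I_d + \btheta \btheta^\top)^{-1/2}\|$. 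Since $\btheta$ is an eigenvector of $\mu^2 I_d + \btheta\btheta^\top$ with eigenvalue $\mu^2 + \|\btheta\|^2$, this operator norm equals $\|\btheta\|/\sqrt{\mu^2 + \|\btheta\|^2}$. The inequality $(1-\rho^2) \|\btheta\|^2 \le \rho^2 \mu^2$ guaranteed by $\bups \in \Ups(\rho)$ rearranges to exactly $\|\btheta\|^2/(\mu^2 + \|\btheta\|^2) \le \rho^2$, finishing the first bound.

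\emph{Second bound: setup.} I would reduce the claim to a variational bound. Write
\[
\|H_{\btheta \btheta}^{-1/2} H_{\btheta A} H_{AA}^{-1/2}\| = \sup_{\|\bu\|=\|\bw\|=1} \bu^\top H_{\btheta \btheta}^{-1/2} H_{\btheta A} H_{AA}^{-1/2} \bw,
\]
and substitute $\bu' = H_{\btheta\btheta}^{-1/2}\bu$ and $\bw' = H_{AA}^{-1/2}\bw$. The constraints become $\|A\bu'\|^2 + \lambda\|\bu'\|^2 \le 1$ and, identifying $\bw'$ with a matrix $W \in \R^{d\times d}$ whose rows are its blocks, $\mu^2 \|W\|_{\F}^2 + \|W\btheta\|^2 \le 1$. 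Using the explicit form $H_{\btheta A} = (A\btheta - \bfeta)^\top \otimes I_d + A^\top \otimes \btheta^\top$, a direct computation with the block structure of $H_{\btheta \ba_j}$ gives
\[
\bu'^\top H_{\btheta A} \bw' = (A\btheta - \bfeta)^\top W \bu' + (A\bu')^\top W \btheta.
\]

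\emph{Second bound: optimization.} Let $\alpha = \|A\bu'\|$ and $\beta = \|W\btheta\|$, so that $\sqrt{\lambda}\|\bu'\| \le \sqrt{1-\alpha^2}$ and $\mu \|W\|_{\F} \le \sqrt{1-\beta^2}$. The two terms are bounded by Cauchy--Schwarz:
\[
|(A\btheta - \bfeta)^\top W\bu'| \le \|A\btheta - \bfeta\| \cdot \|W\|_{\F} \cdot \|\bu'\| \le \rho\mu\sqrt{\lambda} \cdot \tfrac{\sqrt{1-\beta^2}}{\mu}\cdot \tfrac{\sqrt{1-\alpha^2}}{\sqrt{\lambda}} = \rho\sqrt{(1-\alpha^2)(1-\beta^2)},
\]
and $|(A\bu')^\top W\btheta| \le \alpha\beta$. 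I would also verify the auxiliary fact that $\beta \le \rho$ by chaining $\beta \le \|W\|_{\F} \|\btheta\|$ with the two constraints, which gives $\beta^2 \le (1-\beta^2)\rho^2/(1-\rho^2)$, i.e.\ $\beta \le \rho$.

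\emph{Second bound: conclusion.} Finally, maximize $f(\alpha,\beta) = \rho\sqrt{(1-\alpha^2)(1-\beta^2)} + \alpha\beta$ over $\alpha\in[0,1]$ at fixed $\beta$. Setting $\partial_\alpha f = 0$ yields $\alpha^2 = \beta^2/(\rho^2(1-\beta^2)+\beta^2)$, which substituted back gives $f = \sqrt{\rho^2 + \beta^2 - \rho^2\beta^2}$; using $\beta \le \rho$ then bounds this by $\rho\sqrt{2-\rho^2} \le \rho\sqrt{2}$. The main obstacle is isolating the correct constant $\rho\sqrt{2}$: naive Cauchy--Schwarz only produces $2\rho$, so one must retain the coupling between $\|W\|_{\F}$ and $\|W\btheta\|$ (and between $\|A\bu'\|$ and $\|\bu'\|$) through the parameters $\alpha, \beta$, and then do the explicit one-variable optimization.
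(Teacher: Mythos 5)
Your proof is correct, but it takes a genuinely different route from the paper's. For the first bound, the two arguments are essentially the same: both reduce to $\|\btheta\|^2/(\mu^2+\|\btheta\|^2) \le \rho^2$, the defining inequality of $\Ups(\rho)$; the paper phrases it as the semidefinite inequality $H_{A\bfeta}H_{\bfeta A} \preceq \rho^2 H_{AA}$, you phrase it via the eigenvector of $\mu^2 I_d + \btheta\btheta^\top$ along $\btheta$, and these are identical in content. For the second bound, however, the paper uses the Woodbury identity to compute $H_{AA}^{-1}$ in closed form and then bounds the quadratic form $\bu^\top H_{\btheta A}H_{AA}^{-1}H_{A\btheta}\bu$ directly, showing it is at most $2\rho^2\,\bu^\top H_{\btheta\btheta}\bu$; this avoids any optimization but requires juggling the explicit Kronecker-product expansion of $H_{\btheta A}H_{AA}^{-1}H_{A\btheta}$. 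Your argument instead works with the bilinear form $\bu'^\top H_{\btheta A}\bw' = (A\btheta-\bfeta)^\top W\bu' + (A\bu')^\top W\btheta$ and tracks the coupling between $\|W\|_\F$ and $\|W\btheta\|$ via the variables $\alpha,\beta$ before doing a one-variable optimization. Your observation that naive Cauchy--Schwarz gives only $2\rho$ and that one must retain the $(\alpha,\beta)$ coupling is exactly right, and it makes the source of the $\sqrt{2}$ factor transparent; incidentally, your calculation actually yields the slightly sharper bound $\rho\sqrt{2-\rho^2}$. The trade-off is that the paper's algebraic route is shorter once the Woodbury formula is written down, while yours avoids inverting $H_{AA}$ entirely and reads more like a standard argument about operator norms of coupled bilinear forms. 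Both proofs correctly verify the auxiliary inequalities needed from $\bups \in \Ups(\rho)$.
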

The desired lower bound easily follows from Lemma \ref{lem:non-diagonal_blocks_bound}. Indeed, using to this lemma and the Cauchy-Schwarz inequality $2 \sqrt{2} ab \leq 3a^2 / 4 + 8 b^2 / 3$, we obtain that
\[
    2 \bu^\top H_{\btheta A} \bw
    \geq - 2 \|H_{\btheta\btheta}^{1/2} \bu\| \|H_{\btheta\btheta}^{-1/2} H_{\btheta A} H_{AA}^{1/2}\| \|H_{AA}^{1/2} \bw\|
    \geq -\rho \left( \frac34 \|A \bu\|^2 + \frac{3 \lambda}4 \|\bu\|^2 + \frac83 \|H_{AA}^{1/2} \bw\|^2 \right)
\]
and
\[
    2\bv^\top H_{\bfeta A} \bw
    \geq -2 \|\bv\| \|H_{\bfeta A} H_{AA}\| \|H_{AA}^{1/2} \bw\|
    \geq -\rho \left( \|\bv\|^2 + \|H_{AA}^{1/2} \bw\|^2 \right).
\]
This yields that
\begin{align*}
    &
    \|A \bu\|^2 + \lambda \|\bu\|^2 + 2 \|\bv\|^2 + \bw^\top H_{AA} \bw + 2 \bu^\top A^\top \bv + 2\bu^\top H_{\btheta A} \bw + 2\bv^\top H_{\bfeta A} \bw
    \\&
    \geq \left(1 - \frac{3 \rho}4 \right) \left( \|A \bu\|^2 + \lambda \|\bu\|^2 \right)
    + 2 \left(1 - \frac{\rho}2 \right) \|\bv\|^2
    + \left(1 - \frac{8 \rho}3 - \rho \right) \|H_{AA}^{1/2} \bw\|^2
    + 2 \bu^\top A^\top \bv.
\end{align*}
Taking into account the lower bound
\[
    \|A \bu\|^2 + 2 \|\bv\|^2 + 2 \bu^\top A^\top \bv
    \geq \|A \bu\|^2 + 2 \|\bv\|^2 - \left( \frac23 \|A \bu\|^2 + \frac32 \|\bv\|^2 \right)
    \geq \frac13 \|A \bu\|^2 + \frac12 \|\bv\|^2,
\]
we conclude that
\begin{align*}
    &
    \|A \bu\|^2 + \lambda \|\bu\|^2 + 2 \|\bv\|^2 + \bw^\top H_{AA} \bw + 2 \bu^\top A^\top \bv + 2\bu^\top H_{\btheta A} \bw + 2\bv^\top H_{\bfeta A} \bw
    \\&
    \geq \left(\frac13 - \frac{3 \rho}4 \right) \left( \|A \bu\|^2 + \lambda \|\bu\|^2 \right)
    + 2 \left(\frac14 - \frac{\rho}2 \right) \|\bv\|^2
    + \left(1 - \frac{8 \rho}3 - \rho \right) \|H_{AA}^{1/2} \bw\|^2.
\end{align*}
Finally, since $\rho \in [0, 1/7]$, it holds that
\[
    \frac13 - \frac{3 \rho}4
    \geq \frac14 - \left( \frac34 - \frac14 \right) \rho
    = \frac14 - \frac\rho2,
    \quad \text{and} \quad
    1 - \frac{11 \rho}3
    \geq \frac{10}{21}
    \geq \frac14 - \frac\rho2,
\]
and we obtain the desired bound:
\begin{align*}
    &
    \|A \bu\|^2 + \lambda \|\bu\|^2 + 2 \|\bv\|^2 + \bw^\top H_{AA} \bw + 2 \bu^\top A^\top \bv + 2\bu^\top H_{\btheta A} \bw + 2\bv^\top H_{\bfeta A} \bw
    \\&
    \geq \frac{1 - 2\rho}4 \left( \|A \bu\|^2 + \lambda \|\bu\|^2 + 2 \|\bv\|^2 + \|H_{AA}^{1/2} \bw\|^2 \right),
\end{align*}
which is equivalent to
\begin{align*}
    &
    \begin{pmatrix} \bu^\top & \bv^\top & \bw^\top \end{pmatrix}
    \begin{pmatrix}
    H_{\btheta\btheta} & H_{\btheta\bfeta} & H_{\btheta A} \\
    H_{\bfeta\btheta} & H_{\bfeta\bfeta} & H_{\bfeta A} \\
    H_{A\btheta} & H_{A\bfeta} & H_{AA}
    \end{pmatrix}
    \begin{pmatrix} \bu \\ \bv \\ \bw \end{pmatrix}
    \\&
    \geq \frac{1 - 2\rho}4 \begin{pmatrix} \bu^\top & \bv^\top & \bw^\top \end{pmatrix}
    \begin{pmatrix}
    H_{\btheta\btheta} & O & O \\
    O & H_{\bfeta\bfeta} & O \\
    O & O & H_{AA}
    \end{pmatrix}
    \begin{pmatrix} \bu \\ \bv \\ \bw \end{pmatrix}.
\end{align*}
The proof is finished.

\myendproof

\subsection{Proof of Proposition~\ref{proposition: inverse of nuisance parameters}}
\label{sec: prop inverse of nuisance parameters}

Since the measure $\m$ is clear from the context, we omit it throughout the proof and write $\avg{H}$ and $\avg{\btheta}$, instead of $\avg{H}^\m$ and $\avg{\btheta}^\m$ for brevity.
Let us note that, according to \eqref{eq:h_block_form_kronecker}, the matrix $H_{\chi \chi}(\bups)$ admits the following form:
\[
    H_{\chi \chi} =
    \begin{pmatrix}
        2 I_d & I_d \otimes \btheta^\top \\
        I_d \otimes \btheta & I_d \otimes (\mu^2 I_d + \btheta \btheta^\top)
    \end{pmatrix}.
\]
Then, due to the definition of $\avg{H}$ it holds that
\[
    \avg{H}_{\chi \chi} =
    \begin{pmatrix}
        2 I_d & I_d \otimes \avg{\btheta}^\top \\
        I_d \otimes \avg{\btheta} & I_d \otimes \left(\mu^2 I_d + \avg{\btheta \btheta^\top} \right)
    \end{pmatrix}.
\]
Due to the blockwise inversion formula, we have
\begin{align*}
    J_{\bfeta \bfeta}
    &
    = \left( \avg{H}_{\bfeta \bfeta} - \avg{H}_{\bfeta A} \avg{H}_{AA}^{-1} \avg{H}_{A \bfeta} \right)^{-1}
    \\&
    = \left ( 2 I_d - \left(I_d \otimes \avg{\btheta}^\top \right) \left[ I_d \otimes \left(\mu^2 I_d +  \avg{\btheta \btheta^\top} \right) \right]^{-1} \left(I_d \otimes \avg{\btheta} \right) \right )^{-1}.
\end{align*}
The latter expression can significantly simplified using the Kronecker product properties:
\[
    J_{\bfeta \bfeta}
    = \left ( 2 - \avg{\btheta}^\top \left(\mu^2 I_d +  \avg{\btheta \btheta^\top} \right)^{-1} \avg{\btheta} \right )^{-1} I_d.
\]
We are going to show that the remainder
\begin{align*}
    r_1 = \left ( 2 - \avg{\btheta}^\top \left(\mu^2 I_d +  \avg{\btheta \btheta^\top} \right)^{-1} \avg{\btheta}  \right )^{-1} - \frac{1}{2}
\end{align*}
is small and, hence, $J_{\bfeta \bfeta}$ is close to $0.5 I_d$. Indeed,
taking into account that $\m$ is supported on the set defined in~\eqref{eq: norm theta set}, we observe that
\[
    \left\|\avg{\btheta} \right\|^2 \leq \rho^2 \mu^2.
\]
Then, according to the Sherman-Morrison formula,
\begin{align*}
    \avg{\btheta}^\top \left(\mu^2 I_d + \avg{\btheta \btheta^\top} \right)^{-1} \avg{\btheta}
    &
    \leq \avg{\btheta}^\top \left(\mu^2 I_d + \avg{\btheta} \avg{\btheta}^\top \right)^{-1} \avg{\btheta}
    \\&
    = \avg{\btheta}^\top \left(\frac1{\mu^2} I_d - \frac{\avg{\btheta} \avg{\btheta}^\top}{\mu^2 (\mu^2 + \|\avg{\btheta}\|^2)} \right) \avg{\btheta}
    \\&
    = \frac{\|\avg{\btheta}\|^2}{\mu^2 + \|\avg{\btheta}\|^2}
    \leq \rho^2.
\end{align*} 
This yields that
\begin{align}
    r_1
    &
    = \frac{\avg{\btheta}^\top (\mu^2 I_d +  \avg{\btheta \btheta^\top} )^{-1} \avg{\btheta} }{2 (2 - \biasInner{\btheta}^\top (\mu^2 I_d +  \avg{\btheta \btheta^\top} )^{-1} \biasInner{\btheta}) }
    \le \frac{\rho^2}{4 - 2 \rho^2}. \label{eq: r1 bound}
\end{align}
Since $\rho \le 1/2$, the expression in the right-hand side is at most $2\rho^2 / 7 < \rho^2$.

We proceed with the block-inversion formula and compute:
\begin{align}
    \label{eq:J_eta_A}
    J_{\bfeta A}
    &\notag
    = - J_{\bfeta \bfeta} \avg{H}_{\bfeta A} \avg{H}_{AA}^{-1}
    \\&
    = \left (\frac{1}{2} + r_{1} \right) \left( I_d \otimes \avg{\btheta}^\top \right) \left[ I_d \otimes \left(\mu^2 I_d + \avg{\btheta \btheta^\top} \right)^{-1} \right] = r_2 I_d \otimes \Tilde{\btheta}^\top,
\end{align}
where
\[
    \Tilde{\btheta} = \mu^2 \left(\mu^2 I_d + \avg{\btheta \btheta^\top} \right)^{-1}\avg{\btheta}
    \quad \text{and} \quad
    r_2 = \mu^{-2} \left(\frac12 + r_1 \right) \leq \mu^{-2}.
\]
Note that
\begin{equation}
    \label{eq: tilde theta bound}
    \Vert \Tilde{\btheta} \Vert
    \le \Vert \avg{\btheta} \Vert \le \rho \mu,
\end{equation}
because of $(1 - \rho^2) \|\biasInner{\btheta}\|^2 \leq \rho^2 \mu^2$.
    
It remains to consider $J_{AA}$. Since the product $\avg{H} J$ equals to the identity matrix, it holds that
\[
    \avg{H}_{A \bfeta} J_{\bfeta A} + \avg{H}_{AA} J_{AA} = I_{d^2}. 
\]
This yields that $J_{AA} = \avg{H}_{AA}^{-1} - \avg{H}_{AA}^{-1} \avg{H}_{A \bfeta} J_{\bfeta A}$
and, applying \eqref{eq:J_eta_A}, we obtain that
\begin{align*}
    J_{AA}
    &
    = I_d \otimes \left( \mu^2 I_d + \avg{\btheta \btheta^\top} \right)^{-1} + r_2 \left[ I_d \otimes \left(\mu^2 I_d + \avg{\btheta \btheta^\top} \right) \right]^{-1} (I_d \otimes \avg{\btheta}) (I_d \otimes \Tilde{\btheta}^\top) \\
    &
    = I_d \otimes R_3,
\end{align*}
where
\begin{align*}
    R_3
    &
    = \left( \mu^2 I_d + \avg{\btheta \btheta^\top} \right)^{-1} + r_2 \left(\mu^2 I_d + \avg{\btheta \btheta^\top} \right)^{-1} \avg{\btheta} \Tilde{\btheta}^\top
    \\&
    = \left( \mu^2 I_d + \avg{\btheta \btheta^\top} \right)^{-1} + \mu^{-4} \left( \frac{1}{2} + r_1 \right ) \Tilde{\btheta} \Tilde{\btheta}^\top.
\end{align*}
Since the norm of $\Tilde{\btheta}$ does not exceed $\rho \mu$ (see \eqref{eq: tilde theta bound}), we have
\[
    \Vert R_3 \Vert
    \le  \mu^{-2} + \left ( \frac{1}{2} + r_1 \right ) \mu^{-4} \Vert \Tilde{\btheta} \Vert^2 \le \mu^{-2} + \rho^2 \left ( \frac{1}{2} + r_1 \right ) \mu^{-2} \le 2 \mu^{-2}.
\]

Finally, we study the leading terms of $J_{\bfeta A}$ and $J_{AA}$. Set $r_1' = r_1 / \mu^2$. Clearly, $r_1'$ is not greater than $\rho^2 \mu^{-2}$, and, due to \eqref{eq:J_eta_A}, it holds that
\begin{align*}
    J_{\bfeta A} & 
    = r_2 I_d \otimes \Tilde{\btheta}^\top
    = \mu^{-2} \left(\frac12 + r_1 \right) I_d \otimes \Tilde{\btheta}^\top
    = \frac12 I_d \otimes \Tilde{\btheta}^\top + r_1' I_d \otimes \Tilde{\btheta}^\top.
\end{align*}
Next, we determine the leading term in the expression for $R_3$:
\begin{align*}
    R_3
    &
    = \left( \mu^2 I_d + \avg{\btheta \btheta^\top} \right)^{-1} + \mu^{-4} \left( \frac{1}{2} + r_1 \right ) \Tilde{\btheta} \Tilde{\btheta}^\top
    \\&
    = \frac1{\mu^2} \left( \mu^2 I_d + \avg{\btheta \btheta^\top} \right)^{-1} \left( \mu^2 I_d + \avg{\btheta \btheta^\top} - \avg{\btheta \btheta^\top}\right) + \mu^{-4} \left( \frac{1}{2} + r_1 \right ) \Tilde{\btheta} \Tilde{\btheta}^\top
    \\&
    = \frac1{\mu^2} I_d + R_3' \biasInner{\btheta \btheta^\top} + r_3' \Tilde{\btheta} \Tilde{\btheta}^\top,
\end{align*}
where the introduced matrix
\[
    R_3' = \frac1{\mu^2} \left( \mu^2 I_d + \avg{\btheta \btheta^\top} \right)^{-1}
\]
and the coefficient $r_3' = \mu^{-4} (1/2 + r_1)$ satisfy the bounds
\[
    \Vert R_3' \Vert
    = \left\Vert \mu^{-2} \left(\mu^2 I_d + \biasInner{\btheta \btheta^\top} \right) \right\Vert \le \mu^{-4}
    \quad \text{and} \quad
    r_3' = \mu^{-4} \left( \frac12 + r_1 \right)
    \le \mu^{-4}.
\]
\myendproof

\subsection{Proof of Lemma \ref{lem:second_derivative_lipschitzness}}
\label{sec:lem_second_derivative_lipschitzness_proof}

Throughout the proof, we will represent $\bups^*$, $\bups$, and $\bu$ as triplets:
\[
    \bups = (\btheta, \bfeta, \rmvec(A)),
    \quad
    \bups^* = (\btheta^*, \bfeta^*, \rmvec(A^*)),
    \quad
    \bu = (\bt, \bnu, \rmvec(B)).
\]
Then, calculating the Hessian of $\cL(\bups)$ (see, for instance, the proof of Lemma \ref{lem:block-diagonal_lower_bound}), we can rewrite the quadratic form $\bu^\top \nabla^2 \cL(\bups^*) \bu$ in the following way:
\begin{align*}
    \bu^\top \nabla^2 \cL(\bups^*) \bu
    &
    = \|A^* \bt\|^2 + \lambda \|\bt\|^2 + 2 \|\bnu\|^2
    + \|B \btheta^*\|^2 + \mu \|B\|_{\F}^2
    \\&\quad
    - 2 \bnu^\top A^* \bt
    + 2 \bt^\top (A^*)^\top B \btheta^* + 2 (A^* \btheta^* - \bfeta^*)^\top B \bt - 2 \bnu^\top B \btheta^*.
\end{align*}
Note that the parameters $\lambda$ and $\mu$, appearing in the definition of $\cL(\bups)$, may differ from $\lambda_0$ and $\mu_0$. It does not affect the proof, because $\lambda$ and $\mu$ vanish, once we subtract $\bu^\top \nabla^2 \cL(\bups^*) \bu$ from $\bu^\top \nabla^2 \cL(\bups^* + \bups) \bu$. Indeed, it holds that
\begin{align*}
    \bu^\top \left( \nabla^2 \cL(\bups^* + \bups) - \nabla^2 \cL(\bups^*) \right) \bu
    &
    = 2 \bt^\top A^\top A^* \bt + \|A \bt\|^2 + 2 \btheta^\top B^\top B \btheta^* + \|B \btheta\|^2 - 2 \bnu^\top A \bt
    \\&\quad
    + 2 \btheta^\top B^\top A^* \bt  + 2 \bt^\top A^\top B \btheta^* + 2 \bt^\top A^\top B \btheta
    \\&\quad
    + 2 (A^* \btheta + A \btheta^* + A \btheta - \bfeta)^\top B \bt - 2 \bnu^\top B \btheta.
\end{align*}
It is straightforward to observe that the difference of $\bu^\top \nabla^2 \cL(\bups^* + \bups) \bu$ and $\bu^\top \nabla^2 \cL(\bups^*) \bu$ consists of the third order and the fourth order terms (in $\bu$ and $\bups$):
\begin{equation}
    \label{eq:tq_decomposition}
    \bu^\top \left( \nabla^2 \cL(\bups^* + \bups) - \nabla^2 \cL(\bups^*) \right) \bu = \cT + \cQ,
\end{equation}
where
\begin{align*}
    \cT
    &
    = 2 \bt^\top A^\top A^* \bt + 2 \btheta^\top B^\top B \btheta^* - 2 \bnu^\top A \bt + 2 \btheta^\top B^\top A^* \bt
    \\&\quad
    + 2 \bt^\top A^\top B \btheta^* + 2 (A^* \btheta + A \btheta^* - \bfeta)^\top B \bt - 2 \bnu^\top B \btheta
\end{align*}
and
\[
    \cQ
    = \|A \bt\|^2 + \|B \btheta\|^2 + 2 \bt^\top A^\top B \theta + 2 \btheta^\top A^\top B \bt.
\]
It is more convenient to study $\cT$ and $\cQ$ separately.
For this reason, we split the rest of the proof into two steps and show that
\[
    |\cT| \leq \frac{4}{\mu_0 \sqrt{\lambda_0}} \|\ttD_0 \bups\| \|\ttD_0 \bu\|^2
    \quad \text{and} \quad
    |\cQ| \leq \frac{2}{\mu_0^2 \lambda_0} \|\ttD_0 \bups\|^2 \|\ttD_0 \bu\|^2.
\]

\medskip

\noindent
\textbf{Step 1: upper bound on $|\cQ|$.}
\quad
We start with a simpler inequality
\[
    |\cQ| \leq \frac{2}{\mu_0^2 \lambda_0} \|\ttD_0 \bu\|^2 \|\ttD_0 \bups\|^2
\]
and postpone the proof of the upper bound on the absolute value $\cT$ to the next step. It is enough to show that
\[
    \frac{2}{\mu_0^2 \lambda_0} \|\ttD_0 \bu\|^2 \|\ttD_0 \bups\|^2 \leq \cQ \leq \frac{2}{\mu_0^2 \lambda_0} \|\ttD_0 \bu\|^2 \|\ttD_0 \bups\|^2.
\]
The lower bound trivially follows from the inequalities
\begin{align*}
    \cQ
    &
    = \|A \bt + B \btheta\|^2 + 2 \btheta^\top A^\top B \bt
    \geq -2 \|\btheta\| \|A\|_{\F} \|B\|_{\F} \|\bt\|
    \geq - \frac{2}{\mu^2 \lambda} \|\ttD_0 \bu\|^2 \|\ttD_0 \bups\|^2.
\end{align*}
From now on, we focus on the upper bound on $\cQ$. It holds that
\begin{align}
    \label{eq:q_intermediate_upper_bound}
    \cQ
    &\notag
    = \|A \bt\|^2 + \|B \btheta\|^2 + 2 \bt^\top A^\top B \theta + 2 \btheta^\top A^\top B \bt
    \\&
    \leq 2 \|A\|_{\F}^2 \|\bt\|^2 + 2 \|B\|_{\F}^2 \|\btheta\|^2 + 2 \|A\|_{\F} \|\btheta\| \|B\|_{\F} \|\bt\|.
\end{align}
The right-hand side can be viewed as a quadratic form of $\mu_0 \|A\|_{\F}$ and $\sqrt{\lambda_0} \|\btheta\|$:
\begin{align}
    \label{eq:q_quadratic_form_representation}
    &\notag
    2 \|A\|_{\F}^2 \|\bt\|^2 + 2 \|B\|_{\F}^2 \|\btheta\|^2 + 2 \|A\|_{\F} \|\btheta\| \|B\|_{\F} \|\bt\|
    \\&
    = \frac{1}{\mu_0^2 \lambda_0}
    \begin{pmatrix} \mu_0 \|A\|_{\F} & \sqrt{\lambda_0} \|\btheta\| \end{pmatrix}
    \begin{pmatrix}
        2 \lambda_0 \|\bt\|^2 & \mu_0 \sqrt{\lambda_0} \|B\|_{\F} \|\bt\| \\
        \mu_0 \sqrt{\lambda_0} \|B\|_{\F} \|\bt\| & 2 \mu_0^2 \|B\|_{\F}
    \end{pmatrix}
    \begin{pmatrix} \mu \|A\|_{\F} \\ \sqrt{\lambda} \|\btheta\| \end{pmatrix}.
\end{align}
Let us introduce $\varphi \in [0, \pi / 2]$, such that
\[
    \sin \varphi = \frac{\mu_0 \|B\|_{\F}}{\sqrt{\lambda_0 \|t\|^2 + \mu_0^2 \|B\|_{\F}^2}},
    \quad
    \cos \varphi = \frac{\sqrt{\lambda_0} \|t\|}{\sqrt{\lambda_0 \|t\|^2 + \mu_0^2 \|B\|_{\F}^2}}
\]
and consider the matrix
\[
    \frac1{\mu_0^2 \|B\|_{\F}^2 + \lambda_0 \|t\|^2}
    \begin{pmatrix}
        2 \lambda_0 \|\bt\|^2 & \mu_0 \sqrt{\lambda_0} \|B\|_{\F} \|\bt\| \\
        \mu_0 \sqrt{\lambda_0} \|B\|_{\F} \|\bt\| & 2 \mu_0^2 \|B\|_{\F}
    \end{pmatrix}
    = \begin{pmatrix}
        2 \cos^2 \varphi & \sin \varphi \cos \varphi \\
        \sin \varphi \cos \varphi & 2 \sin^2 \varphi
    \end{pmatrix}.
\]
Direct calculations show that both its eigenvalues belong to $[0, 2]$. Hence, the operator norm of this matrix does not exceed $2$ and we obtain that
\begin{align}
    \label{eq:q_quadratic_form_upper_bound}
    &\notag
    \begin{pmatrix} \mu_0 \|A\|_{\F} & \sqrt{\lambda_0} \|\btheta\| \end{pmatrix}
    \begin{pmatrix}
        2 \lambda_0 \|\bt\|^2 & \mu_0 \sqrt{\lambda_0} \|B\|_{\F} \|\bt\| \\
        \mu_0 \sqrt{\lambda_0} \|B\|_{\F} \|\bt\| & 2 \mu_0^2 \|B\|_{\F}
    \end{pmatrix}
    \begin{pmatrix} \mu_0 \|A\|_{\F} \\ \sqrt{\lambda_0} \|\btheta\| \end{pmatrix}
    \\&\notag
    \leq \left( \mu_0^2 \|A\|_{\F}^2 + \lambda_0 \|\btheta\|^2 \right) \left\|
    \begin{pmatrix}
        2 \lambda_0 \|\bt\|^2 & \mu_0 \sqrt{\lambda_0} \|B\|_{\F} \|\bt\| \\
        \mu_0 \sqrt{\lambda_0} \|B\|_{\F} \|\bt\| & 2 \mu_0^2 \|B\|_{\F}
    \end{pmatrix}
    \right\|
    \\&
    = \left( \mu_0^2 \|A\|_{\F}^2 + \lambda_0 \|\btheta\|^2 \right) 
    \left( \mu_0^2 \|B\|_{\F}^2 + \lambda_0 \|t\|^2 \right)
    \left\|
    \begin{pmatrix}
        2 \cos^2 \varphi & \sin \varphi \cos \varphi \\
        \sin \varphi \cos \varphi & 2 \sin^2 \varphi
    \end{pmatrix}
    \right\|
    \\&\notag
    \leq 2 \left( \mu_0^2 \|A\|_{\F}^2 + \lambda_0 \|\btheta\|^2 \right) 
    \left( \mu_0^2 \|B\|_{\F}^2 + \lambda_0 \|t\|^2 \right).
\end{align}
Taking \eqref{eq:q_intermediate_upper_bound}, \eqref{eq:q_quadratic_form_representation}, and \eqref{eq:q_quadratic_form_upper_bound} into account, we obtain that
\begin{equation}
    \label{eq:q_upper_bound}
    \cQ
    \leq \frac{2}{\mu_0^2 \lambda_0} \left( \mu_0^2 \|A\|_{\F}^2 + \lambda_0 \|\btheta\|^2 \right) 
    \left( \mu_0^2 \|B\|_{\F}^2 + \lambda_0 \|t\|^2 \right)
    \leq \frac{2}{\mu_0^2 \lambda_0} \|\ttD_0 \bups\|^2 \|\ttD_0 \bu\|^2.
\end{equation}

\medskip

\noindent
\textbf{Step 2: upper bound on $|\cT|$.}
It remains to bound the absolute value of $\cT$. First, note that
\begin{align}
    \label{eq:t_intermediate_upper_bound}
    \frac{\mu_0 \sqrt{\lambda_0} \; |\cT|}{2}
    &\notag
    \leq \mu_0 \sqrt{\lambda_0} \|\bt\| \|A\|_{\F} \|A^* \bt\|
    + \mu_0 \sqrt{\lambda_0} \|\btheta\| \|B\|_{\F} \|B \btheta^*\|
    + \mu_0 \sqrt{\lambda_0} \|\bnu\| \|A\|_{\F} \|\bt\|
    \\&\quad
    + \mu_0 \sqrt{\lambda_0} \|\btheta\| \|B\|_{\F} \|A^* \bt\|
    + \mu_0 \sqrt{\lambda_0} \|\bt\| \|A\|_{\F} \|B \btheta^*\|
    + \mu_0 \sqrt{\lambda_0} \|\bt\| \|B\|_{\F} \|A^* \btheta\|
    \\&\quad\notag
    + \mu_0 \sqrt{\lambda_0} \|\bt\| \|B\|_{\F} \|A \btheta^*\|
    + \mu_0 \sqrt{\lambda_0} \|\bt\| \|B\|_{\F} \|\bfeta\|
    + \mu_0 \sqrt{\lambda_0} \|\bnu\| \|B\|_{\F} \|\btheta\|.
\end{align}
The right-hand side of \eqref{eq:t_intermediate_upper_bound} is quite massive. Let us first examine the terms $\sqrt{\lambda_0} \|\btheta\| \|A^* \bt\| + \sqrt{\lambda_0} \|\bt\| \|A^* \btheta\|$ and $\mu_0 \|A\|_{\F} \|B \btheta^*\| + \mu_0 \|B\|_{\F} \|A \btheta^*\|$. Due to the Cauchy-Schwarz inequality, it holds that
\[
    \sqrt{\lambda_0} \|\btheta\| \|A^* \bt\| + \sqrt{\lambda_0} \|\bt\| \|A^* \btheta\|
    \leq \sqrt{\|A^* \bt\|^2 + \lambda_0 \|\bt\|^2} \sqrt{\|A^* \btheta\|^2 + \lambda_0 \|\btheta\|^2}
\]
and
\[
    \mu_0 \|A\|_{\F} \|B \btheta^*\| + \mu_0 \|B\|_{\F} \|A \btheta^*\|
    \leq \sqrt{\mu_0^2 \|B\|_{\F}^2 + \|B \btheta^*\|^2} \sqrt{\mu_0^2 \|A\|_{\F}^2 + \|A \btheta^*\|^2}.
\]
Thus,
\begin{align}
    \label{eq:t_4_terms_cauchy_schwarz_bound}
    &\notag
    \mu_0 \sqrt{\lambda_0} \|\btheta\| \|B\|_{\F} \|A^* \bt\|
    + \mu_0 \sqrt{\lambda_0} \|\bt\| \|A\|_{\F} \|B \btheta^*\|
    + \mu_0 \sqrt{\lambda_0} \|\bt\| \|B\|_{\F} \|A^* \btheta\|
    \\&\quad\notag
    + \mu_0 \sqrt{\lambda_0} \|\bt\| \|B\|_{\F} \|A \btheta^*\|
    + \mu_0 \sqrt{\lambda_0} \|\bt\| \|B\|_{\F} \|\bfeta\|
    \\&
    \leq \mu_0 \|B\|_{\F} \sqrt{\|A^* \bt\|^2 + \lambda_0 \|\bt\|^2} \sqrt{\|A^* \btheta\|^2 + \lambda_0 \|\btheta\|^2}
    \\&\quad\notag
    + \sqrt{\lambda_0} \|\bt\| \sqrt{\mu_0^2 \|B\|_{\F}^2 + \|B \btheta^*\|^2} \sqrt{\mu_0^2 \|A\|_{\F}^2 + \|A \btheta^*\|^2} 
    + \mu_0 \sqrt{\lambda_0} \|\bt\| \|B\|_{\F} \|\bfeta\|.
\end{align}
Applying the Cauchy-Schwarz inequality again, we obtain that
\begin{align}
    \label{eq:t_5_terms_cauchy_schwarz_bound}
    &\notag
    \mu_0 \|B\|_{\F} \sqrt{\|A^* \bt\|^2 + \lambda_0 \|\bt\|^2} \sqrt{\|A^* \btheta\|^2 + \lambda_0 \|\btheta\|^2}
    \\&\quad\notag
    + \sqrt{\lambda_0} \|\bt\| \sqrt{\mu_0^2 \|B\|_{\F}^2 + \|B \btheta^*\|^2} \sqrt{\mu_0^2 \|A\|_{\F}^2 + \|A \btheta^*\|^2} 
    + \mu_0 \sqrt{\lambda_0} \|\bt\| \|B\|_{\F} \|\bfeta\|
    \\&
    \leq \sqrt{\|A^* \bt\|^2 + \lambda_0 \|\bt\|^2} \sqrt{\mu_0^2 \|B\|_{\F}^2 + \|B \btheta^*\|^2}
    \\&\quad \notag
    \cdot \left( \sqrt{\|A^* \btheta\|^2 + \lambda_0 \|\btheta\|^2} + \sqrt{\mu_0^2 \|A\|_{\F}^2 + \|A \btheta^*\|^2}  + \frac1{\sqrt 2} \cdot \sqrt{2}\|\bfeta\| \right)
    \\& \notag
    \leq \sqrt{\|A^* \bt\|^2 + \lambda_0 \|\bt\|^2} \sqrt{\mu_0^2 \|B\|_{\F}^2 + \|B \btheta^*\|^2} \cdot \sqrt{\frac52} \; \|\ttD_0 \bups\|.
\end{align}
The inequalities \eqref{eq:t_4_terms_cauchy_schwarz_bound} and \eqref{eq:t_5_terms_cauchy_schwarz_bound} help us to simplify the upper bound \eqref{eq:t_intermediate_upper_bound} drastically. In particular, they yield that
\begin{align*}
    \frac{\mu_0 \sqrt{\lambda_0} \; |\cT|}{2}
    &
    \leq \mu_0 \sqrt{\lambda_0} \|\bt\| \|A\|_{\F} \|A^* \bt\|
    + \mu_0 \sqrt{\lambda_0} \|\btheta\| \|B\|_{\F} \|B \btheta^*\|
    + \mu_0 \sqrt{\lambda_0} \|\bnu\| \|A\|_{\F} \|\bt\|
    \\&\quad
    + \mu_0 \sqrt{\lambda_0} \|\bnu\| \|B\|_{\F} \|\btheta\|
    + \sqrt{\|A^* \bt\|^2 + \lambda_0 \|\bt\|^2} \sqrt{\mu_0^2 \|B\|_{\F}^2 + \|B \btheta^*\|^2} \cdot \sqrt{\frac52} \; \|\ttD_0 \bups\|.
\end{align*}
The expression in the right-hand side can be simplified even further. Using the inequalities
\[
    \mu_0 \|A\|_{\F} \leq \|\ttD_0 \bups\|,
    \quad \text{and} \quad
    \sqrt{\lambda_0} \|\btheta\| \leq \|\ttD_0 \bups\|,
\]
we obtain that
\begin{align*}
    \frac{\mu_0 \sqrt{\lambda_0} \; |\cT|}{2}
    &
    \leq \mu_0 \sqrt{\lambda_0} \|\bt\| \|A\|_{\F} \|A^* \bt\|
    + \mu_0 \sqrt{\lambda_0} \|\btheta\| \|B\|_{\F} \|B \btheta^*\|
    + \mu_0 \sqrt{\lambda_0} \|\bnu\| \|A\|_{\F} \|\bt\|
    \\&\quad
    + \mu_0 \sqrt{\lambda_0} \|\bnu\| \|B\|_{\F} \|\btheta\|
    + \sqrt{\|A^* \bt\|^2 + \lambda_0 \|\bt\|^2} \sqrt{\mu_0^2 \|B\|_{\F}^2 + \|B \btheta^*\|^2} \cdot \sqrt{\frac52} \; \|\ttD_0 \bups\|
    \\&
    \leq \sqrt{\lambda_0} \|\bt\| \|A^* \bt\| \|\ttD_0 \bups\|
    + \mu_0 \|B\|_{\F} \|B \btheta^*\| \|\ttD_0 \bups\|
    + \sqrt{\lambda_0} \|\bnu\| \|\bt\| \|\ttD_0 \bups\|
    \\&\quad
    + \mu_0 \|\bnu\| \|B\|_{\F} \|\ttD_0 \bups\|
    + \sqrt{\|A^* \bt\|^2 + \lambda \|\bt\|^2} \sqrt{\mu^2 \|B\|_{\F}^2 + \|B \btheta^*\|^2} \cdot \sqrt{\frac52} \; \|\ttD_0 \bups\|.
\end{align*}
Moreover, introducing
\[
    \bw = \left(\sqrt{\|A^* \bt\|^2 + \lambda \|\bt\|^2}, \sqrt{2} \|\bnu\|, \sqrt{\mu^2 \|B\|_{\F}^2 + \|B \btheta^*\|^2} \right)^\top
    \quad \text{and} \quad
    \Phi = 
    \begin{pmatrix}
        1 & \sqrt{2} / 4 & \sqrt{10} / 4 \\
        \sqrt{2} / 4 & 0 & \sqrt{2} / 4 \\
        \sqrt{10} / 4 & \sqrt{2} / 4 & 1
    \end{pmatrix},
\]
we observe that
\begin{align*}
    \frac{\mu_0 \sqrt{\lambda_0} \; |\cT|}{2}
    &
    \leq \sqrt{\lambda_0} \|\bt\| \|A^* \bt\| \|\ttD_0 \bups\|
    + \mu_0 \|B\|_{\F} \|B \btheta^*\| \|\ttD_0 \bups\|
    + \sqrt{\lambda_0} \|\bnu\| \|\bt\| \|\ttD_0 \bups\|
    \\&\quad
    + \mu_0 \|\bnu\| \|B\|_{\F} \|\ttD_0 \bups\|
    + \sqrt{\|A^* \bt\|^2 + \lambda \|\bt\|^2} \sqrt{\mu^2 \|B\|_{\F}^2 + \|B \btheta^*\|^2} \cdot \sqrt{\frac52} \; \|\ttD_0 \bups\|
    \\&
    \leq \|\ttD_0 \bups\| \; \bw^\top \Phi \bw.
\end{align*}
Finally, direct calculations show that $\|\Phi\| < 2$, and then
\begin{equation}
    \label{eq:t_upper_bound}
    \frac{\mu_0 \sqrt{\lambda_0} \; |\cT|}{2}
    \leq 2 \|\ttD_0 \bups\| \left( \|A^* \bt\|^2 + \lambda_0 \|\bt\|^2 +  2 \|\bnu\|^2 + \mu_0^2 \|B\|_{\F}^2 + \|B \btheta^*\|^2 \right)
    = 2 \|\ttD_0 \bups\| \|\ttD_0 \bu\|^2.
\end{equation}
Hence, summing up \eqref{eq:tq_decomposition}, \eqref{eq:q_upper_bound}, and \eqref{eq:t_upper_bound}, we obtain that
\[
    \left| \bu^\top \left( \nabla^2 \cL(\bups) - \nabla^2 \cL(\bups^*) \right) \bu \right|
    \leq |\cT| + |\cQ|
    \leq \frac{4}{\mu_0 \sqrt{\lambda_0}} \, \| \ttD_0 \bups \| \| \ttD_0 \bu\|^2 + \frac{2}{\mu_0^2 \lambda_0} \| \ttD_0 \bups \|^2 \| \ttD_0 \bu\|^2.
\]
\myendproof

\subsection{Proof of Lemma \ref{lem:non-diagonal_blocks_bound}}
\label{sec:lem_non-diagonal_blocks_bound_proof}

The proofs of the upper bounds $\|H_{\bfeta A} H_{AA}^{-1/2}\| \leq \rho$ and $\|H_{\btheta \btheta}^{-1/2} H_{\btheta A} H_{AA}^{-1/2}\| \leq \rho \sqrt{2}$ is based on a simple observation that the operator norm of a matrix $B$ does not exceed $\rho$ if and only if $B^\top B \preceq \rho^2 I_d$. In our case, it is enough to show that
\[
    H_{AA}^{-1/2} H_{\bfeta A} H_{A \bfeta} H_{AA}^{-1/2} \preceq \rho^2 I_d
    \quad \text{and} \quad
    H_{\btheta \btheta}^{-1/2} H_{\btheta A} H_{AA}^{-1} H_{A \btheta} H_{\btheta \btheta}^{-1/2} \preceq 2\rho^2 I_d.
\]
For convenience, we split the derivations into two steps.

\medskip

\noindent
\textbf{Step 1: upper bound on $\|H_{\bfeta A} H_{AA}\|$.}
\quad
Let us recall that (see \eqref{eq:h_block_form} and \eqref{eq:h_block_form_kronecker})
\[
    H_{\bfeta A} = H_{A \bfeta}^\top = -I_d \otimes \btheta^\top
    \quad \text{and} \quad
    H_{AA} = I_d \otimes \left(\mu^2 I_d + \btheta \btheta^\top \right).
\]
According to the conditions of the lemma, the triplet $\bups = (\btheta, \bfeta, \rmvec(A))$ belongs to the set $\Ups(\rho)$.
Then it is straightforward to observe that
\[
    H_{A \bfeta} H_{\bfeta A}
    = I_d \otimes \big( \btheta\btheta^\top \big) 
    \preceq I_d \otimes \left( \rho^2 \btheta \btheta^\top + (1 - \rho^2) \|\btheta\|^2 I_d \right)
    \preceq I_d \otimes \left( \rho^2 \btheta \btheta^\top + \rho^2 \mu^2 I_d \right)
    = \rho^2 H_{AA}.
\]
This yields that
\[
    \left\|H_{\bfeta A} H_{AA}^{-1/2} \right\|^2
    = \left\|H_{AA}^{-1/2} H_{A \bfeta} H_{\bfeta A} H_{AA}^{-1/2} \right\|
    \leq \rho^2.
\]

\medskip

\noindent
\textbf{Step 2: upper bound on $\|H_{\btheta \btheta}^{-1/2} H_{\btheta A} H_{AA}^{-1/2}\|$}.
\quad
The proof of the upper bound on the operator norm of $H_{\btheta \btheta}^{-1/2} H_{\btheta A} H_{AA}^{-1/2}$ relies on the explicit representation of $H_{AA}^{-1}$. According to the Woodbury matrix identity, it holds that
\[
    \left( \mu^2 I_d + \btheta \btheta^\top \right)^{-1}
    = \frac1{\mu^2} I_d - \frac1{\mu^2 (\mu^2 + \|\btheta\|^2)} \btheta \btheta^\top
\]
Then
\[
    H_{AA}^{-1} = \frac1{\mu^{2}} \left( I_d \otimes \left( I_d - \frac1{\mu^2 + \|\btheta\|^2} \btheta \btheta^\top \right) \right),
\]
and we obtain that
\begin{align*}
    H_{\btheta A} H_{AA}^{-1} H_{A \btheta}
    &
    = \frac1{\mu^{2}} \left( (A \btheta - \bfeta)^\top \otimes I_d + A^\top \otimes \btheta^\top \right) \left( I_d \otimes \left( I_d - \frac1{\mu^2 + \|\btheta\|^2} \btheta \btheta^\top \right) \right) \Big( (A \btheta - \bfeta) \otimes I_d + A \otimes \btheta \Big)
    \\&
    = \frac{\|A \btheta - \bfeta\|^2}{\mu^{2}} \left( I_d - \frac1{\mu^2 + \|\btheta\|^2} \btheta \btheta^\top \right) + \frac{A^\top (A \btheta - \bfeta) \btheta^\top}{\mu^2 + \|\btheta\|^2} + \frac{\btheta (A \btheta - \bfeta)^\top A}{\mu^2 + \|\btheta\|^2} + \frac{A^\top A \|\btheta\|^2}{\mu^2 + \|\btheta\|^2}
    \\&
    \preceq \frac{\|A \btheta - \bfeta\|^2}{\mu^{2}} + \frac{A^\top (A \btheta - \bfeta) \btheta^\top}{\mu^2 + \|\btheta\|^2} + \frac{\btheta (A \btheta - \bfeta)^\top A}{\mu^2 + \|\btheta\|^2} + \frac{A^\top A \|\btheta\|^2}{\mu^2 + \|\btheta\|^2}.
\end{align*}
Thus, for any unit vector $\bu \in \R^d$, it holds that
\begin{align*}
    \bu^\top H_{\btheta A} H_{AA}^{-1} H_{A \btheta} \bu
    &
    \leq \frac{\|A \btheta - \bfeta\|^2}{\mu^{2}} + \frac{2 \bu^\top \btheta (A \btheta - \bfeta)^\top A \bu}{\mu^2 + \|\btheta\|^2} + \frac{\|A \bu\|^2 \|\btheta\|^2}{\mu^2 + \|\btheta\|^2}
    \\&
    \leq \frac{\|A \btheta - \bfeta\|^2}{\mu^{2}} + \frac{2 \bu^\top \btheta (A \btheta - \bfeta)^\top A \bu}{\mu^2 + \|\btheta\|^2} + \frac{\|A \bu\|^2 \|\btheta\|^2}{\mu^2 + \|\btheta\|^2}
    \\&
    \leq \frac{2 \|A \btheta - \bfeta\|^2}{\mu^{2}} + \frac{\|A \bu\|^2 (\bu^\top \btheta)^2 + \|A \bu\|^2 \|\btheta\|^2}{\mu^2 + \|\btheta\|^2}.
\end{align*}
By the definition of the set $\Ups(\rho)$, $\btheta$, $\bfeta$, and $A$ satisfy the inequalities $\|A \btheta - \bfeta\|^2 \leq \rho^2 \mu^2 \lambda$ and $\|\btheta\|^2 \leq \rho^2 (\mu^2 + \|\btheta\|^2)$. This yields that
\[
    \bu^\top H_{\btheta A} H_{AA}^{-1} H_{A \btheta} \bu
    \leq 2 \rho^2 \lambda + 2\rho^2 \|A \bu\|^2
    = 2 \rho^2 \; \bu^\top H_{\btheta \btheta} \bu.
\]
Hence, it holds that
\[
    H_{\btheta\btheta}^{-1/2} H_{\btheta A} H_{AA}^{-1} H_{A \btheta} H_{\btheta\btheta}^{-1/2}
    \preceq 2 \rho^2 I_d
\]
and then
\[
    \|H_{\btheta\btheta}^{-1/2} H_{\btheta A} H_{AA}^{-1/2}\| \leq \rho \sqrt{2}.
\]
\myendproof

\section{Properties of the target functional minimizer}

In this section, we list auxiliary properties of the triplet $\bups^* = (\btheta^*, \bfeta^*, A^*)$ minimizing the objective function $\cL(\bups)$. They play a fundamental role in studying the bias of the estimate $\widehat \bups$ defined in \eqref{eq: initial optimization problem}. Our first observation is that $\bups^*$ belongs to a convex subset of $\Ups(\rho_0)$, $\rho_0 \in [0, 1/7]$ (see \eqref{eq:upsilon}), under mild assumptions. In view of Lemma \ref{lem:block-diagonal_lower_bound}, this means that the functional $\cL(\bups)$ is strongly convex in a vicinity of $\bups^*$.

\begin{Lem}
\label{lemma: localization bias corollary}
    Let us fix an arbitrary $\rho_0 \in [0, 1/7]$ and let $\Ups(\rho_0)$ be as defined in \eqref{eq:upsilon}. Let
    \begin{align*}
        \Theta^*
        &
        = \left\{ \btheta \in \R^d : \Vert \btheta \Vert \le \frac{\rho_0 \mu}2 \text{ and } \Vert \Sigma \btheta - \Sigma \btheta^\circ \Vert \le \frac{\rho_0 \mu \sqrt{\lambda}}3 \right\},
        \\
        \sfA^*
        &
        = \left\{ A \in \R^{d \times d} : \Vert A - \Sigma \Vert \le \frac{\sqrt{\lambda}}3 \right\},
        \\
        \sfH^*
        &
        = \left\{\bfeta \in \R^d : \Vert \bfeta - \Sigma \btheta^\circ \Vert \le \frac{\rho_0 \mu \sqrt{\lambda}}3 \right\}
    \end{align*}
    and assume that $\Sigma$ and $\btheta^\circ$ satisfy the inequalities
    \begin{equation}
        \label{condition: Sigma bound on bias components}
        \Vert \btheta^\circ \Vert \le \frac{\rho_0 \mu}7
        \quad \text{and} \quad
        \Vert \Sigma \Vert \Vert \btheta^\circ \Vert \le \frac{\rho_0 \mu \sqrt{\lambda}}{24}.
    \end{equation}
    Then it holds that
    \begin{align*}
        \bups^*
        = (\btheta^*, \bfeta^*, A^*)
        \in \Theta^* \times \sfH^* \times \sfA^* 
        \subseteq \Ups(\rho_0).
    \end{align*}
\end{Lem}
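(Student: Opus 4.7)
The plan is to compare $\cL(\bups^*)$ against $\cL(\bups^\circ)$, where $\bups^\circ = (\btheta^\circ, \Sigma\btheta^\circ, \Sigma)$, and then extract coordinate-wise bounds from the resulting inequality before checking the set inclusion. Observe that the first three summands of $\cL(\bups^\circ)$ vanish, so $\cL(\bups^\circ) = (\lambda/2)\|\btheta^\circ\|^2$. By optimality of $\bups^*$, each of the four non-negative summands in the identity
\begin{align*}
    \frac12 \|\Sigma \btheta^\circ - \bfeta^*\|^2 + \frac12 \|\bfeta^* - A^* \btheta^*\|^2 + \frac{\mu^2}2 \|\Sigma - A^*\|_{\F}^2 + \frac{\lambda}2 \|\btheta^*\|^2 \leq \frac{\lambda}2 \|\btheta^\circ\|^2
\end{align*}
is bounded by $(\lambda/2)\|\btheta^\circ\|^2$. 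This yields the four basic estimates
\begin{align*}
    \|\btheta^*\| \leq \|\btheta^\circ\|,\quad
    \|\bfeta^* - \Sigma\btheta^\circ\| \leq \sqrt{\lambda}\|\btheta^\circ\|,\quad
    \|\bfeta^* - A^*\btheta^*\| \leq \sqrt{\lambda}\|\btheta^\circ\|,\quad
    \|A^* - \Sigma\|_{\F} \leq \frac{\sqrt{\lambda}\,\|\btheta^\circ\|}{\mu}.
\end{align*}

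Next I verify the three component-wise inclusions using the hypothesis \eqref{condition: Sigma bound on bias components}. The bound $\|\btheta^*\| \leq \|\btheta^\circ\| \leq \rho_0\mu/7$ is strictly below $\rho_0\mu/2$, and $\|\Sigma\btheta^* - \Sigma\btheta^\circ\| \leq \|\Sigma\|(\|\btheta^*\| + \|\btheta^\circ\|) \leq 2\|\Sigma\|\|\btheta^\circ\| \leq \rho_0 \mu\sqrt{\lambda}/12$, which gives $\btheta^* \in \Theta^*$. The estimate $\|\bfeta^* - \Sigma\btheta^\circ\| \leq \sqrt{\lambda}\|\btheta^\circ\| \leq \sqrt{\lambda}\rho_0\mu/7 < \rho_0\mu\sqrt{\lambda}/3$ gives $\bfeta^* \in \sfH^*$. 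Finally, $\|A^* - \Sigma\| \leq \|A^* - \Sigma\|_{\F} \leq \sqrt{\lambda}\|\btheta^\circ\|/\mu \leq \rho_0\sqrt{\lambda}/7 < \sqrt{\lambda}/3$ yields $A^* \in \sfA^*$.

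It remains to confirm the set inclusion $\Theta^* \times \sfH^* \times \sfA^* \subseteq \Ups(\rho_0)$. For any $\btheta \in \Theta^*$ we have $\|\btheta\|^2 \leq \rho_0^2\mu^2/4$, which is at most $\rho_0^2\mu^2/(1-\rho_0^2)$ since $\rho_0 \leq 1/7$, so the first defining inequality of $\Ups(\rho_0)$ holds. For the second, I apply the triangle inequality
\begin{align*}
    \|\bfeta - A\btheta\|
    \leq \|\bfeta - \Sigma\btheta^\circ\| + \|\Sigma(\btheta^\circ - \btheta)\| + \|(\Sigma - A)\btheta\|
    \leq \frac{\rho_0 \mu\sqrt{\lambda}}{3} + \frac{\rho_0\mu\sqrt{\lambda}}{3} + \frac{\sqrt{\lambda}}{3}\cdot\frac{\rho_0\mu}{2},
\end{align*}
so $\|\bfeta - A\btheta\| \leq (5/6)\rho_0\mu\sqrt{\lambda} < \rho_0\mu\sqrt{\lambda}$, completing the inclusion.

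There is no genuinely hard step here—the whole argument is essentially a single comparison $\cL(\bups^*) \leq \cL(\bups^\circ)$ followed by routine triangle inequalities. The only mild subtlety is bookkeeping with the numerical constants $1/7$, $1/24$, $1/3$, $1/2$ so that all four final inequalities are satisfied simultaneously; this dictated the exact constants chosen in the statement, and the constants above fit with room to spare.
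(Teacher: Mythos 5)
Your argument is correct and follows essentially the same route as the paper: derive the four elementary bounds from $\cL(\bups^*) \le \cL(\bups^\circ) = (\lambda/2)\|\btheta^\circ\|^2$, then verify the coordinate-wise inclusions and the product-set inclusion by triangle inequalities. The paper outsources exactly these elementary bounds to Lemma~\ref{lemma: localization lemma for bias} (items \ref{point: A* weak bound on bias, bias locating convex set}--\ref{point: theta weak bound on bias, bias locating convex set}) rather than re-deriving them inline, and for $\bfeta^* \in \sfH^*$ it uses the first-order identity $\bfeta^* = (A^*\btheta^* + \Sigma\btheta^\circ)/2$ instead of the direct bound $\|\bfeta^* - \Sigma\btheta^\circ\| \le \sqrt{\lambda}\|\btheta^\circ\|$ that you use; both routes yield the same conclusion with room to spare.
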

We provide the proof of Lemma \ref{lemma: localization bias corollary} in Appendix \ref{sec:lemma localization bias corollary proof} below. Its main ingredient is the next result following from the definition of $\bups^*$.

\begin{Lem}
\label{lemma: localization lemma for bias}
Assume that $\Vert \Sigma \Vert \Vert \btheta^\circ \Vert \le 4 \mu \sqrt{\lambda}$ and $\Vert \btheta^\circ \Vert \le \mu/7$. Then we have
\begin{enumerate}[label=(\roman*)]
    \item \label{point: A* weak bound on bias, bias locating convex set}$\Vert A^* - \Sigma \Vert_{\F} \le \mu^{-1} \sqrt{\lambda} \Vert \btheta^\circ \Vert$ and $\Vert A^* - \Sigma \Vert \le \mu^{-2} \Vert \Sigma \Vert \Vert \btheta^\circ \Vert^2$;
    \item \label{point: eta weak bound on bias, bias locating convex set} $\Vert \bfeta^* - \Sigma \btheta^\circ \Vert \le \sqrt{\lambda} \Vert \btheta^\circ \Vert$;
    \item \label{point: bb bfeat weak bound, bias locating convex set} $\Vert A^* \btheta^* - \bfeta^* \Vert \le \sqrt{\lambda} \Vert \btheta^\circ \Vert$;
    \item \label{point: theta weak bound on bias, bias locating convex set} $\Vert \btheta^* \Vert \le \Vert \btheta^\circ \Vert$ and $\Vert \btheta^* - \btheta^\circ\Vert \le \|\bb_\lambda\| + \Vert \btheta^\circ \Vert^2 / \mu$.
\end{enumerate}
\end{Lem}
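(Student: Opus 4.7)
My approach would combine global optimality of $\bups^*$ with the first-order stationarity conditions of $\cL$ in each block of variables. I start from the reference triplet $\bups^\circ = (\btheta^\circ, \Sigma\btheta^\circ, \Sigma)$, at which the first three summands of $\cL$ vanish; hence $\cL(\bups^\circ) = (\lambda/2)\|\btheta^\circ\|^2$. Because $\bups^*$ is a global minimizer, each non-negative summand of $\cL(\bups^*)$ is at most $(\lambda/2)\|\btheta^\circ\|^2$. Reading these summands off one by one immediately produces the Frobenius half of (i), both bounds (ii) and (iii), and the inequality $\|\btheta^*\| \le \|\btheta^\circ\|$ from (iv).

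For the operator-norm half of (i), I would pass to first-order conditions. The equation $\nabla_{\bfeta}\cL(\bups^*) = 0$ gives $\bfeta^* = (\Sigma\btheta^\circ + A^*\btheta^*)/2$, and $\nabla_A\cL(\bups^*) = 0$ yields the rank-one identity $A^* - \Sigma = -\mu^{-2}(A^*\btheta^* - \bfeta^*)\btheta^{*\top}$. Substituting the first relation into $A^*\btheta^* - \bfeta^* = (A^*\btheta^* - \Sigma\btheta^\circ)/2$ and eliminating $A^*\btheta^*$ via the rank-one identity produces the compact formula
\[
    A^*\btheta^* - \bfeta^* = \frac{\Sigma(\btheta^* - \btheta^\circ)}{2 + \|\btheta^*\|^2/\mu^2}.
\]
Combined with $\|\btheta^* - \btheta^\circ\| \le 2\|\btheta^\circ\|$, this yields $\|A^*\btheta^* - \bfeta^*\| \le \|\Sigma\|\|\btheta^\circ\|$, and feeding the result back into the rank-one identity completes (i) with $\|A^* - \Sigma\| \le \mu^{-2}\|\Sigma\|\|\btheta^\circ\|^2$.

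The refined estimate on $\|\btheta^* - \btheta^\circ\|$ in (iv) uses the last stationarity equation, which after eliminating $\bfeta^*$ reduces to $(A^{*\top}A^* + 2\lambda I)\btheta^* = A^{*\top}\Sigma\btheta^\circ$; the vector $\btheta^\circ + \bb_\lambda$ satisfies the analogous identity $(\Sigma^2 + 2\lambda I)(\btheta^\circ + \bb_\lambda) = \Sigma^2\btheta^\circ$. Writing $E = A^* - \Sigma$, subtracting, and using $A^{*\top}A^* - \Sigma^2 = A^{*\top}E + E^\top\Sigma$ produces the key identity
\[
    (A^{*\top}A^* + 2\lambda I)(\btheta^* - \btheta^\circ - \bb_\lambda) = -E^\top\Sigma\bb_\lambda - A^{*\top}E(\btheta^\circ + \bb_\lambda).
\]
I would then estimate the two right-hand-side terms via the resolvent bounds $\|(A^{*\top}A^* + 2\lambda I)^{-1}\| \le 1/(2\lambda)$ and $\|(A^{*\top}A^* + 2\lambda I)^{-1}A^{*\top}\| \le 1/(2\sqrt{2\lambda})$, and close with the triangle inequality $\|\btheta^* - \btheta^\circ\| \le \|\bb_\lambda\| + \|\btheta^* - \btheta^\circ - \bb_\lambda\|$.

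The delicate step is making this estimate sharp enough to reach the claimed constant. A naive bound treating $E$ as an arbitrary operator of norm $\mu^{-2}\|\Sigma\|\|\btheta^\circ\|^2$ introduces an extraneous factor of $\|\Sigma\|/\sqrt{\lambda}$ that is not absorbed by the hypothesis $\|\Sigma\|\|\btheta^\circ\| \le 4\mu\sqrt{\lambda}$. My remedy is to use the rank-one form $E = -\mu^{-2}(A^*\btheta^* - \bfeta^*)\btheta^{*\top}$ directly, together with $\|A^*\btheta^* - \bfeta^*\| \le \sqrt{\lambda}\|\btheta^\circ\|$ from (iii), the bound $\|\btheta^\circ + \bb_\lambda\| \le \|\btheta^\circ\|$, and $\|\Sigma^2\bb_\lambda\| \le 2\lambda\|\btheta^\circ\|$ (both following from $\Sigma^2(\Sigma^2 + 2\lambda I)^{-1} \preceq I$). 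Each of the two terms then comes out of order $\|\btheta^\circ\|^3/\mu^2$, which the assumption $\|\btheta^\circ\| \le \mu/7$ converts into the advertised $\|\btheta^\circ\|^2/\mu$, finishing (iv).
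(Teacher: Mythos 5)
Your proposal is correct, and the first part (reading off the easy bounds from global optimality, and deriving the operator-norm bound on $A^* - \Sigma$ from the rank-one stationarity identity $A^* - \Sigma = -\mu^{-2}(A^*\btheta^* - \bfeta^*)(\btheta^*)^\top$) is essentially what the paper does. Where you diverge is in the refined bound on $\|\btheta^* - \btheta^\circ\|$: the paper writes the stationarity as $(\tfrac12 (A^*)^\top A^* + \lambda I)(\btheta^* - \btheta^\circ) = -\lambda\btheta^\circ - \tfrac12 (A^*)^\top(A^*-\Sigma)\btheta^\circ$ and then performs a resolvent perturbation, introducing $B = (\tfrac12\Sigma^2 + \lambda I)^{-1/2}(\tfrac12(A^*)^\top A^* - \tfrac12\Sigma^2)(\tfrac12\Sigma^2+\lambda I)^{-1/2}$, bounding $\|B\|$, and expanding $(I+B)^{-1}$ to isolate $\bb_\lambda$ plus a remainder. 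You instead subtract the exact identity $(\Sigma^2 + 2\lambda I)(\btheta^\circ + \bb_\lambda) = \Sigma^2\btheta^\circ$ from the normal equation $((A^*)^\top A^* + 2\lambda I)\btheta^* = (A^*)^\top\Sigma\btheta^\circ$, which puts the unknown $\btheta^* - \btheta^\circ - \bb_\lambda$ on the left with a right-hand side that is manifestly quadratic in the perturbation $E = A^* - \Sigma$. This is structurally cleaner: it avoids the series expansion and operator-norm manipulation of the resolvent, and the rank-one form of $E$ makes each right-hand-side term trivially of order $\|\btheta^\circ\|^3/\mu^2$. Your appeal to $\|\Sigma^2\bb_\lambda\| \le 2\lambda\|\btheta^\circ\|$ rather than $\|\Sigma\bb_\lambda\| \le \sqrt{\lambda/2}\|\btheta^\circ\|$ signals that in the term $E^\top\Sigma\bb_\lambda$ you intend to substitute the compact formula $A^*\btheta^* - \bfeta^* = \Sigma(\btheta^*-\btheta^\circ)/(2 + \|\btheta^*\|^2/\mu^2)$, which produces the inner product $(\btheta^*-\btheta^\circ)^\top\Sigma^2\bb_\lambda$; this works, though the simpler bound $\|A^*\btheta^* - \bfeta^*\| \le \sqrt{\lambda}\|\btheta^\circ\|$ already suffices. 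Either way the constants close with room to spare under $\|\btheta^\circ\| \le \mu/7$, and the one caveat you raise (that the operator-norm bound $\|E\| \le \mu^{-2}\|\Sigma\|\|\btheta^\circ\|^2$ would leave an unabsorbed $\|\Sigma\|/\sqrt{\lambda}$) is correctly diagnosed, although using the Frobenius bound $\|E\| \le \|E\|_F \le \mu^{-1}\sqrt{\lambda}\|\btheta^\circ\|$ would also have sidestepped that issue.
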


The proof of Lemma \ref{lemma: localization lemma for bias} is moved to Section \ref{sec:lemma localization lemma for bias proof}.
The inequalities \ref{point: A* weak bound on bias, bias locating convex set} will often be enough for our purposes. However, sometimes we will need sharper bounds on the operator norm of $A^* - \Sigma$ and on the Euclidean norm of $A^* \btheta^* - \bfeta^*$.

\begin{Lem}
\label{lemma: Sigma bias optimal bound}
Assume that $\|\btheta^\circ\| \leq \mu / 49$ and $\|\Sigma\| \|\btheta^\circ\| \leq \mu \sqrt{\lambda} / 24$. Then it holds that
\begin{align*}
    \Vert  A^* - \Sigma \Vert \le \left (\frac{14 \Vert \btheta^\circ \Vert}{\mu} \right )^2 \sqrt{\lambda} + \frac{ 35\Vert \Sigma \Vert \Vert \btheta^\circ \Vert \Vert \bb_\lambda \Vert}{\mu^2}.
\end{align*}
\end{Lem}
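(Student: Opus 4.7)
The plan is to extract a closed-form representation of $A^* - \Sigma$ from the first-order optimality condition with respect to the operator variable, and then combine it with the optimality conditions for $\bfeta$ and $\btheta$ to sharpen the naive bound $\|A^*\btheta^* - \bfeta^*\| \leq \sqrt{\lambda}\|\btheta^\circ\|$ coming from Lemma~\ref{lemma: localization lemma for bias}. The improvement is exactly the $\|\bb_\lambda\|$-factor appearing in the target bound.

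First I would differentiate
\[
    \cL(\bups) = \tfrac12 \|\Sigma \btheta^\circ - \bfeta\|^2 + \tfrac12 \|\bfeta - A\btheta\|^2 + \tfrac{\mu^2}{2}\|\Sigma - A\|_{\F}^2 + \tfrac{\lambda}{2}\|\btheta\|^2
\]
with respect to $A$ and set the result to zero, obtaining
\[
    A^* - \Sigma = -\frac{1}{\mu^2}(A^*\btheta^* - \bfeta^*)(\btheta^*)^\top,
\]
so $\|A^* - \Sigma\| \leq \mu^{-2}\|A^*\btheta^* - \bfeta^*\|\,\|\btheta^*\|$ by the outer-product identity for the operator norm. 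From $\nabla_\bfeta \cL(\bups^*) = 0$ one gets $2\bfeta^* = \Sigma\btheta^\circ + A^*\btheta^*$, hence $2(A^*\btheta^* - \bfeta^*) = A^*\btheta^* - \Sigma\btheta^\circ$. Multiplying the displayed identity for $A^* - \Sigma$ on the right by $\btheta^*$ yields $A^*\btheta^* = \Sigma\btheta^* - \mu^{-2}\|\btheta^*\|^2 (A^*\btheta^* - \bfeta^*)$, and substituting this into the preceding relation produces the key fixed-point equation
\[
    \left(2 + \tfrac{\|\btheta^*\|^2}{\mu^2}\right)(A^*\btheta^* - \bfeta^*) = \Sigma(\btheta^* - \btheta^\circ),
\]
so $\|A^*\btheta^* - \bfeta^*\| \leq \tfrac12 \|\Sigma\|\,\|\btheta^* - \btheta^\circ\|$.

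Under the present hypotheses the conditions of Lemma~\ref{lemma: localization lemma for bias} are met (since $\mu/49 \leq \mu/7$ and $\mu\sqrt{\lambda}/24 \leq 4\mu\sqrt{\lambda}$), so I can invoke part~\ref{point: theta weak bound on bias, bias locating convex set} to conclude $\|\btheta^*\| \leq \|\btheta^\circ\|$ and $\|\btheta^* - \btheta^\circ\| \leq \|\bb_\lambda\| + \|\btheta^\circ\|^2/\mu$. Combining all pieces gives
\[
    \|A^* - \Sigma\| \leq \frac{\|\Sigma\|\,\|\btheta^\circ\|\,\|\bb_\lambda\|}{2\mu^2} + \frac{\|\Sigma\|\,\|\btheta^\circ\|^3}{2\mu^3}.
\]
The first summand is trivially dominated by $35\|\Sigma\|\|\btheta^\circ\|\|\bb_\lambda\|/\mu^2$. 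For the second, the assumption $\|\Sigma\|\|\btheta^\circ\| \leq \mu\sqrt{\lambda}/24$ yields $\|\Sigma\|\|\btheta^\circ\|^3/(2\mu^3) \leq \sqrt{\lambda}\|\btheta^\circ\|^2/(48\mu^2) \leq (14\|\btheta^\circ\|/\mu)^2 \sqrt{\lambda}$, completing the bound. No serious obstacle is anticipated; the one delicate point is noticing that the three first-order conditions together produce the factor $(2 + \|\btheta^*\|^2/\mu^2)^{-1}$, which is what upgrades the naive estimate $\mu^{-2}\|\Sigma\|\|\btheta^\circ\|^2$ of Lemma~\ref{lemma: localization lemma for bias}\ref{point: A* weak bound on bias, bias locating convex set} to the sharper form involving $\|\bb_\lambda\|$.
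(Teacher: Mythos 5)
Your proof is correct and takes a genuinely simpler route than the paper's. The paper proves this lemma via the Newton--Leibniz representation $\bups^* - \bups^\circ = -\avg{H}^{-1}\bnabla\cL(\bups^\circ)$, passes through the block structure of $\avg{H}^{-1}$ using Proposition~\ref{proposition: inverse of nuisance parameters}, and bounds the resulting three-term decomposition of $A^* - \Sigma$ (equation~\eqref{eq: A - Sigma star decomposition using the average hessian}) term by term. You instead work purely with the raw first-order conditions for $\bups^*$: the condition in $A$ gives the rank-one identity $A^* - \Sigma = -\mu^{-2}(A^*\btheta^* - \bfeta^*)(\btheta^*)^\top$, the condition in $\bfeta$ gives $2(A^*\btheta^* - \bfeta^*) = A^*\btheta^* - \Sigma\btheta^\circ$, and contracting the former with $\btheta^*$ and substituting yields the fixed-point relation
$(2 + \|\btheta^*\|^2/\mu^2)(A^*\btheta^* - \bfeta^*) = \Sigma(\btheta^* - \btheta^\circ)$. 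Feeding $\|\btheta^*\| \le \|\btheta^\circ\|$ and $\|\btheta^* - \btheta^\circ\| \le \|\bb_\lambda\| + \|\btheta^\circ\|^2/\mu$ from Lemma~\ref{lemma: localization lemma for bias}\ref{point: theta weak bound on bias, bias locating convex set} then finishes with ample slack in the constants ($1/2$ versus $35$, $1/48$ versus $196$). Worth noting: the intermediate inequality $\|A^*-\Sigma\| \le \tfrac{1}{2\mu^2}\|\Sigma\|\,\|\btheta^*\|\,\|\btheta^*-\btheta^\circ\|$ is actually already established in Step~3 of the proof of Lemma~\ref{lemma: localization lemma for bias}, where the paper immediately coarsens it to $\|\Sigma\|\|\btheta^\circ\|^2/\mu^2$; your observation that combining it with the sharp bound on $\|\btheta^*-\btheta^\circ\|$ from part~\ref{point: theta weak bound on bias, bias locating convex set} of that same lemma already yields the $\|\bb_\lambda\|$-dependent claim is a genuine short-cut that bypasses the averaged-Hessian machinery entirely. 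The two approaches cost the same dependency (both rely on Lemma~\ref{lemma: localization lemma for bias}), so nothing is lost; yours is shorter and more transparent.
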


\begin{Lem}
    \label{lem:a_theta_eta_expansion}
    Assume that $\|\btheta^\circ\| \leq \mu / 49$ and $\|\Sigma\| \|\btheta^\circ\| \leq \mu \sqrt{\lambda} / 24$. Then it holds that
    \[
        \left\|A^* \btheta^* - \bfeta^* - \frac{\Sigma \bb_{\lambda}}2 \right\| \leq 3 \|\btheta^\circ\| \sqrt{\lambda} \left( \left(\frac{14 \|\btheta^\circ\|}\mu \right)^2 + \frac{35 \|\Sigma\| \|\btheta^\circ\| \|\bb_\lambda\|}{\mu^2 \sqrt{\lambda}} \right),
    \]
    where $\bb_\lambda = -\lambda (\Sigma^2 / 2 + \lambda I_d)^{-1} \btheta^\circ$.
\end{Lem}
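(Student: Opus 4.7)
The argument starts from the first-order stationarity conditions at $\bups^*$. Setting $\bnabla_{\bfeta}\cL(\bups^*) = \bzero$ yields $\bfeta^* = (\Sigma\btheta^\circ + A^*\btheta^*)/2$, so the quantity of interest equals $\tfrac12(A^*\btheta^* - \Sigma\btheta^\circ)$; setting $\bnabla_{\btheta}\cL(\bups^*) = \bzero$ gives $(A^*)^\top(A^*\btheta^* - \bfeta^*) = -\lambda\btheta^*$. Combining these identities (equivalently, invoking the push-through $A^*((A^*)^\top A^* + 2\lambda I_d)^{-1} = (A^*(A^*)^\top + 2\lambda I_d)^{-1}A^*$ on the closed form of $\btheta^*$) produces
\begin{equation*}
    A^*\btheta^* - \bfeta^* = -\lambda M_1^{-1}\Sigma\btheta^\circ,\qquad M_1 := A^*(A^*)^\top + 2\lambda I_d,
\end{equation*}
while by the definition of $\bb_\lambda$ one has $\Sigma\bb_\lambda/2 = -\lambda M_2^{-1}\Sigma\btheta^\circ$ with $M_2 := \Sigma^2 + 2\lambda I_d$.

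The next step is a resolvent expansion. Using $M_2^{-1} - M_1^{-1} = M_2^{-1}(M_1 - M_2)M_1^{-1}$, the decomposition $M_1 - M_2 = E(A^*)^\top + \Sigma E^\top$ with $E := A^* - \Sigma$ (valid because $\Sigma$ is symmetric), and the identity $M_1^{-1}\Sigma\btheta^\circ = -(A^*\btheta^* - \bfeta^*)/\lambda$, I arrive at the key formula
\begin{equation*}
    A^*\btheta^* - \bfeta^* - \frac{\Sigma\bb_\lambda}{2} = -M_2^{-1}\bigl(E(A^*)^\top + \Sigma E^\top\bigr)(A^*\btheta^* - \bfeta^*).
\end{equation*}
I then bound the two summands separately. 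For the first, I use the stationarity identity $(A^*)^\top(A^*\btheta^* - \bfeta^*) = -\lambda\btheta^*$ to rewrite it as $\lambda M_2^{-1}E\btheta^*$; then $\|M_2^{-1}\| \le (2\lambda)^{-1}$ together with $\|\btheta^*\| \le \|\btheta^\circ\|$ (Lemma~\ref{lemma: localization lemma for bias}\ref{point: theta weak bound on bias, bias locating convex set}) gives a contribution of at most $\tfrac12\|E\|\|\btheta^\circ\|$. For the second, AM-GM on the eigenvalues $\sigma_j/(\sigma_j^2 + 2\lambda)$ of $M_2^{-1}\Sigma$ yields $\|M_2^{-1}\Sigma\| \le (8\lambda)^{-1/2}$, and $\|A^*\btheta^* - \bfeta^*\| \le \sqrt{\lambda}\|\btheta^\circ\|$ by Lemma~\ref{lemma: localization lemma for bias}\ref{point: bb bfeat weak bound, bias locating convex set}, so this summand contributes at most $8^{-1/2}\|E\|\|\btheta^\circ\|$. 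Since $\tfrac12 + 8^{-1/2} < 1$, summing yields $\|A^*\btheta^* - \bfeta^* - \Sigma\bb_\lambda/2\| \le \|E\|\|\btheta^\circ\|$; inserting the bound on $\|E\|$ from Lemma~\ref{lemma: Sigma bias optimal bound} delivers the stated inequality with slack in the constant~$3$. Both hypotheses of Lemmata~\ref{lemma: localization lemma for bias} and~\ref{lemma: Sigma bias optimal bound} are clearly weaker than those of the present lemma.

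The main obstacle I expect is the treatment of the first summand. The naive estimate $\|E(A^*)^\top(A^*\btheta^* - \bfeta^*)\| \le \|E\|\|A^*\|\|A^*\btheta^* - \bfeta^*\|$ introduces a factor $\|A^*\|/\sqrt\lambda$, which the hypotheses permit to be of order $\mu/\|\btheta^\circ\|$ and therefore cannot be absorbed into the target. The remedy is to recognise that the stationarity condition for $\btheta$ produces the exact identity $(A^*)^\top(A^*\btheta^* - \bfeta^*) = -\lambda\btheta^*$, which trades the harmful product $\|A^*\|\|A^*\btheta^* - \bfeta^*\|$ for the scale-matched $\lambda\|\btheta^*\|$ and restores the correct scaling in $\|\Sigma\|$. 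Once this observation is made, the remainder of the argument is routine algebra built on the already-established lemmata.
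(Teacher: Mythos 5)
Your proof is correct, and it takes a genuinely different and arguably cleaner route than the paper's. The paper starts from the identity $\bfeta^* = (A^*\btheta^* + \Sigma\btheta^\circ)/2$, writes
\[
A^*\btheta^* - \bfeta^* - \tfrac12\Sigma\bb_\lambda
= \tfrac12\bigl[(A^* - \Sigma)\btheta^* + \Sigma(\btheta^* - \btheta^\circ - \bb_\lambda)\bigr],
\]
bounds the first summand by Lemma~\ref{lemma: Sigma bias optimal bound} and $\|\btheta^*\|\le\|\btheta^\circ\|$, and then runs a separate resolvent expansion in terms of $B = (\tfrac12\Sigma^2+\lambda I_d)^{-1/2}\bigl((A^*)^\top A^*-\Sigma^2\bigr)(\tfrac12\Sigma^2+\lambda I_d)^{-1/2}$ to control $\|\Sigma(\btheta^*-\btheta^\circ-\bb_\lambda)\|$; adding the two pieces yields the constant~$3$. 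You instead combine the two stationarity conditions and a push-through identity to get the closed form $A^*\btheta^* - \bfeta^* = -\lambda M_1^{-1}\Sigma\btheta^\circ$ with $M_1 = A^*(A^*)^\top + 2\lambda I_d$, pair it with $\tfrac12\Sigma\bb_\lambda = -\lambda M_2^{-1}\Sigma\btheta^\circ$ with $M_2 = \Sigma^2 + 2\lambda I_d$, and compare the two resolvents directly via $M_2^{-1}-M_1^{-1} = M_2^{-1}(M_1-M_2)M_1^{-1}$ and the factorization $M_1-M_2 = E(A^*)^\top + \Sigma E^\top$. Recycling the identity $M_1^{-1}\Sigma\btheta^\circ = -(A^*\btheta^*-\bfeta^*)/\lambda$ and the trade $(A^*)^\top(A^*\btheta^*-\bfeta^*) = -\lambda\btheta^*$ avoids the separate expansion of $\btheta^*-\btheta^\circ-\bb_\lambda$ entirely and gives $\|A^*\btheta^*-\bfeta^*-\tfrac12\Sigma\bb_\lambda\| \le (\tfrac12 + 8^{-1/2})\|A^*-\Sigma\|\,\|\btheta^\circ\|$, which is a strict improvement on the paper's factor of~$3$. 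The approach buys a sharper constant and a more self-contained argument; its only cost is that it reuses the first-order optimality condition twice in a slightly nonobvious way (once to derive the closed form for $\btheta^*$, once to resubstitute $-\lambda\btheta^*$ after the resolvent expansion), which is exactly the step you flagged as the main obstacle.
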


We provide the proofs of Lemma \ref{lemma: Sigma bias optimal bound} and Lemma \ref{lem:a_theta_eta_expansion} in Sections \ref{sec:lemma Sigma bias optimal bound proof} and \ref{sec:lem_a_theta_eta_expansion}, respectively.

\subsection{Proof of Lemma~\ref{lemma: localization bias corollary}}
\label{sec:lemma localization bias corollary proof}

First, let us show that $\btheta^* \in \Theta^*$. Applying Lemma~\ref{lemma: localization lemma for bias}\ref{point: theta weak bound on bias, bias locating convex set} with $\rho_0 \leq 1/7$ in place of $\rho$, we obtain that
\begin{align}
    \label{eq: six theta circ bound on bias}
    \Vert \btheta^* - \btheta^\circ \Vert \le 2 \Vert \btheta^\circ \Vert.
\end{align}
Next, taking the condition~\ref{condition: Sigma bounded via lambda} of the lemma into account and using Lemma~\ref{lemma: localization lemma for bias}\ref{point: theta weak bound on bias, bias locating convex set}, we observe that
\begin{align}
    \Vert \Sigma \btheta^* - \Sigma \btheta^\circ \Vert & \le \Vert \Sigma \Vert (\Vert \btheta^* \Vert + \Vert \btheta^\circ \Vert) \le 2 \Vert \Sigma \Vert \Vert \btheta^\circ \Vert \le \rho_0 \mu \sqrt{\lambda} / 3 \label{eq: bias of theta wrt Sigma}.
\end{align}
The inequalities \eqref{eq: six theta circ bound on bias}
and \eqref{eq: bias of theta wrt Sigma} yield that the vector $\btheta^*$ belongs to $\Theta^*$. Our next goal is to prove that $A^* \in \sfA^*$. Due to Lemma~\ref{lemma: localization lemma for bias}\ref{point: A* weak bound on bias, bias locating convex set}, we have 
\begin{align}
\label{eq: bias of A wrt set A*}
    \Vert A^* - \Sigma \Vert
    \le \frac{\sqrt{\lambda} \Vert \btheta^\circ \Vert}{\mu}
    \le \frac{\rho_0 \sqrt{\lambda}}7
    \le \frac{\sqrt{\lambda}}3,
\end{align}
and, hence, $A^* \in \sfA^*$. An upper bound on $\Vert \bfeta^* - \Sigma \btheta^\circ \Vert$ easily follows from \eqref{eq: bias of theta wrt Sigma} and \eqref{eq: bias of A wrt set A*}:
\begin{align*}
    \Vert \bfeta^* - \Sigma \btheta^\circ \Vert & = \frac{1}{2} \Vert A^* \btheta^* - \Sigma \theta^\circ \Vert \le \frac{1}{2} (\Vert A^* \btheta^* - \Sigma \btheta^* \Vert + \Vert \Sigma \btheta^* - \Sigma \btheta^\circ \Vert) \\
    & \le \frac{1}{2} (\Vert A^* - \Sigma \Vert \Vert \btheta^* \Vert + \Vert \Sigma \btheta^* - \Sigma \btheta^\circ \Vert)
    \leq \frac{\rho_0 \mu \sqrt{\lambda}}3,
\end{align*}
where the last inequality is due to the fact that $\btheta^* \in \Theta^*$ and $A \in \sfA^*$. Thus, $\bfeta^* \in \sfH^*$. It only remains to prove that the product $\Theta^* \times \sfA^* \times \sfH^*$ is a subset of $\Upsilon(\rho_0)$. For this purpose, let us fix an arbitrary $(\btheta, A, \bfeta) \in \Theta^* \times \sfA^* \times \sfH^*$. Then, due to the definition of $\Theta^*, \sfA^*$ and $\sfH^*$, it holds that
\begin{align*}
    \Vert A \btheta - \bfeta \Vert \le \Vert A - \Sigma \Vert \Vert \btheta \Vert + \Vert \Sigma \btheta - \Sigma \btheta^\circ \Vert + \Vert \Sigma \btheta^\circ - \bfeta \Vert \le \rho_0 \mu \sqrt{\lambda}.
\end{align*}
This implies that the triplet $(\btheta, A, \bfeta)$ belongs to $\Upsilon(\rho_0)$ as well. Hence, $\Theta^* \times \sfA^* \times \sfH^* \subseteq \Upsilon(\rho_0)$, and the proof is finished.

\myendproof

\subsection{Proof of Lemma~\ref{lemma: localization lemma for bias}}
\label{sec:lemma localization lemma for bias proof}

Let us remind the reader that
\[
    \cL(\bups)
    = \cL(\btheta, \bfeta, A)
    = \frac12 \|\Sigma \btheta^\circ - \bfeta\|^2 + \frac12 \|\bfeta - A \btheta\|^2 + \frac{\mu^2}2 \|\Sigma - A\|_{\F}^2 + \frac{\lambda}2 \|\btheta\|^2.
\]
Since $(\btheta^*, \bfeta^*, A^*)$ minimizes $\cL(\btheta, \bfeta, A)$, we have
\[
    \cL(\btheta^*, A^*, \bfeta^*)
    \le \cL(\btheta^\circ, \Sigma, \Sigma \btheta^\circ)
    = \frac{\lambda \Vert \btheta^\circ \Vert^2}2
\]
and then
\begin{align}
    \label{eq: Delta(A) bias bound}
    \notag
    \Vert A^* \btheta^* - \bfeta^* \Vert
    &
    \le \sqrt{\lambda} \Vert \btheta^\circ \Vert,
    \quad
    \Vert A^* - \Sigma \Vert_{\F} \le \mu^{-1} \sqrt{\lambda} \Vert \btheta^\circ \Vert,
    \\
    \Vert \bfeta^* - \Sigma \btheta^\circ \Vert
    &
    \le \sqrt{\lambda} \Vert \btheta^\circ \Vert,
    \quad
    \Vert \btheta^* \Vert \le \Vert \btheta^\circ \Vert.
\end{align}
This yields the statements \ref{point: eta weak bound on bias, bias locating convex set} and \ref{point: bb bfeat weak bound, bias locating convex set} of the lemma and the first part of~\ref{point: A* weak bound on bias, bias locating convex set} and \ref{point: theta weak bound on bias, bias locating convex set}. In contrary, the derivation of the inequalities 
\[
    \Vert A^* - \Sigma \Vert \le \Vert \Sigma \Vert \Vert \btheta^\circ \Vert^2 / \mu^2
    \quad \text{and} \quad
    \Vert \btheta^* - \btheta^\circ\Vert \le \left \Vert \bb_\lambda \right \Vert + 3 \Vert \btheta^\circ \Vert \Vert \bb_\lambda \Vert / \mu
\]
is not straightforward. For the ease of exposure, we split the rest of the proof into several steps.

\medskip

\noindent \textbf{Step 1. An intermediate bound on $\|\btheta^* - \btheta^\circ\|$.}\quad
To bound the norm of $\btheta^* - \btheta^\circ$, we consider the gradient of $\cL$. Since $\bnabla_{\btheta} \cL(\btheta^*, \bfeta^*, A^*) = \bzero$ and
\[
    \bnabla_{\btheta} \cL(\btheta, \bfeta, A) = A^\top (A \btheta - \bfeta) + \lambda \btheta,
\]
we have
\[
    -\lambda \btheta^\circ
    = \bnabla_{\btheta} \cL(\btheta^*, \bfeta^*, A^*) - \bnabla_{\btheta} \cL(\btheta^\circ, \Sigma \btheta^\circ, \Sigma)
    = (A^*)^\top (A^* \btheta^* - \bfeta^*) + \lambda (\btheta^* - \btheta^\circ)
\]
Let us note that $\bfeta^* = (A^* \btheta^* + \Sigma \btheta^\circ) / 2$. This yields that
\[
    \frac12 (A^*)^\top (A^* \btheta^* - \Sigma \btheta^\circ) + \lambda (\btheta^* - \btheta^\circ)
    = -\lambda \btheta^\circ
\]
or, equivalently,
\begin{align}
    \label{eq:first-order_optimality_corollary}
    \left( \frac12 (A^*)^\top A^* + \lambda I_d \right) (\btheta^* - \btheta^\circ)
    &\notag
    = -\lambda \btheta^\circ - \frac12 (A^*)^\top (A^* - \Sigma) \btheta^\circ 
    \\&
    = -\lambda \btheta^\circ - \frac12 \Sigma (A^* - \Sigma) \btheta^\circ - \frac12 (A^* - \Sigma)^\top (A^* - \Sigma) \btheta^\circ.
\end{align}
Thus, due to the triangle inequality, we obtain that
\begin{align*}
    \left\| \btheta^* - \btheta^\circ \right\|
    &
    \leq \left\| \left( \frac12 (A^*)^\top A^* + \lambda I_d \right)^{-1} \left( \lambda \btheta^\circ + \frac12 \Sigma (A^* - \Sigma) \btheta^\circ \right) \right\|
    \\&\quad
    + \frac12 \left\| \left( \frac12 (A^*)^\top A^* + \lambda I_d \right)^{-1} (A^* - \Sigma)^\top (A^* - \Sigma) \btheta^\circ \right\|
    \\&
    \leq \left\| \left( \frac12 (A^*)^\top A^* + \lambda I_d \right)^{-1} \left( \lambda \btheta^\circ + \frac12 \Sigma (A^* - \Sigma) \btheta^\circ \right) \right\| + \frac{\|A^* - \Sigma\|^2 \|\btheta^\circ\|}{2 \lambda}.
\end{align*}
Let us introduce $\tilde \rho = \|\btheta^\circ\| / \mu \le 1/7$. This, together with the bound \eqref{eq: Delta(A) bias bound}, implies that
\[
    \frac{\|A^* - \Sigma\|^2 \|\btheta^\circ\|}{2 \lambda}
    \leq \frac{\|\btheta^\circ\|}{2 \lambda} \cdot \frac{\lambda \|\btheta^\circ\|^2}{\mu^2}
    = \frac{\|\btheta^\circ\|^3}{2 \mu^2}
    \leq \frac{\tilde \rho^2 \|\btheta^\circ\|}2.
\]
Hence, it holds that
\begin{equation}
    \label{eq:bias_norm_intermediate_bound}
    \left\| \btheta^* - \btheta^\circ \right\|
    \leq \left\| \left( \frac12 (A^*)^\top A^* + \lambda I_d \right)^{-1} \left( \lambda \btheta^\circ + \frac12 \Sigma (A^* - \Sigma) \btheta^\circ \right) \right\| + \frac{\tilde \rho^2 \|\btheta^\circ\|}2.
\end{equation}

\medskip

\noindent \textbf{Step 2. Final bound on $\|\btheta^* - \btheta^\circ\|$.}\quad
In what follows, we are going to show that
\begin{equation}
    \label{eq:bias_norm_first_term_bound}
    \left\| \left( \frac12 (A^*)^\top A^* + \lambda I_d \right)^{-1} \left( \lambda \btheta^\circ + \frac12 \Sigma (A^* - \Sigma) \btheta^\circ \right) \right\|
    \leq \|\bb_\lambda\| + \frac{35 \tilde \rho \|\btheta^\circ\|}{12}.
\end{equation}
Then the inequalities \eqref{eq:bias_norm_intermediate_bound} and \eqref{eq:bias_norm_first_term_bound} will yield the desired bound on $\|\btheta^* - \btheta^\circ\|$. Let us introduce
\[
    S = \frac12 (A^*)^\top A^* - \frac12 \Sigma^2
    \quad \text{and} \quad
    B = \left( \frac12 \Sigma^2 + \lambda I_d \right)^{-1/2} S \left( \frac12 \Sigma^2 + \lambda I_d \right)^{-1/2}.
\]
Note that, according to Lemma \ref{lem:diff_squares_op_norm_bound} and \eqref{eq: Delta(A) bias bound}, the operator norm of $B$ does not exceed
\begin{equation}
    \label{eq:standardized_remainder_norm_bound}
    \|B\|
    \leq \|A^* - \Sigma\| \sqrt{\frac{2}{\lambda}} + \frac{\|A^* - \Sigma\|^2}\lambda
    \leq \frac{\|\btheta^\circ\| \sqrt{2}}{\mu} + \frac{\|\btheta^\circ\|^2}{\mu^2}
    = \tilde \rho\sqrt{2} + \tilde \rho^2 \leq \frac14.
\end{equation}
Here we used the inequalities $\|\btheta^\circ\| = \tilde \rho \mu$ and $0 \leq \tilde \rho \leq 1/7$ holding by the definition of $\tilde \rho$ and conditions of the lemma. In particular, \eqref{eq:standardized_remainder_norm_bound} ensures that the matrix $(I_d + B)$ is invertible and
\[
    \left\| (I_d + B)^{-1} \right\|
    \leq \frac1{1 - \|B\|}
    \leq \frac43.
\]
Using the relation
\[
    (I_d + B)^{-1}
    = (I_d + B)^{-1} (I_d + B - B)
    = I_d - (I_d + B)^{-1} B,
\]
we obtain that
\begin{align}
    \label{eq:bias_norm_first_term_expansion}
    &\notag
    \left( \frac12 (A^*)^\top A^* + \lambda I_d \right)^{-1} \left( \lambda \btheta^\circ + \frac12 \Sigma (A^* - \Sigma) \btheta^\circ \right)
    \\&\notag
    = \left( \frac12 \Sigma^2 + \lambda I_d \right)^{-1/2} (I_d + B)^{-1} \left( \frac12 \Sigma^2 + \lambda I_d \right)^{-1/2} \left( \lambda \btheta^\circ + \frac12 \Sigma (A^* - \Sigma) \btheta^\circ \right)
    \\&
    = -\bb_\lambda - \lambda \left( \frac12 \Sigma^2 + \lambda I_d \right)^{-1/2} (I_d + B)^{-1} B \left( \frac12 \Sigma^2 + \lambda I_d \right)^{-1/2} \btheta^\circ
    \\&\quad\notag
    + \frac12 \left( \frac12 \Sigma^2 + \lambda I_d \right)^{-1/2} (I_d + B)^{-1} \left( \frac12 \Sigma^2 + \lambda I_d \right)^{-1/2} \Sigma (A^* - \Sigma) \btheta^\circ.
\end{align}
The norm of the second term in the right-hand side of \eqref{eq:bias_norm_first_term_expansion} in not greater than
\begin{align}
    \label{eq:bias_norm_first_term_expansion_i}
    &\notag
    \left\| \lambda \left( \frac12 \Sigma^2 + \lambda I_d \right)^{-1/2} (I_d + B)^{-1} B \left( \frac12 \Sigma^2 + \lambda I_d \right)^{-1/2} \btheta^\circ \right\|
    \\&
    \leq \lambda \cdot \frac1{\sqrt{\lambda}} \cdot \frac{\|B\|}{1 - \|B\|} \cdot \frac{\|\btheta^\circ\|}{\sqrt{\lambda}}
    = \frac43 (\tilde \rho \sqrt{2} + \tilde \rho^2) \|\btheta^\circ\|
    \leq \frac{9 \tilde \rho \|\btheta^\circ\|}4,
\end{align}
while the third one does not exceed
\begin{align}
    \label{eq:bias_norm_first_term_expansion_ii}
    &\notag
    \left\| \frac12 \left( \frac12 \Sigma^2 + \lambda I_d \right)^{-1/2} (I_d + B)^{-1} \left( \frac12 \Sigma^2 + \lambda I_d \right)^{-1/2} \Sigma (A^* - \Sigma) \btheta^\circ \right\|
    \\&
    \leq \frac1{2 \sqrt{\lambda}} \cdot \frac1{1 - \|B\|} \cdot \left\| \left( \frac12 \Sigma^2 + \lambda I_d \right)^{-1/2} \Sigma \right\| \|A^* - \Sigma\| \|\btheta^\circ\|
    \\&\notag
    \leq \frac1{2 \sqrt{\lambda}} \cdot \frac1{1 - \|B\|} \cdot \frac{\|\btheta^\circ\|^2 \sqrt{\lambda}}{\mu}
    \leq \frac43 \cdot \frac{\tilde \rho \|\btheta^\circ\|}{2}
    = \frac{2 \tilde \rho \|\btheta^\circ\|}{3}.
\end{align}
The inequalities \eqref{eq:bias_norm_first_term_expansion}, \eqref{eq:bias_norm_first_term_expansion_i}, and \eqref{eq:bias_norm_first_term_expansion_ii} imply that
\begin{align*}
    &
    \left\| \left( \frac12 (A^*)^\top A^* + \lambda I_d \right)^{-1} \left( \lambda \btheta^\circ + \frac12 \Sigma (A^* - \Sigma) \btheta^\circ \right) \right\|
    \\&
    \leq \|\bb_\lambda\| + \frac{9 \tilde \rho \|\btheta^\circ\|}4 + \frac{2 \tilde \rho \|\btheta^\circ\|}{3}
    = \|\bb_\lambda\| + \frac{35 \tilde \rho \|\btheta^\circ\|}{12}.
\end{align*}
Hence, \eqref{eq:bias_norm_first_term_bound} holds and then \eqref{eq:bias_norm_intermediate_bound} yields
\[
    \|\btheta^* - \btheta^\circ\|
    \leq \|\bb_\lambda\| + \frac{35 \tilde \rho \|\btheta^\circ\|}{12} + \frac{\tilde \rho^2 \|\btheta^\circ\|}2
    \leq \|\bb_\lambda\| + 3 \tilde \rho \|\btheta^\circ\| = \Vert \bb_\lambda \Vert + \Vert \btheta^\circ \Vert^2 / \mu,
\]
where the last inequality is due to the fact that $\tilde \rho \leq 1/7$.

\medskip

\noindent \textbf{Step 3. The bound on $\Vert A^* - \Sigma \Vert$.} Finally, we are going to prove that $\Vert A^* - \Sigma \Vert \le  \Vert \Sigma \Vert \Vert \btheta^\circ \Vert^2/ \mu^2$ using the derived bound on $\Vert \btheta^* - \btheta^\circ \Vert$. Note that $\|\bb_\lambda\| \leq \|\btheta^\circ\|$ and then
\begin{align}
    \label{eq: 6 theta circ bound on bias}
    \Vert \btheta^* - \btheta^\circ \Vert
    \le \Vert \btheta^\circ \Vert + 3 \tilde \rho \Vert \btheta^\circ \Vert
    \leq \Vert \btheta^\circ \Vert + \frac{3 \Vert \btheta^\circ \Vert}7
    \le \frac{3 \Vert \btheta^\circ \Vert}2.
\end{align}
Similarly to the analysis of $\btheta^* - \btheta^\circ$, our study of $A^* - \Sigma$ relies on the first-order optimality condition:
\[
    O_d = \nabla_A \cL(\btheta^*, \bfeta^*, A^*) = (A^* \btheta^* - \bfeta^*) (\btheta^*)^\top + \mu^2 (A^* - \Sigma).
\]
Since $\bfeta^* = (A^* \btheta^* + \Sigma \btheta^\circ) / 2$, it holds that
\[
    \mu^2 (A^* - \Sigma) = \frac12(\Sigma \btheta^\circ - A^* \btheta^*) (\btheta^*)^\top.
\]
Thus, we have
\[
    \left( \mu^2 I_d + \btheta^* (\btheta^*)^\top \right) (A^* - \Sigma) = \frac12 \Sigma (\btheta^\circ - \btheta^*) (\btheta^*)^\top.    
\]
Since the smallest eigenvalue of $\mu^2 I_d + \btheta^* (\btheta^*)^\top$ is equal to $\mu^2$, we obtain that
\[
    \|A^* - \Sigma\|
    \leq \frac{\|\Sigma\| \|\btheta^*\| \|\btheta^* - \btheta^\circ\|}{2 \mu^2} 
    \leq \frac{3 \|\Sigma\| \|\btheta^\circ\|^2}{4 \mu^2} \le \Vert \Sigma \Vert \Vert \btheta^\circ \Vert^2 / \mu^2.
\]

\myendproof

\subsection{Proof of Lemma~\ref{lemma: Sigma bias optimal bound}}
\label{sec:lemma Sigma bias optimal bound proof}

Similarly to the proof of Theorem \ref{theorem: bias}, the analysis of $A^* - \Sigma$ starts with the Newton-Leibniz formula:
\begin{align*}
    - \bnabla \cL(\bups^\circ)
    = \bnabla \cL(\bups^*) - \bnabla \cL(\bups^\circ)
    = \int_{0}^1 \nabla^2 \cL\big(t \bups^* + (1 - t) \bups^\circ \big) (\bups^* - \bups^\circ) \, \rmd t
    = \avg{H} (\bups^* - \bups^\circ).
\end{align*}
Here we used the fact that $\bnabla \cL(\bups^*) = \bzero$ and introduced the notation
\[
    \avg H = \int_{0}^1 \nabla^2 \cL\big(t \bups^* + (1 - t) \bups^\circ \big) \, \rmd t.
\]
More generally, throughout the proof, for a matrix-valued function $f(\bups) = f(\btheta, \bfeta, A)$, $\biasInner{f}$ stands for
\begin{align*}
    \biasInner{f} = \int_0^1 f\big(t \bups^* + (1 - t) \bups^\circ \big) \, \rmd t.
\end{align*}
Note that, according to Lemma \ref{lemma: localization bias corollary} and Lemma \ref{lem:block-diagonal_lower_bound}, the matrix $\biasInner{H}$ is invertible. Then it holds that $\bups^* - \bups^\circ = - \avg{H}^{-1} \bnabla \cL(\bups^\circ)$, and we have to study the last $d^2$ components of $\avg{H}^{-1} \bnabla \cL(\bups^\circ)$. For this purpose, we use the block form representation of the matrix $\avg H$ and its inverse:
\begin{align*}
    \avg H
    =
    \begin{pmatrix}
        \avg H_{\btheta \btheta} & \avg H_{\btheta \bfeta} & \avg H_{\btheta A}\\
        \avg H_{\bfeta \btheta} & \avg H_{\bfeta \bfeta} & \avg H_{\bfeta A} \\
        \avg H_{A \btheta} & \avg H_{A \bfeta} & \avg H_{AA}
    \end{pmatrix}
    =
    \begin{pmatrix}
        \avg H_{\btheta \btheta} & \avg H_{\btheta \bfeta} & \avg H_{\btheta \ba_1} & \dots & \avg H_{\btheta \ba_d} \\
        \avg H_{\bfeta \btheta} & \avg H_{\bfeta \bfeta} & \avg H_{\bfeta \ba_1} & \dots & \avg H_{\bfeta \ba_d} \\
        \avg H_{\ba_1 \btheta} & \avg H_{\ba_1 \bfeta} & \avg H_{\ba_1 \ba_1} & \dots & \avg H_{\ba_1 \ba_d} \\
        \vdots & \vdots & \vdots & \ddots & \vdots \\
        \avg H_{\ba_d \btheta} & \avg H_{\ba_d \bfeta} & \avg H_{\ba_d \ba_1} & \dots & \avg H_{\ba_d \ba_d}
    \end{pmatrix}.
\end{align*}
In what follows, we denote $\avg H^{-1}$ by $F$ and, for any $\bnu, \bgamma \in \{\btheta, \bfeta, \ba_1, \dots \ba_d\}$, $F_{\bnu \bgamma}$ stands for the corresponding block of $F$. Let us note that all the components of $\nabla \cL(\bups^\circ)$, except for the first $d$ ones, are equal to zero. Therefore, for any $j \in \{1, \dots, d\}$ we may write
\begin{align*}
    \ba_j^* - \Sigma \be_j
    &
    = - \big( \avg H^{-1} \, \bnabla \cL(\bups^\circ) \big)_{\ba_j}
    = - \big( F \, \bnabla \cL(\bups^\circ) \big)_{\ba_j}
    \\&
    = - F_{\ba_j \btheta} \bnabla_{\btheta} \cL(\bups^\circ)
    - F_{\ba_j \bfeta} \bnabla_{\bfeta} \cL(\bups^\circ)
    - \sum\limits_{k = 1}^d F_{\ba_j \ba_k} \bnabla_{\ba_k} \cL(\bups^\circ)
    \\&
    = - F_{\ba_j \btheta} \bnabla_{\btheta} \cL(\bups^\circ)
    = -\lambda F_{\ba_j \btheta} \btheta^\circ.
\end{align*}
Let us elaborate on $F_{\ba_j \btheta}$. Denote the nuisance parameter by $\chi = (\bfeta, A)$ and let
\[
    \avg H_{\chi \chi}
    =
    \begin{pmatrix}
        \avg H_{\bfeta \bfeta} & \avg H_{\bfeta A} \\
        \avg H_{A \bfeta} & \avg H_{AA}
    \end{pmatrix}
    =
    \begin{pmatrix}
        \avg H_{\bfeta \bfeta} & \avg H_{\bfeta \ba_1} & \dots & \avg H_{\bfeta \ba_d} \\
        \avg H_{\ba_1 \bfeta} & \avg H_{\ba_1 \ba_1} & \dots & \avg H_{\ba_1 \ba_d} \\
        \vdots & \vdots & \ddots & \vdots \\
        \avg H_{\ba_d \bfeta} & \avg H_{\ba_d \ba_1} & \dots & \avg H_{\ba_d \ba_d}
    \end{pmatrix}.
\]
stand for the corresponding block of $\avg H$. Introducing $J = \avg{H}_{\chi \chi}^{-1}$ and using the blockwise inversion formula \eqref{eq:h_block_inversion}, we obtain that
\[
    F_{\chi \btheta} = -\avg H_{\chi \chi}^{-1} H_{\chi \btheta} \left(\avg H / \avg H_{\chi \chi} \right)^{-1}
    = J H_{\chi \btheta} \left(\avg H / \avg H_{\chi \chi} \right)^{-1}.
\]
Thus, for any $j \in \{1, \dots, d\}$, we have
\[
    F_{\ba_j \btheta}
    = -J_{\ba_j \bfeta} H_{\bfeta \btheta} \left(\avg H / \avg H_{\chi \chi} \right)^{-1} - \sum\limits_{k = 1}^d J_{\ba_j \ba_k} H_{\ba_k \btheta} \left(\avg H / \avg H_{\chi \chi} \right)^{-1}.
\]
According to Proposition \ref{proposition: inverse of nuisance parameters}, it holds that $J_{\ba_j \ba_k} = O_d$ for all $j \neq k$. Then the expression for $F_{\ba_j \btheta}$ simplifies to
\[
    F_{\ba_j \btheta}
    = -J_{\ba_j \bfeta} H_{\bfeta \btheta} \left(\avg H / \avg H_{\chi \chi} \right)^{-1} - J_{\ba_j \ba_j} H_{\ba_j \btheta} \left(\avg H / \avg H_{\chi \chi} \right)^{-1}
\]
Let us recall that in the proof of Theorem \ref{theorem: bias}, we showed (see \eqref{eq:theta_star_representation})
\[
    \btheta^* - \btheta^\circ = -\lambda \left(\avg H / \avg H_{\chi \chi} \right)^{-1} \btheta^\circ.
\]
This yields that
\[
    \ba_j^* - \Sigma \be_j
    = -J_{\ba_j \bfeta} H_{\bfeta \btheta} (\btheta^* - \btheta^\circ)
    - J_{\ba_j \ba_j} H_{\ba_j \btheta} (\btheta^* - \btheta^\circ)
    \quad \text{for all $j \in \{1, \dots, d\}$}
\]
and, hence,
\begin{align}
    \label{eq: A - Sigma star decomposition using the average hessian}
    A^* - \Sigma
    &
    = \sum_{j = 1}^d \left( \be_j (\ba_j^*)^\top - \Sigma \be_j \be_j^\top \right)
    \nonumber \\
    &
    = - \sum_{j = 1}^d \be_j (\btheta^* - \btheta^\circ)^\top \left ( \left \{ \avg{(\ba_j^\top \btheta - \bfeta_j)} I_d + \avg{\ba_j \btheta^\top}   \right \}  R_3 + r_2 \avg{A}^\top \be_j \Tilde{\btheta}^\top \right ) \nonumber
    \\&
    = - \avg{(A \btheta - \bfeta)} (\btheta^* - \btheta^\circ)^\top R_3 - \avg{A (\btheta^* - \btheta^\circ) \btheta^\top} R_3 - r_2 \avg{A} (\btheta^* - \btheta^\circ) \Tilde{\btheta}^\top.
\end{align}
Let us consider the first term in the right-hand side of~\eqref{eq: A - Sigma star decomposition using the average hessian}. For the sake of brevity, we introduce
\begin{align*}
    \rho := \frac{7 \Vert \btheta^\circ \Vert}{\mu}.
\end{align*}
Due to Proposition~\ref{proposition: A theta - eta bias bound}, Lemma~\ref{lemma: localization lemma for bias}\ref{point: theta weak bound on bias, bias locating convex set} and the assumption $\Vert \btheta^\circ \Vert \le \rho \mu /7$, we have
\[
    \Vert \avg{A \btheta - \bfeta} \Vert \le \rho \mu \sqrt{\lambda} / 2 + 3 \Vert \Sigma \Vert \Vert \bb_\lambda \Vert
    \quad \text{and} \quad
    \Vert \btheta^* \Vert \le \Vert \btheta^\circ \Vert \le \rho \mu /7.
\]
This implies that
\begin{align}
    \Vert \avg{(A \btheta - \bfeta)} (\btheta^* - \btheta^\circ)^\top \, R_3 \Vert & \le \left ( \rho \mu \sqrt{\lambda} / 2 + 3 \Vert \Sigma \Vert \Vert \bb_\lambda \Vert \right ) \cdot (2 \rho \mu) \cdot \Vert R_3 \Vert / 7  \nonumber \\
    & \le  \rho^2 \sqrt{\lambda} / 3 + 2 \rho \Vert \Sigma \Vert \Vert \bb_\lambda \Vert / \mu, \label{eq: A - Sigma star decomposition using the average hessian - term 1 bound}
\end{align}
where we use $\Vert R_3 \Vert \le 2 \mu^{-2}$ due to Proposition~\ref{proposition: inverse of nuisance parameters}.
Then we bound last two terms of~\eqref{eq: A - Sigma star decomposition using the average hessian}. We have
\begin{align*}
    \left\Vert \avg{A (\btheta^* - \btheta^\circ) \btheta^\top} \right\Vert
    &
    \le \left\Vert \Sigma  (\btheta^* - \btheta^\circ) \avg{\btheta}^\top \right\Vert
    + \left\Vert \avg{(A - \Sigma) (\btheta^* - \btheta^\circ) \btheta^\top} \right\Vert, \\
    \Vert \avg{A} (\btheta^* - \btheta^\circ) \Tilde{\btheta}^\top \Vert & \le \left \Vert \Sigma (\btheta^* - \btheta^\circ) \Tilde{\btheta}^\top \right \Vert + \left \Vert \avg{A - \Sigma} (\btheta^*- \btheta^\circ ) \Tilde{\btheta}^\top \right \Vert.
\end{align*}
Using Lemma~\ref{lemma: localization lemma for bias}\ref{point: theta weak bound on bias, bias locating convex set} and the triangle inequality, we deduce that
\begin{align*}
    \left\Vert \avg{A (\btheta^* - \btheta^\circ) \btheta^\top} \right\Vert
    & \le \Vert \Sigma \Vert \left \{ \Vert \bb_\lambda \Vert + 3 \rho \Vert \btheta^\circ \Vert \right \} \Vert \avg{\btheta}^\top \Vert + \int_0^1 \Vert (A^{(t)} - \Sigma) (\btheta^* - \btheta^\circ) (\btheta^{(t)})^\top \Vert \rmd \m(t), \\
    \left\Vert \avg{A} (\btheta^* - \btheta^\circ) \Tilde{\btheta}^\top \right\Vert
    & \le \Vert \Sigma \Vert \left \{ \Vert \bb_\lambda \Vert + 3 \rho \Vert \btheta^\circ \Vert \right \} \Vert \avg{\btheta}^\top \Vert + \int_0^1 \Vert (A^{(t)} - \Sigma) (\btheta^* - \btheta^\circ) \Tilde{\btheta}^\top \Vert \rmd \m(t).
\end{align*}
Since $A^{(t)} \in \sfA^*$, we have $\Vert A^{(t)} - \Sigma \Vert \le \sqrt{\lambda}/3$. On the other hand, the vectors $\btheta^*$, $\btheta^\circ$, and $\btheta^{(t)}$ belong to $\Theta^*$. Thus, we can bound their norms by $\rho \mu$, as well as the norm of $\Tilde{\btheta}$ due to~\eqref{eq: tilde theta bound}, and obtain that
\begin{align*}
    \left\Vert \avg{A (\btheta^* - \btheta^\circ) \btheta^\top} \right\Vert
    &
    \le \rho \mu  \Vert \Sigma \Vert \left \{ \Vert \bb_\lambda \Vert + 3 \rho \Vert \btheta^\circ \Vert \right \} + 2 \rho^2 \mu^2 \sqrt{\lambda} / 3,
    \\
    \Vert \avg{A} (\btheta^* - \btheta^\circ) \Tilde{\btheta} \Vert
    &
    \le \rho \mu  \Vert \Sigma \Vert \left \{ \Vert \bb_\lambda \Vert + 3 \rho \Vert \btheta^\circ \Vert \right \} + 2 \rho^2 \mu^2 \sqrt{\lambda} / 3.
\end{align*}
It remains to bound the first terms of expressions above. By the assumptions of the lemma, we have
\begin{align*}
    \Vert \Sigma \Vert \Vert \btheta^\circ \Vert & \le \mu \sqrt{\lambda} / 24.
\end{align*}
This yields that
\begin{align}
\label{eq: A - Sigma star decomposition using the average hessian - term 2 bound}
    \left \Vert \avg{A (\btheta^* - \btheta^\circ) \btheta^\top} \right \Vert \le \rho^2 \mu^2 \sqrt{\lambda} + \rho \mu \Vert \Sigma \Vert \Vert \bb_\lambda \Vert, \\
\label{eq: A - Sigma star decomposition using the average hessian - term 3 bound}
    \left \Vert \avg{A} (\btheta^* - \btheta^\circ) \Tilde{\btheta}^\top \right \Vert \le \rho^2 \mu^2 \sqrt{\lambda} + \rho \mu \Vert \Sigma \Vert \Vert \bb_\lambda \Vert.
\end{align}
Combining the inequalities~\eqref{eq: A - Sigma star decomposition using the average hessian - term 1 bound},\eqref{eq: A - Sigma star decomposition using the average hessian - term 2 bound},\eqref{eq: A - Sigma star decomposition using the average hessian - term 3 bound} with \eqref{eq: A - Sigma star decomposition using the average hessian}, we finally get that
\begin{align*}
    \Vert A^* - \Sigma \Vert \le \rho^2 \sqrt{\lambda} / 3 +  \frac{2 \rho \Vert \Sigma \Vert \Vert \bb_\lambda \Vert}{\mu}+ \Vert R_3 \Vert \cdot (\rho^2 \mu^2 \sqrt{\lambda} + \Vert \Sigma \Vert \Vert \bb_\lambda \Vert) + r_2 (\rho^2 \mu^2 \sqrt{\lambda} + \Vert \Sigma \Vert \Vert \bb_\lambda \Vert),
\end{align*}
which is at most $4 \rho^2 \sqrt{\lambda} + 5 \rho \Vert \Sigma \Vert \Vert \bb_\lambda \Vert / \mu$ due to Proposition~\ref{proposition: inverse of nuisance parameters}.
\myendproof

\subsection{Proof of Lemma \ref{lem:a_theta_eta_expansion}}
\label{sec:lem_a_theta_eta_expansion}

Since $\bfeta^* = (A^* \btheta^* - \Sigma \btheta^\circ) / 2$, it holds that
\begin{align}
    \label{eq:a_theta_eta_triangle}
    \left\| A^* \btheta^* - \bfeta^* - \frac12 \Sigma \bb_\lambda \right\|
    &\notag
    = \frac12 \left\| A^* \btheta^* - \Sigma \btheta^\circ - \Sigma \bb_\lambda \right\|
    \\&
    \leq \frac12 \| A^* - \Sigma\| \|\btheta^*\| + \frac12 \left\| \Sigma (\btheta^* - \btheta^\circ - \bb_\lambda) \right\|.
\end{align}
Using Lemma \ref{lemma: Sigma bias optimal bound} and Lemma \ref{lemma: localization lemma for bias}\ref{point: theta weak bound on bias, bias locating convex set} we can easily bound the first term in the right-hand side:
\begin{equation}
    \label{eq:a_theta_eta_first_term}
    \| A^* - \Sigma\| \|\btheta^*\| \leq \left(\frac{14 \|\btheta^\circ\|}\mu \right)^2 \|\btheta^\circ\| \sqrt{\lambda} + \frac{35 \|\Sigma\| \|\btheta^\circ\|^2 \|\bb_\lambda\|}{\mu^2}.
\end{equation}
To bound the second term in the right-hand side of \eqref{eq:a_theta_eta_triangle}, we use the first-order optimality condition $\bnabla_{\btheta} \cL(\btheta^*, \bfeta^*, A^*) = \bzero$. Similarly to the proof of Lemma \ref{lemma: localization lemma for bias} (see \eqref{eq:first-order_optimality_corollary}), we obtain that
\begin{equation}
    \label{eq:first-order_optimality_corollary_2}
    \left( \frac12 (A^*)^\top A^* + \lambda I_d \right) (\btheta^* - \btheta^\circ)
    = -\lambda \btheta^\circ - \frac12 (A^*)^\top (A^* - \Sigma) \btheta^\circ.
\end{equation}
Let us introduce
\[
    B = \left( \frac12 \Sigma^2 + \lambda I_d \right)^{-1/2} \left( (A^*)^\top A^* - \Sigma^2 \right) \left( \frac12 \Sigma^2 + \lambda I_d \right)^{-1/2}.
\]
According to Lemma \ref{lem:diff_squares_op_norm_bound}, the operator norm of $B$ does not exceed
\[
    \|B\| \leq \|A^* - \Sigma\| \sqrt{\frac2\lambda} + \frac{\|A^* - \Sigma\|^2}{2 \lambda}.
\]
Due to Lemma \ref{lemma: Sigma bias optimal bound}, we have
\[
    \frac{\|A^* - \Sigma\|}{\sqrt{\lambda}} \leq \left(\frac{14 \|\btheta^\circ\|}\mu \right)^2 + \frac{35 \|\Sigma\| \|\btheta^\circ\| \|\bb_\lambda\|}{\mu^2 \sqrt{\lambda}}.
\]
The conditions of the lemma yield that $\|A^* - \Sigma\| / \sqrt{\lambda}$ is not greater than $1$. Indeed, since $\|\btheta^\circ\| / \mu = \rho / 7 \leq 1 / 49$ and $\mu \sqrt{\lambda} \geq 24 \|\Sigma\| \|\btheta^\circ\|$, it holds that
\[
    \frac{\|A^* - \Sigma\|}{\sqrt{\lambda}} \leq \left( \frac27 \right)^2 + \frac{35 \|\bb_\lambda\|}{24 \mu}
    \leq \left( \frac27 \right)^2 + \frac{35 \|\btheta^\circ\|}{24 \mu}
    \leq \left( \frac27 \right)^2 + \frac{5}{24 \cdot 7} < \frac18.
\]
This implies that
\[
    \|A^* - \Sigma\| \sqrt{\frac2\lambda} + \frac{\|A^* - \Sigma\|^2}{2 \lambda}
    \leq \frac{\|A^* - \Sigma\|}{\sqrt{\lambda}} \left( \sqrt{2} + \frac12 \right) 
    \leq \frac{2 \|A^* - \Sigma\|}{\sqrt{\lambda}}.
\]
and then
\begin{equation}
    \label{eq:b_bound}
    \|B\|
    \leq \frac{2 \|A^* - \Sigma\|}{\sqrt{\lambda}}
    \leq 2 \left(\frac{14 \|\btheta^\circ\|}\mu \right)^2 + \frac{70 \|\Sigma\| \|\btheta^\circ\| \|\bb_\lambda\|}{\mu^2 \sqrt{\lambda}} \leq \frac14.
\end{equation}
Using the identity \eqref{eq:first-order_optimality_corollary_2}, we obtain that
\begin{align*}
    \left\| \Sigma \left(\btheta^* - \btheta^\circ - \bb_\lambda \right) \right\|
    &
    = \Bigg\| \lambda \Sigma \left(\frac12 \Sigma^2 + \lambda I_d \right)^{-1} \btheta^\circ - \lambda \Sigma \left( \frac12 (A^*)^\top A^* + \lambda I_d \right)^{-1} \btheta^\circ
    \\&\quad
    - \frac12 \Sigma \left( \frac12 (A^*)^\top A^* + \lambda I_d \right)^{-1} (A^*)^\top (A^* - \Sigma) \btheta^\circ \Bigg\|
    \\&
    = \Bigg\| \lambda \Sigma \left(\frac12 \Sigma^2 + \lambda I_d \right)^{-1/2} \left(I_d - (I_d + B)^{-1} \right) \left(\frac12 \Sigma^2 + \lambda I_d \right)^{-1/2} \btheta^\circ
    \\&\quad
    - \frac12 \Sigma \left( \frac12 (A^*)^\top A^* + \lambda I_d \right)^{-1} (A^*)^\top (A^* - \Sigma) \btheta^\circ \Bigg\|.
\end{align*}
Due to the triangle inequality and
\[
    \left\| I_d - (I_d + B)^{-1} \right\|
    = \left\| B (I_d + B)^{-1} \right\|
    \leq \frac{\|B\|}{1 - \|B\|}
    \leq \frac{4 \|B\|}3,
\]
it holds that
\begin{align*}
    \left\| \Sigma \left(\btheta^* - \btheta^\circ - \bb_\lambda \right) \right\|
    &
    \leq \frac{4 \lambda \|B\|}3 \left\| \Sigma \left(\frac12 \Sigma^2 + \lambda I_d \right)^{-1/2} \right\| \left\| \left(\frac12 \Sigma^2 + \lambda I_d \right)^{-1/2} \btheta^\circ \right\|
    \\&\quad
    + \frac12 \left\| \Sigma \left( \frac12 (A^*)^\top A^* + \lambda I_d \right)^{-1} (A^*)^\top (A^* - \Sigma) \btheta^\circ \right\|.
\end{align*}
An upper bound on the first term in the right-hand side immediately follows from the inequalities
\[
    \left\| \Sigma \left(\frac12 \Sigma^2 + \lambda I_d \right)^{-1/2} \right\| \leq \sqrt{2}
    \quad \text{and} \quad
    \left\| \left(\frac12 \Sigma^2 + \lambda I_d \right)^{-1/2} \btheta^\circ \right\|
    \leq \frac{\|\btheta^\circ\|}{\sqrt{\lambda}},
\]
while the second one does not exceed
\begin{align*}
    &
    \frac12 \left\| \Sigma \left( \frac12 (A^*)^\top A^* + \lambda I_d \right)^{-1} (A^*)^\top (A^* - \Sigma) \btheta^\circ \right\|
    \\&
    \leq \frac12 \left\| A^* \left( \frac12 (A^*)^\top A^* + \lambda I_d \right)^{-1} (A^*)^\top (A^* - \Sigma) \btheta^\circ \right\|
    \\&\quad
    + \frac12 \left\| (A^* - \Sigma) \left( \frac12 (A^*)^\top A^* + \lambda I_d \right)^{-1} (A^*)^\top (A^* - \Sigma) \btheta^\circ \right\|
    \\&
    \leq \left\|(A^* - \Sigma) \btheta^\circ \right\| + \frac{\|A^* - \Sigma\|^2 \|\btheta^\circ\|}{2 \lambda} \left\| \left( \frac12 (A^*)^\top A^* + \lambda I_d \right)^{-1} A^* \right\|
    \\&
    \leq \left\|(A^* - \Sigma) \btheta^\circ \right\| + \frac{\|A^* - \Sigma\|^2 \|\btheta^\circ\|}{\sqrt{2 \lambda}}
\end{align*}
Thus, we obtain that
\begin{align*}
    &
    \left\| \Sigma \left(\btheta^* - \btheta^\circ - \bb_\lambda \right) \right\|
    \leq \frac{4 \|B\| \|\btheta^\circ\| \sqrt{2 \lambda}}3 + \left\|(A^* - \Sigma) \btheta^\circ \right\| + \frac{\|A^* - \Sigma\|^2 \|\btheta^\circ\|}{2 \lambda}
    \\&
    \leq \frac{8 \|\btheta^\circ\| \sqrt{2 \lambda}}3 \left( \left(\frac{14 \|\btheta^\circ\|}\mu \right)^2 + \frac{35 \|\Sigma\| \|\btheta^\circ\| \|\bb_\lambda\|}{\mu^2 \sqrt{\lambda}} \right)
    + \|A^* - \Sigma\| \|\btheta^\circ\| \left(1 + \frac{\|A^* - \Sigma\|}{\sqrt{2 \lambda}} \right)
    \\&
    \leq \frac{8 \|\btheta^\circ\| \sqrt{2 \lambda}}3 \left( \left(\frac{14 \|\btheta^\circ\|}\mu \right)^2 + \frac{35 \|\Sigma\| \|\btheta^\circ\| \|\bb_\lambda\|}{\mu^2 \sqrt{\lambda}} \right)
    + \|A^* - \Sigma\| \|\btheta^\circ\| \left(1 + \frac{1}{8 \sqrt{2}} \right)
    \\&
    \leq \left(\frac{8 \sqrt{2}}3 + 1 + \frac1{8 \sqrt{2}} \right) \left( \left(\frac{14 \|\btheta^\circ\|}\mu \right)^2 + \frac{35 \|\Sigma\| \|\btheta^\circ\| \|\bb_\lambda\|}{\mu^2 \sqrt{\lambda}} \right) \|\btheta^\circ\| \sqrt{\lambda}
    \\&
    \leq 5 \|\btheta^\circ\| \sqrt{\lambda} \left( \left(\frac{14 \|\btheta^\circ\|}\mu \right)^2 + \frac{35 \|\Sigma\| \|\btheta^\circ\| \|\bb_\lambda\|}{\mu^2 \sqrt{\lambda}} \right).
\end{align*}
This and the inequalities \eqref{eq:a_theta_eta_triangle} and \eqref{eq:a_theta_eta_first_term} yield that
\[
    \left\| A^* \btheta^* - \bfeta^* - \frac12 \Sigma \bb_\lambda \right\|
    \leq 3 \|\btheta^\circ\| \sqrt{\lambda} \left( \left(\frac{14 \|\btheta^\circ\|}\mu \right)^2 + \frac{35 \|\Sigma\| \|\btheta^\circ\| \|\bb_\lambda\|}{\mu^2 \sqrt{\lambda}} \right).
\]
\myendproof

\section{Results from the proof of Theorem~\ref{theorem: bias}}

\subsection{Proof of Lemma~\ref{lemma: Schur complement bound for bias}}

Since the measure $\m$ is fixed, we omit the superscript $\m$ and simply write $\avg H$ instead of  $\biasInner{H}^{\,\m}$. Similarly, for a matrix-valued function $f$, we define
\begin{align}
    \label{eq:ut_def}
    \biasInner{f} = \int_0^1 f(\bu^{(t)}) \, \rmd \m(t),
    \quad \text{where} \quad
    \bu^{(t)} = (\btheta^{(t)}, \bfeta^{(t)}, A^{(t)}) = (1 - t) \bups^{\circ} + t \bups^*.
\end{align}
Our analysis heavily relies on the block-form representation of $H(\bups)$: 
\[
    H = H(\bups) =
    \begin{pmatrix}
        H_{\btheta\btheta} & H_{\btheta\bfeta} & H_{\btheta A} \\
        H_{\bfeta\btheta} & H_{\bfeta\bfeta} & H_{\bfeta A} \\
        H_{A\btheta} & H_{A\bfeta} & H_{AA}
    \end{pmatrix}.
\]
Here, for any $\bnu$ and $\bgamma$ from $\{\btheta, \bfeta, A\}$, the block $H_{\bnu \bgamma} = H_{\bnu \bgamma}(\bups)$ corresponds to the second partial derivative of $\ttL$ with respect to $\bnu$ and $\bgamma$. In particular, the diagonal blocks are equal to
\begin{align*}
    H_{\btheta\btheta} = A^\top A + \lambda I_d,
    \quad
    H_{\bfeta\bfeta} = 2 I_d,
    \quad \text{and} \quad
    H_{AA} = \diag \big(H_{\ba_1 \ba_1}, \dots, H_{\ba_d \ba_d}\big),
\end{align*}
while the off-diagonal ones are given by
\[
    H_{\btheta \bfeta} = H_{\bfeta \btheta}^\top = - A^\top,
    \quad
    H_{\btheta A} = H_{A \btheta}^\top = \begin{pmatrix} H_{\btheta\ba_1} & \dots & H_{\btheta\ba_d} \end{pmatrix},
    \quad
    H_{\bfeta A} = H_{A \bfeta}^\top = \begin{pmatrix} H_{\bfeta\ba_1} & \dots & H_{\bfeta\ba_d} \end{pmatrix},
\]
where, for any $j \in \{1, \dots, d\}$,
\begin{align*}
    H_{\ba_j \ba_j} = \mu^2 I_d + \btheta \btheta^\top,
    \quad
    H_{\btheta \ba_j} = (\ba_j^\top \btheta - \bfeta_j) I_d + \ba_j \btheta^\top
    \quad \text{and} \quad
    H_{\bfeta \ba_j} = - \be_j \btheta^\top.
\end{align*}
Since the proof of Lemma \ref{lemma: Schur complement bound for bias} is quite cumbersome, we split it into several steps.

\medskip

\noindent\textbf{Step 1. Decomposing the Schur complement.}
\quad
Let us recall that we are interested in properties of the Schur complement of $\avg H_{\chi \chi}$ corresponding to the nuisance parameter $\chi = (\bfeta, A)$:
\[
    \avg H_{\chi \chi}
    =
    \begin{pmatrix}
        \avg H_{\bfeta \bfeta} & \avg H_{\bfeta A} \\
        \avg H_{A \bfeta} & \avg H_{AA}
    \end{pmatrix}
    =
    \begin{pmatrix}
        \avg H_{\bfeta \bfeta} & \avg H_{\bfeta \ba_1} & \dots & \avg H_{\bfeta \ba_d} \\
        \avg H_{\ba_1 \bfeta} & \avg H_{\ba_1 \ba_1} & \dots & \avg H_{\ba_1 \ba_d} \\
        \vdots & \vdots & \ddots & \vdots \\
        \avg H_{\ba_d \bfeta} & \avg H_{\ba_d \ba_1} & \dots & \avg H_{\ba_d \ba_d}
    \end{pmatrix}.
\]
In what follows, we denote the inverse of $\avg H_{\chi \chi}$ by $J$:
\[
    \avg H_{\chi \chi}^{-1} = J =
    \begin{pmatrix}
        J_{\bfeta \bfeta} & J_{\bfeta A} \\
        J_{A \bfeta} & J_{AA}
    \end{pmatrix}
    =
    \begin{pmatrix}
        J_{\bfeta \bfeta} & J_{\bfeta \ba_1} & \dots & J_{\bfeta \ba_d} \\
        J_{\ba_1 \bfeta} & J_{\ba_1 \ba_1} & \dots & J_{\ba_1 \ba_d} \\
        \vdots & \vdots & \ddots & \vdots \\
        J_{\ba_d \bfeta} & J_{\ba_d \ba_1} & \dots & J_{\ba_d \ba_d}
    \end{pmatrix}.
\]
Then it holds that
\begin{align}
    \label{eq:schur_complement_decomposition}
    \left ( \avg{H} /  \avg{H}_{\chi \chi} \right )
    &\notag
    = \avg H_{\btheta \btheta} - \avg H_{\btheta \chi} J \avg H_{\chi \btheta}
    \\&
    = \avg{A^\top A} + \lambda I_d - \avg{H}_{\btheta \bfeta} J_{\bfeta \bfeta} \avg{H}_{\bfeta \btheta} - \sum_{j = 1}^d \avg H_{\btheta \bfeta} J_{\bfeta \ba_j} \avg H_{\ba_j \btheta}
    \\&\quad\notag
    - \sum_{j = 1}^d \avg H_{\btheta \ba_j} J_{\ba_j \bfeta} \avg H_{\bfeta \btheta} - \sum_{j = 1}^d \avg H_{\btheta \ba_j} J_{\ba_j \ba_j} \avg H_{\ba_j \btheta}.
\end{align}
We simplify the terms in the right-hand side one by one using Proposition \ref{proposition: inverse of nuisance parameters}, which ensures that $J$ admits a nice decomposition. First, according to Proposition \ref{proposition: inverse of nuisance parameters}, we have
\begin{align*}
    \avg{H}_{\btheta \bfeta} J_{\bfeta \bfeta} \avg{H}_{\bfeta \btheta} = \left (\frac{1}{2} + r_1 \right ) \avg{A}^\top \avg{A},
    \quad \text{where} \quad
    0 \leq r_1 \leq \rho^2.
\end{align*}
Next, using the identity
\[
    A = \sum\limits_{j = 1}^d \be_j \ba_j^\top,
\]
we observe that the same proposition yields that the third term in the right-hand side of \eqref{eq:schur_complement_decomposition} is equal to
\begin{align}
     \sum\limits_{j = 1}^d \avg H_{\btheta \bfeta} J_{\bfeta \ba_j} \avg H_{\ba_j \btheta} & = -  r_2 \langle A \rangle^\top \sum_{j = 1}^d \be_j \Tilde{\btheta}^\top \avg{(\ba_j^\top \btheta - \bfeta_j)} - r_2  \avg{A}^\top \sum_{j = 1}^d \be_j \Tilde{\btheta}^\top \avg{\btheta \ba_j^\top} \nonumber \\
     & = - r_2 \avg{A}^\top \avg{(A \btheta - \bfeta)} \Tilde{\btheta}^\top - r_2 \avg{A}^\top \avg{ A \Tilde{\btheta}^\top \btheta},
     \label{eq: Schur complement of the Hessian -- term 1}
\end{align}
where
\[
    \Tilde{\btheta} = \mu^2 \left( \mu^2 I_d + (\avg{\btheta \btheta^\top}) \right)^{-1}\avg{\btheta}.
\]
Let us denote the rightmost side of~\eqref{eq: Schur complement of the Hessian -- term 1} by $\ttR_{\eqref{eq: Schur complement of the Hessian -- term 1}}$. The sum
\[
    \sum\limits_{j = 1}^d \avg H_{\btheta \ba_j} J_{\ba_j \bfeta} \avg H_{\bfeta \btheta}
\]
is nothing but the transpose of the left-hand side of \eqref{eq: Schur complement of the Hessian -- term 1}. Hence, it remains simplify the last term in \eqref{eq:schur_complement_decomposition}:
\begin{align}
    \label{eq: Schur complement of the Hessian -- term 2}
    \sum_{j = 1}^d \avg H_{\btheta \ba_j} J_{\ba_j \ba_j} \avg H_{\ba_j \btheta}
    &
    = \sum_{j = 1}^d \avg{(\ba_j^\top \btheta - \bfeta_j)}^2 \, R_3 + \sum_{j = 1}^d \avg{(\ba_j^\top \btheta -\bfeta_j)} \, R_3 \, \avg{\btheta \ba_j^\top}
    \nonumber \\
    & \quad
    + \sum_{j = 1}^d \avg{\ba_j \btheta ^\top} \, R_3 \, \avg{(\ba_j^\top \btheta - \bfeta_j)} + \sum_{j = 1}^d \avg{\ba_j \btheta ^\top} \, R_3 \,  \avg{\btheta \ba_j^\top}
    \\& \notag
    = \Vert \avg{A \btheta - \bfeta} \Vert^2 \, R_3 + R_3 \, \avg{\btheta \avg{(A \btheta - \bfeta)}^\top A } \notag\\
    & \quad + \avg{A^\top \avg{(A \btheta - \bfeta)} \btheta^\top} \, R_3
    + \avg{A^\top \btheta^\top} \, R_3 \, \avg{\btheta A}.
\end{align}
We denote the latter sum by $\ttR_{\eqref{eq: Schur complement of the Hessian -- term 2}}$. Summing up \eqref{eq:schur_complement_decomposition}, \eqref{eq: Schur complement of the Hessian -- term 1}, and \eqref{eq: Schur complement of the Hessian -- term 2}, we obtain the following identity:
\begin{align}
    &\notag
    (\Sigma^2 / 2 + \lambda I_d)^{-1} \left(\avg{H} / \avg{H}_{\chi \chi} \right)
    \\&
    = (\Sigma^2 / 2 + \lambda I_d)^{-1} \left[ \left (  \avg{A^\top A} - \frac{1}{2} \avg{A}^\top \avg{A} + \lambda I_d \right ) - r_1 \avg{A}^\top \avg{A} - \ttR_{\eqref{eq: Schur complement of the Hessian -- term 1}} - \ttR_{\eqref{eq: Schur complement of the Hessian -- term 1}}^\top - \ttR_{\eqref{eq: Schur complement of the Hessian -- term 2}} \right]. \label{eq: Schur complement decomposition -- normalized}
\end{align}

\medskip

\noindent\textbf{Step 2. The leading term expansion.}
\quad
On this step, we focus on the leading term in the right-hand side of~\eqref{eq: Schur complement decomposition -- normalized}
\begin{equation}
    \label{eq:schur_complement_decomposition_leading_term}
    (\Sigma^2 / 2 + \lambda I_d)^{-1} \left( \avg{A^\top A} - \frac{1}{2} \avg{A}^\top \avg{A} + \lambda I_d \right)
\end{equation}
leaving analysis of the remainders for future. We are going to show that \eqref{eq:schur_complement_decomposition_leading_term}
approximately equals $I_d$. To do so, we use the following proposition.

\begin{Prop}
\label{proposition: pointwise bound on AiAj}
    {Assume that
    \begin{align*}
        \rho := \frac{7 \Vert \btheta^\circ \Vert}{\mu} \le 1/7 \quad \text{and} \quad 7 \rho \Vert \Sigma \Vert \le \sqrt{\lambda}.
    \end{align*}
    }
    Then the following holds. For any $s, t \in [0, 1]$, we have
    \begin{align*}
        \Vert A^{(s)} - \Sigma \Vert \le 4 \rho^2 \sqrt{\lambda} + 5 \rho \Vert \Sigma \Vert \Vert \bb_\lambda \Vert/\mu \quad \text{and} \quad \Vert (A^{(s)})^\top A^{(t)} - \Sigma^2 \Vert \le 3 \rho \lambda + 3 \Vert \Sigma \Vert \Vert \bb_\lambda \Vert \sqrt{\lambda} / \mu.
    \end{align*}
\end{Prop}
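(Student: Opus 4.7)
The plan is to exploit the fact that the midpoint map $t \mapsto \bu^{(t)} = (1-t)\bups^\circ + t\bups^*$ (introduced in \eqref{eq:ut_def}) is affine in the $A$-coordinate, so $A^{(t)} = (1-t)\Sigma + t A^* = \Sigma + t(A^* - \Sigma)$ for every $t \in [0,1]$. This reduces the first claim to the single inequality
\[
    \|A^{(s)} - \Sigma\| = s\, \|A^* - \Sigma\| \leq \|A^* - \Sigma\|,
\]
which is exactly the content of Lemma~\ref{lemma: Sigma bias optimal bound}. One has to verify that the lemma's hypotheses ($\|\btheta^\circ\| \leq \mu/49$ and $\|\Sigma\|\|\btheta^\circ\| \leq \mu\sqrt{\lambda}/24$) follow from the present ones: the first is just $\|\btheta^\circ\| = \rho\mu/7 \leq \mu/49$, while $7\rho\|\Sigma\| \leq \sqrt{\lambda}$ and $\|\btheta^\circ\| = \rho\mu/7$ yield $\|\Sigma\|\|\btheta^\circ\| \leq \mu\sqrt{\lambda}/49$. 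Rewriting Lemma~\ref{lemma: Sigma bias optimal bound}'s bound $(14\|\btheta^\circ\|/\mu)^2\sqrt{\lambda} + 35\|\Sigma\|\|\btheta^\circ\|\|\bb_\lambda\|/\mu^2$ in terms of $\rho$ gives precisely $4\rho^2\sqrt{\lambda} + 5\rho\|\Sigma\|\|\bb_\lambda\|/\mu$.

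For the second claim, set $E_s = A^{(s)} - \Sigma$ and $E_t = A^{(t)} - \Sigma$, both of which are controlled by the first step. Using that $\Sigma$ is symmetric, expand
\[
    (A^{(s)})^\top A^{(t)} - \Sigma^2 = \Sigma E_t + E_s^\top \Sigma + E_s^\top E_t,
\]
so that
\[
    \|(A^{(s)})^\top A^{(t)} - \Sigma^2\| \leq 2\|\Sigma\|\bigl(4\rho^2\sqrt{\lambda} + 5\rho\|\Sigma\|\|\bb_\lambda\|/\mu\bigr) + \bigl(4\rho^2\sqrt{\lambda} + 5\rho\|\Sigma\|\|\bb_\lambda\|/\mu\bigr)^2.
\]
The structural assumption $7\rho\|\Sigma\| \leq \sqrt{\lambda}$ is precisely what is needed to convert the factors of $\|\Sigma\|$ appearing in the linear piece: $2\|\Sigma\|\cdot 4\rho^2\sqrt{\lambda} \leq (8/7)\,\rho\lambda$ and $2\|\Sigma\|\cdot 5\rho\|\Sigma\|\|\bb_\lambda\|/\mu \leq (10/7)\,\|\Sigma\|\|\bb_\lambda\|\sqrt{\lambda}/\mu$, already fitting under the $3\rho\lambda + 3\|\Sigma\|\|\bb_\lambda\|\sqrt{\lambda}/\mu$ budget with room to spare. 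The quadratic remainder is absorbed using the same inequality together with the trivial bound $\|\bb_\lambda\| \leq \|\btheta^\circ\| = \rho\mu/7$ and $\rho \leq 1/7$; each of its three terms is smaller than the leading ones by at least a factor of $\rho$, which comfortably fills the remaining constant budget.

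The argument is essentially a constant-tracking exercise, and the only substantive ingredient is the sharp bound on $\|A^* - \Sigma\|$ provided by Lemma~\ref{lemma: Sigma bias optimal bound}; nothing genuinely new happens beyond the linear interpolation identity $A^{(t)} = \Sigma + t(A^* - \Sigma)$. The mild obstacle is simply verifying that the proposition's hypotheses on $\rho$ and $\|\Sigma\|$ imply those of Lemma~\ref{lemma: Sigma bias optimal bound}, and that the quadratic-in-$E$ piece of the product bound can be neatly folded into the claimed constants.
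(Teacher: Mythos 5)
Your proof is correct and is essentially the paper's proof for the first bound, and a mild variant of it for the second. For $\|A^{(s)} - \Sigma\|$ both proofs reduce to $\|A^{(s)} - \Sigma\| = s\|A^* - \Sigma\|\le \|A^* - \Sigma\|$ (the affine structure of $A^{(t)}$) and then apply Lemma~\ref{lemma: Sigma bias optimal bound}; the verification of that lemma's hypotheses from the present ones is the same.

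For the second bound the two routes differ slightly. The paper uses the one-sided factoring $(A^{(s)}-\Sigma)^\top A^{(t)} + \Sigma(A^{(t)}-\Sigma)$ and then controls $\|A^{(t)}\| \le 2\|\Sigma\| + \sqrt{\lambda}/3$ by appealing to $A^*, \Sigma \in \sfA^*$ via Lemma~\ref{lemma: localization bias corollary}. You instead use the symmetric expansion around $\Sigma$, $(A^{(s)})^\top A^{(t)} - \Sigma^2 = \Sigma E_t + E_s^\top\Sigma + E_s^\top E_t$, bound the linear part exactly as the paper does, and absorb the quadratic $\|E_s\|\|E_t\| \le \|A^*-\Sigma\|^2$ directly from the first claim together with $\|\bb_\lambda\| \le \|\btheta^\circ\| = \rho\mu/7$ and $\rho \le 1/7$. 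The two decompositions are trivially equivalent; what your version buys is that it is self-contained under the proposition's literal hypotheses, whereas the paper's invocation of Lemma~\ref{lemma: localization bias corollary} with $\rho_0 = 1/7$ formally requires $\|\Sigma\|\|\btheta^\circ\| \le \mu\sqrt{\lambda}/168$, which is stronger than the $\le \mu\sqrt{\lambda}/49$ that $7\rho\|\Sigma\| \le \sqrt{\lambda}$ gives on its own (this gap is harmless where the proposition is actually used, inside the proof of Theorem~\ref{theorem: bias}, since the stronger bound is available there). Your constant tracking checks out: after the factor $7\rho\|\Sigma\| \le \sqrt{\lambda}$ the linear piece contributes at most $\tfrac{8}{7}\rho\lambda + \tfrac{10}{7}\|\Sigma\|\|\bb_\lambda\|\sqrt{\lambda}/\mu$, and each of the three quadratic pieces is smaller than the corresponding linear one by at least a factor of $\rho^2 \le 1/49$, so the total sits comfortably under $3\rho\lambda + 3\|\Sigma\|\|\bb_\lambda\|\sqrt{\lambda}/\mu$.
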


Applying Proposition \ref{proposition: pointwise bound on AiAj} and using the bound $\Vert (\Sigma^2/2 + \lambda I_d)^{-1} \Vert \le \lambda^{-1}$, we infer
\begin{align*}
    \left\Vert (\Sigma^2 / 2 + \lambda I_d)^{-1} \left (\avg{A^\top A} - \Sigma^2 \right ) \right\Vert
    &
    \le \int_0^1 \left \Vert (\Sigma^2 / 2 + \lambda I_d)^{-1} ((A^{(t)})^\top A^{(t)} - \Sigma^2) \right \Vert \rmd \m(t)
    \\
    &
    \le \left ( 3 \rho \lambda + \frac{3 \Vert \Sigma \Vert \Vert \bb_\lambda \Vert \sqrt{\lambda}}{\mu} \right )\Vert (\Sigma^2/2 + \lambda I_d)^{-1} \Vert \le 3 \rho + \frac{3 \Vert \Sigma \Vert \Vert \bb_\lambda \Vert}{\mu \sqrt{\lambda}}.
\end{align*}
Similarly, it holds that
\begin{align*}
    \left\Vert (\Sigma^2 / 2 + \lambda I_d)^{-1} (\biasInner{A}^\top \biasInner{A} - \Sigma^2) \right\Vert
    &
    \le \int_0^1 \int_0^1 \Vert (\Sigma^2 / 2 + \lambda I_d)^{-1} \Vert \Vert (A^{(s)})^\top A^{(t)} - \Sigma^2 \Vert \rmd \m^{\otimes 2}(s, t)
    \\
    &
    \le 3 \rho + \frac{3 \Vert \Sigma \Vert \Vert \bb_\lambda \Vert}{\mu \sqrt{\lambda}}.
\end{align*}
Substituting these bounds into \eqref{eq: Schur complement decomposition -- normalized}, we obtain that
\begin{align}
    \left\Vert (\Sigma^2 / 2 + \lambda I_d)^{-1} (\avg{H} / \avg{H}_{\chi \chi}) - I_d \right\Vert
    &
    = \left\Vert (\Sigma^2 / 2 + \lambda I_d)^{-1} \left[ (\avg{H} / \avg{H}_{\chi \chi}) -  (\Sigma^2/2 + \lambda I_d) \right] \right\Vert
    \nonumber \\
    &
    \le \frac{9\rho}2 + \frac{3 \Vert \Sigma \Vert \Vert \bb_\lambda \Vert}{2 \mu \sqrt{\lambda}} + r_1 \Vert (\Sigma^2 / 2 + \lambda I_d)^{-1} \Sigma^2 \Vert \nonumber \\
    & \quad + \left ( 3 \rho  + \frac{3 \Vert \Sigma \Vert \Vert \bb_\lambda \Vert}{\mu \sqrt{\lambda}}\right ) \cdot r_1 \nonumber  + \left\Vert (\Sigma^2 / 2 + \lambda I_d)^{-1} \, \ttR_{\eqref{eq: Schur complement of the Hessian -- term 1}} \right\Vert \nonumber \\
    & \quad + \left\Vert (\Sigma^2 / 2 + \lambda I_d)^{-1} \, \ttR_{\eqref{eq: Schur complement of the Hessian -- term 1}}^\top \Vert + \Vert (\Sigma^2 / 2 + \lambda I_d)^{-1}  \, \ttR_{\eqref{eq: Schur complement of the Hessian -- term 2}} \right\Vert \nonumber.
\end{align}
Let us recall that, according to Proposition \ref{proposition: inverse of nuisance parameters}, $r_1 \le \rho^2$. This inequality, together with the bound
\[
    \left\Vert (\Sigma^2/2 + \lambda I_d)^{-1} \Sigma^2 \right\Vert \le 2,
\]
yields that
\begin{align}
    \left\Vert (\Sigma^2 / 2 + \lambda I_d)^{-1} (\avg{H} / \avg{H}_{\chi \chi}) - I_d \right\Vert
    &
    \le 6 \rho + \frac{2 \Vert \Sigma \Vert \Vert \bb_\lambda \Vert}{\mu \sqrt{\lambda}} +  \left\Vert (\Sigma^2 / 2 + \lambda I_d)^{-1} \, \ttR_{\eqref{eq: Schur complement of the Hessian -- term 1}} \right\Vert
    \nonumber \\ 
    & \quad
    + \left\Vert (\Sigma^2 / 2 + \lambda I_d)^{-1} \, \ttR_{\eqref{eq: Schur complement of the Hessian -- term 1}}^\top \right\Vert  + \left\Vert (\Sigma^2 / 2 + \lambda I_d)^{-1} \, \ttR_{\eqref{eq: Schur complement of the Hessian -- term 2}} \right\Vert. \label{eq: normalized remainder in Schur complement}
\end{align}

\medskip

\noindent \textbf{Step 3. Bounding the remainder I.}
\quad
{We start bounding the term $\Vert (\Sigma^2 / 2 + \lambda I_d)^{-1} \, \ttR_{\eqref{eq: Schur complement of the Hessian -- term 1}} \Vert$ , and then proceed with $\Vert (\Sigma^2 / 2 + \lambda I_d)^{-1} \, \ttR_{\eqref{eq: Schur complement of the Hessian -- term 1}}^\top \Vert$.} We have
\begin{align}
    \left\Vert (\Sigma^2 / 2 + \lambda I_d)^{-1} \, \ttR_{\eqref{eq: Schur complement of the Hessian -- term 1}} \right\Vert
    &
    \le r_2 \left\Vert (\Sigma^2 / 2 + \lambda I_d)^{-1} \right\Vert \, \left\Vert \avg{A}^\top \avg{(A \btheta - \bfeta)} \Tilde{\btheta}^\top \right\Vert
    \nonumber \\
    & \quad
    + r_2 \left\Vert (\Sigma^2 / 2 + \lambda I_d)^{-1} \, \avg{A}^\top \avg{A \btheta^\top \Tilde{\btheta}} \right\Vert.
    \label{eq: non-simplified upper bound on normalized term 1}
\end{align}
We bound $r_2$ by $\mu^{-2}$. Next, we bound the first term of the expression above. We have
\begin{align*}
    r_2 \left\Vert (\Sigma^2 / 2 + \lambda I_d)^{-1} \right\Vert
    \,
    \left\Vert \avg{A}^\top \avg{(A \btheta - \bfeta)} \Tilde{\btheta}^\top \right \Vert
    \le \frac{\Vert \biasInner{A} \Vert \Vert \biasInner{A \btheta - \bfeta} \Vert \Vert \Tilde{\btheta} \Vert}{\mu^2 \lambda} \le \frac{\Vert \biasInner{A} \Vert \Vert \biasInner{A \btheta - \bfeta} \Vert \, \Vert \Tilde{\btheta} \Vert}{\mu^2 \lambda}.
\end{align*}
Since $(\btheta^{(t)}, \bfeta^{(t)}, A^{(t)}) \in \Theta^* \times \sfH^* \times \sfA^*$, we have $\Vert \biasInner{A} \Vert \le \Vert \Sigma \Vert + \sqrt{\lambda} / 3$ by the definition of $\sfA^*$, and $\Vert \biasInner{\btheta} \Vert \le \Vert \btheta^\circ \Vert \le \rho \mu$ since $\Vert \btheta^* \Vert \le \Vert \btheta^\circ \Vert$ by Lemma~\ref{lemma: localization lemma for bias}\ref{point: theta weak bound on bias, bias locating convex set}. To bound $\Vert \biasInner{A \btheta - \bfeta} \Vert$, we use the following proposition.

\begin{Prop}
\label{proposition: A theta - eta bias bound}
    {Assume that
    \begin{align*}
        \rho := \frac{7 \Vert \btheta^\circ \Vert}{\mu} \le 1 \quad \text{ and } \quad \Vert \Sigma \Vert \Vert \btheta^\circ \Vert \le \mu \sqrt{\lambda} / 24.
    \end{align*}}
    For any $t \in [0, 1]$, we have
    \begin{align*}
        \Vert A^{(t)} \btheta^{(t)} - \bfeta^{(t)} \Vert \le \rho \mu \sqrt{\lambda} / 2 + 3 \Vert \Sigma \Vert \Vert \bb_\lambda \Vert.
    \end{align*}
\end{Prop}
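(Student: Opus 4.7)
The plan is to introduce the shorthand $\Delta A = A^* - \Sigma$, $\Delta\btheta = \btheta^* - \btheta^\circ$, and $\Delta\bfeta = \bfeta^* - \Sigma\btheta^\circ$, so that $A^{(t)} = \Sigma + t\Delta A$, $\btheta^{(t)} = \btheta^\circ + t\Delta\btheta$, and $\bfeta^{(t)} = \Sigma\btheta^\circ + t\Delta\bfeta$. Direct multiplication yields
\[
A^{(t)}\btheta^{(t)} - \bfeta^{(t)} = t\bigl(\Sigma\Delta\btheta + \Delta A\,\btheta^\circ - \Delta\bfeta\bigr) + t^2\, \Delta A\, \Delta\btheta.
\]
Evaluating both sides at $t = 1$ pins the bracket down as $A^*\btheta^* - \bfeta^* - \Delta A\,\Delta\btheta$, which delivers the key factorization
\[
A^{(t)}\btheta^{(t)} - \bfeta^{(t)} = t\,(A^*\btheta^* - \bfeta^*) + t(t-1)\,\Delta A\,\Delta\btheta.
\]

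With this identity in hand, the triangle inequality reduces the task to bounding two pieces. Our hypotheses $\rho \le 1$ and $\|\Sigma\|\|\btheta^\circ\| \le \mu\sqrt{\lambda}/24$ clearly imply the conditions of Lemma~\ref{lemma: localization lemma for bias}. Part~\ref{point: bb bfeat weak bound, bias locating convex set} then yields $\|A^*\btheta^* - \bfeta^*\| \le \sqrt{\lambda}\|\btheta^\circ\| = \rho\mu\sqrt{\lambda}/7$, which already falls below $\rho\mu\sqrt{\lambda}/2$. Since $|t(t-1)| \le 1/4$ on $[0,1]$, only $\|\Delta A\|\|\Delta\btheta\|$ remains. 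Parts~\ref{point: A* weak bound on bias, bias locating convex set} and~\ref{point: theta weak bound on bias, bias locating convex set} of the same lemma give $\|\Delta A\| \le \|\Sigma\|\|\btheta^\circ\|^2/\mu^2$ and $\|\Delta\btheta\| \le \|\bb_\lambda\| + \|\btheta^\circ\|^2/\mu \le 2\|\btheta^\circ\|$ (using $\|\bb_\lambda\| \le \|\btheta^\circ\|$ and $\rho \le 1$); combining these with $\|\Sigma\|\|\btheta^\circ\| \le \mu\sqrt{\lambda}/24$ and $\|\btheta^\circ\|/\mu = \rho/7$ turns the product into a negligible multiple of $\rho^2\mu\sqrt{\lambda}$, which is easily absorbed into the target bound.

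The only obstacle worth flagging is structural rather than technical: a naive term-by-term bound on the raw expansion $t(\Sigma\Delta\btheta + \Delta A\btheta^\circ - \Delta\bfeta) + t^2\Delta A\Delta\btheta$ produces a coefficient on $\rho\mu\sqrt{\lambda}$ that can exceed $1/2$, because $\|\Delta\bfeta\|$ and $\|\Sigma\Delta\btheta\|$ are each only controlled to order $\rho\mu\sqrt{\lambda}$ on their own. The collapse to a single $A^*\btheta^* - \bfeta^*$ factor exploits the cancellations among these terms and lets Lemma~\ref{lemma: localization lemma for bias}\ref{point: bb bfeat weak bound, bias locating convex set} deliver a tighter estimate than any pointwise combination would. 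The summand $3\|\Sigma\|\|\bb_\lambda\|$ in the target bound is thus slack that this approach never needs to saturate, which is consistent with how the proposition is used in the proof of Lemma~\ref{lemma: Schur complement bound for bias}.
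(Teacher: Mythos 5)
Your proof is correct and takes a genuinely different route from the paper's. The paper expands via the triangle inequality
\[
\Vert A^{(t)} \btheta^{(t)} - \bfeta^{(t)} \Vert \le \Vert A^{(t)} - \Sigma\Vert\Vert\btheta^{(t)}\Vert + \Vert\Sigma(\btheta^{(t)} - \btheta^\circ)\Vert + \Vert\Sigma\btheta^\circ - \bfeta^{(t)}\Vert,
\]
then substitutes $\bfeta^* = (A^*\btheta^* + \Sigma\btheta^\circ)/2$ to rewrite the last term, leaving a $\tfrac34\Vert\Sigma\Vert\Vert\btheta^*-\btheta^\circ\Vert$ piece which, via Lemma~\ref{lemma: localization lemma for bias}\ref{point: theta weak bound on bias, bias locating convex set}, produces the $3\Vert\Sigma\Vert\Vert\bb_\lambda\Vert$ summand. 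Your argument instead exploits that each component of $\bu^{(t)}$ is affine in $t$, so $A^{(t)}\btheta^{(t)}-\bfeta^{(t)}$ is a quadratic polynomial in $t$ vanishing at $t=0$; matching it at $t=1$ yields the factorization $t(A^*\btheta^*-\bfeta^*) + t(t-1)\,\Delta A\,\Delta\btheta$. This cancels the cross terms that the paper's bound has to absorb, and the two resulting pieces are controlled by Lemma~\ref{lemma: localization lemma for bias}\ref{point: bb bfeat weak bound, bias locating convex set} and \ref{point: A* weak bound on bias, bias locating convex set}, \ref{point: theta weak bound on bias, bias locating convex set} respectively. The net effect is a strictly stronger conclusion: with your numbers, $\Vert A^{(t)}\btheta^{(t)}-\bfeta^{(t)}\Vert \le \rho\mu\sqrt\lambda/7 + \rho^2\mu\sqrt\lambda/2352 < \rho\mu\sqrt\lambda/2$, so the $3\Vert\Sigma\Vert\Vert\bb_\lambda\Vert$ in the target is pure slack, whereas the paper's derivation genuinely needs it because its decomposition passes through $\Vert\btheta^*-\btheta^\circ\Vert \le \Vert\bb_\lambda\Vert + \Vert\btheta^\circ\Vert^2/\mu$. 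Your observation in the last paragraph — that a naive term-by-term bound on the raw expansion overshoots and the factorization is what rescues it — is accurate and is exactly the structural advantage your route buys.
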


It implies
\begin{align}
    \label{eq: upper bound on the part i of term 1}
    r_2 \left\Vert (\Sigma^2 / 2 + \lambda I_d)^{-1} \right \Vert \cdot \left\Vert \avg{A}^\top \avg{(A \btheta - \bfeta)} \Tilde{\btheta}^\top \right\Vert
    &\notag
    \le \frac{\left( \Vert \Sigma \Vert + \sqrt{\lambda} / 3\right) \cdot (\rho \mu \sqrt{\lambda}/ 2 + \Vert \Sigma \Vert \Vert \bb_\lambda \Vert) \cdot \rho \mu}{\mu^2 \lambda}
    \\&\notag
    \le \frac{\rho^2 \Vert \Sigma \Vert}{2 \sqrt{\lambda}} + \rho^2/6 + \frac{\rho \Vert \Sigma \Vert^2 \Vert \bb_\lambda \Vert }{\mu \lambda} + \frac{\rho \Vert \Sigma \Vert \Vert \bb_\lambda \Vert}{\mu \sqrt{\lambda}}
    \\& \notag
    \le \rho / 14 + \rho^2/6 + \frac{\Vert \Sigma \Vert \Vert \bb_\lambda \Vert}{\mu\sqrt{\lambda}}
    \\& 
    \le \rho / 6 + \frac{\Vert \Sigma \Vert \Vert \bb_\lambda \Vert}{\mu \sqrt{\lambda}},
\end{align}
where we used the condition $7 \rho \Vert \Sigma \Vert \le \sqrt{\lambda}$.

Next, we bound $r_2 \left\Vert (\Sigma^2 / 2 + \lambda I_d)^{-1} \avg{A}^\top  \avg{A \btheta^\top \Tilde{\btheta}} \right\Vert$. We have
\begin{align*}
    &
    r_2 \left\Vert (\Sigma^2 / 2 + \lambda I_d)^{-1} \avg{A}^\top \avg{A \btheta^\top \Tilde{\btheta}} \right\Vert
    \\&
    \le \mu^{-2} \int_0^1 \int_0^1 \left\Vert (\Sigma^2/2 + \lambda I_d)^{-1} (A^{(s)})^\top A^{(t)} \cdot \Tilde{\btheta}^\top \btheta^{(t)} \right\Vert \, \rmd \m^{\otimes 2}(s, t)
    \\&
    \le \left\Vert (\Sigma^2/2 + \lambda I_d)^{-1} \Sigma^2 \right\Vert \cdot \mu^{-2} \int_0^1 |\Tilde{\btheta}^\top \btheta^{(t)} | \, \rmd \m(t)
    \\&
    \quad + \mu^{-2} \, \left\Vert (\Sigma^2 /2 + \lambda I_d)^{-1} \right\Vert \int_0^1 \int_0^1 \Vert (A^{s})^\top A^{(t)} - \Sigma^2 \Vert \cdot |\Tilde{\btheta}^\top \btheta^{(t)} | \, \rmd \m^{\otimes 2}(s, t).
\end{align*}
By the Cauchy-Schwarz inequality and bound $\Vert \Tilde{\btheta} \Vert\le \rho \mu$ from Proposition~\ref{proposition: inverse of nuisance parameters}, we have $\mu^{-2} |\Tilde{\btheta}^\top \btheta^{(t)} | \le \rho \mu^{-1} \Vert \btheta^{(t)} \Vert$. The latter is at most $\rho^2 / 7$, since $\Vert \btheta^{(t)} \Vert \le \max \{\Vert \btheta^* \Vert, \Vert \btheta^\circ \Vert\} \le \Vert \btheta^\circ \Vert \le \rho \mu / 7$ from Lemma~\ref{lemma: localization lemma for bias}\ref{point: theta weak bound on bias, bias locating convex set}. Bounding $\Vert (\Sigma^2/2 + \lambda I_d)^{-1} \Sigma^2 \Vert$ by $1$ and applying Proposition~\ref{proposition: pointwise bound on AiAj}, we obtain
\begin{align}
    r_2 \left\Vert (\Sigma^2 / 2 + \lambda I_d)^{-1} \avg{A}^\top  \avg{A \btheta^\top \Tilde{\btheta}} \right\Vert
    &
    \le \rho^2 + \rho^2 \cdot \left ( 3 \rho \lambda + \frac{3 \Vert \Sigma \Vert \Vert \bb_\lambda \Vert \sqrt{\lambda}}{\mu}\right ) \left\Vert (\Sigma^2/ 2 + \lambda I_d)^{-1} \right\Vert \nonumber \\
    & \le \rho^2 + 3 \rho^3 + \rho^2/4 \le 3 \rho, \label{eq: upper bound on the part ii of term 1}
\end{align}
where we used $\Vert \Sigma \Vert \Vert \bb_\lambda \Vert \le \Vert \Sigma \Vert \Vert \btheta^\circ \Vert \le \mu \sqrt{\lambda} / 24$ from~\eqref{eq: sigma theta circ bound}.
Substituting bounds~\eqref{eq: upper bound on the part i of term 1},~\eqref{eq: upper bound on the part ii of term 1} in~\eqref{eq: non-simplified upper bound on normalized term 1}, we get
\begin{align}
\label{eq: normalized bound Schur complement of the Hessian -- term 1}
    \Vert (\Sigma^2 / 2 + \lambda I_d)^{-1} \, \ttR_{\eqref{eq: Schur complement of the Hessian -- term 1}} \Vert \le 7 \rho / 2 + \frac{\Vert \Sigma \Vert \Vert \bb_\lambda \Vert}{\mu \sqrt{\lambda}}.
\end{align}
{
Then, we proceed with the term $\Vert (\Sigma^2 / 2 + \lambda I_d)^{-1} \, \ttR_{\eqref{eq: Schur complement of the Hessian -- term 1}}^\top \Vert$. We have
\begin{align*}
    \Vert (\Sigma^2 / 2 + \lambda I_d)^{-1} \, \ttR_{\eqref{eq: Schur complement of the Hessian -- term 1}}^\top \Vert & \le r_2 \Vert (\Sigma^2 / 2 + \lambda I_d)^{-1}  \Vert \Vert \avg{A}^\top \avg{A \btheta - \bfeta} \Tilde{\btheta}^\top \Vert \\
    & \quad + r_2 \Vert (\Sigma^2 / 2 + \lambda I_d)^{-1}  \avg{ \btheta^\top \Tilde{\btheta}  A^\top} \avg{A}^\top \Vert.
\end{align*}
The first term can be bounded by~\eqref{eq: upper bound on the part i of term 1}. We bound the second term using the fact that for each $s \in [0, 1]$, the quantity $(\btheta^{(s)})^\top \Tilde{\btheta}$ is scalar. Hence, we have
\begin{align*}
    \Vert (\Sigma^2 / 2 + \lambda I_d)^{-1}  \avg{ \btheta^\top \Tilde{\btheta}  A^\top} \avg{A} \Vert & = \left \Vert  \int_{0}^1 \int_0^1  \Tilde{\btheta}^\top \btheta^{(s)} \, (\Sigma^2 / 2 + \lambda I_d)^{-1} (A^{(s)})^\top A^{(t)} \rmd \m^{\otimes 2}(s, t)\right \Vert \\
    & \le \rho^2 \mu^2 / 7 \int_0^1 \int^1 \left \Vert (\Sigma^2 / 2 + \lambda I_d)^{-1} (A^{(s)})^\top A^{(t)} \right \Vert \rmd \m^{\otimes 2}(s, t),
\end{align*}
where we used $\Vert \btheta^{(s)} \Vert \le \max \{ \Vert \btheta^* \Vert,  \Vert \btheta^\circ \Vert\} \le \rho \mu / 7$ from Lemma~\ref{lemma: localization lemma for bias}\ref{point: theta weak bound on bias, bias locating convex set} and $\Vert \Tilde{\btheta} \Vert\le \rho \mu$ from Proposition~\ref{proposition: inverse of nuisance parameters}. Then, we bound $\left \Vert (\Sigma^2 / 2 + \lambda I_d)^{-1} (A^{(s)})^\top A^{(t)} \right \Vert$ as before:
\begin{align*}
    \left \Vert (\Sigma^2 / 2 + \lambda I_d)^{-1} (A^{(s)})^\top A^{(t)} \right \Vert & \le \left \Vert (\Sigma^2 / 2 + \lambda I_d)^{-1} \Sigma^2 \right \Vert + \left \Vert (\Sigma^2 / 2 + \lambda I_d)^{-1} \left ( (A^{(s)})^\top A^{(t)}  -\Sigma ^2 \right ) \right \Vert \\
    & \le 1 + \frac{1}{\lambda} \left ( 3 \rho \lambda + \frac{3 \Vert \Sigma \Vert \Vert \bb_\lambda \Vert \sqrt{\lambda}}{\mu} \right ) = 1 + 3 \rho + \frac{3 \Vert \Sigma \Vert \Vert \bb_\lambda \Vert}{\mu \sqrt{\lambda}},
\end{align*}
where we used $\Vert (\Sigma^2 / 2 + \lambda I_d)^{-1} \Vert \le \lambda^{-1}$ and Proposition~\ref{proposition: pointwise bound on AiAj} for the second inequality. Hence, we have
\begin{align*}
    \Vert (\Sigma^2 / 2 + \lambda I_d)^{-1} \, \ttR_{\eqref{eq: Schur complement of the Hessian -- term 1}}^\top \Vert \le \rho/6 + \frac{\Vert \Sigma \Vert \Vert \bb_\lambda \Vert}{\mu \sqrt{\lambda}} + \rho^2 / 7 + 3 \rho^3 / 7 + \frac{3 \rho^2\Vert \Sigma \Vert \Vert \bb_\lambda \Vert}{7 \mu \sqrt{\lambda}}.
\end{align*}
Bounding $\Vert \Sigma \Vert \Vert \bb_\lambda \Vert \le \Vert \Sigma \Vert \Vert \btheta^\circ \Vert \le \mu \sqrt{\lambda} / 24$ due to~\eqref{eq: sigma theta circ bound}, we derive
\begin{align}
\label{eq: normalized bound Schur complement of the Hessian -- term 1 transposed}
    \Vert (\Sigma^2 / 2 + \lambda I_d)^{-1} \, \ttR_{\eqref{eq: Schur complement of the Hessian -- term 1}}^\top \Vert \le 7 \rho / 2 + \frac{\Vert \Sigma \Vert \Vert \bb_\lambda \Vert}{\mu \sqrt{\lambda}}.
\end{align}
}

\medskip

\noindent \textbf{Step 4. Bounding the remainder II.}
\quad
Next, we analyze $\Vert (\Sigma^2/2 + \lambda I_d)^{-1} \, \ttR_{\eqref{eq: Schur complement of the Hessian -- term 2}\Vert}$. The first three terms of~\eqref{eq: Schur complement of the Hessian -- term 2}, can be bounded in the same way:
\begin{align*}
    &
    \left \Vert (\Sigma^2/2 + \lambda I_d)^{-1} \left \{ \Vert \avg{A \btheta - \bfeta} \Vert^2 \, R_3 + R_3 \, \avg{\btheta \avg{(A \btheta - \bfeta)}^\top A } + \avg{A^\top \avg{(A \btheta - \bfeta)} \btheta^\top} \, R_3 \right \} \right \Vert
    \\&
    \le \frac{1}{\lambda} \left \{ \left\Vert \biasInner{A \btheta - \bfeta} \right\Vert^2 \cdot \Vert R_3 \Vert  + 2 \Vert R_3 \Vert \int_0^1 \Vert \btheta^{(t)} \Vert \, \Vert \biasInner{A \btheta - \bfeta} \Vert \, \Vert A^{(t)} \Vert \, \rmd \m(t) \right \}.
\end{align*}
Due to Proposition~\ref{proposition: A theta - eta bias bound}, the quantity $\Vert \biasInner{A \btheta - \bfeta} \Vert$ is at most $\rho \mu \sqrt{\lambda}/2 + \Vert \Sigma \Vert \Vert \bb_\lambda \Vert$, and
\begin{align*}
    \Vert \biasInner{A \btheta - \bfeta} \Vert^2 \le \rho^2 \mu^2 \lambda / 4 + \rho \mu \sqrt{\lambda}\Vert \Sigma \Vert \Vert \bb_\lambda \Vert + \Vert \Sigma\Vert^2 \Vert \bb_\lambda \Vert^2.
\end{align*}
Next,  the norm $\Vert R_3 \Vert$ is at most $2 \mu^{-2}$ due to Proposition~\ref{proposition: inverse of nuisance parameters} and $\Vert \Sigma \Vert \Vert \bb_\lambda \Vert \le \Vert \Sigma \Vert \Vert \btheta^\circ \Vert \le \mu \sqrt{\lambda} / 24$ due to~\eqref{eq: sigma theta circ bound},  therefore, the above is at most
\begin{align*}
    \frac{1}{\lambda} \left \{ \rho^2 \lambda/2 + \frac{\Vert \Sigma \Vert \Vert \bb_\lambda \Vert \sqrt{\lambda}}{\mu} + \left( \rho \sqrt{\lambda} \mu^{-1} + \frac{2 \Vert \Sigma \Vert \Vert \bb_\lambda \Vert}{\mu^2} \right) \int_0^1 \Vert \btheta^{(t)} \Vert  \Vert A^{(t)} \Vert \rmd \m(t)\right \}
\end{align*}
Again, we use $A^{(t)} \in \sfA^*$, and bound $\Vert A^{(t)} \Vert \le \Vert \Sigma \Vert + \sqrt{\lambda} / 3$. Since $\Vert \btheta^* \Vert \le \Vert \btheta^\circ \Vert$ due to Lemma~\ref{lemma: localization lemma for bias}\ref{point: theta weak bound on bias, bias locating convex set}, $\Vert \btheta^{(t)} \Vert \le \rho \mu / 7$. Hence, the above is at most
\begin{align*}
    \rho^2 / 2 + \left ( \frac{\rho^2}{\sqrt{\lambda}} + \frac{2 \rho \Vert \Sigma \Vert \Vert \bb_\lambda \Vert}{\mu \lambda}\right ) \left \{ \Vert \Sigma \Vert + \sqrt{\lambda} / 3 \right \} & \le \rho^2/ 2 + \frac{\rho}{7} + \rho^2/3 + \frac{\Vert \Sigma \Vert \Vert \bb_\lambda \Vert}{7 \mu \sqrt{\lambda}} + \frac{2 \rho \Vert \Sigma \Vert \Vert \bb_\lambda \Vert}{\mu \sqrt{\lambda}} \\
    & \le \rho + \frac{\Vert \Sigma \Vert \Vert \bb_\lambda \Vert}{\mu \sqrt{\lambda}},
\end{align*}
where we used the condition $7 \rho \Vert \Sigma \Vert \le \sqrt{\lambda}$. Thus, the bound
\begin{align}
    &\notag
    \left \Vert (\Sigma^2/2 + \lambda I_d)^{-1} \left \{ \Vert \avg{A \btheta - \bfeta} \Vert^2 \, R_3 + R_3 \, \avg{\btheta \avg{(A \btheta - \bfeta)}^\top A } + \avg{A^\top \avg{(A \btheta - \bfeta)} \btheta^\top} \, R_3 \right \} \right \Vert
    \\&
    \le \rho + \frac{\Vert \Sigma \Vert \Vert \bb_\lambda \Vert}{\mu \sqrt{\lambda}} \label{eq: normalized Shur complement bound on part i term 2}
\end{align}
holds. Finally, we bound the norm of $(\Sigma^2 /2 + \lambda I_d)^{-1} \avg{A^\top \btheta^\top} \, R_3 \, \avg{\btheta A}$. Applying the triangle inequality, we derive
\begin{align*}
    \left\Vert (\Sigma^2 /2 + \lambda I_d)^{-1} \avg{A^\top \btheta^\top} \, R_3 \, \avg{\btheta A} \right\Vert
    &
    \le \left\Vert (\Sigma^2 /2 + \lambda I_d)^{-1} \Sigma \right\Vert \, \left( \int_0^1 \Vert \btheta^{(t)} \Vert \rmd \m(t) \right) \Vert R_3 \Vert \, \Vert \avg{\btheta A} \Vert
    \\
    & \quad
    + \Vert (\Sigma^2/ 2 + \lambda I_d)^{-1} \Vert \left ( \int_0^1 \Vert A^{(t)} - \Sigma \Vert \Vert \btheta^{(t)} \Vert \rmd \m(t) \right ) \Vert R_3 \Vert \Vert \biasInner{\btheta A} \Vert.
\end{align*}
As before, we use the definition of $\sfA^*$, to get bound on the norm of $A^{(t)} \in \sfA^*$ and $A^{(t)} - \Sigma$, and, as a consequence, obtain
\begin{align*}
    \Vert \biasInner{\btheta A} \Vert
    \le \int_0^1\Vert \btheta^{(t)} \Vert \Vert A^{(t)} \Vert \rmd \m(t)
    \le \rho \mu \left \{ \Vert \Sigma \Vert + \sqrt{\lambda} / 3 \right \}.
\end{align*}
Next, we have $\Vert R_3 \Vert \le 2 \mu^{-2}$ and $\Vert A^{(t)} - \Sigma \Vert \le \sqrt{\lambda} /3$, hence, it yields
\begin{align}
    \label{eq:last_remainder_intermediate_bound}
    \left\Vert (\Sigma^2 /2 + \lambda I_d)^{-1} \avg{A^\top \btheta^\top} \, R_3 \, \avg{\btheta A} \right\Vert
    &\notag
    \le 2 \rho^2 \left\Vert (\Sigma^2/2 + \lambda I_d)^{-1} \Sigma \right\Vert
    \left \{\Vert \Sigma \Vert + \sqrt{\lambda} / 3 \right \} \\
    & \quad
    + \frac{2 \rho^2 \sqrt{\lambda}}{3} \, \left\Vert (\Sigma^2/2 + \lambda I_d)^{-1} \right\Vert \left \{ \Vert \Sigma \Vert + \sqrt{\lambda} / 3 \right \}.
\end{align}
Let $\lambda_1(\Sigma) \geq \dots \geq \lambda_1(\Sigma)$ stand for the eigenvalues of $\Sigma$. Then, due to the Cauchy-Schwarz inequality, it holds that
\[
    \left\| (\Sigma^2/2 + \lambda I_d)^{-1} \Sigma \right\|
    = \max\limits_{1 \leq j \leq d} \left\{ \frac{\lambda_j(\Sigma)}{\lambda_j(\Sigma)^2/2 + \lambda} \right\}
    \leq \sqrt{\frac2{\lambda}}.
\]
Applying this bound and the inequality $\Vert (\Sigma^2/ 2 + \lambda I_d)^{-1} \Vert \le 1 /\lambda$ to \eqref{eq:last_remainder_intermediate_bound}, we deduce
\begin{align}
    \left\Vert (\Sigma^2 /2 + \lambda I_d)^{-1} \avg{A^\top \btheta^\top} \, R_3 \, \avg{\btheta A} \right\Vert
    \le \frac{3 \rho^2}{\sqrt{\lambda}} \left \{ \Vert \Sigma \Vert + \sqrt{\lambda} / 3 \right \}
    \le \frac{3 \rho}{7} + \rho^2 \le \rho.
    \label{eq: normalized Shur complement bound part ii term 2}
\end{align}
As before, we took into account that $7 \rho \Vert \Sigma \Vert \le \sqrt{\lambda}$ and $\rho \le 1/2$ due to the proposition conditions. Thus, \eqref{eq: normalized Shur complement bound on part i term 2} and \eqref{eq: normalized Shur complement bound part ii term 2} imply that
\begin{align}
    \Vert (\Sigma^2/2 + \lambda I_d)^{-1} \, \ttR_{\eqref{eq: Schur complement of the Hessian -- term 2}} \Vert \le 2 \rho + \frac{\Vert \Sigma \Vert \Vert \bb_\lambda \Vert}{\mu \sqrt{\lambda}}. \label{eq: normalized bound Schur complement of the Hessian -- term 2}
\end{align}
Substituting the bounds~\eqref{eq: normalized bound Schur complement of the Hessian -- term 1},~\eqref{eq: normalized bound Schur complement of the Hessian -- term 1 transposed}, and~\eqref{eq: normalized bound Schur complement of the Hessian -- term 2} into~\eqref{eq: normalized remainder in Schur complement}, we obtain the desired bound:
\[
    \left \Vert (\Sigma^2 / 2 + \lambda I_d)^{-1} (\avg{H}^{\,\m} / \avg{H}^{\,\m}_{\chi \chi}) - I_d \right \Vert
    \le 15 \rho + \frac{5 \Vert \Sigma \Vert \Vert \bb_\lambda \Vert}{\mu \sqrt{\lambda}}.
\]
The proof is finished.

\myendproof

\subsection{Proof of Proposition~\ref{proposition: pointwise bound on AiAj}}

Due to the definition of $A^{(t)}$ (see \eqref{eq:ut_def}), we clearly have $\Vert A^{(t)} - \Sigma \Vert = t \Vert A^* - \Sigma \Vert$ for all $t \in [0, 1]$. Therefore, according to Lemma \ref{lemma: Sigma bias optimal bound}, it holds that
\[
    \Vert A^{(t)} - \Sigma \Vert
    \leq \Vert  A^* - \Sigma \Vert
    \le 4 \rho^2 \sqrt{\lambda} + \frac{5 \rho \Vert \Sigma \Vert \Vert \bb_\lambda \Vert }{\mu}.
\]
It only remains to consider $\Vert (A^{(s)})^\top A^{(t)} - \Sigma^2 \Vert$. The triangle yields that
\begin{align*}
    \Vert (A^{(s)})^\top A^{(t)} - \Sigma^2 \Vert
    &
    = \Vert (A^{(s)} - \Sigma) A^{(t)} + \Sigma (A^{(t)} - \Sigma) \Vert
    \\&
    \le \Vert A^{(s)} - \Sigma \Vert \Vert A^{(t)} \Vert + \Vert \Sigma \Vert \Vert A^{(t)} - \Sigma \Vert
    \\&
    \leq \|A^* - \Sigma\| \left( \|A^*\| + \|\Sigma\| \right).
\end{align*}
Since, according to Lemma \ref{lemma: localization bias corollary} apllied with $\rho_0 = 1/7$, $A^*$ and $\Sigma$ belong to the set $\sfA^*$, we have $\Vert A^* - \Sigma \Vert \leq \sqrt{\lambda} / 3$ and then
\[
    \Vert \Sigma \Vert + \Vert A^* \Vert
    \leq 2 \Vert \Sigma \Vert + \Vert A^* - \Sigma \Vert
    \leq 2 \Vert \Sigma \Vert + \frac{\sqrt{\lambda}}3.
\]
This yields that
\begin{align*}
    \Vert (A^{(s)})^\top A^{(t)} - \Sigma^2 \Vert
    & = \|A^* - \Sigma\| \left( \|A^*\| + \|\Sigma\| \right) \\
    & \le \left ( 4 \rho^2 \sqrt{\lambda} + \frac{5 \rho \Vert \Sigma \Vert \Vert \bb_\lambda \Vert }{\mu} \right ) (2 \Vert \Sigma \Vert + \sqrt{\lambda}/3) \\
    & \le 8 \rho^2 \Vert \Sigma \Vert \sqrt{\lambda} + 4 \rho^2 \lambda / 3 + \frac{10 \rho \Vert \Sigma \Vert^2 \Vert \bb_\lambda \Vert}{\mu} + \frac{5 \rho \Vert \Sigma \Vert \Vert \bb_\lambda \Vert \sqrt{\lambda}}{3 \mu}.
\end{align*}
Since $7 \rho \Vert \Sigma \Vert \le \sqrt{\lambda}$ and $\rho \le 1/7$, the latter is at most $3 \rho \lambda + 3 \Vert \Sigma \Vert \Vert \bb_\lambda \Vert \sqrt{\lambda} / \mu$.
\myendproof

\subsection{Proof of Proposition~\ref{proposition: A theta - eta bias bound}}

Due to the triangle inequality, $\Vert A^{(t)} \btheta^{(t)} - \bfeta^{(t)} \Vert$ satisfies
\begin{align*}
    \Vert A^{(t)} \btheta^{(t)} - \bfeta^{(t)} \Vert & \le \Vert A^{(t)} - \Sigma \Vert \Vert \btheta^{(t)} \Vert + \Vert \Sigma \btheta^{(t)} - \Sigma \btheta^\circ \Vert + \Vert \Sigma \btheta^\circ - \bfeta^{(t)} \Vert \\
    & = t \Vert A^* - \Sigma \Vert \Vert \btheta^{(t)} \Vert + t \Vert \Sigma \btheta^* - \Sigma \btheta^\circ \Vert + t \Vert \Sigma \btheta^\circ - \bfeta^* \Vert.
\end{align*}
Since $\bfeta^* = (A^* \btheta^* + \Sigma \btheta^\circ) / 2$, it holds that
\begin{align*}
    \Vert A^{(t)} \btheta^{(t)} - \bfeta^{(t)} \Vert & \le \Vert A^* - \Sigma \Vert \Vert \btheta^{(t)} \Vert + \Vert \Sigma \Vert \Vert \btheta^* - \btheta^\circ \Vert + \frac{1}{2} \Vert \Sigma \btheta^\circ - A^* \btheta^* \Vert \\
    & \le \Vert A^* - \Sigma \Vert \left( \Vert \btheta^{(t)} \Vert + \frac{\Vert \btheta^* \Vert}2 \right) + \frac{3}{4} \Vert \Sigma \Vert \Vert \btheta^\circ - \btheta^* \Vert
\end{align*}
Let us recall that $\btheta^{(t)}$ is a convex combination of $\btheta^*$ and $\btheta^\circ$, and $\Vert \btheta^* \Vert \le \Vert \btheta^\circ \Vert$ in view of Lemma~\ref{lemma: localization lemma for bias}\ref{point: theta weak bound on bias, bias locating convex set}. Moreover, we have $\Vert \btheta^{(t)} \Vert \le \Vert \btheta^\circ \Vert = \rho \mu / 7$ due to the conditions of the proposition. Applying  Lemma~\ref{lemma: localization lemma for bias}\ref{point: theta weak bound on bias, bias locating convex set} to $\|\btheta^* - \btheta^\circ\|$, we obtain that
\begin{align*}
    \Vert A^{(t)} \btheta^{(t)} - \bfeta^{(t)} \Vert
    \le \frac{3 \rho \mu \Vert A^* - \Sigma \Vert}{14} + 3 \Vert \Sigma \Vert \Vert \bb_\lambda \Vert + 3 \rho \Vert \Sigma \Vert \Vert \btheta^\circ \Vert.
\end{align*}
Using the fact that $\Vert \Sigma \Vert \Vert \btheta^\circ \Vert \le \mu \sqrt{\lambda} / 24$ by the assumptions of Lemma~\ref{lemma: localization bias corollary}, we obtain that 
\[
    3 \rho \Vert \Sigma \Vert \Vert \btheta^\circ \Vert \le \rho \mu \sqrt{\lambda} / 8.
\]
Finally, the chain of the inequalities
\[  
    \Vert A^* - \Sigma \Vert \le \mu^{-1} \sqrt{\lambda} \Vert \btheta^\circ \Vert \le \rho \sqrt{\lambda} / 7
\]
following from Lemma~\ref{lemma: localization lemma for bias}, implies that
\begin{align*}
    \Vert A^{(t)} \btheta^{(t)} - \bfeta^{(t)} \Vert \le \rho \mu \sqrt{\lambda} / 2 + 3 \Vert \Sigma \Vert \Vert \bb_\lambda \Vert.
\end{align*}
\myendproof

\section{Results from the proof of Theorem \ref{th:stoch_term}}

This section contained proof of auxiliary results related to Theorem \ref{th:stoch_term}.

\subsection{Preliminaries}

We start with the following result playing an important role in the proof of Lemma \ref{lem:stoch_term_expansion}.

\begin{Lem}
    \label{lem:stoch_term_rough_bound}
    Let us fix any $\rho \in [0, 1/16]$ and introduce a block diagonal matrix
    \[
        \ttD_*^2
        = \diag \left( (A^*)^\top A^* + \lambda I_d, 2 I_d, I_d \otimes \big(\mu^2 I_d + \btheta^* (\btheta^*)^\top \big) \right).
    \]
    Consider an event $\cE$ such that $\bups^*$ and $\widehat \bups$ belong to a convex set $ \cU \subset \Upsilon(\rho)$ and $171 \|\ttD_*^{-1} \z\| \leq \mu \sqrt{\lambda}$
    on $\cE$. Then, on this event, it holds that
    \[
        \|\ttD_* (\widehat \bups - \bups^*)\| \leq 9 \|\ttD_*^{-1} \z\|.
    \]
\end{Lem}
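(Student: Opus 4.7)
My plan has three stages: derive a cubic scalar inequality linking $X := \|\ttD_*(\widehat\bups - \bups^*)\|$ to $Y := \|\ttD_*^{-1}\bz\|$; bootstrap it through a homotopy to obtain an a priori bound $X \le \mu\sqrt\lambda/19$; and close the loop with a direct estimate yielding $X \le 9Y$. For the first stage, set $\bv := \widehat\bups - \bups^*$ and $\bu_t := \bups^* + t\bv$. The optimality conditions $\bnabla\cL(\bups^*) = \bzero$ and $\bnabla\cL(\widehat\bups) = \bz$ (the latter from $\bnabla\ttL = \bnabla\cL - \bz$) combine via Newton--Leibniz to give $\bz = \int_0^1 \nabla^2\cL(\bu_t)\bv\,\rmd t$, so $\bv^\top\bz = \int_0^1 \bv^\top\nabla^2\cL(\bu_t)\bv\,\rmd t$. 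Convexity of $\cU$ keeps every $\bu_t$ in $\Ups(\rho)$; Lemma~\ref{lem:block-diagonal_lower_bound} at $\bups^*$ gives $\nabla^2\cL(\bups^*) \succeq \tfrac{7}{32}\ttD_*^2$ (since $\rho \le 1/16 \le 1/7$), and Lemma~\ref{lem:second_derivative_lipschitzness} with $\lambda_0 = \lambda$, $\mu_0 = \mu$ (so $\ttD_0 = \ttD_*$) controls the deviation through $\|\ttD_*(\bu_t - \bups^*)\| = tX$. Integrating, invoking Cauchy--Schwarz $\bv^\top\bz \le XY$, and dividing by $X > 0$ (the case $X = 0$ is immediate) leaves
\[
g(X) \;:=\; \frac{7X}{32} - \frac{2X^2}{\mu\sqrt\lambda} - \frac{2X^3}{3\mu^2\lambda} \;\le\; Y. \qquad(\ast)
\]

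The inequality $(\ast)$ admits a spurious large-$X$ branch, which is why a bootstrap is needed. For $\alpha \in [0,1]$, the functional $\Phi_\alpha := \cL - \alpha\langle\bz,\cdot\rangle$ is strongly convex on $\Ups(\rho) \supseteq \cU$ by Lemma~\ref{lem:block-diagonal_lower_bound}, so it admits a unique minimizer $\widehat\bups_\alpha$ on $\cU$, continuous in $\alpha$, with $\widehat\bups_0 = \bups^*$ (the global minimizer of $\cL$) and $\widehat\bups_1 = \widehat\bups$ (the global minimizer of $\ttL$, inside $\cU$ by hypothesis). Repeating the derivation of $(\ast)$ for the pair $(\bups^*, \widehat\bups_\alpha)$ yields $g(X_\alpha) \le \alpha Y \le Y$, where $X_\alpha := \|\ttD_*(\widehat\bups_\alpha - \bups^*)\|$ is continuous with $X_0 = 0$. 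A direct computation gives
\[
g(\mu\sqrt\lambda/19) \;=\; \mu\sqrt\lambda\left(\tfrac{7}{608} - \tfrac{2}{361} - \tfrac{2}{20577}\right) \;>\; \frac{\mu\sqrt\lambda}{171} \;\ge\; Y,
\]
which excludes the value $X_\alpha = \mu\sqrt\lambda/19$ for every $\alpha \in [0,1]$; continuity of $X_\alpha$ together with $X_0 = 0$ then forces $X_\alpha < \mu\sqrt\lambda/19$ throughout, and in particular $X \le \mu\sqrt\lambda/19$.

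With the a priori bound in hand, the corrections in $g$ are absorbed: $\tfrac{2X^2}{\mu\sqrt\lambda} \le \tfrac{2X}{19}$ and $\tfrac{2X^3}{3\mu^2\lambda} \le \tfrac{2X}{3\cdot 361}$, so $g(X) \ge X(\tfrac{7}{32} - \tfrac{2}{19} - \tfrac{2}{1083}) > X/9$. Combined with $(\ast)$ this produces $X \le 9Y$, completing the argument. The hard part is the bootstrap: the margin $g(\mu\sqrt\lambda/19) - \mu\sqrt\lambda/171$ is on the order of $3\cdot 10^{-5}\,\mu\sqrt\lambda$, which fixes the precise constants $9$ and $171$ in the statement and leaves no slack for a weaker choice. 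A secondary technical matter is verifying that each $\widehat\bups_\alpha$ stays in the interior of $\cU$ so that the first-order condition $\bnabla\cL(\widehat\bups_\alpha) = \alpha\bz$ is genuine, which follows from the strong convexity of $\Phi_\alpha$ on $\Ups(\rho)$ together with the interior property of the endpoints $\alpha \in \{0,1\}$.
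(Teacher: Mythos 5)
Your Newton--Leibniz / Cauchy--Schwarz derivation of the cubic scalar inequality $g(X) \le Y$ is a genuinely different packaging of the same ingredients the paper uses (Lemma~\ref{lem:block-diagonal_lower_bound} for the quadratic term, Lemma~\ref{lem:second_derivative_lipschitzness} for the cubic and quartic corrections, Cauchy--Schwarz for the score), and your numerics do check out. The paper instead fixes $r = 9\|\ttD_*^{-1}\bz\|$, studies $f(t) = \ttL(\bups^* + t\,\ttD_*^{-1}\bu)$ along each ray of norm $r$, shows $f'(1) > 0$ via a Taylor expansion of $f'$ at $t=0$, and then uses $f'' > 0$ on the segment to get $f'(t) > 0$ for all $t \ge 1$ inside $\cU$. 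That ray argument kills the spurious large-$X$ branch for free: $\widehat\bups$ is a global stationary point of $\ttL$, so it cannot lie beyond $t=1$ along any ray. Your route has to build the same exclusion from scratch, which is what the homotopy is for.

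That homotopy contains the genuine gap. You justify the first-order condition $\bnabla\cL(\widehat\bups_\alpha) = \alpha\bz$ by asserting that each $\widehat\bups_\alpha$ is interior to $\cU$, and claim this ``follows from the strong convexity of $\Phi_\alpha$ on $\Ups(\rho)$ together with the interior property of the endpoints.'' Neither part is valid: strong convexity says nothing about the minimizer staying away from the boundary as $\alpha$ varies, and the lemma's hypothesis does not put $\bups^*$ or $\widehat\bups$ in the interior of $\cU$ at all --- it only says they lie in $\cU$. Without the first-order condition, the claim $g(X_\alpha) \le \alpha Y$ is unsupported and the continuity argument collapses. The fix exists and should be written out: use the variational inequality $\langle \bnabla\cL(\widehat\bups_\alpha) - \alpha\bz,\; \bups^* - \widehat\bups_\alpha \rangle \ge 0$ (optimality of $\widehat\bups_\alpha$ over the convex set $\cU$), which rearranges to $\langle \bnabla\cL(\widehat\bups_\alpha),\, \widehat\bups_\alpha - \bups^* \rangle \le \alpha \langle \bz,\, \widehat\bups_\alpha - \bups^* \rangle$; Newton--Leibniz on the left and Cauchy--Schwarz on the right then recover $g(X_\alpha) \le \alpha Y$ with no interiority assumption. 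You should also flag that the existence of $\widehat\bups_\alpha$ itself requires $\cU$ to be closed, a hypothesis the lemma does not impose; the paper's ray argument needs no such assumption.
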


\begin{proof}[Proof of Lemma \ref{lem:stoch_term_rough_bound}]

Let us denote $r = 9 \|\ttD_*^{-1} \z\|$ for brevity and show that $\|\ttD (\widehat \bups - \bups^*)\| \leq r$ on the event $\cE$. For this purpose, we fix an arbitrary vector $\bu \in \R^{2d + d^2}$ of the norm $\|\bu\| = r$ and consider a function
\[
    f(t) = \ttL(\bups^* + t \, \ttD_*^{-1} \bu),
    \quad t > 0.
\]
We are going to prove that $f'(t) \neq 0$ for any $t > 1$ such that $\bups^* + t \, \ttD^{-1} \bu \in \cU$. This yields the desired bound, because $\widehat \bups$ is a stationary point of $\ttL$.
Our approach is based on the analysis of the first and the second order derivatives:
\[
    f'(t) = \bnabla \ttL(\bups^* + t\, \ttD_*^{-1} \bu)^\top \ttD_*^{-1} \bu
    \quad \text{and} \quad
    f''(t) = \bu^\top \ttD_*^{-1} \nabla^2 \ttL(\bups^* + t\, \ttD_*^{-1} \bu) \ttD_*^{-1} \bu.
\]
According to Taylor's formula with the integral remainder term, it holds that
\begin{equation}
    \label{eq:f_taylors_expansion}
    f'(1) - f'(0) - f''(0)
    = \int\limits_0^1 \big( f''(s) - f''(0) \big) \rmd s. 
\end{equation}
Let us fix any $s \in (0, 1)$ and consider $\big( f''(s) - f''(0) \big)$.
Applying Lemma \ref{lem:second_derivative_lipschitzness} with $\lambda_0 = \lambda$ and $\mu_0 = \mu$, we obtain that
\begin{align*}
    \left| f''(s) - f''(0) \right|
    &
    = \left| \bu^\top \ttD_*^{-1} \big( \nabla^2 \ttL(\bups^* + ts \, \ttD_*^{-1} \bu) - \nabla^2 \ttL(\bups^*) \big) \ttD_*^{-1} \bu \right|
    \\&
    \leq \frac{4 s \, \|\bu\|^3}{\mu \sqrt{\lambda}} + \frac{2 s^2 \, \|\bu\|^4}{\mu^2 \lambda}
    \\&
    = \frac{4 s \, r^3}{\mu \sqrt{\lambda}} + \frac{2 s^2 \, r^4}{\mu^2 \lambda}.
\end{align*}
Substituting this bound into \eqref{eq:f_taylors_expansion}, we observe that
\[
    \left| f'(1) - f'(0) - f''(0) \right|
    \leq \frac{4 r^3}{\mu \sqrt{\lambda}} \int\limits_0^1 s \rmd s + \frac{2 r^4}{\mu^2 \lambda} \int\limits_0^1 s^2 \rmd s
    = \frac{2 r^3}{\mu \sqrt{\lambda}} \left(1 + \frac{r}{3 \mu \sqrt{\lambda}} \right).
\]
Since $r = 9 \|\ttD_*^{-1} \z\|$ and $171 \|\ttD_*^{-1} \z\| \leq \mu \sqrt{\lambda}$ on the event $\cE$, it holds that
\[
    \left| f'(1) - f'(0) - f''(0) \right|
    \leq \frac{2 r^2}{19} \left(1 + \frac{1}{57} \right)
    = \frac{r^2}{19} \cdot \frac{58}{57}.
\]
Note that Lemma \ref{lem:block-diagonal_lower_bound} yields
\[
    f''(0)
    = \bu^\top \ttD_*^{-1} \nabla^2 \ttL(\bups^*) \ttD_*^{-1} \bu
    \geq \frac{(1 - 2\rho) \|\bu\|^2}4
    = \frac{(1 - 2\rho) r^2}4.
\]
Taking into account this inequality and the relation
\[
    |f'(0)|
    = \left| \bnabla \ttL(\bups^*)^\top \ttD_*^{-1} \bu \right|
    = \left| \bz^\top \ttD_*^{-1} \bu \right|
    \leq \|\ttD_*^{-1} \bz\| \|\bu\| 
    \leq \frac{r^2}{9},
\]
we obtain that
\[
    f'(1)
    \geq \frac{(1 - 2\rho) r^2}4 - \frac{r^2}{10} - \frac{2 r^2}{19} \cdot \frac{58}{57}.
\]
It is straightforward to check that
\[
    \frac{2}{19} \cdot \frac{58}{57}
    = \frac{6 \cdot 58}{57^2}
    \leq \frac{6 \cdot 58}{57^2 - 1}
    = \frac{6}{56}
    = \frac3{28}.
\]
Thus, it holds that
\[
    f'(1)
    = \left( \frac14 - \frac19 - \frac3{28} - \frac{\rho}2 \right) r^2
    = \left( \frac5{36} - \frac3{28} - \frac{\rho}2 \right) r^2
    = \left( \frac2{63} - \frac{\rho}2 \right) r^2
    \geq \frac{(1 - 16 \rho) r^2}{32}
    \geq 0.
\]
    
Since $f''(t) > 0$ for any $t > 0$ such that $\bups^* + t \, \ttD_*^{-1} \bu \in \cU$, the Lagrange mean value theorem yields that $f'(t) \geq f'(1) > 0$ for any $t \geq 1$ satisfying $\bups^* + t \, \ttD_*^{-1} \bu \in \cU$.
Hence, we deduce that $\| \ttD_* (\widehat \bups - \bups^*)\| \leq r$.

\end{proof}

\subsection{Proof of Lemma \ref{lem:stoch_term_expansion}}
\label{sec:lem_stoch_term_expansion_proof}

For convenience, we split the proof into several steps.

\medskip

\noindent
\textbf{Step 1: Taylor's expansion.}
\quad
Let us remind a reader that $H_* = \nabla^2 \cL(\bups^*) = \nabla^2 \ttL(\bups^*)$.
Due to the definition of $\widehat \bups$, it holds that $\bnabla \ttL (\widehat \bups) = \bzero$. Using the Newton-Leibniz rule, we obtain that
\begin{align*}
    \bnabla \ttL(\bups^* + H_*^{-1} \bz)
    &
    = \bnabla \ttL(\bups^* + H_*^{-1} \bz) - \bnabla \ttL(\widehat \bups)
    \\&
    = \int\limits_0^1 \nabla^2 \ttL \big(t \widehat\bups + (1 - t)(\bups^* + H_*^{-1} \bz \big) (\bups^* - \widehat\bups + H_*^{-1} \bz) \rmd t
    \\&
    = \int\limits_0^1 \nabla^2 \ttL \big(\bups^* + t (\widehat\bups - \bups^*) + (1 - t) H_*^{-1} \bz \big) (\bups^* - \widehat\bups + H_*^{-1} \bz) \rmd t.
\end{align*}
Thus, it holds that
\[
    \widehat\bups - \bups^* - H_*^{-1} \bz
    = -\left( H_* + R \right)^{-1} \bnabla \ttL(\bups^* + H_*^{-1} \bz),
\]
where we introduced
\[
    R = \int\limits_0^1 \left[ \nabla^2 \ttL \big(\bups^* + t (\widehat\bups - \bups^*) + (1 - t) H_*^{-1} \bz \big) - H_* \right] \rmd t.
\]
The representation
\[
    -(H_* + R)^{-1}
    = -H_*^{-1} H_* (H_* + R)^{-1}
    = -H_*^{-1} + H_*^{-1} R (H_* + R)^{-1}
\]
yields the equality
\[
    \widehat\bups - \bups^* - H_*^{-1} \bz
    = -H_*^{-1} \bnabla \ttL(\bups^* + H_*^{-1} \bz) + H_*^{-1} R (H_* + R)^{-1} \bnabla \ttL(\bups^* + H_*^{-1} \bz).
\]
Then it holds that
\begin{align*}
    \left\| S \big( \widehat\bups - \bups^* - H_*^{-1} \bz \big) \right\|
    &
    \leq \left\| S H_*^{-1} \bnabla \ttL(\bups^* + H_*^{-1} \bz) \right\| + \left\| S H_*^{-1} R (H_* + R)^{-1} \bnabla \ttL(\bups^* + H_*^{-1} \bz) \right\|
    \\&
    \leq  \left\| S H_*^{-1} \ttD_0 \right\| \left\| \ttD_0^{-1} \bnabla \ttL(\bups^* + H_*^{-1} \bz) \right\|
    \\&\quad
    + \left\| S H_*^{-1} \ttD_* \right\| \left\| \ttD_*^{-1} R (H_* + R)^{-1} \ttD_* \right\| \left\| \ttD_*^{-1} \bnabla \ttL(\bups^* + H_*^{-1} \bz) \right\|.
\end{align*}
Let us recall that
\[
    \ttD_*^2
    = \diag\left( (A^*)^\top A^* + \lambda I_d, 2 I_d, I_d \otimes \big(\mu^2 I_d + \btheta^* (\btheta^*)^\top \big) \right)
\]
and that $\mu_0$ does not exceed $\mu$ by the definition. Then it is straightforward to observe that $\ttD_*^2 \succeq \ttD_0^2$ and $\ttD_*^{-2} \preceq \ttD_0^{-2}$. 
Taking these inequalities into account, we obtain that
\begin{align}
    \label{eq:variance_remainder_triangle_inequality}
    \left\| S \big( \widehat\bups - \bups^* - H_*^{-1} \bz \big) \right\|
    &
    \leq  \left\| S H_*^{-1} \ttD_* \right\| \left\| \ttD_0^{-1} \bnabla \ttL(\bups^* + H_*^{-1} \bz) \right\| \left(1 + \left\| \ttD_*^{-1} R (H_* + R)^{-1} \ttD_* \right\| \right)
\end{align}
In the rest of the proof, we bound three terms in the right-hand side of \eqref{eq:variance_remainder_triangle_inequality} one by one.

\medskip

\noindent
\textbf{Step 2: upper bound on $\| \ttD_0^{-1} \bnabla \ttL(\bups^* + H_*^{-1} \bz) \|$.}
\quad
We start with an upper bound on the norm of $\ttD_0^{-1} \bnabla \ttL(\bups^* + H_*^{-1} \bz)$. Note that
\begin{align*}
    \ttD_0^{-1} \bnabla \ttL(\bups^* + H_*^{-1} \bz)
    &
    = \ttD_0^{-1} \bnabla \cL(\bups^* + H_*^{-1} \bz) - \ttD_0^{-1} \bz
    \\&
    = \ttD_0^{-1} \left( \bnabla \cL(\bups^* + H_*^{-1} \bz) - \bnabla \cL(\bups^*) \right) - \ttD_0^{-1} \bz.
\end{align*}
Using Taylor's expansion with the integral remainder term, we obtain that
\begin{align*}
    \left\| \ttD_0^{-1} \bnabla \ttL(\bups^* + H_*^{-1} \bz) \right\|
    &
    = \left\| - \ttD_0^{-1} \bz + \ttD_0^{-1} \int\limits_0^1 \bnabla^2 \cL(\bups^* + t \, H_*^{-1} \bz) \, H_*^{-1} \bz \, \rmd t \right\|
    \\&
    = \left\| \int\limits_0^1 \ttD_0^{-1} \left( \bnabla^2 \cL(\bups^* + t \, H_*^{-1} \bz) - H_* \right) \ttD_0^{-1} \, \ttD_0 H_*^{-1} \bz \, \rmd t \right\|
    \\&
    \leq \left\| \ttD_0 H_*^{-1} \bz \right\| \int\limits_0^1 \left\| \ttD_0^{-1} \left( \bnabla^2 \cL(\bups^* + t \, H_*^{-1} \bz) - H_* \right) \ttD_0^{-1} \right\| \, \rmd t.
\end{align*}
Lemma \ref{lem:second_derivative_lipschitzness} implies that
\[
    \left\| \ttD_0^{-1} \left( \bnabla^2 \cL(\bups^* + t \, H_*^{-1} \bz) - H_* \right) \ttD_0^{-1} \right\|
    \leq \frac{4 t}{\mu_0 \sqrt{\lambda}} \, \| \ttD_0 H_*^{-1} \bz \| + \frac{2 t^2}{\mu_0^2 \lambda} \| \ttD_0 H_*^{-1} \bz \|^2,
\]
Hence, it holds that
\begin{align}
    \label{eq:l_grad_upper_bound}
    \left\| \ttD_0^{-1} \bnabla \ttL(\bups^* + H_*^{-1} \bz) \right\|
    &\notag
    \leq \frac{4}{\mu_0 \sqrt{\lambda}} \, \| \ttD_0 H_*^{-1} \bz \|^2 \int\limits_0^1 t \rmd t + \frac{2}{\mu_0^2 \lambda} \| \ttD_0 H_*^{-1} \bz \|^3 \int\limits_0^1 t^2 \rmd t
    \\&
    = \frac{2}{\mu_0 \sqrt{\lambda}} \, \| \ttD_0 H_*^{-1} \bz \|^2 \left(1 + \frac{\| \ttD_0 H_*^{-1} \bz \|}{3 \mu_0 \sqrt{\lambda}} \right).
\end{align}

\medskip

\noindent
\textbf{Step 3: upper bound on $\|\ttD_*^{-1} R (H_* + R)^{-1} \ttD_* \|$.}
\quad
We proceed with an upper bound on the norm of $\ttD_*^{-1} R (H_* + R)^{-1} \ttD_*$. Representing the matrix of interest as the product of $\ttD_*^{-1} R \, \ttD_*^{-1}$ and $\ttD_* (H_* + R)^{-1} \ttD_*$, we obtain that
\[
    \left\|\ttD_*^{-1} R (H_* + R)^{-1} \ttD_* \right\|
    \leq \left\|\ttD_*^{-1} R \, \ttD_*^{-1} \right\|  \left\| \ttD_* (H_* + R)^{-1} \ttD_* \right\|.
\]
Due to the Jensen inequality, the first term in the right-hand side does not exceed
\begin{align*}
    \left\|\ttD_*^{-1} R \, \ttD_*^{-1} \right\|
    &
    = \left\| \int\limits_0^1 \ttD_*^{-1}\left[ \nabla^2 \ttL \big(\bups^* + t (\widehat\bups - \bups^*) + (1 - t) H_*^{-1} \bz \big) - H_* \right] \ttD_*^{-1} \, \rmd t \right\|
    \\&
    \leq \int\limits_0^1 \left\| \ttD_*^{-1}\left[ \nabla^2 \ttL \big(\bups^* + t (\widehat\bups - \bups^*) + (1 - t) H_*^{-1} \bz \big) - H_* \right] \ttD_*^{-1} \right\| \, \rmd t.
\end{align*}
Next, we can bound the expression under the integral using Lemmata \ref{lem:stoch_term_rough_bound}, \ref{lem:block-diagonal_lower_bound}, and \ref{lem:second_derivative_lipschitzness}. Indeed, in view of Lemma \ref{lem:second_derivative_lipschitzness}, for any $t \in [0, 1]$, we have
\begin{align*}
    &
    \left\| \ttD_*^{-1}\left( \nabla^2 \ttL \big(\bups^* + t (\widehat\bups - \bups^*) + (1 - t) H_*^{-1} \bz \big) - H_* \right) \ttD_*^{-1} \right\|
    \\&
    \leq \frac{4}{\mu \sqrt{\lambda}} \, \left\| t \, \ttD_* (\widehat\bups - \bups^*) + (1 - t) \ttD_* H_*^{-1} \bz \right\| + \frac{2}{\mu^2 \lambda} \left\| t \, \ttD_* (\widehat\bups - \bups^*) + (1 - t) \, \ttD_* H_*^{-1} \bz \right\|^2.
\end{align*}
Lemma \ref{lem:block-diagonal_lower_bound} implies that 
\[
    \left\| \ttD_* H_*^{-1} \ttD_* \right\| \leq \frac4{1 - 2\rho_0}
    \quad \text{and} \quad
    \|\ttD_* H_*^{-1} \z\| \leq \frac{4}{1 - 2\rho_0} \|\ttD_*^{-1} \z\|.
\]
Combining this bound with the result of
Lemma \ref{lem:stoch_term_rough_bound}, we get that
\begin{align*}
    \left\| t \, \ttD_* (\widehat\bups - \bups^*) + (1 - t) \ttD_* H_*^{-1} \bz \right\|
    &
    \leq t \left\| \ttD_* (\widehat\bups - \bups^*) \right\| + (1 - t) \left\| \ttD_* H_*^{-1} \bz \right\|
    \\&
    \leq 9t \|\ttD_*^{-1} \z\| + \frac{4 (1 - t)}{1 - 2\rho_0} \|\ttD_*^{-1} \z\|
    \\&
    \leq 9 \|\ttD_*^{-1} \z\|.
\end{align*}
The last inequality follows from the fact that $\rho \in [0, 1/16]$.
Similarly, it holds that
\begin{align*}
    \left\| t \, \ttD_* (\widehat\bups - \bups^*) + (1 - t) \ttD_* H_*^{-1} \bz \right\|^2
    \leq t \left\| \ttD_* (\widehat\bups - \bups^*) \right\|^2 + (1 - t) \left\| \ttD_* H_*^{-1} \bz \right\|^2
    \leq 81 \|\ttD_*^{-1} \z\|^2.
\end{align*}
Hence, we obtain that
\begin{equation}
    \label{eq:r_upper_bound}
    \left\|\ttD_*^{-1} R \, \ttD_*^{-1} \right\|
    \leq \frac{36}{\mu \sqrt{\lambda}} \, \|\ttD_*^{-1} \z\| \left(1 + \frac{9 \|\ttD_*^{-1} \z\|}{2 \mu \sqrt{\lambda}} \right)
    \leq \frac{36}{\mu \sqrt{\lambda}} \, \|\ttD_*^{-1} \z\| \left(1 + \frac{1}{36} \right)
    = \frac{37}{\mu \sqrt{\lambda}} \, \|\ttD_*^{-1} \z\|.
\end{equation}
The inequality \eqref{eq:r_upper_bound} and Lemma \ref{lem:block-diagonal_lower_bound} immediately yield an upper bound on $\|\ttD_* (H_* + R)^{-1} \ttD_*\|$. Indeed, the smallest eigenvalue of $\ttD_*^{-1} (H_* + R) \ttD_*^{-1}$ is at least
\[
    \frac{1 - 2\rho_0}4 - \left\|\ttD_*^{-1} R \, \ttD^{-1} \right\|
    \geq \frac14 - \frac{\rho_0}2 - \frac{37}{\mu \sqrt{\lambda}} \, \|\ttD_*^{-1} \z\|
    \geq \frac14 - \frac{1}{32} - \frac{3}{32}
    = \frac1{8}.
\]
Thus,
\[
    \|\ttD_* (H_* + R)^{-1} \ttD_*\| \leq 8
\]
and, as a consequence, we have
\begin{equation}
    \label{eq:r_h_plus_r_upper_bound}
    \left\|\ttD_*^{-1} R (H_* + R)^{-1} \ttD_* \right\|
    \leq 8 \cdot \frac{37}{\mu \sqrt{\lambda}} \, \|\ttD_*^{-1} \z\|
    = \frac{296}{\mu \sqrt{\lambda}} \, \|\ttD_*^{-1} \z\|.
\end{equation}

\medskip

\noindent
\textbf{Step 4: upper bound on $\|S H_*^{-1} \ttD_*\|$.}
\quad
It only remains to bound $\|S H_*^{-1} \ttD_*\|$ to finish the proof. According to Lemma \ref{lem:block-diagonal_lower_bound}, $4 H_* \succeq (1 - 2\rho_0) \ttD_*^2$. This yields that $(1 - 2\rho_0) H_*^{-1} \preceq 4 \ttD_*^{-2}$ and then
\[
    \|S H_*^{-1} \ttD_*\|
    = \|S \, \ttD_*^{-1} \, \ttD_* H_*^{-1} \ttD_*\|
    \leq \|S \, \ttD_*^{-1}\| \| \ttD_* H_*^{-1} \ttD_*\|
    \leq \frac4{1 - 2\rho_0} \|S \, \ttD_*^{-1}\|.
\]
Let us recall that
\[
    S = \diag\big( \Sigma^{1/2}, I_{d+1} \otimes O_d \big)
    \quad \text{and} \quad
    \ttD_*^2
    = \diag\left( (A^*)^\top A^* + \lambda I_d, 2 I_d, I_d \otimes \big(\mu^2 I_d + \btheta^* (\btheta^*)^\top \big) \right).
\]
Hence, it holds that
\begin{align*}
    \|S \, \ttD_*^{-1}\|
    &
    = \left\| \Sigma^{1/2} \left( (A^*)^\top A^* + \lambda I_d \right)^{-1/2} \right\|
    = \left\| \left( (A^*)^\top A^* + \lambda I_d \right)^{-1/2} \Sigma \left( (A^*)^\top A^* + \lambda I_d \right)^{-1/2} \right\|^{1/2}
    \\&
    \leq \left\| \left( (A^*)^\top A^* + \lambda I_d \right)^{-1/2} A^* \left( (A^*)^\top A^* + \lambda I_d \right)^{-1/2} \right\|^{1/2}
    \\&\quad
    + \left\| \left( (A^*)^\top A^* + \lambda I_d \right)^{-1/2} (\Sigma - A^*) \left( (A^*)^\top A^* + \lambda I_d \right)^{-1/2} \right\|^{1/2}
    \\&
    \leq \left\| \left( (A^*)^\top A^* + \lambda I_d \right)^{-1/2} \right\|^{1/2} + \sqrt{\frac{\left\| \Sigma - A^*\right\|}{\lambda}}
    \leq \lambda^{-1/4} + \sqrt{\frac{\left\| \Sigma - A^*\right\|}{\lambda}}
\end{align*}
and then
\begin{equation}
    \label{eq:s_hessian_inverse_upper_bound}
    \|S H_*^{-1} \ttD_*\|
    \leq \frac{4}{1 - 2\rho_0} \left( \lambda^{-1/4} + \sqrt{\frac{\left\| \Sigma - A^*\right\|}{\lambda}} \right).
\end{equation}

\medskip

\noindent
\textbf{Step 5: final bound.}
\quad
Summing up the inequalities \eqref{eq:variance_remainder_triangle_inequality}, \eqref{eq:l_grad_upper_bound}, \eqref{eq:r_h_plus_r_upper_bound}, and \eqref{eq:s_hessian_inverse_upper_bound}, we obtain that
\begin{align*}
    \left\| S \big( \widehat\bups - \bups^* - H^{-1} \bz \big) \right\|
    &
    \leq \frac{4}{1 - 2\rho_0} \left( \lambda^{-1/4} + \sqrt{\frac{\left\| \Sigma - A^*\right\|}{\lambda}} \right) \cdot \frac{2}{\mu_0 \sqrt{\lambda}} \, \| \ttD_0 H_*^{-1} \bz \|^2
    \\&\quad
    \cdot \left(1 + \frac{\| \ttD_0 H_*^{-1} \bz \|}{3 \mu_0 \sqrt{\lambda}} \right) \left(1 + \frac{296}{\mu \sqrt{\lambda}} \, \|\ttD_*^{-1} \z\| \right).
\end{align*}
Since, due to the conditions of Lemma \ref{lem:stoch_term_expansion}, $395 \|\ttD_*^{-1} \z\| \leq \mu \sqrt{\lambda}$ and $\rho \leq 1/16$, the expression in the right-hand side simplifies to
\begin{align*}
    \left\| S \big( \widehat\bups - \bups^* - H_*^{-1} \bz \big) \right\|
    &
    \leq \frac{8\| \ttD_0 H_*^{-1} \bz \|^2}{(1 - 1/8) \mu_0 \sqrt{\lambda}} \left( \lambda^{-1/4} + \sqrt{\frac{\left\| \Sigma - A^*\right\|}{\lambda}} \right) 
    \left(1 + \frac{\| \ttD_0 H_*^{-1} \bz \|}{3 \mu_0 \sqrt{\lambda}} \right) \left(1 + \frac{296}{395}\right)
    \\&
    \leq \frac{16}{\mu_0 \sqrt{\lambda}} \, \| \ttD_0 H_*^{-1} \bz \|^2 \left( \lambda^{-1/4} + \sqrt{\frac{\left\| \Sigma - A^*\right\|}{\lambda}} \right) \left(1 + \frac{\| \ttD_0 H_*^{-1} \bz \|}{3 \mu_0 \sqrt{\lambda}} \right).
\end{align*}
In the last line, we used the fact that $64 / 7 \cdot (1 + 296/395) < 16$. The proof is finished.

\myendproof

\subsection{Proof of Lemma \ref{lem:standardized_score_theta_bound}}
\label{sec:lem_standardized_score_theta_bound_proof}

The main idea of the proof of Lemma \ref{lem:standardized_score_theta_bound} is to represent $\breve H_{\btheta \btheta}^{-1} H_{\btheta \chi} H_{\chi \chi}^{-1} \bz_\chi$ as a linear transform of $(\bZ - \E \bZ)$ and $(\widehat \Sigma - \Sigma)$ and then apply the large deviation inequalities from Theorems \ref{th:covariance_concentration} and \ref{th:noise_concentration}. Despite the simple idea, the proof is quite technical, so we split it into several steps for convenience.

\medskip

\noindent\textbf{Step 1.}\quad We start with an observation that the matrix $\breve H_{\btheta \btheta}$ is close to $(A^\top A / 2 + \lambda I_d)$.

\begin{Lem}
    \label{lem:h_theta_schur}
    Assume that $\rho \leq 1/5$. Let us fix an arbitrary $\bups \in \Upsilon(\rho)$ and let $\breve H_{\btheta \btheta} = H_{\btheta \btheta} - H_{\btheta \chi} H_{\chi \chi}^{-1} H_{\chi \btheta}$ stand for the Schur complement of $H_{\chi\chi}$. Then it holds that
    \[
        \left\| \left( \frac12 A^\top A + \lambda I_d \right)^{-1/2} \breve H_{\btheta \btheta} \left( \frac12 A^\top A + \lambda I_d \right)^{-1/2} - I_d \right\|
        \leq 10 \rho^2.
    \]
\end{Lem}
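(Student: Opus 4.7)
The plan is to follow the structure of the proof of Lemma~\ref{lemma: Schur complement bound for bias}, but specialized to the Dirac measure $\m$ concentrated at $\bups$, and with normalization by $N^{1/2} := (A^\top A / 2 + \lambda I_d)^{1/2}$ in place of $(\Sigma^2 / 2 + \lambda I_d)^{1/2}$. Since $\bups \in \Ups(\rho)$ and $\rho \le 1/5$, we have $\|\btheta\| \le \rho \mu / \sqrt{1 - \rho^2} =: \rho' \mu \le \mu / 2$, so Proposition~\ref{proposition: inverse of nuisance parameters} applies to $\m$ and yields an explicit block structure of $H_{\chi\chi}^{-1}$. Expanding $H_{\btheta\chi} H_{\chi\chi}^{-1} H_{\chi\btheta}$ block by block exactly as in the cited proof (but at a single $\bups$, with all averages collapsing to their integrands) gives
\begin{align*}
\breve H_{\btheta\btheta} - N
&= \bigl(-r_1 + 2 r_2 (\tilde\btheta^\top \btheta) - \btheta^\top R_3 \btheta\bigr)\, A^\top A + r_2 \bigl[A^\top (A\btheta - \bfeta) \tilde\btheta^\top + \tilde\btheta (A\btheta - \bfeta)^\top A\bigr] \\
&\quad - \|A\btheta - \bfeta\|^2 R_3 - R_3 \btheta (A\btheta - \bfeta)^\top A - A^\top (A\btheta - \bfeta) \btheta^\top R_3,
\end{align*}
where $r_1, r_2, R_3, \tilde\btheta$ are as in Proposition~\ref{proposition: inverse of nuisance parameters}.

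The key observation (the analogue of Step~2 in the proof of Lemma~\ref{lemma: Schur complement bound for bias}) is that $2 r_2 \tilde\btheta^\top \btheta = \btheta^\top R_3 \btheta$, so the $A^\top A$ coefficient collapses to $-r_1$. This identity follows purely from the block relations $J H_{\chi\chi} = I$: the $(\bfeta, A)$ block gives $r_2 \mu^2 \tilde\btheta = (1/2 - r_1)\, \btheta$, the $(\bfeta, \bfeta)$ block gives $r_2 \tilde\btheta^\top \btheta = 2 r_1$, and the $(A, A)$ block gives $\btheta^\top R_3 \btheta = \|\btheta\|^2 (1 + 2 r_1) / (\mu^2 + \|\btheta\|^2)$ (after multiplying $R_3 (\mu^2 I_d + \btheta\btheta^\top) = I_d + r_2 \tilde\btheta \btheta^\top$ by $\btheta$ from the right). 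Combining these yields $r_1 = \|\btheta\|^2 / (2 (2\mu^2 + \|\btheta\|^2))$ and $2 r_2 \tilde\btheta^\top \btheta = \btheta^\top R_3 \btheta = 2\|\btheta\|^2 / (2\mu^2 + \|\btheta\|^2)$. In particular $2 r_1 \le (\rho')^2 / 2$ and, since $\|N^{-1/2} A^\top A N^{-1/2}\| \le 2$, the surviving $-r_1 A^\top A$ contributes at most $(\rho')^2 / 2$ to $\|N^{-1/2}(\breve H_{\btheta\btheta} - N) N^{-1/2}\|$.

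For the five remaining terms (all containing $A\btheta - \bfeta$, whose norm is bounded by $\rho\mu\sqrt\lambda$), I would use $\|N^{-1/2}\| \le \lambda^{-1/2}$, $\|N^{-1/2} A^\top\| \le \sqrt{2}$, $\|R_3\| \le 2 / \mu^2$ from Proposition~\ref{proposition: inverse of nuisance parameters}, together with the sharpened bounds $\|r_2 \tilde\btheta\| = \|\btheta\| / (2\mu^2 + \|\btheta\|^2) \le \rho' / (2\mu)$ and $\|R_3 \btheta\| = 2\|\btheta\| / (2\mu^2 + \|\btheta\|^2) \le \rho' / \mu$ extracted from the identities above. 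A term-by-term bound yields total contribution at most $(\rho')^2 / 2 + 2 \rho^2 + 3 \sqrt{2}\, \rho \rho'$; plugging in $\rho' \le \sqrt{25/24}\,\rho$ (valid for $\rho \le 1/5$) gives a bound below $10 \rho^2$, as required. The main obstacle is the cancellation of the $A^\top A$ terms: without identifying the exact scalars $r_1$, $r_2 \tilde\btheta^\top \btheta$, and $\btheta^\top R_3 \btheta$, the coarse bounds $r_1 \le (\rho')^2$, $r_2 \le 1/\mu^2$, $\|R_3\| \le 2/\mu^2$ alone produce a constant of order $20$, too weak for the claim.
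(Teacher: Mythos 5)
Your proof is correct, and it is a genuinely sharper realization of the strategy the paper uses (expanding $\breve H_{\btheta\btheta}$ block-by-block via Proposition~\ref{proposition: inverse of nuisance parameters}). Your decomposition of $\breve H_{\btheta\btheta} - N$ with $N = \tfrac12 A^\top A + \lambda I_d$ is a complete transcription of the Schur-complement algebra: it retains the full $-\|A\btheta-\bfeta\|^2 R_3$ and the two cross terms $-R_3\btheta(A\btheta-\bfeta)^\top A - A^\top(A\btheta-\bfeta)\btheta^\top R_3$, whereas the paper's displayed expansion~\eqref{eq:h_theta_schur_complement_decomposition} drops the rank-one part of these contributions. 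With the full set of terms the coarse bounds $r_1 \le \rho^2$, $r_2 \le \mu^{-2}$, $\|R_3\| \le 2\mu^{-2}$ from Proposition~\ref{proposition: inverse of nuisance parameters} indeed fall short (constant of order $20$, as you diagnose), and what rescues the argument is the exact cancellation $2r_2\tilde\btheta^\top\btheta = \btheta^\top R_3\btheta$. This identity follows from the block relations $J H_{\chi\chi} = I$ and collapses the $A^\top A$-coefficient to $-r_1$; I have checked, and your closed forms $r_1 = \|\btheta\|^2/(2(2\mu^2+\|\btheta\|^2))$, $r_2\tilde\btheta = \btheta/(2\mu^2+\|\btheta\|^2)$, $R_3\btheta = 2\btheta/(2\mu^2+\|\btheta\|^2)$ are all correct, as is the resulting total $(\rho')^2/2 + 2\rho^2 + 3\sqrt2\,\rho\rho' \le 6.9\rho^2 < 10\rho^2$ for $\rho \le 1/5$. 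Two cosmetic points: (i) the $(\bfeta,A)$-block of $JH_{\chi\chi}=I$ reads, directly, $r_2(\mu^2\tilde\btheta + (\tilde\btheta^\top\btheta)\btheta) = (1/2 + r_1)\btheta$, which simplifies to $r_2\mu^2 = 1/2 + r_1$; your form $r_2\mu^2\tilde\btheta = (1/2 - r_1)\btheta$ is a correct corollary once the closed form of $\tilde\btheta$ is inserted, but it is not the block relation itself. (ii) Since $2\mu^2 + \|\btheta\|^2 \ge 2\mu\sqrt{\mu^2+\|\btheta\|^2}$, one gets $\|r_2\tilde\btheta\| \le \rho/(2\mu)$ and $\|R_3\btheta\| \le \rho/\mu$ directly, with $\rho$ rather than $\rho'$, which avoids invoking $\rho' \le \sqrt{25/24}\,\rho$ in those two estimates. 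Overall your proof is more careful than the paper's and the cancellation observation is a real improvement.
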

The proof of Lemma \ref{lem:h_theta_schur} is postponed to Appendix \ref{sec:lem_h_theta_schur_proof}. Lemma \ref{lem:h_theta_schur} yields that the smallest eigenvalue of $( A^\top A/2 + \lambda I_d )^{-1/2} \breve H_{\btheta \btheta} ( A^\top A/2 + \lambda I_d )^{1/2}$
is at least $1 - 11\rho^2$. Then we deduce that the operator norm of
\[
    \left( \frac12 A^\top A + \lambda I_d \right)^{1/2} \breve H_{\btheta \btheta}^{-1} \left( \frac12 A^\top A + \lambda I_d \right)^{1/2}
\]
does not exceed $(1 - 10 \rho^2)^{-1}$ and then
\[
    \left\| \left( \frac12 A^\top A + \lambda I_d \right)^{1/2} \breve H_{\btheta \btheta}^{-1} H_{\btheta \chi} H_{\chi \chi}^{-1} \bz_\chi \right\|
    \leq \frac1{1 - 10\rho^2} \left\| \left( \frac12 A^\top A + \lambda I_d \right)^{-1/2} H_{\btheta \chi} H_{\chi \chi}^{-1} \bz_\chi \right\|.
\]
Thus, we can restrict our attention on a bit simpler term
\[
    \left( \frac12 A^\top A + \lambda I_d \right)^{-1/2} H_{\btheta \chi} H_{\chi \chi}^{-1} \bz_\chi.
\]

\medskip

\noindent\textbf{Step 2: explicit expression for $H_{\btheta \chi} H_{\chi \chi}^{-1} \bz_\chi$.}\quad
Let us elaborate on $H_{\btheta \chi} H_{\chi \chi}^{-1} \bz_\chi$. The next lemma shows that $H_{\btheta \chi} H_{\chi \chi}^{-1} \bz_\chi$ can be represented as a sum of linear transforms of  $(\bZ - \E \bZ)$ and $(\widehat \Sigma - \Sigma)$.

\begin{Lem}
    \label{lem:semiparametric_score_explicit_representation}
    Let $\rho \leq 1/2$ and let us fix any $\bups \in \Ups(\rho)$. With the introduced notations, it holds that
    \begin{align*}
        H_{\btheta \chi} H_{\chi \chi}^{-1} \z_\chi
        &
        = -\frac12 A^\top \bU + \frac12 A^\top (\widehat \Sigma - \Sigma)(\btheta - \btheta^\circ) + (\widehat \Sigma - \Sigma)(A \btheta - \bfeta)
        \\&\quad
        - \left(r_1 - \frac{r_2 \mu^2 \|\btheta\|^2}{\mu^2 + \|\btheta\|^2} \right) A^\top (\bZ - \E \bZ) + \frac{r_2 \mu^2}{\mu^2 + \|\btheta\|^2} \left( (A \btheta - \bfeta)^\top (\bZ - \E \bZ) \right) \btheta
        \\&\quad
        - \left(\frac{\|\btheta\|^2}{2\mu^2 + \|\btheta\|^2} - \frac{\|\btheta\|^2}{2 (\mu^2 + \|\btheta\|^2)} + \frac{r_2' \mu^4}{\mu^2 + \|\btheta\|^2} \right) A^\top (\widehat \Sigma - \Sigma) \btheta
        \\&\quad
        - \frac{1}{2\mu^2 + \|\btheta\|^2} \big( (A \btheta - \bfeta)^\top (\widehat \Sigma - \Sigma) \btheta \big) \btheta,
    \end{align*}
    where
    \[
        \bU
        = \bZ - \E \bZ - (\widehat \Sigma - \Sigma) \btheta^\circ
        = \frac1n \, \bbX \beps
    \]
    and the constants $r_1 \in [0, \rho^2]$, $r_2 = 0.5 \mu^{-2} + r_2'$, $r_2' \in [0, \rho^2 / \mu^{2}]$ are defined in Proposition \ref{proposition: inverse of nuisance parameters}.
\end{Lem}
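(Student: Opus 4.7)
The plan is to invoke Proposition~\ref{proposition: inverse of nuisance parameters} with $\m$ equal to the Dirac measure at $\bups$ to obtain an explicit block form for $H_{\chi\chi}^{-1}$, then compute $H_{\btheta\chi} H_{\chi\chi}^{-1} \z_\chi$ directly from the Kronecker-product representation \eqref{eq:h_block_form_kronecker} of the Hessian, and finally substitute $\bZ - \E\bZ = \bU + (\widehat\Sigma - \Sigma)\btheta^\circ$ (using $\E\bZ = \Sigma\btheta^\circ$) to isolate the claimed leading terms. Since $\m$ is a point mass, Sherman--Morrison simplifies $\Tilde\btheta = \mu^2(\mu^2 I_d + \btheta\btheta^\top)^{-1}\btheta$ to $\frac{\mu^2}{\mu^2 + \|\btheta\|^2}\btheta$, and the scalars in Proposition~\ref{proposition: inverse of nuisance parameters} take the closed forms $r_1 = \frac{\|\btheta\|^2}{2(2\mu^2 + \|\btheta\|^2)}$, $r_2 = \mu^{-2}(\frac12 + r_1)$, $r_2' = r_2 - \frac{1}{2\mu^2}$; in particular, $r_2 \mu^2 = \frac12 + r_1$.

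In the first step I would compute $\bpsi := (H_{\chi\chi}^{-1}\z_\chi)_{\bfeta}$ and the $A$-block $\Psi := (H_{\chi\chi}^{-1}\z_\chi)_A$, viewed as a $d \times d$ matrix through the row-stacking convention adopted in the paper. The Kronecker identity $(I_d \otimes \bv^\top)\rmvec(M) = M\bv$, together with Proposition~\ref{proposition: inverse of nuisance parameters}, yields
\[
\bpsi = \Bigl(\frac12 + r_1\Bigr)(\bZ - \E\bZ) + \mu^2 r_2 (\widehat\Sigma - \Sigma)\Tilde\btheta, \qquad \Psi = \mu^2 (\widehat\Sigma - \Sigma) R_3 + r_2 (\bZ - \E\bZ)\Tilde\btheta^\top,
\]
with $R_3$ as in that proposition.

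In the second step I would apply $H_{\btheta\chi} = (-A^\top,\ (A\btheta - \bfeta)^\top \otimes I_d + A^\top \otimes \btheta^\top)$. The identity $(B \otimes C)\rmvec(M) = \rmvec(BMC^\top)$ contracts the Kronecker products to matrix--vector products and produces
\[
H_{\btheta\chi} H_{\chi\chi}^{-1} \z_\chi = -A^\top \bpsi + \Psi^\top(A\btheta - \bfeta) + A^\top \Psi \btheta.
\]
Substituting the formulas for $\bpsi$ and $\Psi$, using the symmetry of $\widehat\Sigma - \Sigma$, and decomposing $\bZ - \E\bZ = \bU + (\widehat\Sigma - \Sigma)\btheta^\circ$ produces seven terms. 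Extracting the pieces whose coefficients reduce to $-\frac12$, $\frac12$ and $1$ after setting $r_1 = r_2' = 0$ yields the three leading expressions $-\frac12 A^\top \bU$, $\frac12 A^\top(\widehat\Sigma - \Sigma)(\btheta - \btheta^\circ)$ and $(\widehat\Sigma - \Sigma)(A\btheta - \bfeta)$, while the remaining four terms assemble into the stated remainder once the scalars are re-expressed through $r_1, r_2, r_2'$.

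The main obstacle will be purely algebraic bookkeeping: matching the rational functions of $\|\btheta\|^2$ and $\mu^2$ that arise from the direct computation to the specific combinations
\[
r_1 - \frac{r_2 \mu^2 \|\btheta\|^2}{\mu^2 + \|\btheta\|^2}, \qquad \frac{r_2 \mu^2}{\mu^2 + \|\btheta\|^2}, \qquad \frac{\|\btheta\|^2}{2\mu^2 + \|\btheta\|^2} - \frac{\|\btheta\|^2}{2(\mu^2 + \|\btheta\|^2)} + \frac{r_2' \mu^4}{\mu^2 + \|\btheta\|^2}
\]
prescribed by the statement. All of these identifications follow from direct manipulation using the closed-form expressions for $r_1, r_2, r_2'$ and the identity $r_2 \mu^2 = \frac12 + r_1$, but the calculation must be organized carefully to avoid sign or index errors.
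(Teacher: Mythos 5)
Your proposal is correct and follows essentially the same route as the paper: invoke Proposition~\ref{proposition: inverse of nuisance parameters} with a Dirac prior, compute the nuisance block $H_{\chi\chi}^{-1}\z_\chi$, contract via Kronecker identities, and substitute $\bZ - \E\bZ = \bU + (\widehat\Sigma - \Sigma)\btheta^\circ$ at the end. The only organizational difference is that the paper sidesteps the abstract $R_3$ by recomputing $J_{AA} = I_d \otimes (\mu^2 I_d + \tfrac12 \btheta\btheta^\top)^{-1}$ directly through block inversion and Woodbury, and it keeps $r_1, r_2, r_2'$ abstract rather than substituting their closed forms, which is slightly cleaner since the lemma statement itself is phrased in terms of those symbols; also note that the Kronecker identity should read $(B\otimes C)\rmvec(M) = \rmvec(CMB^\top)$ rather than $\rmvec(BMC^\top)$, though your downstream formula $-A^\top\bpsi + \Psi^\top(A\btheta-\bfeta) + A^\top\Psi\btheta$ is consistent with the paper's row-stacking convention $\rmvec(A^\top)$.
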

The proof of Lemma \ref{lem:semiparametric_score_explicit_representation} is moved to Appendix \ref{sec:lem_semiparametric_score_explicit_representation}. 
The inequalities $0 \leq r_1 \leq \rho^2$ and $0 \leq r_2' \leq \rho^2 / \mu^2$ yield that
\[
    \left| r_1 - \frac{r_2 \mu^2 \|\btheta\|^2}{\mu^2 + \|\btheta\|^2} \right| \leq \rho^2
    \quad \text{and} \quad
    \left| \frac{\|\btheta\|^2}{2\mu^2 + \|\btheta\|^2} - \frac{\|\btheta\|^2}{2 (\mu^2 + \|\btheta\|^2)} + \frac{r_2' \mu^4}{\mu^2 + \|\btheta\|^2} \right| \leq 2 \rho^2.
\]
Then, according to Lemma \ref{lem:semiparametric_score_explicit_representation} and the triangle inequality, we have
\begin{align}
    \label{eq:semiparametric_score_norm_upper_bound}
    &\notag
    \left\| \left( \frac12 A^\top A + \lambda I_d \right)^{-1/2} H_{\btheta \chi} H_{\chi \chi}^{-1} \bz_\chi \right\|
    \\&\notag
    \leq \frac12 \left\| \left( \frac12 A^\top A + \lambda I_d \right)^{-1/2} A^\top \bU \right\|
    + \frac12 \left\| \left( \frac12 A^\top A + \lambda I_d \right)^{-1/2} A^\top (\widehat \Sigma - \Sigma) (\btheta - \btheta^\circ) \right\|
    \\&\quad
    + \rho^2 \left\| \left( \frac12 A^\top A + \lambda I_d \right)^{-1/2} A^\top (\bZ - \E \bZ) \right\|
    + 2 \rho^2 \left\| \left( \frac12 A^\top A + \lambda I_d \right)^{-1/2} A^\top (\widehat \Sigma - \Sigma) \btheta \right\|
    \\&\quad\notag
    + \frac{\lambda^{-1/2} \|\btheta\|}{\mu^2 + \|\btheta\|^2} \left| (A \btheta - \bfeta)^\top (\bZ - \E \bZ) \right|
    + \frac{\lambda^{-1/2} \|\btheta\|}{2\mu^2 + \|\btheta\|^2} \left| (A \btheta - \bfeta)^\top (\widehat \Sigma - \Sigma) \btheta \right|
    \\&\quad\notag
    + \left\| \left( \frac12 A^\top A + \lambda I_d \right)^{-1/2} (\widehat \Sigma - \Sigma) (A \btheta - \bfeta) \right\|.
\end{align}
Here we used the fact that
\[
    \left\| (A^\top A + \lambda I_d)^{-1/2} \btheta \right\|
    \leq \frac{\|\btheta\|}{\sqrt{\lambda}}.
\]
Since
\[
    \left\| \left( \frac12 A^\top A + \lambda I_d \right)^{-1/2} A^\top \right\| 
    \leq \sqrt{2}
    \quad \text{and} \quad
    \left\| \left( \frac12 A^\top A + \lambda I_d \right)^{-1/2}\right\| 
    \leq \frac1{\sqrt{\lambda}},
\]
the inequality \eqref{eq:semiparametric_score_norm_upper_bound} can be simplified even further:
\begin{align}
    \label{eq:semiparametric_score_norm_upper_bound_simplified}
    \left\| \left( \frac12 A^\top A + \lambda I_d \right)^{-1/2} H_{\btheta \chi} H_{\chi \chi}^{-1} \bz_\chi \right\|
    &\notag
    \leq \frac{\|\bU\|}{\sqrt{2}} + \frac{\|(\widehat \Sigma - \Sigma)(\btheta - \btheta^\circ)\|}{\sqrt{2}} + \frac{\|(\widehat \Sigma - \Sigma) (A \btheta - \bfeta)\|}{\sqrt{\lambda}}
    \\&\quad \notag
    + \sqrt{2} \rho^2 \, \|\bZ - \E \bZ\|
    + 2 \sqrt{2} \rho^2 \left\| (\widehat \Sigma - \Sigma) \btheta \right\|
    \\&\quad
    + \frac{\lambda^{-1/2} \|\btheta\|}{\mu^2 + \|\btheta\|^2} \left| (A \btheta - \bfeta)^\top (\bZ - \E \bZ) \right|
    \\&\quad\notag
    + \frac{\lambda^{-1/2} \|\btheta\|}{2\mu^2 + \|\btheta\|^2} \left| (A \btheta - \bfeta)^\top (\widehat \Sigma - \Sigma) \btheta \right|.
\end{align}
In other words, Lemma \ref{lem:semiparametric_score_explicit_representation} allows us to reduce the study of a complicated expression for $\breve H_{\btheta \btheta}^{-1} H_{\btheta \chi} H_{\chi \chi}^{-1} \bz_\chi$ to analysis of much more tractable terms. In the rest of the proof, we bound the summands in the right-hand side of \eqref{eq:semiparametric_score_norm_upper_bound} one by one applying Theorems \ref{th:covariance_concentration} and \ref{th:noise_concentration}.

\medskip

\noindent\textbf{Step 3: large deviation bounds.}
\quad
Let us recall that on the event $\cE_1$ we have
\[
    \|(\widehat \Sigma - \Sigma) \btheta^\circ\| \leq 4 C_X \|\Sigma\| \|\btheta^\circ\| \sqrt{\frac{\ttr(\Sigma) + \log(4 / \delta)}n}
    \quad \text{and} \quad
    \|\bU\| \leq 8 \sigma \|\Sigma\|^{1/2} \sqrt{\frac{\ttr(\Sigma) + \log(4 / \delta)}n}.
\]
Moreover, according to Theorem \ref{th:covariance_concentration}, each of the  inequalities \eqref{eq:hat_sigma_linear_functional_1_deviation_bound}--\eqref{eq:hat_sigma_linear_functional_4_deviation_bound} holds with probability at least $1 - \delta/2$. Note that the conditions of Theorem \ref{th:covariance_concentration} are fulfilled,
because $\ttr(\Sigma) + \log(4 / \delta) \leq n \leq 4n$ by our assumptions. Similarly, due to Theorem \ref{th:noise_concentration}, with probability at least $1 - \delta/2$, it holds that
\[
    \left| (A \btheta - \bfeta)^\top \bU \right|
    \leq 8 \sigma \|\Sigma^{1/2} (A \btheta - \bfeta)\| \sqrt{\frac{1 + \log(4 / \delta)}n}.
\]
The union bound yields that there exists an event $\cE_2$ of probability at least $1 - 5\delta / 2$ where the inequalities \eqref{eq:hat_sigma_linear_functional_1_deviation_bound}--\eqref{eq:z_linear_functional_deviation_bound} hold simultaneously. On the intersection of the events $\cE_1$ and $\cE_2$, the right-hand side of \eqref{eq:semiparametric_score_norm_upper_bound_simplified} does not exceed
\begin{align*}
    &
    \left\| \left( \frac12 A^\top A + \lambda I_d \right)^{-1/2} H_{\btheta \chi} H_{\chi \chi}^{-1} \bz_\chi \right\|
    \\&
    \leq 2 \sqrt{2} \|\Sigma\|^{1/2} \left(
    2 \sigma + C_X \|\Sigma^{1/2} (\btheta - \btheta^\circ)\| + \frac{\sqrt{2} C_X \|\Sigma^{1/2} (A \btheta - \bfeta)\|}{\sqrt{\lambda}} \right) \sqrt{\frac{\ttr(\Sigma) + \log(4 / \delta)}n}
    \\&\quad
    + 4 \sqrt{2} \rho^2 \|\Sigma\|^{1/2} \left(
    2 \sigma + 3 C_X \|\Sigma^{1/2} \btheta^\circ\| \right) \sqrt{\frac{\ttr(\Sigma) + \log(4 / \delta)}n}
    \\&\quad
    + \frac{8 \|\btheta\|}{\mu^2 + \|\btheta\|^2} \cdot \frac{\|\Sigma^{1/2} (A \btheta - \bfeta)\|}{\sqrt{\lambda}} \left( C_X \|\Sigma^{1/2} \btheta\| + \sigma \right) \sqrt{\frac{1 + \log(4 / \delta)}n}.
\end{align*}
Finally, since $\bups \in \Ups(\rho)$, we have
\[
    \|\btheta\|^2 \leq \rho^2 (\mu^2 + \|\btheta\|^2)
    \quad \text{and} \quad
    \frac{\|\btheta\|}{\mu^2 + \|\btheta\|^2}
    \leq \frac{\|\btheta\|}{\mu \sqrt{\mu^2 + \|\btheta\|^2}} \leq \frac{\rho}{\mu}.
\]
This yields the desired bound:
\begin{align*}
    &
    \left\| \left( \frac12 A^\top A + \lambda I_d \right)^{1/2} \breve H_{\btheta \btheta}^{-1} H_{\btheta \chi} H_{\chi \chi}^{-1} \bz_\chi \right\|
    \\&
    \leq \frac1{1 - 10\rho^2} \left\| \left( \frac12 A^\top A + \lambda I_d \right)^{-1/2} H_{\btheta \chi} H_{\chi \chi}^{-1} \bz_\chi \right\|
    \\&
    \leq \frac{2 \sqrt{2} \|\Sigma\|^{1/2}}{1 - 10\rho^2} \left(
    2 \sigma + C_X \|\Sigma^{1/2} (\btheta - \btheta^\circ)\| + \frac{\sqrt{2} C_X \|\Sigma^{1/2} (A \btheta - \bfeta)\|}{\sqrt{\lambda}} \right) \sqrt{\frac{\ttr(\Sigma) + \log(4 / \delta)}n}
    \\&\quad
    + \frac{4 \sqrt{2} \rho^2 \|\Sigma\|^{1/2}}{1 - 10\rho^2} \left(
    2 \sigma + 3 C_X \|\Sigma^{1/2} \btheta^\circ\| \right) \sqrt{\frac{\ttr(\Sigma) + \log(4 / \delta)}n}
    \\&\quad
    + \frac{8 \rho}{(1 - 10\rho^2) \mu} \cdot \frac{\|\Sigma^{1/2} (A \btheta - \bfeta)\|}{\sqrt{\lambda}} \left( C_X \|\Sigma^{1/2} \btheta\| + \sigma \right) \sqrt{\frac{1 + \log(4 / \delta)}n}.
\end{align*}
\myendproof

\subsection{Proof of Lemma \ref{lem:standardized_score_chi_bound}}
\label{sec:lem_standardized_score_chi_bound_proof}

As before, we will use the blockwise representation of the Hessian of $\cL(\bups)$.
We would like to recall that $\breve H_{\chi \chi} = H_{\chi \chi} - H_{\chi \btheta} H_{\btheta \btheta}^{-1} H_{\chi \btheta}$, where
\[
    H_{\btheta \btheta} = A^\top A + \lambda I_d,
    \quad
    H_{\chi \chi} =
    \begin{pmatrix}
        2 I_d & H_{\bfeta A} \\
        H_{A \bfeta} & H_{AA}
    \end{pmatrix}
    \quad \text{and} \quad
    H_{\btheta \chi} =
    \begin{pmatrix}
        -A^\top & H_{\btheta A}
    \end{pmatrix}.
\]
The key observation in the proof of Lemma \ref{lem:standardized_score_chi_bound} is that the off-diagonal blocks of $\breve H_{\chi \chi}$ are small. To be more precise, let us introduce a block-diagonal matrix
\[
    \ttD_\chi^2 = \diag\left( 2 I_d - A (A^\top A + \lambda I_d)^{-1} A^\top, H_{AA} \right),
\]
and consider the difference
\[
    Q = \breve H_{\chi \chi} - \ttD_\chi^2 =
    \begin{pmatrix}
        O_d & -A H_{\btheta \btheta}^{-1} H_{\btheta A} + H_{\bfeta A} \\
        -H_{A \btheta} H_{\btheta \btheta}^{-1} A^\top + H_{A \bfeta} & -H_{A \btheta} H_{\btheta \btheta}^{-1} H_{\btheta A}
    \end{pmatrix}.
\]
We are going to show that $\|\ttD_{\chi}^{-1} Q \, \ttD_{\chi}^{-1}\| \leq (1 + \sqrt{2}) \rho + 2\rho^2$. As usual, we split the rest of the proof into several steps for convenience.

\medskip

\noindent\textbf{Step 1: upper bound on $\|\ttD_{\chi}^{-1} Q \, \ttD_{\chi}^{-1}\|$.}
\quad
Since $A (A^\top A + \lambda I_d)^{-1} A^\top \preceq I_d$ and $\ttD_{\chi}^2 \succeq \diag( I_d, H_{AA})$,
it is enough to check that
\[
    \left\| \diag( I_d, H_{AA}^{-1/2}) \; Q \; \diag( I_d, H_{AA}^{-1/2}) \right\| \leq (1 + \sqrt{2}) \rho + 2\rho^2.
\]
For this purpose, we fix arbitrary $\bv \in \R^d$ and $\bw \in \R^{d^2}$ and study
\[
    \begin{pmatrix}
        \bv^\top & \bw^\top
    \end{pmatrix}
    Q
    \begin{pmatrix}
        \bv \\ \bw
    \end{pmatrix}.
\]
Due to the Cauchy-Schwarz inequality, it holds that
\begin{align*}
    \begin{pmatrix}
        \bv^\top & \bw^\top
    \end{pmatrix}
    Q
    \begin{pmatrix}
        \bv \\ \bw
    \end{pmatrix}
    &
    = 2 \bv^\top A H_{\btheta \btheta}^{-1} H_{\btheta A} \bw + 2 \bv^\top H_{\bfeta A} \bw + \bw^\top H_{A \btheta} H_{\btheta \btheta}^{-1} H_{\btheta A} \bw
    \\&
    \leq 2 \|\bv\| \|A H_{\btheta \btheta}^{-1/2}\| \|H_{\btheta \btheta}^{-1/2} H_{\btheta A} H_{AA}^{-1/2} \| \|H_{AA}^{1/2} \bw\|
    \\& \quad
    + 2 \|\bv\| \| H_{\bfeta A} H_{AA}^{-1/2} \| \|H_{AA}^{1/2} \bw\|
    + \|H_{\btheta \btheta}^{-1/2} H_{\btheta A} H_{AA}^{-1/2} \|^2 \|H_{AA}^{1/2} \bw\|^2.
\end{align*}
According to Lemma \ref{lem:non-diagonal_blocks_bound}, we have
\[
    \| H_{\bfeta A} H_{AA}^{-1/2} \| \leq \rho
    \quad \text{and} \quad
    \|H_{\btheta \btheta}^{-1/2} H_{\btheta A} H_{AA}^{-1/2} \|
    \leq \rho \sqrt{2}.
\]
This, together with the inequality
\[
    \|A H_{\btheta \btheta}^{-1/2}\|
    = \left\| A (A^\top A + \lambda I_d)^{-1/2} \right\|
    \leq 1,
\]
yields that
\begin{align*}
    \begin{pmatrix}
        \bv^\top & \bw^\top
    \end{pmatrix}
    Q
    \begin{pmatrix}
        \bv \\ \bw
    \end{pmatrix}
    &
    \leq 2 \rho (1 + \sqrt{2}) \|\bv\| \, \|H_{AA}^{1/2} \bw\| + 2 \rho^2 \|H_{AA}^{1/2} \bw\|^2
    \\&
    \leq \rho (1 + \sqrt{2}) \left( \|\bv\|^2 + \|H_{AA}^{1/2} \bw\|^2 \right) + 2 \rho^2 \|H_{AA}^{1/2} \bw\|^2
    \\&
    \leq ((1 + \sqrt{2}) \rho + 2\rho^2) \left( \|\bv\|^2 + \|H_{AA}^{1/2} \bw\|^2 \right).
\end{align*}
Hence, we proved that
\begin{equation}
    \label{eq:remainder_q_bound}
    \|\ttD_{\chi}^{-1} Q \, \ttD_{\chi}^{-1}\|
    \leq \left\| \diag( I_d, H_{AA}^{-1/2}) \; Q \; \diag( I_d, H_{AA}^{-1/2}) \right\|
    \leq (1 + \sqrt{2}) \rho + 2\rho^2.
\end{equation}

\medskip

\noindent\textbf{Step 2: upper bound on $\|\widetilde \ttD \breve H_{\chi \chi}^{-1} \z_\chi\|$.}
\quad
With the inequality \eqref{eq:remainder_q_bound} the upper bound on the norm of $\widetilde \ttD \breve H_{\chi \chi}^{-1} \z_\chi$ is straightforward. Indeed, it holds that
\begin{align*}
    \breve H_{\chi \chi}^{-1} \z_\chi
    = \left( \ttD_\chi^2 + Q \right)^{-1} \z_\chi
    &
    = \ttD_\chi^{-1} \left( I_d + \ttD_\chi^{-1} Q \, \ttD_\chi^{-1} \right)^{-1} \ttD_\chi^{-1} \z_\chi
    \\&
    = \ttD_\chi^{-1} \left( I_d - \left( I_d + \ttD_\chi^{-1} Q \, \ttD_\chi^{-1} \right)^{-1} \right) \ttD_\chi^{-1} \z_\chi
    \\&
    = \ttD_\chi^{-2} \z_\chi - \ttD_\chi^{-1} \left( I_d + \ttD_\chi^{-1} Q \, \ttD_\chi^{-1} \right)^{-1} \ttD_\chi^{-1} Q \, \ttD_\chi^{-2} \z_\chi.
\end{align*}
Then \eqref{eq:remainder_q_bound} and the triangle inequality imply that
\begin{align*}
    \left\| \widetilde \ttD \breve H_{\chi \chi}^{-1} \z_\chi \right\|
    &
    \leq \left\| \widetilde \ttD \, \ttD_\chi^{-2} \z_\chi \right\| + \left\| \widetilde \ttD \, \ttD_\chi^{-1} \left( I_d + \ttD_\chi^{-1} Q \, \ttD_\chi^{-1} \right)^{-1} \ttD_\chi^{-1} Q \, \ttD_\chi^{-2} \z_\chi \right\|
    \\&
    \leq \left\| \widetilde \ttD \, \ttD_\chi^{-2} \z_\chi \right\|
    + \left\| \widetilde \ttD \, \ttD_\chi^{-1} \right\| \cdot \frac{ \|\ttD_\chi^{-1} Q \, \ttD_\chi^{-1}\|}{1 - \|\ttD_\chi^{-1} Q \, \ttD_\chi^{-1}\|} \cdot \left\| \ttD_\chi^{-1} \z_\chi \right\|
    \\&
    \leq \left\| \widetilde \ttD \, \ttD_\chi^{-2} \z_\chi \right\| + \frac{((1 + \sqrt{2}) \rho + 2\rho^2) \sqrt{2}}{1 - (1 + \sqrt{2}) \rho - 2\rho^2} \cdot \left\| \ttD_\chi^{-1} \z_\chi \right\|.
\end{align*}
Here we used the fact that
\[
    \ttD_\chi^2
    \succeq \diag(I_d, H_{AA})
    = \diag\big(I_d, I_d \otimes (\mu^2 I_d + \btheta\btheta^\top) \big)
    \succeq \frac12 \diag\big(2 I_d, I_d \otimes (\widetilde\mu^2 I_d + \btheta\btheta^\top) \big)
    \succeq \frac12 \widetilde \ttD^2
\]
which yields $\|\widetilde \ttD \, \ttD_\chi^{-1}\| \leq \sqrt{2}$. Since $\rho \leq 1/16$, we obtain that
\[
    \left\| \widetilde \ttD \breve H_{\chi \chi}^{-1} \z_\chi \right\|
    \leq \left\| \widetilde \ttD \, \ttD_\chi^{-2} \z_\chi \right\| + \frac{((1 + \sqrt{2}) \rho + \rho/8) \sqrt{2}}{1 - (1 + \sqrt{2}) / 16 - 1 / 128} \, \left\| \ttD_\chi^{-1} \z_\chi \right\|
    \leq \left\| \ttD_\chi^{-2} \z_\chi \right\| + \frac{9 \rho}2 \, \left\| \ttD_\chi^{-1} \z_\chi \right\|.
\]
It only remains to note that
\begin{align*}
    \left\| \widetilde \ttD \, \ttD_\chi^{-2} \z_\chi \right\|^2
    &
    \leq \left\| \diag\left(I_d, \frac{\sqrt{\widetilde \mu^2 + \|\btheta\|^2}}{\mu^2} I_{d^2} \right) \z_\chi \right\|^2
    \\&
    = \left\| \diag\left(I_d, \frac{\sqrt{\widetilde \mu^2 + \|\btheta\|^2}}{\mu^2} I_{d^2} \right)
    \begin{pmatrix}
        \bZ - \E \bZ \\ \mu^2 \, \rmvec(\widehat \Sigma - \Sigma)
    \end{pmatrix}
    \right\|^2
    \\&
    = \left\| \bZ - \E \bZ \right\|^2 + \left(\widetilde \mu^2 + \|\btheta\|^2\right) \left\| \widehat \Sigma - \Sigma \right\|_{\F}^2
\end{align*}
and, similarly,
\[
    \left\| \ttD_\chi^{-1} \z_\chi \right\|^2
    \leq \left\| \diag\left(I_d, \frac{1}{\mu} I_{d^2} \right)
    \begin{pmatrix}
        \bZ - \E \bZ \\ \mu^2 \, \rmvec(\widehat \Sigma - \Sigma)
    \end{pmatrix}
    \right\|^2
    = \left\| \bZ - \E \bZ \right\|^2 + \mu^2 \left\| \widehat \Sigma - \Sigma \right\|_{\F}^2. 
\]
Then
\[
    \left\| \widetilde \ttD \breve H_{\chi \chi}^{-1} \z_\chi \right\|
    \leq \left(1 + \frac{9 \rho}2 \right) \left( \|\bU\| + \|(\widehat \Sigma - \Sigma) \btheta^\circ\| \right) + \left(\sqrt{\widetilde\mu^2 + \|\btheta\|^2} + \frac{9 \rho \mu}2 \right) \left\| \widehat \Sigma - \Sigma \right\|_{\F}
\]
and, taking into account that
\[
    \|\widehat \Sigma - \Sigma\|_{\F} \leq 4 C_X \|\Sigma\| \sqrt{\frac{\ttr(\Sigma)^2 + \log(4 / \delta)}n},
    \quad 
    \|(\widehat \Sigma - \Sigma) \btheta^\circ\| \leq 4 C_X \|\Sigma\| \|\btheta^\circ\| \sqrt{\frac{\ttr(\Sigma) + \log(4 / \delta)}n},
\]
and
\[
    \|\bU\| \leq 8 \sigma \|\Sigma\|^{1/2} \sqrt{\frac{\ttr(\Sigma) + \log(4 / \delta)}n}.
\]
on the event $\cE_1$ (see \eqref{eq:e1_sigma} and \eqref{eq:e1_u}), we deduce the desired bound:
\begin{align*}
    \left\| \widetilde \ttD \breve H_{\chi \chi}^{-1} \z_\chi \right\|
    &
    \leq 8 \left(1 + \frac{9 \rho}2 \right) \sigma \|\Sigma\|^{1/2} \sqrt{\frac{\ttr(\Sigma) + \log(4 / \delta)}n}
    \\&\quad
    + 4 \left(1 + \frac{9 \rho}2 \right) C_X \|\Sigma\| \|\btheta^\circ\| \sqrt{\frac{\ttr(\Sigma) + \log(4 / \delta)}n}
    \\&\quad
    + 4 \left(\sqrt{\widetilde \mu^2 + \|\btheta\|^2} + \frac{9 \rho \mu}2\right) C_X \|\Sigma\| \sqrt{\frac{\ttr(\Sigma)^2 + \log(4 / \delta)}n}.
\end{align*}
\myendproof

\subsection{Proof of Lemma \ref{lem:standardized_score_simplified}}
\label{sec:lem_standardized_score_simplified_proof}

We would like to remind a reader that (see \eqref{eq:standardized_score})
\[
    \bs_{\btheta}^* = -\breve H_{\btheta \btheta}^{-1}(\bups^*) H_{\btheta \chi}(\bups^*) H_{\chi \chi}^{-1}(\bups^*) \z_\chi,
\]
where, as before, $\breve H_{\btheta \btheta}(\bups^*) = H_* / H_{\chi \chi}(\bups^*)$
stands for the Schur complement of $H_{\chi \chi}(\bups^*)$. The proof of Lemma \ref{lem:standardized_score_simplified} consists of two steps. First, using Lemma \ref{lem:h_theta_schur}, we show that $\bs_{\btheta}^*$ is close to
\[
    \left( \frac12 (A^*)^\top A^* + \lambda I_d \right)^{-1} H_{\btheta \chi}(\bups^*) H_{\chi \chi}^{-1}(\bups^*) \z_\chi.
\]
After that, we quantify the difference between
\[
    \left( \frac12 (A^*)^\top A^* + \lambda I_d \right)^{-1} H_{\btheta \chi}(\bups^*) H_{\chi \chi}^{-1}(\bups^*) \z_\chi
    \quad \text{and} \quad
    \left( \frac12 \Sigma^2 + \lambda I_d \right)^{-1} H_{\btheta \chi}(\bups^*) H_{\chi \chi}^{-1}(\bups^*) \z_\chi.
\]

\medskip

\noindent\textbf{Step 1.}\quad
Let us introduce
\[
    R = \left( \frac12 (A^*)^\top A^* + \lambda I_d \right)^{-1/2} \breve H_{\btheta \btheta}(\bups^*) \left( \frac12 (A^*)^\top A^* + \lambda I_d \right)^{-1/2} - I_d
\]
and note that
\begin{align*}
    &
    \left\| \Sigma^{1/2} \bs_{\btheta}^*
    + \Sigma^{1/2} \left( \frac12 (A^*)^\top A^* + \lambda I_d \right)^{-1} H_{\btheta \chi}(\bups^*) H_{\chi \chi}^{-1}(\bups^*) \z_\chi \right\|
    \\&
    = \left\| \Sigma^{1/2} \left( \frac12 (A^*)^\top A^* + \lambda I_d \right)^{-1/2} R (I_d + R)^{-1} \left( \frac12 (A^*)^\top A^* + \lambda I_d \right)^{-1/2} H_{\btheta \chi}(\bups^*) H_{\chi \chi}^{-1}(\bups^*) \z_\chi \right\|
    \\&
    \leq \left\| R (I_d + R)^{-1} \right\| \left\| \Sigma^{1/2} \left( \frac12 (A^*)^\top A^* + \lambda I_d \right)^{-1/2}  \right\| \left\| \left( \frac12 (A^*)^\top A^* + \lambda I_d \right)^{-1/2} H_{\btheta \chi}(\bups^*) H_{\chi \chi}^{-1}(\bups^*) \z_\chi \right\|.
\end{align*}
According to Lemma \ref{lemma: localization lemma for bias} (see \ref{point: eta weak bound on bias, bias locating convex set} and \ref{point: theta weak bound on bias, bias locating convex set}), it holds that
\[
    \|A^* \btheta^* - \bfeta^*\| \leq \sqrt{\lambda} \|\btheta^\circ\|
    \quad \text{and} \quad
    \|\btheta^*\| \leq \|\btheta^\circ\|.
\]
This means that $\btheta^*$ belongs to $\Ups(\rho^*)$ with $\rho^* = \|\btheta^\circ\| / \mu \leq 1/112$. Then, due to Lemma \ref{lem:h_theta_schur}, the operator norm of $R$ does not exceed $10 (\rho^*)^2$. This yields that
\[
    \left\|R (I_d + R)^{-1} \right\|
    \leq \|R\| \left\|(I_d + R)^{-1} \right\|
    \leq \frac{\|R\|}{1 - \|R\|}
    \leq \frac{10 (\rho^*)^2}{1 - 10 (\rho^*)^2}
    \leq 1.001 (\rho^*)^2
    = \frac{1.001 \|\btheta^\circ\|^2}{\mu^2},
\]
and, as a consequence, we obtain that
\begin{align}
    \label{eq:leading_stoch_term_upper_bound}
    &
    \left\| \Sigma^{1/2} \bs_{\btheta}^* 
    + \Sigma^{1/2} \left( \frac12 (A^*)^\top A^* + \lambda I_d \right)^{-1} H_{\btheta \chi}(\bups^*) H_{\chi \chi}^{-1}(\bups^*) \z_\chi \right\|
    \\&\notag
    \leq \frac{1.001 \|\btheta^\circ\|^2}{\mu^2} \left\| \Sigma^{1/2} \left( \frac12 (A^*)^\top A^* + \lambda I_d \right)^{-1/2} \right\| \left\| \left( \frac12 (A^*)^\top A^* + \lambda I_d \right)^{-1/2} H_{\btheta \chi}(\bups^*) H_{\chi \chi}^{-1}(\bups^*) \z_\chi \right\|.
\end{align}
We are going to show that, under the conditions of the lemma, the operator norm of
\[
    \Sigma^{1/2} \left( (A^*)^\top A^* / 2 + \lambda I_d \right)^{-1/2}
\]
is not greater than $\lambda^{-1/4}$. The proof of this fact is based on the following lemma.
\begin{Lem}
    \label{lem:diff_squares_op_norm_bound}
    Let us fix arbitrary $A \in \R^{d \times d}$ and $\lambda > 0$. Then it holds that
    \[
        \frac12 \left\| \left(\frac12 \Sigma^2 + \lambda I_d \right)^{-1/2} \left( A^\top A - \Sigma^2 \right) \left(\frac12 \Sigma^2 + \lambda I_d \right)^{-1/2} \right\|
        \leq \|A - \Sigma\| \sqrt{\frac{2}{\lambda}} + \frac{\|A - \Sigma\|^2}{2 \lambda}.
    \]
\end{Lem}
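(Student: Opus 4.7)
The plan is to reduce the inequality to two standard operator-norm bounds after expanding $A^\top A - \Sigma^2$ into pieces where the factor $(A-\Sigma)$ appears only once or twice. The natural identity is
\[
    A^\top A - \Sigma^2 = (A-\Sigma)^\top (A-\Sigma) + \Sigma (A-\Sigma) + (A-\Sigma)^\top \Sigma,
\]
which one checks immediately using $\Sigma^\top = \Sigma$. Introducing the shorthand $M = (\Sigma^2/2 + \lambda I_d)^{-1/2}$, the triangle inequality then gives
\[
    \tfrac12 \|M(A^\top A - \Sigma^2) M\|
    \leq \tfrac12 \|M(A-\Sigma)^\top (A-\Sigma) M\|
    + \|M \Sigma (A-\Sigma) M\|,
\]
where the middle and last terms have been grouped by passing to adjoints (they share the same operator norm).

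Next I would bound each of the two surviving terms using submultiplicativity. The quadratic piece is easy: $\|M(A-\Sigma)^\top (A-\Sigma) M\| \leq \|M\|^2 \|A-\Sigma\|^2 \leq \|A-\Sigma\|^2/\lambda$, since $M^2 \preceq \lambda^{-1} I_d$. The linear piece requires the estimate $\|M \Sigma\| \leq \sqrt{2}$, which I would verify by simultaneous diagonalization of $\Sigma$ and $M$: if $\sigma_1, \ldots, \sigma_d$ are the eigenvalues of $\Sigma$, then
\[
    \|M \Sigma\|
    = \max_{1 \leq j \leq d} \frac{|\sigma_j|}{\sqrt{\sigma_j^2/2 + \lambda}}
    \leq \sqrt{2}.
\]
Combining with $\|M\| \leq \lambda^{-1/2}$ yields $\|M\Sigma(A-\Sigma)M\| \leq \|A-\Sigma\|\sqrt{2/\lambda}$.

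Assembling the pieces gives exactly
\[
    \tfrac12 \|M(A^\top A - \Sigma^2) M\|
    \leq \|A-\Sigma\| \sqrt{\tfrac{2}{\lambda}} + \frac{\|A-\Sigma\|^2}{2\lambda},
\]
which is the claim. There is no real obstacle here; the only step that requires a moment's thought is the symmetric decomposition of $A^\top A - \Sigma^2$ that keeps one factor of $(A-\Sigma)$ adjacent to $\Sigma$ so that the favorable bound $\|M\Sigma\|\leq\sqrt{2}$ (rather than the weaker $\|M\|\|\Sigma\|$) can be used. Everything else is routine manipulation with operator norms and the spectral calculus for the commuting matrices $\Sigma$ and $M$.
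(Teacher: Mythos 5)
Your proof is correct and follows essentially the same route as the paper: the same decomposition $A^\top A - \Sigma^2 = (A-\Sigma)^\top(A-\Sigma) + \Sigma(A-\Sigma) + (A-\Sigma)^\top\Sigma$, the same bounds $\|(\Sigma^2/2 + \lambda I_d)^{-1/2}\Sigma\| \le \sqrt{2}$ and $\|(\Sigma^2/2 + \lambda I_d)^{-1/2}\| \le \lambda^{-1/2}$, and the same assembly by submultiplicativity and the triangle inequality. The only cosmetic difference is that you explicitly note the two cross terms share a norm by taking adjoints, where the paper simply writes the grouped bound directly.
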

We postpone the proof of Lemma \ref{lem:diff_squares_op_norm_bound} to Appendix \ref{sec:lem_diff_squares_op_norm_bound_proof} and proceed with the proof of Lemma \ref{lem:standardized_score_simplified}. Taking into account \eqref{eq:stoch_term_mu_lambda_conditions} and applying Lemma \ref{lemma: Sigma bias optimal bound}, we note that
\[
    \frac{\|A^* - \Sigma\|}{\sqrt{\lambda}}
    \leq \left( \frac{14 \|\btheta^\circ\|}{\mu} \right)^2 + \frac{5 \|\bb_\lambda\|}{64 \cdot 18 \mu}
    \leq \left( \frac{14 \|\btheta^\circ\|}{\mu}\right)^2 + \frac{\|\bb_\lambda\|}{230 \mu}.
\]
Due to the definition of $\bb_\lambda$ and the condition $112 \|\btheta^\circ\| \leq \mu$, the right-hand side does not exceed 
\[
    \left( \frac{14 \|\btheta^\circ\|}{\mu}\right)^2 + \frac{\|\bb_\lambda\|}{230 \mu}
    \leq \frac1{64} + \frac1{112 \cdot 230} < \frac1{63}.
\]
Then, due to Lemma \ref{lem:diff_squares_op_norm_bound}, it holds that
\begin{align}
    \label{eq:diff_squares_op_norm_bound_number}
    &\notag
    \left\| \left(\frac12 \Sigma^2 + \lambda I_d \right)^{-1/2} \left(\frac12 (A^*)^\top A^* - \frac12 \Sigma^2 \right) \left(\frac12 \Sigma^2 + \lambda I_d \right)^{-1/2} \right\|
    \\&
    \leq \left[ \left( \frac{14 \|\btheta^\circ\|}{\mu}\right)^2 + \frac{\|\bb_\lambda\|}{230 \mu} \right] \left( \sqrt{2} + \frac1{2 \cdot 63} \right)
    \leq \left( \frac{17 \|\btheta^\circ\|}{\mu}\right)^2 + \frac{\|\bb_\lambda\|}{160 \mu} \leq \frac1{43}.
\end{align}
In the last line, we used the inequalities
\[
    \sqrt{2} + 1/126 < (17/14)^2,
    \quad
    (\sqrt{2} + 1/126) / 230 < 1 / 160,
    \quad\text{and}\quad
    \frac{\|\bb_{\lambda}\|}{\mu} \leq \frac{\|\btheta^\circ\|}{\mu} \leq \frac{1}{112}.
\]
Let us denote the matrix in the left-hand side by $B$:
\begin{equation}
    \label{eq:b}
    B = \left(\frac12 \Sigma^2 + \lambda I_d \right)^{-1/2} \left(\frac12 (A^*)^\top A^* - \frac12 \Sigma^2 \right) \left(\frac12 \Sigma^2 + \lambda I_d \right)^{-1/2}.
\end{equation}
Then it is straightforward to observe that
\begin{align}
    \label{eq:sigma_matrix_product_bound}
    &\notag
    \left\| \left( \frac12 \Sigma^2 + \lambda I_d \right)^{1/2} \left( \frac12 (A^*)^\top A^* + \lambda I_d \right)^{-1/2} \right\|^2
    \\&
    = \left\| \left( \frac12 \Sigma^2 + \lambda I_d \right)^{1/2} \left( \frac12 \Sigma^2 + \lambda I_d + \frac12 (A^*)^\top A^* - \frac12 \Sigma^2 \right) \left( \frac12 \Sigma^2 + \lambda I_d \right)^{1/2} \right\|
    \\&\notag
    = \left\| (I_d + B)^{-1} \right\|
    \leq \frac1{1 - \|B\|}
    \leq \frac{43}{42}.
\end{align}
This inequality and \eqref{eq:leading_stoch_term_upper_bound} immediately imply that
\begin{align*}
    &
    \left\| \Sigma^{1/2} \bs_{\btheta}^* 
    + \Sigma^{1/2} \left( \frac12 (A^*)^\top A^* + \lambda I_d \right)^{-1} H_{\btheta \chi}(\bups^*) H_{\chi \chi}^{-1}(\bups^*) \z_\chi \right\|
    \\&
    \leq \frac{1.001 \|\btheta^\circ\|^2}{\mu^2} \sqrt{\frac{43}{42}} \, \left\| \Sigma^{1/2} \left( \frac12 \Sigma^2 + \lambda I_d \right)^{-1/2} \right\| \left\| \left( \frac12 (A^*)^\top A^* + \lambda I_d \right)^{-1/2} H_{\btheta \chi}(\bups^*) H_{\chi \chi}^{-1}(\bups^*) \z_\chi \right\|.
\end{align*}
Due to the Cauchy-Schwarz inequality,
\begin{equation}
    \label{eq:sigma_cauchy-schwarz}
    \left\| \Sigma^{1/2} \left( \frac12 \Sigma^2 + \lambda I_d \right)^{-1/2} \right\|^2
    = \left\| \left( \frac12 \Sigma^2 + \lambda I_d \right)^{-1/2} \Sigma \left( \frac12 \Sigma^2 + \lambda I_d \right)^{-1/2} \right\| \leq \frac{1}{\sqrt{2 \lambda}}.
\end{equation}
This yields that
\begin{align}
    \label{eq:standardized_score_simplified_first_term}
    &\notag
    \left\| \Sigma^{1/2} \bs_{\btheta}^* 
    + \Sigma^{1/2} \left( \frac12 (A^*)^\top A^* + \lambda I_d \right)^{-1} H_{\btheta \chi}(\bups^*) H_{\chi \chi}^{-1}(\bups^*) \z_\chi \right\|
    \\&
    \leq \frac{1.001 \|\btheta^\circ\|^2}{\mu^2} \sqrt{\frac{43}{42}} \cdot (2\lambda)^{-1/4} \left\| \left( \frac12 (A^*)^\top A^* + \lambda I_d \right)^{-1/2} H_{\btheta \chi}(\bups^*) H_{\chi \chi}^{-1}(\bups^*) \z_\chi \right\|.
\end{align}
Hence, we proved that $\Sigma^{1/2} \bs_{\btheta}^*$ is close to $\Sigma^{1/2} \left( (A^*)^\top A^* / 2 + \lambda I_d \right)^{-1} H_{\btheta \chi}(\bups^*) H_{\chi \chi}^{-1}(\bups^*) \z_\chi$. Our next goal is to show that
\[
    \Sigma^{1/2} \left( (A^*)^\top A^* / 2 + \lambda I_d \right)^{-1} H_{\btheta \chi}(\bups^*) H_{\chi \chi}^{-1}(\bups^*) \z_\chi
    \quad \text{and} \quad
    \Sigma^{1/2} \left( \Sigma^2 / 2 + \lambda I_d \right)^{-1} H_{\btheta \chi}(\bups^*) H_{\chi \chi}^{-1}(\bups^*) \z_\chi
\]
are close as well.

\medskip

\noindent\textbf{Step 2.}
\quad Using \eqref{eq:b} we rewrite the difference between
\[
    -\Sigma^{1/2} \left( (A^*)^\top A^* / 2 + \lambda I_d \right)^{-1} H_{\btheta \chi}(\bups^*) H_{\chi \chi}^{-1}(\bups^*) \z_\chi
    \quad \text{and} \quad
    -\Sigma^{1/2} \left( \Sigma^2 / 2 + \lambda I_d \right)^{-1} H_{\btheta \chi}(\bups^*) H_{\chi \chi}^{-1}(\bups^*) \z_\chi
\]
in the following form:
\begin{align*}
    &
    \left\| \Sigma^{1/2} \left[ \left( \frac12 (A^*)^\top A^* + \lambda I_d \right)^{-1} - \left( \frac12 \Sigma^2 + \lambda I_d \right)^{-1} \right] H_{\btheta \chi}(\bups^*) H_{\chi \chi}^{-1}(\bups^*) \z_\chi \right\|
    \\&
    = \left\| \Sigma^{1/2} \left( \frac12 \Sigma^2 + \lambda I_d \right)^{-1/2} \left( (I_d + B)^{-1} - I_d \right) \left( \frac12 \Sigma^2 + \lambda I_d \right)^{-1/2} H_{\btheta \chi}(\bups^*) H_{\chi \chi}^{-1}(\bups^*) \z_\chi \right\|
    \\&
    = \left\| \Sigma^{1/2} \left( \frac12 \Sigma^2 + \lambda I_d \right)^{-1/2} B (I_d + B)^{-1} \left( \frac12 \Sigma^2 + \lambda I_d \right)^{-1/2} H_{\btheta \chi}(\bups^*) H_{\chi \chi}^{-1}(\bups^*) \z_\chi \right\|.
\end{align*}
Since, due to \eqref{eq:diff_squares_op_norm_bound_number}, the operator norm of $B (I_d + B)^{-1}$ does not exceed
\[
    \left\| B (I_d + B)^{-1} \right\|
    \leq \frac{\|B\|}{1 - \|B\|}
    \leq \frac{43}{42} \left( \left( \frac{17 \|\btheta^\circ\|}{\mu}\right)^2 + \frac{\|\bb_\lambda\|}{160 \mu} \right),
\]
we obtain that
\begin{align*}
    &
    \left\| \Sigma^{1/2} \left[ \left( \frac12 (A^*)^\top A^* + \lambda I_d \right)^{-1} - \left( \frac12 \Sigma^2 + \lambda I_d \right)^{-1} \right] H_{\btheta \chi}(\bups^*) H_{\chi \chi}^{-1}(\bups^*) \z_\chi \right\|
    \\&
    \leq \frac{43}{42} \left( \left( \frac{17 \|\btheta^\circ\|}{\mu}\right)^2 + \frac{\|\bb_\lambda\|}{160 \mu} \right) \left\| \Sigma^{1/2} \left( \frac12 \Sigma^2 + \lambda I_d \right)^{-1/2} \right\|
    \\&\quad
    \cdot \left\| \left( \frac12 \Sigma^2 + \lambda I_d \right)^{-1/2} H_{\btheta \chi}(\bups^*) H_{\chi \chi}^{-1}(\bups^*) \z_\chi \right\|.
\end{align*}
Combining this inequality with \eqref{eq:sigma_matrix_product_bound} and \eqref{eq:sigma_cauchy-schwarz}, we obtain that
\begin{align}
    \label{eq:standardized_score_simplified_second_term}
    &
    \left\| \Sigma^{1/2} \left[ \left( \frac12 (A^*)^\top A^* + \lambda I_d \right)^{-1} - \left( \frac12 \Sigma^2 + \lambda I_d \right)^{-1} \right] H_{\btheta \chi}(\bups^*) H_{\chi \chi}^{-1}(\bups^*) \z_\chi \right\|
    \\&\notag
    \leq \frac{43}{42} \left( \left( \frac{17 \|\btheta^\circ\|}{\mu}\right)^2 + \frac{\|\bb_\lambda\|}{160 \mu} \right)  \cdot (2\lambda)^{-1/4} \cdot \sqrt{\frac{43}{42}} \, \left\| \left( \frac12 (A^*)^\top A^* + \lambda I_d \right)^{-1/2} H_{\btheta \chi}(\bups^*) H_{\chi \chi}^{-1}(\bups^*) \z_\chi \right\|.
\end{align}
Finally, summing up the inequalities \eqref{eq:standardized_score_simplified_first_term} and \eqref{eq:standardized_score_simplified_second_term}, we obtain the desired bound:
\begin{align*}
    &
    \left\| \Sigma^{1/2} \bs_{\btheta}^* 
    + \Sigma^{1/2} \left( \Sigma^2 + \lambda I_d \right)^{-1} H_{\btheta \chi}(\bups^*) H_{\chi \chi}^{-1}(\bups^*) \z_\chi \right\|
    \\&
    \leq \frac{1.001 \|\btheta^\circ\|^2}{\mu^2} \sqrt{\frac{43}{42}} \cdot (2\lambda)^{-1/4} \left\| \left( \frac12 (A^*)^\top A^* + \lambda I_d \right)^{-1/2} H_{\btheta \chi}(\bups^*) H_{\chi \chi}^{-1}(\bups^*) \z_\chi \right\|
    \\&\quad
    + \frac{43}{42} \left( \left( \frac{17 \|\btheta^\circ\|}{\mu}\right)^2 + \frac{\|\bb_\lambda\|}{160 \mu} \right)  \cdot (2\lambda)^{-1/4} \cdot \sqrt{\frac{43}{42}} \, \left\| \left( \frac12 (A^*)^\top A^* + \lambda I_d \right)^{-1/2} H_{\btheta \chi}(\bups^*) H_{\chi \chi}^{-1}(\bups^*) \z_\chi \right\|
    \\&
    \leq \lambda^{-1/4} \left( \left( \frac{17 \|\btheta^\circ\|}{\mu}\right)^2 + \frac{\|\bb_\lambda\|}{160 \mu} \right) \left\| \left( \frac12 (A^*)^\top A^* + \lambda I_d \right)^{-1/2} H_{\btheta \chi}(\bups^*) H_{\chi \chi}^{-1}(\bups^*) \z_\chi \right\|.
\end{align*}
It only remains to note that, due to \eqref{eq:standardized_score_theta_bound_simplified_2}, the expression in the right-hand side does not exceed
\begin{align*}
    &
    \left(4 \sqrt{2} + \frac{8 \sqrt{2}}{112^2} + \frac{8}{112^2} \right) \frac{\|\btheta^\circ\|}{14 \lambda^{1/4}} 
    \left( \left( \frac{17 \|\btheta^\circ\|}{\mu}\right)^2 + \frac{\|\bb_\lambda\|}{160 \mu} \right) \sqrt{\Psi(n, \delta)}
    \\&
    \leq \frac{\|\btheta^\circ\|}{2 \lambda^{1/4}} 
    \left( \left( \frac{17 \|\btheta^\circ\|}{\mu}\right)^2 + \frac{\|\bb_\lambda\|}{160 \mu} \right) \sqrt{\Psi(n, \delta)},
\end{align*}
and the claim of the lemma follows.

\myendproof

\subsection{Proof of Lemma \ref{lem:semiparametric_remainder_1}}
\label{sec:lem_semiparametric_remainder_1_proof}

The proof of the lemma is based on the expansion of $H_{\btheta \chi}(\bups^*) H_{\chi \chi}^{-1}(\bups^*) \z_\chi$ from Lemma \ref{lem:semiparametric_score_explicit_representation}.
According to Lemma \ref{lemma: localization lemma for bias} (see \ref{point: eta weak bound on bias, bias locating convex set} and \ref{point: theta weak bound on bias, bias locating convex set}), it holds that
\[
    \|A^* \btheta^* - \bfeta^*\| \leq \sqrt{\lambda} \|\btheta^\circ\|
    \quad \text{and} \quad
    \|\btheta^*\| \leq \|\btheta^\circ\|.
\]
This means that $\btheta^*$ belongs to $\Ups(\rho^*)$ with $\rho^* = \|\btheta^\circ\| / \mu \leq 1/112$. Applying Lemma \ref{lem:semiparametric_score_explicit_representation} with $\bups = \bups^*$ and $\rho = \rho^*$, we obtain that
\begin{align*}
    &
    H_{\btheta \chi}(\bups^*) H_{\chi \chi}^{-1}(\bups^*) \z_\chi
    + \frac12 (A^*)^\top \bU
    - \frac12 (A^*)^\top (\widehat \Sigma - \Sigma) (\btheta^* - \btheta^\circ)
    - (\widehat \Sigma - \Sigma) (A^* \btheta^* - \bfeta^*)
    \\&
    = -\left(r_1 - \frac{r_2 \mu^2 \|\btheta^*\|^2}{\mu^2 + \|\btheta^*\|^2} \right) (A^*)^\top (\bZ - \E \bZ)
    + \frac{r_2 \mu^2}{\mu^2 + \|\btheta^*\|^2} \left( (A^* \btheta^* - \bfeta^*)^\top (\bZ - \E \bZ) \right) \btheta^*
    \\&\quad
    - \left(\frac{\|\btheta\|^2}{2\mu^2 + \|\btheta\|^2} - \frac{\|\btheta\|^2}{2 (\mu^2 + \|\btheta\|^2)} + \frac{r_2' \mu^4}{\mu^2 + \|\btheta\|^2} \right) (A^*)^\top (\widehat \Sigma - \Sigma) \btheta^*
    \\&\quad
    - \frac{1}{2\mu^2 + \|\btheta^*\|^2} \big( (A^* \btheta^* - \bfeta^*)^\top (\widehat \Sigma - \Sigma) \btheta^* \big) \btheta^*,
\end{align*}
where $r_1 \in [0, (\rho^*)^2]$, $r_2 = 0.5\mu^{-2} + r_2'$, and $r_2' \in [0, (\rho^*)^2 / \mu^{2}]$ are defined in Proposition \ref{proposition: inverse of nuisance parameters}. Let us denote
\begin{align*}
    \ttR
    &
    = -\left(r_1 - \frac{r_2 \mu^2 \|\btheta^*\|^2}{\mu^2 + \|\btheta^*\|^2} \right) (A^*)^\top (\bZ - \E \bZ)
    \\&\quad
    - \left(\frac{\|\btheta\|^2}{2\mu^2 + \|\btheta\|^2} - \frac{\|\btheta\|^2}{2 (\mu^2 + \|\btheta\|^2)} + \frac{r_2' \mu^4}{\mu^2 + \|\btheta\|^2} \right) (A^*)^\top (\widehat \Sigma - \Sigma) \btheta^*
\end{align*}
and
\[
    \ttQ = \frac{r_2 \mu^2}{\mu^2 + \|\btheta^*\|^2} \left( (A^* \btheta^* - \bfeta^*)^\top (\bZ - \E \bZ) \right) \btheta^*
    - \frac{1}{2\mu^2 + \|\btheta^*\|^2} \big( (A^* \btheta^* - \bfeta^*)^\top (\widehat \Sigma - \Sigma) \btheta^* \big) \btheta^*.
\]
With the introduced notations, it holds that
\begin{align}
    \label{eq:standardized_score_representation}
    &
    H_{\btheta \chi}(\bups^*) H_{\chi \chi}^{-1}(\bups^*) \z_\chi
    + 0.5 \btau
    = \ttR + \ttQ.
\end{align}
Let us recall that on the event $\cE_2^*$ we have (see \eqref{eq:hat_sigma_linear_functional_star_3_deviation_bound}--\eqref{eq:z_linear_functional_star_deviation_bound})
\[
    | (A^* \btheta^* - \bfeta^*)^\top (\widehat \Sigma - \Sigma) \btheta^* |
    \leq 4 C_X \|\Sigma^{1/2} \btheta^*\| \|\Sigma^{1/2} (A^* \btheta^* - \bfeta^*)\| \sqrt{\frac{1 + \log(4 / \delta)}n},
\]
\[
    | (A^* \btheta^* - \bfeta^*)^\top (\widehat \Sigma - \Sigma) \btheta^\circ |
    \leq 4 C_X \|\Sigma^{1/2} \btheta^\circ\| \|\Sigma^{1/2} (A^* \btheta^* - \bfeta^*)\| \sqrt{\frac{1 + \log(4 / \delta)}n},
\]
and
\[
    \left| (A^* \btheta^* - \bfeta^*)^\top \bU \right|
    \leq 8 \sigma \|\Sigma^{1/2} (A^* \btheta^* - \bfeta^*)\| \sqrt{\frac{1 + \log(4 / \delta)}n}.
\]
Since $\|A^*\btheta^* - \bfeta^*\| \leq \sqrt{\lambda} \|\btheta^\circ\|$ and $\|\btheta^*\| \leq \|\btheta^\circ\|$ due to Lemma \ref{lemma: localization lemma for bias}, it holds that
\begin{align*}
    &
    \left| \frac{r_2 \mu^2}{\mu^2 + \|\btheta^*\|^2} \left( (A^* \btheta^* - \bfeta^*)^\top (\bZ - \E \bZ) \right)
    - \frac{1}{2\mu^2 + \|\btheta^*\|^2} \big( (A^* \btheta^* - \bfeta^*)^\top (\widehat \Sigma - \Sigma) \btheta^* \big) \right|
    \\&
    \leq \frac{4 \|\Sigma\|^{1/2} \|A^* \btheta^* - \bfeta^*\|}{\mu^2} \left((1 + (\rho^*)^2 \right) \left( \sigma + C_X \|\Sigma\|^{1/2} \|\btheta^\circ\| \right) \sqrt{\frac{1 + \log(4 / \delta)}n}
    \\&
    \leq 1.001 \cdot \frac{4 \|\btheta^\circ\|^2 \sqrt{\lambda}}{\mu^2} \left( \frac{\sigma \|\Sigma\|^{1/2}}{\|\btheta^\circ\|} + C_X \|\Sigma\| \right) \sqrt{\frac{1 + \log(4 / \delta)}n}
    \\&
    \leq 1.001 \cdot \frac{2 \|\btheta^\circ\|^2}{7 \mu^2} \sqrt{\lambda \Psi(n, \delta)}.
\end{align*}
Let $\lambda_1(\Sigma), \dots, \lambda_d(\Sigma)$ be the eigenvalues of $\Sigma$. Let us fix an arbitrary $j \in \{1, \dots, d\}$. Applying the Young inequality
\[
    ab \leq \frac{a^p}{p} + \frac{b^q}{q}
    \quad \text{with $a = \sqrt{\lambda_j(\Sigma)}$, $b = (2 \lambda / 3)^{3/4}$, $p = 4$, and $q = 4/3$,}
\]
we obtain that
\[
    \frac{\sqrt{\lambda_j(\Sigma)}}{\lambda_j(\Sigma)^2 / 2 + \lambda}
    \leq \frac{(2\lambda/3)^{-3/4}}2
    \quad \text{for any $j \in \{1, \dots, d\}$,}
\]
and then
\begin{equation}
    \label{eq:young_inequality_op_norm_bound}
    \left\| \Sigma^{1/2} \left( \frac12 \Sigma^2 + \lambda I_d \right)^{-1} \right\|
    = \max\limits_{1 \leq j \leq d} \left\{ \frac{\sqrt{\lambda_j(\Sigma)}}{\lambda_j(\Sigma)^2 / 2 + \lambda} \right\}
    \leq \frac{(2\lambda/3)^{-3/4}}2.
\end{equation}
Then, using the inequality
\[
    \left\| \Sigma^{1/2} \left( \frac12 \Sigma^2 + \lambda I_d \right)^{-1} \btheta^* \right\|
    \leq \left\| \Sigma^{1/2} \left( \frac12 \Sigma^2 + \lambda I_d \right)^{-1} \right\| \|\btheta^*\|
    \leq \frac12 \left( \frac{2\lambda}3 \right)^{-3/4} \|\btheta^\circ\|,
\]
we conclude that
\begin{align}
    \label{eq:q_bound}
    \left\| \Sigma^{1/2} \left( \frac12 \Sigma^2 + \lambda I_d \right)^{-1} \ttQ \right\|
    &\notag
    \leq \frac12 \left( \frac{2\lambda}3 \right)^{-3/4} \|\btheta^\circ\| \cdot 1.001 \cdot \frac{2 \|\btheta^\circ\|^2}{7 \mu^2} \sqrt{\lambda \Psi(n, \delta)}
    \\&
    \leq \frac{\|\btheta^\circ\|^3}{5 \mu^2 \lambda^{1/4}} \sqrt{\Psi(n, \delta)}.
\end{align}
On the other hand, since $0 \leq r_1 \leq (\rho^*)^2$ and $0 \leq r_2' \leq (\rho^*)^2 / \mu^2$, it holds that
\[
    \left|r_1 - \frac{r_2 \mu^2 \|\btheta^*\|^2}{\mu^2 + \|\btheta^*\|^2} \right| \leq (\rho^*)^2 = \frac{\|\btheta^\circ\|^2}{\mu^2}
\]
and
\[
    \left|\frac{\|\btheta\|^2}{2\mu^2 + \|\btheta\|^2} - \frac{\|\btheta\|^2}{2 (\mu^2 + \|\btheta\|^2)} + \frac{r_2' \mu^4}{\mu^2 + \|\btheta\|^2} \right| \leq 2 (\rho^*)^2 = \frac{2\|\btheta^\circ\|^2}{\mu^2}.
\]
Then
\begin{align*}
    \left\| \Sigma^{1/2} \left( \frac12 \Sigma^2 + \lambda I_d \right)^{-1} \ttR \right\|
    &
    \leq \frac{\|\btheta^\circ\|^2}{\mu^2} \left\| \Sigma^{1/2} \left( \frac12 \Sigma^2 + \lambda I_d \right)^{-1} (A^*)^\top (\bZ - \E \bZ) \right\|
    \\&\quad
    + \frac{2\|\btheta^\circ\|^2}{\mu^2} \left\| \Sigma^{1/2} \left( \frac12 \Sigma^2 + \lambda I_d \right)^{-1} (A^*)^\top (\widehat \Sigma - \Sigma) \btheta^* \right\|.
\end{align*}
Let us consider the operator norm of 
\[
    \Sigma^{1/2} \left( \frac12 \Sigma^2 + \lambda I_d \right)^{-1} (A^*)^\top.
\]
Due to the triangle inequality, we have
\[
    \left\| \Sigma^{1/2} \left( \frac12 \Sigma^2 + \lambda I_d \right)^{-1} (A^*)^\top \right\|
    \leq \left\| \Sigma^{1/2} \left( \frac12 \Sigma^2 + \lambda I_d \right)^{-1} \Sigma \right\| + \left\| \Sigma^{1/2} \left( \frac12 \Sigma^2 + \lambda I_d \right)^{-1}\right\| \|\Sigma - A^*\|.
\]
Let us note that we have already bounded the operator norm of $\Sigma^{1/2} (\Sigma^2 / 2 + \lambda I_d )^{-1}$ (see \eqref{eq:young_inequality_op_norm_bound}). Similarly, using Young's inequality
\[
    ab \leq \frac{a^p}{p} + \frac{b^q}{q}
    \quad \text{with $a = \lambda_j(\Sigma)^{3/2}$, $b = (6 \lambda)^{1/4}$, $p = 4/3$, and $q = 4$,}
\]
we observe that
\begin{align*}
    \left\| \Sigma^{1/2} \left( \frac12 \Sigma^2 + \lambda I_d \right)^{-1} \Sigma \right\|
    = \max\limits_{1 \leq j \leq d} \left\{ \frac{\lambda_j(\Sigma)^{3/2}}{\lambda_j(\Sigma)^2 / 2 + \lambda} \right\}
    \leq \frac{3}{2 (6\lambda)^{1/4}}.
\end{align*}
Thus, it holds that 
\[
    \left\| \Sigma^{1/2} \left( \frac12 \Sigma^2 + \lambda I_d \right)^{-1} (A^*)^\top \right\|
    \leq \frac{3}{2 (6\lambda)^{1/4}} + \frac{(2\lambda/3)^{-3/4}}2 \|A^* - \Sigma\|.
\]
Moreover, Lemma \ref{lemma: Sigma bias optimal bound} and the condition \eqref{eq:stoch_term_mu_lambda_conditions} imply that
\[
    \frac{\|A^* - \Sigma\|}{\sqrt{\lambda}}
    \leq \left( \frac{14 \|\btheta^\circ\|}{\mu}\right)^2 + \frac{\|\bb_\lambda\|}{230 \mu}
    \leq \frac1{64} + \frac1{112 \cdot 230} < \frac1{63},
\]
and we obtain that
\begin{equation}
    \label{eq:young_inequality_op_norm_bound_2}
    \left\| \Sigma^{1/2} \left( \frac12 \Sigma^2 + \lambda I_d \right)^{-1} (A^*)^\top \right\|
    \leq \frac{3}{2 (6\lambda)^{1/4}} + \frac{(2\lambda/3)^{-3/4}}{126} \leq \lambda^{1/4}.
\end{equation}
This yields that
\begin{align*}
    \left\| \Sigma^{1/2} \left( \frac12 \Sigma^2 + \lambda I_d \right)^{-1} \ttR \right\|
    &
    \leq \frac{\|\btheta^\circ\|^2}{\mu^2} \left\| \Sigma^{1/2} \left( \frac12 \Sigma^2 + \lambda I_d \right)^{-1} (A^*)^\top (\bZ - \E \bZ) \right\|
    \\&\quad
    + \frac{2\|\btheta^\circ\|^2}{\mu^2} \left\| \Sigma^{1/2} \left( \frac12 \Sigma^2 + \lambda I_d \right)^{-1} (A^*)^\top (\widehat \Sigma - \Sigma) \btheta^* \right\|
    \\&
    \leq \frac{\|\btheta^\circ\|^2}{\mu^2 \lambda^{1/4}} \left( \|\bU\| + \|(\widehat\Sigma - \Sigma) \btheta^\circ\| + 2 \|(\widehat \Sigma - \Sigma) \btheta^*\| \right).
\end{align*}
Recall that, on the intersection of $\cE_1$ and $\cE_2^*$, we have (see \eqref{eq:e1_sigma}, \eqref{eq:e1_u}, and \eqref{eq:hat_sigma_linear_functional_1_deviation_bound})
\[
    \|(\widehat \Sigma - \Sigma) \btheta^\circ\| \leq 4 C_X \|\Sigma\| \|\btheta^\circ\| \sqrt{\frac{\ttr(\Sigma) + \log(4 / \delta)}n},
    \quad
    \|(\widehat \Sigma - \Sigma) \btheta^*\| \leq 4 C_X \|\Sigma\| \|\btheta^*\| \sqrt{\frac{\ttr(\Sigma) + \log(4 / \delta)}n},
\]
and
\[
    \|\bU\| \leq 8 \sigma \|\Sigma\|^{1/2} \sqrt{\frac{\ttr(\Sigma) + \log(4 / \delta)}n}.
\]
Taking into account that, due to Lemma \ref{lemma: localization lemma for bias}, $\|\btheta^*\| \leq \|\btheta^\circ\|$, we finally get that
\begin{align}
    \label{eq:remainder_1_r_bound}
    \left\| \Sigma^{1/2} \left( \frac12 \Sigma^2 + \lambda I_d \right)^{-1} \ttR \right\|
    &\notag
    \leq \frac{8 \|\btheta^\circ\|^3}{\mu^2 \lambda^{1/4}} \left( \frac{\sigma \|\Sigma\|^{1/2}}{\|\btheta^\circ\|} + \frac{3 C_X \|\Sigma\|}2 \right) \sqrt{\frac{\ttr(\Sigma) + \log(4 / \delta)}n}
    \\&
    \leq \frac{4 \|\btheta^\circ\|^3}{7 \mu^2 \lambda^{1/4}} \sqrt{\Psi(n, \delta)}.
\end{align}
The identity \eqref{eq:standardized_score_representation} and the inequalities \eqref{eq:q_bound} and \eqref{eq:remainder_1_r_bound} yield that
\begin{align*}
    \left\| H_{\btheta \chi}(\bups^*) H_{\chi \chi}^{-1}(\bups^*) \z_\chi
    + 0.5 \btau \right\|
    \leq \frac{\|\btheta^\circ\|^3}{5 \mu^2 \lambda^{1/4}} \sqrt{\Psi(n, \delta)} + \frac{4 \|\btheta^\circ\|^3}{7 \mu^2 \lambda^{1/4}} \sqrt{\Psi(n, \delta)}
    \leq \frac{\|\btheta^\circ\|^3}{\mu^2 \lambda^{1/4}} \sqrt{\Psi(n, \delta)}.
\end{align*}
\myendproof

\subsection{Proof of Lemma \ref{lem:semiparametric_remainder_2}}
\label{sec:lem_semiparametric_remainder_2_proof}

Note that, due to the definitions of $\btau$ and $\bzeta$, we have
\begin{align*}
    \btau - \bzeta
    &
    = (A^* - \Sigma) \bU
    + (A^*)^\top (\widehat \Sigma - \Sigma) (\btheta^* - \btheta^\circ - \bb_\lambda)
    \\&\quad
    - (A^* - \Sigma)^\top (\widehat \Sigma - \Sigma) \bb_\lambda
    - (\widehat \Sigma - \Sigma) (2 A^* \btheta^* - 2 \bfeta^* - \bb_\lambda).
\end{align*}
Then the triangle inequality yields that
\begin{align*}
    \left\|\Sigma^{1/2} \left( \Sigma^2 + 2 \lambda I_d \right)^{-1} (\btau - \bzeta) \right\|
    &
    \leq \left\|\Sigma^{1/2} \left( \Sigma^2 + 2 \lambda I_d \right)^{-1} \right\| \|A^* - \Sigma\| \left( \|\bU\| + \left\| (\widehat \Sigma - \Sigma) \bb_\lambda \right\| \right)
    \\&\quad
    + \left\|\Sigma^{1/2} \left( \Sigma^2 + 2 \lambda I_d \right)^{-1} (A^*)^\top \right\| \left\| (\widehat \Sigma - \Sigma) (\btheta^* - \btheta^\circ - \bb_\lambda) \right\|
    \\&\quad
    + \left\|\Sigma^{1/2} \left( \Sigma^2 + 2 \lambda I_d \right)^{-1} \right\| \left\| (\widehat \Sigma - \Sigma) (2 A^* \btheta^* - 2 \bfeta^* - \Sigma \bb_\lambda) \right\|.
\end{align*}
In view of \eqref{eq:young_inequality_op_norm_bound} and \eqref{eq:young_inequality_op_norm_bound_2}, we obtain that
\begin{align*}
    \left\|\Sigma^{1/2} \left( \Sigma^2 + 2 \lambda I_d \right)^{-1} (\btau - \bzeta) \right\|
    &
    \leq \frac{(2\lambda/3)^{-3/4}}4 \, \|A^* - \Sigma\| \left( \|\bU\| + \left\| (\widehat \Sigma - \Sigma) \bb_\lambda \right\| \right)
    \\&\quad
    + \frac1{2 \lambda^{1/4}} \left\| (\widehat \Sigma - \Sigma) (\btheta^* - \btheta^\circ - \bb_\lambda) \right\|
    \\&\quad
    + \frac{(2\lambda/3)^{-3/4}}2 \, \left\| (\widehat \Sigma - \Sigma) \left( A^* \btheta^* - \bfeta^* - \frac12 \Sigma \bb_\lambda \right) \right\|.
\end{align*}
According to Theorem \ref{th:covariance_concentration}, there exists an event $\cE_3$, $\p(\cE_3) \geq 1 - 3\delta / 2$ such that the following inequalities holds simultaneously on $\cE_3$:
\begin{align*}
    \left\| (\widehat \Sigma - \Sigma) \bb_\lambda \right\|
    &
    \leq 4 C_X \|\Sigma\|^{1/2} \|\Sigma^{1/2} \bb_\lambda\| \sqrt{\frac{\ttr(\Sigma) + \log(4 / \delta)}n},
    \\
    \left\| (\widehat \Sigma - \Sigma) (\btheta^* - \btheta^\circ - \bb_\lambda) \right\|
    &
    \leq 4 C_X \|\Sigma\|^{1/2} \|\Sigma^{1/2} (\btheta^* - \btheta^\circ - \bb_\lambda)\| \sqrt{\frac{\ttr(\Sigma) + \log(4 / \delta)}n},
    \\
    \left\| (\widehat \Sigma - \Sigma) \left( A^* \btheta^* - \bfeta^* - \frac12 \Sigma \bb_\lambda \right) \right\|
    &
    \leq 4 C_X \|\Sigma\| \left\| A^* \btheta^* - \bfeta^* - \frac12 \Sigma \bb_\lambda \right\| \sqrt{\frac{\ttr(\Sigma) + \log(4 / \delta)}n}.
\end{align*}
From now on, we restrict our attention on the intersection of $\cE_1$ and $\cE_3$. Since on $\cE_1$ (see \eqref{eq:e1_u})
\[
    \|\bU\| \leq 8 \sigma \|\Sigma\|^{1/2} \sqrt{\frac{\ttr(\Sigma) + \log(4 / \delta)}n},
\]
it holds that
\begin{align}
    \label{eq:tau_zeta_diff_triangle_inequality}
    &\notag
    \left\|\Sigma^{1/2} \left( \Sigma^2 + 2 \lambda I_d \right)^{-1} (\btau - \bzeta) \right\|
    \\&\notag
    \leq (2\lambda/3)^{-3/4} \|A^* - \Sigma\| \left( 2 \sigma \|\Sigma\|^{1/2} + C_X \|\Sigma\| \|\bb_\lambda\| \right) \sqrt{\frac{\ttr(\Sigma) + \log(4 / \delta)}n}
    \\&\quad
    + \frac{2 C_X}{\lambda^{1/4}} \|\Sigma\| \| \btheta^* - \btheta^\circ - \bb_\lambda \| \sqrt{\frac{\ttr(\Sigma) + \log(4 / \delta)}n}
    \\&\quad\notag
    + 2 (2\lambda/3)^{-3/4} C_X \|\Sigma\| \left\| A^* \btheta^* - \bfeta^* - \frac12 \Sigma \bb_\lambda \right\| \sqrt{\frac{\ttr(\Sigma) + \log(4 / \delta)}n}.
\end{align}
It remains to bound the term in the right-hand side one by one. Applying Lemma \ref{lemma: Sigma bias optimal bound} and taking into account the inequalities \eqref{eq:stoch_term_mu_lambda_conditions} and $\|\bb_\lambda\| \leq \|\btheta^\circ\|$, we obtain that
\begin{align*}
    &
    (2\lambda/3)^{-3/4} \, \|A^* - \Sigma\| \left( 8 \sigma \|\Sigma\|^{1/2} + C_X \|\Sigma\| \|\bb_\lambda\| \right) \sqrt{\frac{\ttr(\Sigma) + \log(4 / \delta)}n}
    \\&
    \leq (2\lambda/3)^{-3/4} \cdot \sqrt{\lambda} \left( \left( \frac{14 \Vert \btheta^\circ \Vert}{\mu} \right )^2 + \frac{\Vert \bb_\lambda \Vert }{230 \mu} \right) \left( 2 \sigma \|\Sigma\|^{1/2} + C_X \|\Sigma\| \|\btheta^\circ\| \right) \sqrt{\frac{\ttr(\Sigma) + \log(4 / \delta)}n}.
\end{align*}
Due to Lemma \ref{lem:a_theta_eta_expansion}, it holds that
\begin{align*}
    &
    2 (2\lambda/3)^{-3/4} C_X \|\Sigma\| \left\| A^* \btheta^* - \bfeta^* - \frac12 \Sigma \bb_\lambda \right\| \sqrt{\frac{\ttr(\Sigma) + \log(4 / \delta)}n}
    \\&
    \leq \frac{6 \cdot (3/2)^{3/4}}{\lambda^{1/4}} C_X \|\Sigma\| \|\btheta^\circ\| \left( \left( \frac{14 \Vert \btheta^\circ \Vert}{\mu} \right )^2 + \frac{\Vert \bb_\lambda \Vert }{230 \mu} \right) \sqrt{\frac{\ttr(\Sigma) + \log(4 / \delta)}n}.
\end{align*}
Finally, we use the following result to bound the second term in the right-hand side of \eqref{eq:tau_zeta_diff_triangle_inequality}.
\begin{Lem}
    \label{lem:bias_expansion_rough_bound}
    Assume that $\|\btheta^\circ\| \leq \mu / 49$ and $\|\Sigma\| \|\btheta^\circ\| \leq \mu \sqrt{\lambda} / 24$. Then it holds that
    \[
        \left\|\btheta^* - \btheta^\circ - \bb_\lambda \right\| \leq \frac{7 \|\btheta^\circ\|}2 \left( \left(\frac{14 \|\btheta^\circ\|}\mu \right)^2 + \frac{70 \|\Sigma\| \|\btheta^\circ\| \|\bb_\lambda\|}{\mu^2 \sqrt{\lambda}} \right).
    \]
\end{Lem}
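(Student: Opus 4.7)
The plan is to argue from the first-order optimality condition for $\btheta^*$ exactly as in the proof of Lemma~\ref{lem:a_theta_eta_expansion}, but extract a bound on $\|\btheta^* - \btheta^\circ - \bb_\lambda\|$ rather than on $\|\Sigma(\btheta^* - \btheta^\circ - \bb_\lambda)\|$. Since $\bnabla_{\btheta}\cL(\bups^*)=\bzero$ gives the identity \eqref{eq:first-order_optimality_corollary_2},
\[
    \left( \tfrac12 (A^*)^\top A^* + \lambda I_d \right) (\btheta^* - \btheta^\circ)
    = -\lambda \btheta^\circ - \tfrac12 (A^*)^\top (A^* - \Sigma) \btheta^\circ,
\]
and since $\bb_\lambda = -\lambda(\Sigma^2/2 + \lambda I_d)^{-1}\btheta^\circ$, the difference $\btheta^* - \btheta^\circ - \bb_\lambda$ splits naturally into
\[
    \mathrm{(I)} = -\lambda \left[ \left(\tfrac12 (A^*)^\top A^* + \lambda I_d\right)^{-1} - \left(\tfrac12 \Sigma^2 + \lambda I_d\right)^{-1}\right] \btheta^\circ,
    \qquad
    \mathrm{(II)} = -\tfrac12 \left(\tfrac12 (A^*)^\top A^* + \lambda I_d\right)^{-1} (A^*)^\top (A^* - \Sigma) \btheta^\circ.
\]

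To handle (I), I would reuse the matrix $B$ defined in the proof of Lemma~\ref{lem:a_theta_eta_expansion},
\[
    B = \left(\tfrac12 \Sigma^2 + \lambda I_d\right)^{-1/2}\left(\tfrac12 (A^*)^\top A^* - \tfrac12 \Sigma^2\right)\left(\tfrac12 \Sigma^2 + \lambda I_d\right)^{-1/2},
\]
together with the bounds established there: the condition $\|A^*-\Sigma\|/\sqrt{\lambda}\le 1/8$ (which follows from Lemma~\ref{lemma: Sigma bias optimal bound} under the hypotheses $\|\btheta^\circ\|\le\mu/49$ and $\|\Sigma\|\|\btheta^\circ\|\le\mu\sqrt{\lambda}/24$) implies $\|B\|\le 2\|A^*-\Sigma\|/\sqrt{\lambda}\le 1/4$, so $(I_d+B)^{-1}$ is well defined with $\|(I_d+B)^{-1}\|\le 4/3$. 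Using the identity $(I_d+B)^{-1}-I_d = -B(I_d+B)^{-1}$, rewrite (I) as
\[
    \mathrm{(I)} = \lambda \left(\tfrac12\Sigma^2+\lambda I_d\right)^{-1/2} B(I_d+B)^{-1}\left(\tfrac12\Sigma^2+\lambda I_d\right)^{-1/2}\btheta^\circ,
\]
whose norm is at most $\|\btheta^\circ\|\|B\|/(1-\|B\|) \le (4/3)\|B\|\|\btheta^\circ\|$.

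For (II), the key observation is that $\left\|(\tfrac12 (A^*)^\top A^* + \lambda I_d)^{-1}(A^*)^\top\right\| \le 1/\sqrt{2\lambda}$ (this is the elementary calculus fact $\max_{a\ge 0} a/(a^2/2+\lambda) = 1/\sqrt{2\lambda}$ applied to the singular values of $A^*$), so
\[
    \|\mathrm{(II)}\| \le \frac{\|A^*-\Sigma\|\|\btheta^\circ\|}{2\sqrt{2\lambda}}.
\]
Then, defining $\tilde E = \left(\tfrac{14\|\btheta^\circ\|}{\mu}\right)^{2} + \tfrac{70\|\Sigma\|\|\btheta^\circ\|\|\bb_\lambda\|}{\mu^2\sqrt{\lambda}}$, Lemma~\ref{lemma: Sigma bias optimal bound} gives $\|A^*-\Sigma\|/\sqrt{\lambda}\le \tilde E$ (with slack, since $35\le 70$), and consequently $\|B\|\le 2\tilde E$. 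Summing the two bounds yields
\[
    \|\btheta^* - \btheta^\circ - \bb_\lambda\| \le \frac{4}{3}\cdot 2\tilde E\,\|\btheta^\circ\| + \frac{\tilde E\,\|\btheta^\circ\|}{2\sqrt{2}} = \left(\frac{8}{3} + \frac{1}{2\sqrt{2}}\right)\tilde E\,\|\btheta^\circ\| \le \frac{7}{2}\tilde E\,\|\btheta^\circ\|,
\]
which is the claimed inequality.

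The proof has no genuine obstacle: the decomposition and the two norm bounds are essentially the same as those already used for the sharper (with factor $35$) statement of Lemma~\ref{lem:a_theta_eta_expansion}, the only difference being that here we do not multiply by $\Sigma$ and instead use the weaker operator norm $\|(\tfrac12(A^*)^\top A^*+\lambda I_d)^{-1}(A^*)^\top\|\le 1/\sqrt{2\lambda}$ rather than $\|\Sigma(\tfrac12 \Sigma^2+\lambda I_d)^{-1/2}\|\le \sqrt{2}$. The mildly delicate point is the bookkeeping of numerical constants: one must verify that the loss $35\mapsto 70$ in the coefficient of the second term of $\tilde E$ is enough to absorb the factor $\|B\|\le 2\tilde E$ (so that both (I) and (II) are controlled by the same $\tilde E$), and that $8/3 + 1/(2\sqrt 2) \le 7/2$.
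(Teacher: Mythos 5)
Your proof is correct and follows essentially the same route as the paper: the same first-order optimality decomposition into terms (I) and (II), the same auxiliary matrix $B$ with $\|B\|\le 2\|A^*-\Sigma\|/\sqrt{\lambda}\le 1/4$ via Lemma~\ref{lem:diff_squares_op_norm_bound} and Lemma~\ref{lemma: Sigma bias optimal bound}, the same operator-norm bound $\|(\tfrac12(A^*)^\top A^*+\lambda I_d)^{-1}(A^*)^\top\|\le 1/\sqrt{2\lambda}$, and the same final constant bookkeeping $8/3+1/(2\sqrt 2)\le 7/2$.
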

We provide the proof of Lemma \ref{lem:bias_expansion_rough_bound} in Appendix \ref{sec:lem_bias_expansion_rough_bound_proof}. It immediately implies that
\begin{align*}
    &
    \frac{2 C_X}{\lambda^{1/4}} \|\Sigma\| \| \btheta^* - \btheta^\circ - \bb_\lambda \| \sqrt{\frac{\ttr(\Sigma) + \log(4 / \delta)}n}
    \\&
    \leq \frac{7 C_X \|\Sigma\| \|\btheta^\circ\|}{\lambda^{1/4}} \left( \left(\frac{14 \|\btheta^\circ\|}\mu \right)^2 + \frac{70 \|\Sigma\| \|\btheta^\circ\| \|\bb_\lambda\|}{\mu^2 \sqrt{\lambda}} \right) \sqrt{\frac{\ttr(\Sigma) + \log(4 / \delta)}n}
    \\&
    \leq \frac{7 C_X \|\Sigma\| \|\btheta^\circ\|}{\lambda^{1/4}} \left( \left(\frac{14 \|\btheta^\circ\|}\mu \right)^2 + \frac{\|\bb_\lambda\|}{230 \mu} \right) \sqrt{\frac{\ttr(\Sigma) + \log(4 / \delta)}n}.
\end{align*}
Hence, we obtain that
\begin{align*}
    &
    \left\|\Sigma^{1/2} \left( \Sigma^2 + 2 \lambda I_d \right)^{-1} (\btau - \bzeta) \right\|
    \\&
    \leq \frac{2 \|\btheta^\circ\|}{(2/3)^{3/4} \lambda^{1/4}} \left( \frac{\sigma \|\Sigma\|^{1/2}}{\|\btheta^\circ\|} + \left( 1 + 3 + \frac{7}{2 \cdot (3/2)^{3/4}} \right) C_X \|\Sigma\| \right)
    \\&\quad
    \cdot \left( \left(\frac{14 \|\btheta^\circ\|}\mu \right)^2 + \frac{\|\bb_\lambda\|}{230 \mu} \right) \sqrt{\frac{\ttr(\Sigma) + \log(4 / \delta)}n}
    \\&
    \leq \frac{2 \|\btheta^\circ\|}{(2/3)^{3/4} \lambda^{1/4}} \left( \frac{\sigma \|\Sigma\|^{1/2}}{\|\btheta^\circ\|} + 7 C_X \|\Sigma\| \right) \left( \left(\frac{14 \|\btheta^\circ\|}\mu \right)^2 + \frac{\|\bb_\lambda\|}{230 \mu} \right) \sqrt{\frac{\ttr(\Sigma) + \log(4 / \delta)}n}
    \\&
    \leq \frac{7 \|\btheta^\circ\|}{(2/3)^{3/4} \lambda^{1/4}} \left( \frac{\sigma \|\Sigma\|^{1/2}}{\|\btheta^\circ\|} + 2 C_X \|\Sigma\| \right) \left( \left(\frac{14 \|\btheta^\circ\|}\mu \right)^2 + \frac{\|\bb_\lambda\|}{230 \mu} \right) \sqrt{\frac{\ttr(\Sigma) + \log(4 / \delta)}n}.
\end{align*}
It remains to note that
\[
    \left(\frac{14 \|\btheta^\circ\|}\mu \right)^2 + \frac{\|\bb_\lambda\|}{230 \mu}
    \leq \frac1{\sqrt{2}} \left( \left(\frac{17 \|\btheta^\circ\|}\mu \right)^2 + \frac{\|\bb_\lambda\|}{160 \mu} \right)
\]
and
\[
    \left( \frac{\sigma \|\Sigma\|^{1/2}}{\|\btheta^\circ\|} + 2 C_X \|\Sigma\| \right) \sqrt{\frac{\ttr(\Sigma) + \log(4 / \delta)}n}
    \leq \frac{\sqrt{\Psi(n, \delta)}}{14}.
\]
Then it holds that
\begin{align*}
    \left\|\Sigma^{1/2} \left( \Sigma^2 + 2 \lambda I_d \right)^{-1} (\btau - \bzeta) \right\|
    &
    \leq \frac{(3/2)^{3/4} \|\btheta^\circ\|}{2 \sqrt{2} \lambda^{1/4}} \left( \left(\frac{17 \|\btheta^\circ\|}\mu \right)^2 + \frac{\|\bb_\lambda\|}{160 \mu} \right) \sqrt{\Psi(n, \delta)}
    \\&
    \leq \frac{12 \|\btheta^\circ\|}{25 \lambda^{1/4}} \left( \left(\frac{17 \|\btheta^\circ\|}\mu \right)^2 + \frac{\|\bb_\lambda\|}{160 \mu} \right) \sqrt{\Psi(n, \delta)}.
\end{align*}
\myendproof

\subsection{Auxiliary results}

\subsubsection{Proof of Lemma \ref{lem:h_theta_schur}}
\label{sec:lem_h_theta_schur_proof}

Applying Proposition \ref{proposition: inverse of nuisance parameters}, we obtain that
\begin{align}
    \label{eq:h_theta_schur_complement_decomposition}
    \breve H_{\btheta \btheta} - \left( \frac12 A^\top A + \lambda I_d \right)
    &
    = \left[ -r_1 + \frac{2 r_2 \mu^2 \|\btheta\|^2}{\mu^2 + \|\btheta\|^2} - \frac{\|\btheta\|^2}{\mu^2}\left(1 + r_3' \mu^2 \|\btheta\|^2\right) \right] A^\top A
    \\&\quad\notag
    - \frac1{\mu^2} \|A \btheta - \bfeta\|^2 I_d
    + \frac{r_2 \mu^2}{\mu^2 + \|\btheta\|^2} \left( \btheta (A \btheta - \bfeta)^\top A + A^\top (A \btheta - \bfeta) \btheta^\top \right),
\end{align}
where
\[
    0 \leq r_1 \leq \rho^2,
    \quad
    0 \leq r_2 \leq \frac1{\mu^2},
    \quad \text{and} \quad
    0 \leq r_3' \leq \frac{2}{\mu^4}.
\]
In the rest of the proof, we study the terms in the right-hand side of \eqref{eq:h_theta_schur_complement_decomposition} one by one. First, let us note that the inequalities $\|\btheta\|^2 \leq \rho^2 (\mu^2 + \|\btheta\|^2)$ and $\rho^2 \leq 1/2$ yield that
\begin{align*}
    \left| -r_1 + \frac{2 r_2 \mu^2 \|\btheta\|^2}{\mu^2 + \|\btheta\|^2} - \frac{\|\btheta\|^2}{\mu^2}\left(1 + r_3' \mu^2 \|\btheta\|^2\right) \right|
    &
    \leq \rho^2 + \frac{2 \|\btheta\|^2}{\mu^2 + \|\btheta\|^2} + \frac{\|\btheta\|^2 (\mu^2 + 2 \|\btheta\|^2)}{\mu^4}
    \\&
    \leq \rho^2 + 2 \rho^2 + 2 \rho^4
    \leq 3 \rho^2 + \frac{2\rho^2}{25}.
\end{align*}
Then the first term in the right-hand side of \eqref{eq:h_theta_schur_complement_decomposition} does not exceed
\[
    \left( 3 \rho^2 + \frac{2\rho^2}{25} \right) \left\| \left( \frac12 A^\top A + \lambda I_d \right)^{-1/2} A^\top A \left( \frac12 A^\top A + \lambda I_d \right)^{-1/2} \right\|
    \leq 6 \rho^2 + 2 \rho^4
    \leq 6 \rho^2 + \frac{4 \rho^2}{25}.
\]
Second, it holds that
\[
    \frac{\|A \btheta - \bfeta\|^2}{\mu^2} \left\| \left( \frac12 A^\top A + \lambda I_d \right)^{-1} \right\|
    \leq \rho^2 \lambda \cdot \frac1{\lambda}
    = \rho^2.
\]
Finally, let us consider the latter term in the right-hand side of \eqref{eq:h_theta_schur_complement_decomposition}. The triangle inequality and the submultiplicativity of the operator norm imply that
\begin{align*}
    &
    \frac{r_2 \mu^2}{\mu^2 + \|\btheta\|^2} \left\| \left(\frac12 A^\top A + \lambda I_d \right)^{-1/2} \left( \btheta (A \btheta - \bfeta)^\top A + A^\top (A \btheta - \bfeta) \btheta^\top \right) \left(\frac12 A^\top A + \lambda I_d \right)^{-1/2} \right\|
    \\&
    \leq \frac{2}{\mu^2 + \|\btheta\|^2} \left\| \left(\frac12 A^\top A + \lambda I_d \right)^{-1/2} \btheta \right\| \left\| A^\top \left(\frac12 A^\top A + \lambda I_d \right)^{-1/2} \right\| \left\|A \btheta - \bfeta\right\|
    \\&
    \leq \frac{2}{\mu^2 + \|\btheta\|^2} \cdot \frac{\|\btheta\|}{\sqrt{\lambda}} \cdot \sqrt{2} \cdot \rho \mu \sqrt{\lambda} 
    \leq \frac{2 \sqrt{2} \rho \mu}{\sqrt{\mu^2 + \|\btheta\|^2}} \cdot \frac{\|\btheta\|}{\sqrt{\mu^2 + \|\btheta\|^2}}
    \leq \frac{2 \sqrt{2} \rho \mu}{\mu} \cdot \rho = 2\sqrt{2} \rho^2.
\end{align*}
Hence, it holds that
\[
    \left\| \left( \frac12 A^\top A + \lambda I_d \right)^{-1/2} \breve H_{\btheta \btheta} \left( \frac12 A^\top A + \lambda I_d \right)^{-1/2} - I_d \right\|
    \leq 6\rho^2 + \frac{4 \rho^2}{25} + \rho^2 + 2\sqrt{2} \rho^2
    \leq 10 \rho^2.
\]
\myendproof

\subsubsection{Proof of Lemma \ref{lem:semiparametric_score_explicit_representation}}
\label{sec:lem_semiparametric_score_explicit_representation}

The proof of Lemma \ref{lem:semiparametric_score_explicit_representation} relies on the expression \eqref{eq:h_block_form_kronecker} for the Hessian of $\nabla^2 \cL(\bups)$ and Proposition \ref{proposition: inverse of nuisance parameters}.
Let us denote the inverse of $H_{\chi \chi}$ by $J$.
It holds that
\begin{align*}
    H_{\btheta \chi} J \bz_\chi
    &
    =
    \begin{pmatrix}
        H_{\btheta \bfeta} & H_{\btheta A}
    \end{pmatrix}
    \begin{pmatrix}
        J_{\bfeta \bfeta} & J_{\bfeta A} \\ J_{A \bfeta} & J_{AA}
    \end{pmatrix}
    \begin{pmatrix} \bz_{\bfeta} \\ \bz_A \end{pmatrix}
    \\&
    =
    H_{\btheta \bfeta} J_{\bfeta \bfeta} \bz_{\bfeta} + H_{\btheta \bfeta} J_{\bfeta A} \bz_A
    + H_{\btheta A} J_{A \bfeta} \bz_{\bfeta} + H_{\btheta A} J_{AA} \bz_A.
\end{align*}
In the rest of the proof, we study the terms in the right-hand-side one by one. Using Proposition \ref{proposition: inverse of nuisance parameters}, we obtain that
\begin{align*}
    H_{\btheta A} J_{A \bfeta}
    &
    = r_2 \left( (A \btheta - \bfeta)^\top \otimes I_d + A^\top \otimes \btheta^\top \right) \left( I_d \otimes \Tilde{\btheta} \right)
    \\&
    = r_2 (A \btheta - \bfeta)^\top \otimes \Tilde{\btheta}
    + r_2 A^\top \otimes \left( \btheta^\top \Tilde{\btheta} \right)
    \\&
    = r_2 \Tilde{\btheta} (A \btheta - \bfeta)^\top
    + r_2 \left( \btheta^\top \Tilde{\btheta} \right) A^\top,
\end{align*}
where $\Tilde{\btheta} = \mu^2 (\mu^2 I_d + \btheta \btheta^\top)^{-1} \btheta$. The Woodbury matrix identity yields that
\[
    \Tilde{\btheta} = \frac{\mu^2}{\mu^2 + \|\btheta\|^2} \btheta
    \quad \text{and} \quad
    \btheta^\top \Tilde{\btheta} = \frac{\mu^2 \|\btheta\|^2}{\mu^2 + \|\btheta\|^2}.
\]
Then we have
\begin{equation}
    \label{eq:h_thetaA_J_Aeta_z_eta}
    H_{\btheta A} J_{A \bfeta} \z_{\bfeta}
    = \frac{r_2 \mu^2}{\mu^2 + \|\btheta\|^2} \left( (A \btheta - \bfeta)^\top (\bZ - \E \bZ) \right) \btheta
    + \frac{r_2 \mu^2 \|\btheta\|^2}{\mu^2 + \|\btheta\|^2} A^\top (\bZ - \E \bZ).
\end{equation}
Similarly, we note that
\begin{equation}
    \label{eq:h_thetaeta_J_etaeta_z_eta}
    H_{\btheta \bfeta} J_{\bfeta \bfeta} \z_{\bfeta}
    = -A^\top J_{\bfeta \bfeta} \z_{\bfeta}
    = -\left(\frac12 + r_1 \right) A^\top (\bZ - \E \bZ).
\end{equation}
Due to the properties of the Kronecker product and the vectorization operator (see, for instance, \citep[Appendix A, eq. (25)]{puchkin24}), it holds that
\[
    (I_d \otimes \Tilde{\btheta}\,^\top) \rmvec(\widehat \Sigma - \Sigma)
    = \rmvec\left( I_d (\widehat \Sigma - \Sigma) \Tilde{\btheta} \right)
    = (\widehat \Sigma - \Sigma) \Tilde{\btheta}.
\]
Thus, we obtain that
\begin{equation}
    \label{eq:h_thetaeta_J_etaA_z_A}
    H_{\btheta \bfeta} J_{\bfeta A} \z_A
    = -r_2 \mu^2 A^\top (\widehat \Sigma - \Sigma) \Tilde{\btheta}
    = -\frac{r_2 \mu^4}{\mu^2 + \|\btheta\|^2} A^\top (\widehat \Sigma - \Sigma) \btheta.
\end{equation}
It remains to consider $H_{\btheta A} J_{AA} \z_A$.
Let us focus on $J_{AA}$. Note that, due to \eqref{eq:h_block_form_kronecker}, the matrix $H_{\chi \chi}$ can be represented in the following compact form using the Kronecker product:
\[
    H_{\chi \chi} =
    \begin{pmatrix}
        2 I_d & -I_d \otimes \btheta^\top \\
        -I_d \otimes \btheta & I_d \otimes (\mu^2 I_d + \btheta \btheta^\top)
    \end{pmatrix}.
\]
Then, according to the block-inversion formula (see \eqref{eq:h_block_inversion} in Appendix \ref{sec:hessian}), it holds that
\begin{align*}
    J_{AA}
    &
    = \left( I_d \otimes (\mu^2 I_d + \btheta \btheta^\top) - \frac12 \left( I_d \otimes \btheta^\top \right) \left( I_d \otimes \btheta \right) \right)^{-1}
    \\&
    = \left( I_d \otimes \left(\mu^2 I_d + \frac12 \btheta \btheta^\top \right) \right)^{-1}
    = I_d \otimes \left(\mu^2 I_d + \frac12 \btheta \btheta^\top \right)^{-1}.
\end{align*}
Using the Woodbury matrix identity, we obtain that
\[
    J_{AA}
    = I_d \otimes \left( \frac1{\mu^2} I_d - \frac{1}{\mu^2 (2\mu^2 + \|\btheta\|^2)} \btheta \btheta^\top \right).
\]
Since $\z_A = \mu^2 \rmvec(\widehat \Sigma - \Sigma)$, we have
\begin{align}
    \label{eq:h_thetaA_J_AA_z_A}
    H_{\btheta A} J_{AA} \z_A
    &\notag
    = \mu^2 \left( (A \btheta - \bfeta)^\top \otimes I_d \right) \left( I_d \otimes \left( \frac1{\mu^2} I_d - \frac{1}{\mu^2 (2\mu^2 + \|\btheta\|^2)} \btheta \btheta^\top \right) \right) \rmvec\left( \widehat \Sigma - \Sigma \right)
    \\&\quad\notag
    + \mu^2 \left( A^\top \otimes \btheta^\top \right) \left( I_d \otimes \left( \frac1{\mu^2} I_d - \frac{1}{\mu^2 (2\mu^2 + \|\btheta\|^2)} \btheta \btheta^\top \right) \right) \rmvec\left( \widehat \Sigma - \Sigma \right)
    \\&
    = \left( (A \btheta - \bfeta)^\top \otimes \left(I_d - \frac{1}{2\mu^2 + \|\btheta\|^2} \btheta \btheta^\top \right) \right) \rmvec\left( \widehat \Sigma - \Sigma \right)
    \\&\quad\notag
    + \left( A^\top \otimes \btheta^\top \left( I_d - \frac{1}{2\mu^2 + \|\btheta\|^2} \btheta \btheta^\top \right)\right) \rmvec\left( \widehat \Sigma - \Sigma \right).
\end{align}
Using the properties of the Kronecker product and the vectorization operator $\rmvec$ one more time, we obtain that
\begin{align}
    \label{eq:h_thetaA_J_AA_z_A_first_term}
    &\notag
    \left( (A \btheta - \bfeta)^\top \otimes \left(I_d - \frac{1}{2\mu^2 + \|\btheta\|^2} \btheta \btheta^\top \right) \right) \rmvec\left( \widehat \Sigma - \Sigma \right)
    \\&
    = \rmvec\left( (A \btheta - \bfeta)^\top (\widehat \Sigma - \Sigma) \left(I_d - \frac{1}{2\mu^2 + \|\btheta\|^2} \btheta \btheta^\top \right) \right)
    \\&\notag
    = \left(I_d - \frac{1}{2\mu^2 + \|\btheta\|^2} \btheta \btheta^\top \right) (\widehat \Sigma - \Sigma) (A \btheta - \bfeta)
\end{align}
and
\begin{align}
    \label{eq:h_thetaA_J_AA_z_A_second_term}
    \left( A^\top \otimes \btheta^\top \left( I_d - \frac{1}{2\mu^2 + \|\btheta\|^2} \btheta \btheta^\top \right)\right) \rmvec\left( \widehat \Sigma - \Sigma \right)
    &\notag
    = \frac{2\mu^2}{2\mu^2 + \|\btheta\|^2}\left( A^\top \otimes \btheta^\top \right) \rmvec\left( \widehat \Sigma - \Sigma \right)
    \\&
    = \frac{2\mu^2}{2\mu^2 + \|\btheta\|^2} \rmvec\left( A^\top (\widehat \Sigma - \Sigma) \btheta \right)
    \\&\notag
    = \frac{2\mu^2}{2\mu^2 + \|\btheta\|^2} A^\top (\widehat \Sigma - \Sigma) \btheta.
\end{align}
Substituting the equalities \eqref{eq:h_thetaA_J_AA_z_A_first_term} and \eqref{eq:h_thetaA_J_AA_z_A_second_term} into \eqref{eq:h_thetaA_J_AA_z_A}, we obtain that
\begin{align}
    \label{eq:h_thetaA_J_AA_z_A_simplified}
    H_{\btheta A} J_{AA} \z_A
    &\notag
    = (\widehat \Sigma - \Sigma) (A \btheta - \bfeta) - \frac{1}{2\mu^2 + \|\btheta\|^2} \big( (A \btheta - \bfeta)^\top (\widehat \Sigma - \Sigma) \btheta \big) \btheta
    \\&\quad
    + \frac{2\mu^2}{2\mu^2 + \|\btheta\|^2} A^\top (\widehat \Sigma - \Sigma) \btheta.
\end{align}
Summing up \eqref{eq:h_thetaA_J_Aeta_z_eta}, \eqref{eq:h_thetaeta_J_etaeta_z_eta}, \eqref{eq:h_thetaeta_J_etaA_z_A}, and \eqref{eq:h_thetaA_J_AA_z_A_simplified}, we conclude that
\begin{align*}
    H_{\btheta \chi} J \z_\chi
    &
    = -\left(\frac12 + r_1 - \frac{r_2 \mu^2 \|\btheta\|^2}{\mu^2 + \|\btheta\|^2} \right) A^\top (\bZ - \E \bZ)
    + \frac{r_2 \mu^2}{\mu^2 + \|\btheta\|^2} \left( (A \btheta - \bfeta)^\top (\bZ - \E \bZ) \right) \btheta
    \\&\quad
    + \left( \frac{2\mu^2}{2\mu^2 + \|\btheta\|^2} - \frac{r_2 \mu^4}{\mu^2 + \|\btheta\|^2} \right) A^\top (\widehat \Sigma - \Sigma) \btheta
    - \frac{1}{2\mu^2 + \|\btheta\|^2} \big( (A \btheta - \bfeta)^\top (\widehat \Sigma - \Sigma) \btheta \big) \btheta
    \\&\quad
    + (\widehat \Sigma - \Sigma) (A \btheta - \bfeta).
\end{align*}
Due to the definition of $r_2$ (see Proposition \ref{proposition: inverse of nuisance parameters}), it holds that $r_2 = 0.5 \mu^{-2} + r_2'$, where $0 \leq r_2' \leq \rho^2 / \mu^2$. Using this identity, we can simplify the coefficient before $A^\top (\widehat \Sigma - \Sigma) \btheta$:
\begin{align*}
    \frac{2\mu^2}{2\mu^2 + \|\btheta\|^2} - \frac{r_2 \mu^4}{\mu^2 + \|\btheta\|^2}
    &
    = 1 - \frac{\|\btheta\|^2}{2\mu^2 + \|\btheta\|^2} - \frac{\mu^2}{2 (\mu^2 + \|\btheta\|^2)} - \frac{r_2' \mu^4}{\mu^2 + \|\btheta\|^2}
    \\&
    = 1 - \frac{\|\btheta\|^2}{2\mu^2 + \|\btheta\|^2} - \frac12 + \frac{\|\btheta\|^2}{2 (\mu^2 + \|\btheta\|^2)} - \frac{r_2' \mu^4}{\mu^2 + \|\btheta\|^2}
    \\&
    = \frac12 - \frac{\|\btheta\|^2}{2\mu^2 + \|\btheta\|^2} + \frac{\|\btheta\|^2}{2 (\mu^2 + \|\btheta\|^2)} - \frac{r_2' \mu^4}{\mu^2 + \|\btheta\|^2}.
\end{align*}
Finally, taking into account the representation $\bZ - \E \bZ = \bU + (\widehat \Sigma - \Sigma) \btheta^\circ$, we obtain that
\begin{align*}
    H_{\btheta \chi} J \z_\chi
    &
    = -\frac12 A^\top \bU + \frac12 A^\top (\widehat \Sigma - \Sigma)(\btheta - \btheta^\circ) + (\widehat \Sigma - \Sigma)(A \btheta - \bfeta)
    \\&\quad
    - \left(r_1 - \frac{r_2 \mu^2 \|\btheta\|^2}{\mu^2 + \|\btheta\|^2} \right) A^\top (\bZ - \E \bZ) + \frac{r_2 \mu^2}{\mu^2 + \|\btheta\|^2} \left( (A \btheta - \bfeta)^\top (\bZ - \E \bZ) \right) \btheta
    \\&\quad
    - \left(\frac{\|\btheta\|^2}{2\mu^2 + \|\btheta\|^2} - \frac{\|\btheta\|^2}{2 (\mu^2 + \|\btheta\|^2)} + \frac{r_2' \mu^4}{\mu^2 + \|\btheta\|^2} \right) A^\top (\widehat \Sigma - \Sigma) \btheta
    \\&\quad
    - \frac{1}{2\mu^2 + \|\btheta\|^2} \big( (A \btheta - \bfeta)^\top (\widehat \Sigma - \Sigma) \btheta \big) \btheta.
\end{align*}
\myendproof

\subsubsection{Proof of Lemma \ref{lem:diff_squares_op_norm_bound}}
\label{sec:lem_diff_squares_op_norm_bound_proof}

\noindent
\textbf{Step 1: a bound on the operator norm.}
\quad
Let us start with an upper bound on the operator norm of
\[
    \frac12 \left(\frac12 \Sigma^2 + \lambda I_d \right)^{-1/2} \left( A^\top A - \Sigma^2 \right) \left(\frac12 \Sigma^2 + \lambda I_d \right)^{-1/2}.
\]
Due to the definition of $S$ and the triangle inequality, it holds that
\begin{align*}
    &
    \frac12 \left\| \left(\frac12 \Sigma^2 + \lambda I_d \right)^{-1/2} \left( A^\top A - \Sigma^2 \right) \left(\frac12 \Sigma^2 + \lambda I_d \right)^{-1/2} \right\|
    \\&
    \leq \left\| \left(\frac12 \Sigma^2 + \lambda I_d \right)^{-1/2} \Sigma (A - \Sigma) \left(\frac12 \Sigma^2 + \lambda I_d \right)^{-1/2} \right\|
    \\&\quad
    + \frac12 \left\| \left(\frac12 \Sigma^2 + \lambda I_d \right)^{-1/2} (A^\top - \Sigma)(A - \Sigma) \left(\frac12 \Sigma^2 + \lambda I_d \right)^{-1/2} \right\|.
\end{align*}
Let us note that
\begin{equation}
    \label{eq:sigma_inequalities}
    \left\| \left(\frac12 \Sigma^2 + \lambda I_d \right)^{-1/2} \Sigma \right\| \leq \sqrt{2}
    \quad \text{and} \quad
    \left\| \left(\frac12 \Sigma^2 + \lambda I_d \right)^{-1/2} \right\| \leq \frac1{\sqrt{\lambda}}.
\end{equation}
Taking into account submultiplicativity of the operator norm, we obtain that
\[
    \left\| \left(\frac12 \Sigma^2 + \lambda I_d \right)^{-1/2} \Sigma (A - \Sigma) \left(\frac12 \Sigma^2 + \lambda I_d \right)^{-1/2} \right\|
    \leq \|A - \Sigma\| \sqrt{\frac{2}{\lambda}}
\]
and
\[
   \frac12 \left\| \left(\frac12 \Sigma^2 + \lambda I_d \right)^{-1/2} (A^\top - \Sigma)(A - \Sigma) \left(\frac12 \Sigma^2 + \lambda I_d \right)^{-1/2} \right\|
   \leq \frac{\|A - \Sigma\|^2}{2 \lambda}.
\]
Hence, it holds that
\begin{align*}
    \frac12 \left\| \left(\frac12 \Sigma^2 + \lambda I_d \right)^{-1/2} \left( A^\top A - \Sigma^2 \right) \left(\frac12 \Sigma^2 + \lambda I_d \right)^{-1/2} \right\|
    \leq \|A - \Sigma\| \sqrt{\frac{2}{\lambda}} + \frac{\|A - \Sigma\|^2}{2 \lambda}.
\end{align*}
\myendproof

\subsubsection{Proof of Lemma \ref{lem:bias_expansion_rough_bound}}
\label{sec:lem_bias_expansion_rough_bound_proof}

The proof relies on the identity
\[
    \left( \frac12 (A^*)^\top A^* + \lambda I_d \right) (\btheta^* - \btheta^\circ)
    = -\lambda \btheta^\circ - \frac12 (A^*)^\top (A^* - \Sigma) \btheta^\circ
\]
following from the first-order optimality condition $\bnabla_{\btheta} \cL(\btheta^*, \bfeta^*, A^*) = \bzero$ (see \eqref{eq:first-order_optimality_corollary} in the proof of Lemma \ref{lemma: localization lemma for bias} for the details).
Let us introduce
\[
    B = \left( \frac12 \Sigma^2 + \lambda I_d \right)^{-1/2} \left( (A^*)^\top A^* - \Sigma^2 \right) \left( \frac12 \Sigma^2 + \lambda I_d \right)^{-1/2}.
\]
Then we can rewrite the difference $(\btheta^* - \btheta^\circ - \bb_\lambda)$ in the following form:
\begin{align*}
    \btheta^* - \btheta^\circ - \bb_\lambda
    &
    = \lambda \left( \frac12 \Sigma^2 + \lambda I_d \right) \btheta^\circ - \lambda \left( \frac12 (A^*)^\top A^* + \lambda I_d \right)^{-1} \btheta^\circ
    \\&\quad
    - \frac12 \left( \frac12 (A^*)^\top A^* + \lambda I_d \right)^{-1} (A^*)^\top (A^* - \Sigma) \btheta^\circ
    \\&
    = \lambda \left( \frac12 \Sigma^2 + \lambda I_d \right)^{-1/2} \left( I_d - (I_d + B)^{-1} \right) \left( \frac12 \Sigma^2 + \lambda I_d \right)^{-1/2} \btheta^\circ 
    \\&\quad
    - \frac12 \left( \frac12 (A^*)^\top A^* + \lambda I_d \right)^{-1} (A^*)^\top (A^* - \Sigma) \btheta^\circ.
\end{align*}
The inequalities
\[
    \left\| \left(\frac12 \Sigma^2 + \lambda I_d \right)^{-1/2} \right\| \leq \frac{1}{\sqrt{\lambda}},
    \quad
    \left\| \left( \frac12 (A^*)^\top A^* + \lambda I_d \right)^{-1} (A^*)^\top \right\| \leq \frac{1}{\sqrt{2\lambda}},
\]
and
\[
    \left\| I_d - (I_d + B)^{-1} \right\|
    = \left\| B (I_d + B)^{-1} \right\|
    \leq \frac{\|B\|}{1 - \|B\|}
\]
yield that
\[
    \left\| \btheta^* - \btheta^\circ - \bb_\lambda \right\|
    \leq \left( \frac{\|B\|}{1 - \|B\|} + \frac{\|A^* - \Sigma\|}{2 \sqrt{2 \lambda}} \right) \|\btheta^\circ\|.
\]
Similarly to the proof of Lemma \ref{lem:a_theta_eta_expansion}, we deduce that (see \eqref{eq:b_bound})
\[
    \|B\|
    \leq \frac{2 \|A^* - \Sigma\|}{\sqrt{\lambda}}
    \leq 2 \left(\frac{14 \|\btheta^\circ\|}\mu \right)^2 + \frac{70 \|\Sigma\| \|\btheta^\circ\| \|\bb_\lambda\|}{\mu^2 \sqrt{\lambda}} \leq \frac14.
\]
Then it holds that
\begin{align*}
    \left\| \btheta^* - \btheta^\circ - \bb_\lambda \right\|
    &
    \leq \left( \frac{4\|B\|}3 + \frac{\|A^* - \Sigma\|}{2 \sqrt{2 \lambda}} \right) \|\btheta^\circ\|
    \\&
    \leq \left( \frac{8 \|A^* - \Sigma\|}{3 \sqrt{\lambda}}  + \frac{\|A^* - \Sigma\|}{2 \sqrt{2 \lambda}} \right) \|\btheta^\circ\|
    \\&
    \leq \frac{7 \|\btheta^\circ\|}2 \cdot \frac{\|A^* - \Sigma\|}{\sqrt{\lambda}}
    \\&
    \leq \frac{7 \|\btheta^\circ\|}2 \left( \left(\frac{14 \|\btheta^\circ\|}\mu \right)^2 + \frac{70 \|\Sigma\| \|\btheta^\circ\| \|\bb_\lambda\|}{\mu^2 \sqrt{\lambda}} \right).
\end{align*}

\section{Concentration inequalities}
\label{sec:concentration_inequalities}

\subsection{Proof of Theorem \ref{th:covariance_concentration}}

Following the ideas of \cite{zhivotovskiy21}, we use the tools of PAC-Bayesian analysis to prove a large deviation inequality. The core of this approach is the following variational inequality (see, e. g., \citep[Proposition 2.1]{catoni17}). 

\begin{Lem}
    \label{lem:pac-bayes}
    Let $ \bX, \bX_1, \dots, \bX_n$ be i.i.d. random elements on a measurable space $\cX$. Let $\Theta$ be a parameter space equipped with a measure $\mu$ (which is also referred to as prior). Let $f : \cX \times \Theta \rightarrow \R$. Then, with probability at least $1 - \delta$, it holds that
    \[
        \E_{\btheta \sim \rho} \frac1n \sum\limits_{i = 1}^n f(\bX_i, \btheta)
        \leq \E_{\btheta \sim \rho} \log \E_\bX e^{f(X, \btheta)} + \frac{\KL(\rho, \mu) + \log(1 / \delta)}n
    \]
    simultaneously for all $\rho \ll \mu$.
\end{Lem}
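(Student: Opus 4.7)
This is the standard PAC-Bayesian variational inequality going back to \citet{catoni17}; the plan is to combine a pointwise exponential-moment identity with the Donsker-Varadhan dual representation of the Kullback-Leibler divergence. Concretely, for every fixed $\btheta \in \Theta$ I would introduce the nonnegative random variable
\[
    M_n(\btheta) = \exp\!\left( \sum_{i=1}^n f(\bX_i, \btheta) - n \log \E_\bX e^{f(\bX, \btheta)} \right).
\]
Because $\bX_1, \dots, \bX_n$ are i.i.d., the multiplicative decomposition of $M_n(\btheta)$ and the definition of the cumulant generating function imply $\E M_n(\btheta) = 1$ pointwise in $\btheta$. Since $M_n \geq 0$, Tonelli's theorem gives $\E \int M_n(\btheta) \, \rmd \mu(\btheta) = 1$, and a single application of Markov's inequality to the nonnegative random variable $\int M_n \, \rmd \mu$ produces an event $\Omega_\delta$ of probability at least $1 - \delta$ on which $\int M_n(\btheta) \, \rmd \mu(\btheta) \leq 1/\delta$.

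On $\Omega_\delta$, for any $\rho \ll \mu$ I would invoke the Donsker-Varadhan inequality
\[
    \int g(\btheta) \, \rmd \rho(\btheta) \leq \KL(\rho, \mu) + \log \int e^{g(\btheta)} \, \rmd \mu(\btheta),
\]
valid for every measurable $g : \Theta \to \R$ with $\int e^g \, \rmd \mu < \infty$ (this is nothing but the nonnegativity of the KL divergence between $\rho$ and the tilted probability measure proportional to $e^{g} \mu$). Applying it with $g = \log M_n$ yields, on $\Omega_\delta$,
\[
    \int \log M_n(\btheta) \, \rmd \rho(\btheta) \leq \KL(\rho, \mu) + \log(1/\delta).
\]
Substituting the explicit form of $M_n$ and dividing both sides by $n$ gives exactly the inequality claimed in the lemma, simultaneously in $\rho$ because the event $\Omega_\delta$ does not depend on $\rho$. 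The case $\rho \not\ll \mu$ is trivial because then $\KL(\rho, \mu) = +\infty$.

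The only potentially delicate point is purely measure-theoretic: I need joint measurability of $(\omega, \btheta) \mapsto M_n(\btheta)(\omega)$ so that Tonelli is applicable and the Donsker-Varadhan inequality is well posed. Under the mild regularity implicit in treating $f$ as a measurable function on $\cX \times \Theta$, this is automatic, and the argument otherwise uses no structure specific to the setting of interest: the $1/n$ scaling arises from the exponential product structure and the $\log(1/\delta)$ penalty is a direct consequence of Markov's inequality.
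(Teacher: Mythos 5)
Your proof is correct and is the canonical derivation of this result: the exponential change of measure $M_n(\btheta)$ with $\E M_n(\btheta)=1$, Tonelli plus a single Markov bound on $\int M_n\,\rmd\mu$, then Donsker--Varadhan applied to $\log M_n$, which yields the statement after dividing by $n$. The paper itself does not reprove the lemma (it cites \citep[Proposition~2.1]{catoni17}), and your argument matches that reference's standard proof, including the observation that the Markov event is $\rho$-free so the bound holds simultaneously over all posteriors.
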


In what follows, we show that a high probability upper bound on $\|B(\widehat \Sigma - \Sigma)\|_{\F}$ can be deduced from the PAC-Bayes variational inequality. 
We split the proof into several steps for convenience.

\bigskip

\noindent{\bf Step 0: auxiliary constants}.
\quad
Before we move to the proof, let us introduce auxiliary constants. We define
\begin{equation}
    \label{eq:beta}
    \beta = 2 \, \ttr(\Sigma^{1/2} A^\top A \Sigma^{1/2}) \, \ttr(\Sigma^{1/2} B^\top B \Sigma^{1/2}),
\end{equation}
\begin{equation}
    \label{eq:R}
    R = \sqrt{\frac2\beta \, \Tr(\Sigma^{1/2} A^\top A \Sigma^{1/2}) \, \Tr(\Sigma^{1/2} B^\top B \Sigma^{1/2})},
\end{equation}
and
\begin{align}
    \label{eq:lambda}
    \lambda
    &\notag
    = \frac1{C_X \sqrt{\|\Sigma^{1/2} A^\top A \Sigma^{1/2}\| \, \|\Sigma^{1/2} B^\top B \Sigma^{1/2}}\|}
    \\&\quad
    \cdot \sqrt{\frac{\ttr(\Sigma^{1/2} A^\top A \Sigma^{1/2}) \, \ttr(\Sigma^{1/2} B^\top B \Sigma^{1/2}) + \log(2/\delta)}{4n}}, 
\end{align}
where $C_X$ is defined in Assumption \ref{as:quadratic}.
We move to the proof of Theorem \ref{th:covariance_concentration}.

\bigskip

\noindent{\bf Step 1: a variational inequality.}
\quad
On this step, we make some preparations. We specify a prior $\mu$ and a family of measures $\rho$ from Lemma \ref{lem:pac-bayes}. A reader will immediately see the result of these efforts on the second step where we relate the variational approach with a high probability upper bound on the Frobenius norm of $B(\widehat \Sigma - \Sigma)$.
Let us introduce $\bxi = \Sigma^{-1/2} \bX$, $\bxi_i = \Sigma^{-1/2} \bX_i$, $1 \leq i \leq n$, and a random matrix
\[
    \Psi = \frac1{\sqrt \beta} \Sigma^{1/2} B^\top Z A \Sigma^{1/2},
\]
where $Z$ is a matrix of size $(q \times d)$ with i.i.d. $\cN(0, 1)$ entries and $\beta$ is defined in \eqref{eq:beta}. Note that
\[
    \rmvec(\Psi) \sim \cN\big(\bzero, \beta^{-1} \, (\Sigma^{1/2} A^\top A \Sigma^{1/2}) \otimes (\Sigma^{1/2} B^\top B \Sigma^{1/2} ) \big)
\]
in this case (see, for instance, \citep{leng18} for the details). Denote its density (with respect to the volume measure on the image of $(\Sigma^{1/2} A^\top A \Sigma^{1/2}) \otimes (\Sigma^{1/2} B^\top B \Sigma^{1/2})$) by $\sfp$ and, for any $U \in \R^{d \times d}$, define
\[
    \sfp_U(\bx) = \frac1{\pi_R} \, \sfp \big(\bx - \rmvec(U) \big) \1( \|\bx - \rmvec(U)\| \leq R),
\]
where $\rmvec(U) \in \R^{d^2}$ is a vector obtained by reshaping of $U$ and $\pi_R \in (0, 1)$ is a normalizing constant.
Then $(\bxi_1^\top U \bxi_1 + \dots \bxi_n^\top U \bxi_n) / n - \E \bxi^\top U \bxi$ can be represented in the following form:
\[
    \frac1n \sum\limits_{i = 1}^n \bxi_i^\top U \bxi_i - \E \bxi^\top U \bxi
    = \E_{\rmvec(\Phi) \sim \sfp_U} \left( \frac1n \sum\limits_{i = 1}^n \bxi_i^\top \Phi \bxi_i - \E_X \bxi^\top \Phi \bxi \right).
\]
According to Lemma \ref{lem:pac-bayes}, for any $\delta \in (0, 1)$ there exists an event $\cE$, such that $\p(\cE) \geq 1 - \delta$ and 
\begin{align*}
    \lambda \left( \frac1n \sum\limits_{i = 1}^n \bxi_i^\top U \bxi_i - \E \bxi^\top U \bxi \right)
    &
    = \lambda \E_{\rmvec(\Phi) \sim \sfp_U} \left( \frac1n \sum\limits_{i = 1}^n \bxi_i^\top \Phi \bxi_i - \E_X \bxi^\top \Phi \bxi \right)
    \\&
    \leq -\lambda \E_{\rmvec(\Phi) \sim \sfp_U} \E_\bX \bxi^\top \Phi \bxi + \E_{\rmvec(\Phi) \sim \sfp_U} \log \E_\bX e^{\lambda \bxi^\top \Phi \bxi}
    \\&\quad
    + \frac{\KL(\sfp_U, \sfp) + \log(1/\delta)}n
\end{align*}
simultaneously for all $U \in \R^{d \times d}$ on $\cE$. From now on, we restrict our attention on this event.

\bigskip

\noindent{\bf Step 2: relating the variational inequality with the Frobenius norm.} Rewrite the Frobenius norm of $B(\widehat \Sigma - \Sigma) A^\top$ as follows:
\begin{align*}
    \|B(\widehat \Sigma - \Sigma) A^\top\|_{\F}
    &
    = \sup\limits_{\substack{V \in \R^{d \times q}, \\ \|V\|_{\F} = 1}} \Tr\big(V B (\widehat \Sigma - \Sigma) A^\top \big)
    = \sup\limits_{\substack{V \in \R^{d \times q}, \\ \|V\|_{\F} = 1}} \left( \frac1n \sum\limits_{i = 1}^n \bX_i^\top A^\top V B \bX_i - \E \bX^\top A^\top V B \bX \right)
    \\&
    = \sup\limits_{\substack{U = \Sigma^{1/2} A^\top V B \Sigma^{1/2}, \\ \|V\|_{\F} = 1}} \left( \frac1n \sum\limits_{i = 1}^n \bxi_i^\top U \bxi_i - \E \bxi^\top U \bxi \right).
\end{align*}
On the event $\cE$, defined on Step 1, it holds that
\begin{align}
    \label{eq:fr_norm_bound}
    \lambda \|B(\widehat \Sigma - \Sigma) A^\top\|_{\F}
    &\notag
    \leq \sup\limits_{\substack{U = \Sigma^{1/2} V B \Sigma^{1/2}, \\ \|V\|_{\F} = 1}} \Bigg(  -\lambda \E_{\rmvec(\Phi) \sim \sfp_U} \E_\bX \bxi^\top \Phi \bxi \\&\quad
    + \E_{\rmvec(\Phi) \sim \sfp_U} \log \E_\bX e^{\lambda \bxi^\top \Phi \bxi}
    + \frac{\KL(\sfp_U, \sfp) + \log(1/\delta)}n \Bigg).
\end{align}
It only remains to bound the terms in the right hand side.

\bigskip

\noindent{\bf Step 3: a bound on $\KL(\sfp_U, \sfp)$.} 
\quad Applying Lemma \ref{lem:kl} with $P = \Sigma^{1/2} A^\top$, $Q = \Sigma^{1/2} B^\top$, and $U = \Sigma^{1/2} A^\top V B \Sigma^{1/2}$, $\|V\|_{\F} = 1$, we obtain that
\begin{equation}
    \label{eq:kl_bound}    
    \KL(\sfp_U, \sfp)
    \leq \log2 + \frac{\beta}2 \left\| (\Sigma^{1/2} A^\top)^\dag U (B \Sigma^{1/2})^\dag \right\|_{\F}^2
    \leq \log 2 + \frac{\beta}2 \|V\|_{\F}^2
    = \log 2 + \frac{\beta}2.
\end{equation}

\medskip

\noindent{\bf Step 4: a bound on $\E_{\rmvec(\Phi) \sim \sfp_U} \log \E_\bX e^{\lambda \bxi^\top \Phi \bxi}$.} 
\quad Note that, since $U = \Sigma^{1/2} A^\top V B \Sigma^{1/2}$, $\|V\|_{\F} = 1$, the following inequality holds almost surely:
\begin{align*}
    \|\Phi\|_{\F}^2
    \leq 2 \|U\|_{\F}^2 + 2 R^2
    &
    \leq 2 \|\Sigma^{1/2} A^\top\|^2 \|V\|_{\F}^2 \|B \Sigma^{1/2}\|^2 + 2 R^2
    \\&
    = 2 \|\Sigma^{1/2} A^\top A \Sigma^{1/2}\| \|\Sigma^{1/2} B^\top B \Sigma^{1/2}\| + 2 R^2.
\end{align*}
By the definition of $R$ (see \eqref{eq:R} and \eqref{eq:beta}), we have
\[
    \|\Phi\|_{\F}^2 \leq 4 \|\Sigma^{1/2} A^\top A \Sigma^{1/2}\| \|\Sigma^{1/2} B^\top B \Sigma^{1/2}\|
    \quad \text{almost surely.}
\]
Note that this inequality yields that $\lambda$, defined in \eqref{eq:lambda}, does not exceed $1 / (C_X \|\Phi\|_{\F})$.
Hence, due to Assumption \ref{as:quadratic}, it holds that
\begin{equation}
    \label{eq:exp_moment_bound}
    - \lambda \E_{\bxi} (\bxi^\top \Phi \bxi) + \log \E_{\bxi} e^{\lambda \bxi^\top \Phi \bxi}
    \leq C_X^2 \lambda^2 \|\Phi\|_{\F}^2
    \leq 4 C_X^2 \lambda^2 \|\Sigma^{1/2} A^\top A \Sigma^{1/2}\| \|\Sigma^{1/2} B^\top B \Sigma^{1/2}\|.
\end{equation}
Summing up the inequalitites \eqref{eq:fr_norm_bound}, \eqref{eq:kl_bound}, and \eqref{eq:exp_moment_bound}, we obtain that
\[
    \lambda \|B(\widehat \Sigma - \Sigma) A^\top\|_{\F}
    \leq 4 C_X^2 \lambda^2 \|\Sigma^{1/2} A^\top A \Sigma^{1/2}\| \|\Sigma^{1/2} B^\top B \Sigma^{1/2}\| + \frac{\beta/2 + \log(2 / \delta)}n
\]
on the event $\cE$.
Dividing both parts by $\lambda$ and taking into account \eqref{eq:beta} and \eqref{eq:lambda}, we conclude that
\begin{align*}
    \|B (\widehat \Sigma - \Sigma) A^\top\|_{\F}
    &
    \leq 4 C_X \|\Sigma^{1/2} A^\top A \Sigma^{1/2}\| \, \|\Sigma^{1/2} B^\top B \Sigma^{1/2}\|
    \\&\quad
    \cdot \sqrt{ \frac{\ttr(\Sigma^{1/2} A^\top A \Sigma^{1/2}) \ttr(\Sigma^{1/2} B^\top B \Sigma^{1/2}) + \log(2 / \delta)}n}
\end{align*}
with probability at least $1 - \delta$.

\subsection{Proof of Theorem \ref{th:noise_concentration}}

The proof of this statement is also based on the PAC-Bayesian approach. Note that
\[
    \left\| \frac1n \sum\limits_{i = 1}^n B \bX_i \eps_i \right\|
    = \sup\limits_{\|\bv\| = 1} \frac1n \sum\limits_{i = 1}^n \bv^\top B \bX_i \eps_i
    = \sup\limits_{\substack{\bu = \Sigma^{1/2} B^\top \bv, \\ \|\bv\| = 1}} \frac1n \sum\limits_{i = 1}^n \bu^\top \Sigma^{-1/2} \bX_i \eps_i.
\]
Let $\mu$ be the probability measure on $\R^d$ corresponding to the Gaussian distribution
\[
    \mathcal N\left(\bzero, \frac1{2 \ttr(\Sigma^{1/2} B^\top B \Sigma^{1/2})} \Sigma^{1/2} B^\top B \Sigma^{1/2} \right).
\]
For any $\bu$, such that $\bu = \Sigma^{1/2} B^\top \bv$ for some unit vector $\bv$, define a probability measure $\rho_\bu$ by the formula
\[
    \rmd \rho_\bu(\bx) = \frac1\pi \1\left(\|\bx - \bu\| \leq \|B \Sigma^{1/2}\| \right) \rmd \mu(\bx - \bu),
\]
where
\[
    \pi
    = \mu\left( \left\{\bx : \|\bx - \bu\| \leq \|B \Sigma^{1/2}\| \right\} \right)
\]
is a normalizing constant. Then it holds that
\[
    \sup\limits_{\substack{\bu = \Sigma^{1/2} B^\top \bv, \\ \|\bv\| = 1}} \frac1n \sum\limits_{i = 1}^n \bu^\top \Sigma^{-1/2} \bX_i \eps_i
    = \sup\limits_{\substack{\bu = \Sigma^{1/2} B^\top \bv, \\ \|\bv\| = 1}} \frac1n \E_{\bxi \sim \rho_\bu} \sum\limits_{i = 1}^n \bxi^\top \Sigma^{-1/2} \bX_i \eps_i.
\]
Let us fix
\[
    \lambda = \frac1{4 \sigma} \sqrt{ \frac{\ttr(\Sigma^{1/2} B^\top B \Sigma^{1/2}) + \log(2 / \delta)}{\|\Sigma\| n}}.
\]
According to Lemma \ref{lem:pac-bayes}, for any $\delta \in (0, 1)$, with probability at least $(1 - \delta)$, we have
\[
    \lambda \sup\limits_{\substack{\bu = \Sigma^{1/2} B^\top \bv, \\ \|\bv\| = 1}} \frac1n \E_{\bxi \sim \rho_\bu} \sum\limits_{i = 1}^n \bxi^\top \Sigma^{-1/2} \bX_i \eps_i
    \leq \E_{\bxi \sim \rho_\bu} \log \E_{\bX, \eps} e^{\lambda \bxi^\top \Sigma^{-1/2} \bX \eps} 
    + \frac{\KL(\rho_\bu, \mu) + \log(1 / \delta)}n
\]
simultaneously for all $\rho_\bu$, such that $\bu = \Sigma^{1/2} B^\top \bv$ for some unit vector $\bv$. Applying Lemma \ref{lem:kl} with $P = \Sigma^{1/2} B^\top$, $Q = 1$, and $\beta = 2 \ttr(\Sigma^{1/2} B^\top B \Sigma^{1/2})$, we obtain that
\[
    \KL(\rho_\bu, \mu)
    \leq \log 2 + \ttr(\Sigma^{1/2} B^\top B \Sigma^{1/2}) \left\| (\Sigma^{1/2} B^\top)^\dag \bu \right\|^2
    \leq \log 2 + \ttr(\Sigma^{1/2} B^\top B \Sigma^{1/2}).
\]
Besides, since $\|\Sigma^{-1/2} \bX_1 \eps_1\|_{\psi_1} = \sigma$ and
\[
    \lambda = \frac1{4 \sigma} \sqrt{ \frac{\ttr(\Sigma^{1/2} B^\top B \Sigma^{1/2}) + \log(2 / \delta)}{\|\Sigma\| n} }
    \leq \frac1{4 \sigma \sqrt{\|\Sigma^{1/2} B^\top B \Sigma^{1/2}\|}}
    \leq \frac1{2 \sigma \|\xi\|},
\]
Lemma 2 from \cite{zhivotovskiy21} yields that
\[
    \log \E_{\bX, \eps} e^{\lambda \bxi^\top \Sigma^{-1/2} \bX \eps}
    \leq 4 \lambda^2 \sigma^2 \|\bxi\|^2
    \leq 8 \lambda^2 \sigma^2 \left( \|\bu\|^2 + \|B \Sigma^{1/2}\|^2 \right)
    \leq 16 \lambda^2 \sigma^2 \|B \Sigma^{1/2}\|^2.
\]
Here we used the fact that $\|\bxi\|^2 \leq 2 \|\bu\|^2 + 2 \|B \Sigma^{1/2}\|^2 \leq 4 \|B \Sigma^{1/2}\|^2$ almost surely.
Hence, with probability at least $(1 - \delta)$, it holds that
\begin{align*}
    \left\| B (\bZ - \E \bZ) \right\|
    = \frac{1}n \left\| \sum\limits_{i = 1}^n B \bX_i \eps_i \right\|
    &
    \leq 16 \lambda \sigma^2 \|\Sigma^{1/2} B^\top B \Sigma^{1/2}\| + \frac{\ttr(\Sigma^{1/2} B^\top B \Sigma^{1/2}) + \log(2 / \delta)}{\lambda n}
    \\&
    = 8 \sigma \left\|B \Sigma^{1/2} \right\| \sqrt{\frac{\ttr(\Sigma^{1/2} B^\top B \Sigma^{1/2}) + \log(2 / \delta)}n}.
\end{align*}
\myendproof

\subsection{Auxiliary results}

\begin{Lem}
    \label{lem:kl}
    Let $\beta$ be a positive number and let $P \in \R^{p \times r}$ and $Q \in \R^{q \times s}$ be arbitrary matrices. 
    Let $\sfp$ be the probability density on ${\rm Im}((QQ^\top) \otimes (P P^\top))$ (with respect to the volume measure $\cV$) of the Gaussian distribution $\cN\big(0, \beta^{-1} \, (QQ^\top) \otimes (P P^\top) \big)$, where $\otimes$ stands for the Kronecker product. That is,
    \[
        \sfp(\bx) \propto \exp\left\{ - \frac{\beta}2 \bx^\top \big((QQ^\top) \otimes (P P^\top) \big)^\dag \bx \right\},
        \quad \bx \in {\rm Im}((QQ^\top) \otimes (P P^\top)).
    \]
    Fix a matrix $U \in \R^{p \times q}$, such that $\rmvec(U) \in {\rm Im}((QQ^\top) \otimes (P P^\top))$, and set
    \[
        R = \sqrt{\frac2\beta \, \Tr(PP^\top) \, \Tr(QQ^\top)}.
    \]
    Define the density $\sfp_U$ as follows:
    \[
        \sfp_U(\bx) = \frac{1}{\pi_R} \sfp(\bx - \rmvec(U)) \1 \big(\|\bx - \rmvec(U)\| \leq R \big), \quad \bx \in {\rm Im}((QQ^\top) \otimes (P P^\top)),
    \]
    where
    \[
        \pi_R = \int\limits_{\|\bx\| \leq R} \sfp(\bx) \rmd \cV(\bx) = \p_{\bxi \sim \sfp} \big( \|\bxi\| \leq R \big)
    \]
    is a normalizing constant. Then it holds that
    \[
        \KL(\sfp_U, \sfp) = \log \frac1{\pi_R} + \frac{\beta}2 \|P^{\dag} U (Q^\dag)^{\top} \|_{\F}^2
        \leq \log 2 + \frac{\beta}2 \|P^{\dag} U (Q^\dag)^{\top} \|_{\F}^2.
    \]
\end{Lem}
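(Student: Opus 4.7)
\textbf{Proof proposal for Lemma \ref{lem:kl}.}

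The plan is to expand the KL divergence directly from its definition and split it into a normalization term and a quadratic term, then bound the normalization term via a second-moment computation. Writing $M = (QQ^\top) \otimes (PP^\top)$ and using that $\sfp_U(\bx)/\sfp(\bx) = \pi_R^{-1}\sfp(\bx-\rmvec(U))/\sfp(\bx)$ on the support of $\sfp_U$, I would first observe
\[
    \log \frac{\sfp(\bx - \rmvec(U))}{\sfp(\bx)}
    = \frac{\beta}{2}\Bigl[\bx^\top M^\dag \bx - (\bx-\rmvec(U))^\top M^\dag (\bx-\rmvec(U))\Bigr]
    = \beta\,\rmvec(U)^\top M^\dag \bx - \frac{\beta}{2}\rmvec(U)^\top M^\dag \rmvec(U),
\]
so that
\[
    \KL(\sfp_U,\sfp)
    = \log \frac{1}{\pi_R}
    + \beta\,\rmvec(U)^\top M^\dag\,\E_{\bxi\sim\sfp_U}\bxi
    - \frac{\beta}{2}\rmvec(U)^\top M^\dag \rmvec(U).
\]

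Next, I would use that $\sfp_U$ is symmetric about $\rmvec(U)$ (the unrestricted density $\sfp(\cdot - \rmvec(U))$ is symmetric about $\rmvec(U)$ since $\sfp$ is centered Gaussian, and the truncation ball is centered at $\rmvec(U)$), which gives $\E_{\bxi\sim\sfp_U}\bxi = \rmvec(U)$. This collapses the identity above to
\[
    \KL(\sfp_U,\sfp) = \log\frac{1}{\pi_R} + \frac{\beta}{2}\,\rmvec(U)^\top M^\dag \rmvec(U).
\]
Then I would translate the quadratic form into a Frobenius norm. Using $M^\dag = (QQ^\top)^\dag \otimes (PP^\top)^\dag = (Q^\dag)^\top Q^\dag \otimes (P^\dag)^\top P^\dag$ together with the standard identity $(A\otimes B)\rmvec(X) = \rmvec(BXA^\top)$ and symmetry of $(Q^\dag)^\top Q^\dag$, I obtain
\[
    \rmvec(U)^\top M^\dag \rmvec(U)
    = \Tr\bigl(U^\top (P^\dag)^\top P^\dag\,U\,(Q^\dag)^\top Q^\dag\bigr)
    = \bigl\|P^\dag U (Q^\dag)^\top\bigr\|_{\F}^{\,2}.
\]

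It remains to bound $\log(1/\pi_R)$, which is the only step requiring an estimate rather than an identity. Here I would compute the second moment of $\bxi\sim \sfp$ directly:
\[
    \E\|\bxi\|^2 = \Tr\bigl(\beta^{-1}(QQ^\top)\otimes(PP^\top)\bigr) = \beta^{-1}\,\Tr(QQ^\top)\,\Tr(PP^\top) = R^2/2,
\]
and apply Markov's inequality to get $\p(\|\bxi\|> R)\le 1/2$, hence $\pi_R \ge 1/2$ and $\log(1/\pi_R)\le \log 2$.

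The only genuinely delicate step is the pseudoinverse bookkeeping: one must check that $\rmvec(U)\in\mathrm{Im}(M)$ so that $M^\dag\rmvec(U)$ behaves like an ordinary inverse on the support of $\sfp$, and that the identity $(QQ^\top)^\dag = (Q^\dag)^\top Q^\dag$ (and its analog for $P$) yields the stated Frobenius norm. Everything else reduces to the symmetry of the truncated Gaussian and a one-line Markov bound.
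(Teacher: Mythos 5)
Your proposal is correct and follows essentially the same path as the paper: expand the log-density ratio to isolate the quadratic term, use the symmetry of the truncated Gaussian about $\rmvec(U)$ to evaluate the mean, convert the quadratic form via the Kronecker/$\rmvec$ identities, and bound $\pi_R$ by a second-moment tail bound (your ``Markov applied to $\|\bxi\|^2$'' is the same inequality the paper labels Chebyshev). No gaps.
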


\begin{proof}
    Denote $S = (QQ^\top) \otimes (PP^\top)$ and $\bu = \rmvec(U)$ for brevity.
    By the definition of $\KL(\sfp_U, \sfp)$, it holds that
    \begin{align*}
        \KL(\sfp_U, \sfp)
        &
        = \int \log\frac{\sfp_U(\bx)}{\sfp(\bx)} \, \sfp_U(\bx) \rmd \cV(\bx)
        \\&
        = \int \log\frac1{\pi_R} \, \sfp_U(\bx) \rmd \cV(\bx) + \frac{\beta}2 \int\limits_{\|\bx - \bu\| \leq R} \left[ \bx^\top S^{\dag} \bx - (\bx - \bu)^\top S^{\dag} (\bx - \bu) \right] \rmd \cV(\bx)
        \\&
        = \log\frac1{\pi_R} + \frac{\beta}2 \int\limits_{\|\bx - \bu\| \leq R} \left[ 2 \bx^\top S^{\dag} \bu - \bu^\top S^{\dag} \bu \right] \rmd \cV(\bx)
        \\&
        = \log\frac1{\pi_R} + \frac{\beta}2 \left( 2 \bu^\top S^{\dag} \bu - \bu^\top S^{\dag} \bu\right)
        = \log\frac1{\pi_R} + \frac{\beta}2 \bu^\top S^{\dag} \bu.
    \end{align*}
    Let us recall the following properties of the Kronecker product:
    \begin{align*}
        &
        \big( (QQ^\top) \otimes (PP^\top) \big)^{\dag} = (QQ^\top)^{\dag} \otimes (PP^\top)^{\dag},
        \\&
        \big( (QQ^\top)^{\dag} \otimes (PP^\top)^{\dag} \big) \rmvec(U) = \rmvec \big( (PP^\top)^{\dag} U (QQ^\top)^{\dag} \big).
    \end{align*}
    Using these two inequalities, we obtain that
    \begin{align*}
        \bu^\top S^{\dag} \bu
        &
        = \rmvec(U)^\top \rmvec \big( (PP^\top)^{\dag} U (QQ^\top)^{\dag} \big)
        = \Tr\big(U^\top (PP^\top)^{\dag} U (QQ^\top)^{\dag} \big)
        \\&
        = \Tr\big( U^\top (P^\dag)^\top P^\dag U (Q^\dag)^\top Q^\dag \big)
        = \Tr\big( Q^\dag U^\top (P^\dag)^\top P^\dag U (Q^\dag)^\top \big)
        \\&
        = \|P^\dag U (Q^\dag)^\top\|_{\F}^2.
    \end{align*}
    Hence, it holds that
    \[
        \KL(\rho_U, \mu) = \log \frac1{\pi_R} + \frac{\beta}2 \|P^\dag U (Q^\dag)^\top\|_{\F}^2.
    \]
    To prove the second part of the lemma, note that
    \[
        \pi_R = \p( \|\bxi\| \leq R),
        \quad \text{where $\bxi \sim \cN(0, \beta^{-1} \, (QQ^\top) \otimes (P P^\top))$}.
    \]
    Using Chebyshev's inequality, we obtain that
    \begin{align*}
        \pi_R
        &
        \geq 1 - \frac{\E \|\bxi\|^2}{R^2}
        = 1 - \frac{\E \Tr(\bxi\bxi^\top)}{R^2}
        = 1 - \frac{\Tr\big( (QQ^\top) \otimes (P P^\top) \big)}{\beta R^2}
        \\&
        = 1 - \frac{\Tr\big( (QQ^\top) \big) \Tr \big( (P P^\top) \big)}{\beta R^2}
        \geq \frac12,
    \end{align*}
    and thus,
    \[
        \KL(\sfp_U, \sfp)
        = \log \frac1{\pi_R} + \frac{\beta}2 \|P^\dag U (Q^\dag)^\top\|_{\F}^2
        \leq \log 2 + \frac{\beta}2 \|P^\dag U (Q^\dag)^\top\|_{\F}^2.
    \]
\end{proof}

\section{Proof of Corollary \ref{co:risk_bound}}
\label{sec:co_risk_bound_proof}

According to Theorem \ref{theorem: bias} and \ref{th:stoch_term}, with probability at least $(1 - 13 \delta / 2)$, it holds that
\begin{equation}
    \label{eq:risk_expansion}
    \left\| \Sigma^{1/2} (\widehat \btheta - \btheta^\circ) \right\|
    \leq \left\|\Sigma^{1/2} \bb_\lambda \right\| + \left\| \Sigma^{1/2} (\Sigma^2 + 2\lambda I_d)^{-1} \bzeta \right\| + \diamondsuit,
\end{equation}
where $\bzeta = \Sigma \bU - \Sigma (\widehat \Sigma - \Sigma) \bb_\lambda - (\widehat \Sigma - \Sigma) \Sigma \bb_{\lambda}$ and $\diamondsuit$ defined in \eqref{eq:risk_expansion_remainder}. In the rest of the proof, we show that the norm of $\Sigma^{1/2} (\Sigma^2 + 2\lambda I_d)^{-1} \bzeta$ does not exceed
\[
    4 \left(2\sigma + C_X \|\Sigma^{1/2} \bb_\lambda\| \right) \sqrt{\frac{k^* + r_4(k^*) + \log(4 / \delta)}n}
    + \frac{2 C_X \|\Sigma^{3/2} \bb_{\lambda}\|}{\sqrt{2\lambda}} \sqrt{\frac{4 k^* + 4 r_2(k^*) + \log(4 / \delta)}n}.
\]
In order to do so, we use the triangle inequality
\begin{align}
    \label{eq:zeta_norm_triangle_inequality}
    \left\| \Sigma^{1/2} (\Sigma^2 + 2\lambda I_d)^{-1} \bzeta \right\|
    &\notag
    \leq \left\| \Sigma^{1/2} (\Sigma^2 + 2\lambda I_d)^{-1} \Sigma \bU \right\| + \left\| \Sigma^{1/2} (\Sigma^2 + 2\lambda I_d)^{-1} \Sigma (\widehat\Sigma - \Sigma) \bb_\lambda \right\|
    \\&\quad
    + \left\| \Sigma^{1/2} (\Sigma^2 + 2\lambda I_d)^{-1} (\widehat\Sigma - \Sigma) \Sigma \bb_\lambda \right\|
\end{align}
and bound the three terms in the right-hand side one by one using Theorems \ref{th:covariance_concentration} and \ref{th:noise_concentration}. Before we proceed, we formulate an auxiliary result.
\begin{Lem}
    \label{lem:effective_rank_bounds}
    Let $r_q(k)$ and $k^* = k^*(\lambda)$ be as defined in \eqref{eq:rqk} and \eqref{eq:k_star}, respectively. Then it holds that
    \[
        \ttr\left(\Sigma^4 (\Sigma^2 + 2\lambda I_d)^{-2} \right)
        \leq \left( 1 + \frac{2\lambda}{\|\Sigma\|^2} \right)^2 \big( k^* + r_4(k^*) \big)
    \]
    and
    \[
        \ttr\left(\Sigma^2 (\Sigma^2 + 2\lambda I_d)^{-2} \right)
        \leq 4 k^* + 4 r_2(k^*).
    \]
\end{Lem}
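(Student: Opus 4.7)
The plan is to diagonalize $\Sigma$, reducing both claims to elementary inequalities on the eigenvalues $\sigma_1 \geq \dots \geq \sigma_d$. Write the eigenvalues of $M = \Sigma^4(\Sigma^2 + 2\lambda I_d)^{-2}$ and $N = \Sigma^2(\Sigma^2 + 2\lambda I_d)^{-2}$ as
\[
    m_j = \frac{\sigma_j^4}{(\sigma_j^2 + 2\lambda)^2},
    \qquad
    n_j = \frac{\sigma_j^2}{(\sigma_j^2 + 2\lambda)^2},
\]
and, in each case, split the sum defining $\Tr(\cdot)$ at the index $k^*$, using the defining inequalities $\sigma_{k^*}^2 \geq 2\lambda > \sigma_{k^*+1}^2$.

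The first bound is straightforward because $m_j$ is monotone increasing in $\sigma_j$, so $\|M\| = m_1 = \|\Sigma\|^4/(\|\Sigma\|^2 + 2\lambda)^2$ and $1/\|M\| = (1 + 2\lambda/\|\Sigma\|^2)^2$. For $j \leq k^*$ I would just use $m_j \leq 1$ (which is immediate since $\sigma_j^2/(\sigma_j^2+2\lambda) \leq 1$), and for $j > k^*$ I would bound
\[
    m_j \leq \frac{\sigma_j^4}{(2\lambda)^2} = \left(\frac{\sigma_j}{\sigma_{k^*+1}}\right)^4 \cdot \left(\frac{\sigma_{k^*+1}^2}{2\lambda}\right)^2 \leq \left(\frac{\sigma_j}{\sigma_{k^*+1}}\right)^4,
\]
since $\sigma_{k^*+1}^2 < 2\lambda$. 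Summing yields $\Tr(M) \leq k^* + r_4(k^*)$, and dividing by $\|M\|$ gives the claim.

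The main obstacle is the second bound, because $n_j$ is \emph{not} monotone in $\sigma_j$ (the function $t \mapsto t/(t+2\lambda)^2$ peaks at $t = 2\lambda$), so the identity of the maximizer of $\|N\| = \max_j n_j$ is not transparent. The workaround is not to try to determine $\|N\|$ exactly, but simply to use the lower bound $\|N\| \geq n_{k^*+1}$ on the tail part (and the trivial $\|N\| \geq n_j$ on the head). For $j \leq k^*$ this trivially gives $n_j/\|N\| \leq 1$, contributing at most $k^*$. For $j > k^*$, combine
\[
    n_j \leq \frac{\sigma_j^2}{(2\lambda)^2}
    \quad\text{and}\quad
    n_{k^*+1} \geq \frac{\sigma_{k^*+1}^2}{(4\lambda)^2},
\]
where the second inequality uses $\sigma_{k^*+1}^2 + 2\lambda < 4\lambda$ (a consequence of $\sigma_{k^*+1}^2 < 2\lambda$); this yields $n_j/n_{k^*+1} \leq 4(\sigma_j/\sigma_{k^*+1})^2$ and, on summing, $\sum_{j>k^*} n_j/\|N\| \leq 4 r_2(k^*)$. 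Adding the two contributions gives $\ttr(N) \leq k^* + 4 r_2(k^*) \leq 4(k^* + r_2(k^*))$. Edge cases ($k^* = d$ makes $r_q(k^*) = 0$, and $k^* = 0$ if $\sigma_1^2 < 2\lambda$) are easily handled by interpreting the empty sums as zero.
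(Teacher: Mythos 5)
Your proof is correct and follows essentially the same diagonalization-and-split-at-$k^*$ strategy as the paper, with an identical argument for the first bound. For the second bound you differ slightly in how the head sum is treated: you use the trivial lower bound $\|N\|\geq n_j$ for $j\leq k^*$, giving the head a contribution of $k^*$ directly, whereas the paper estimates $1/\|N\| = \min_k (\sigma_k^2+2\lambda)^2/\sigma_k^2$ by its value at $k=k^*$ and then needs the two further bounds $(\sigma_{k^*}^2+2\lambda)^2/\sigma_{k^*}^2 \leq 4\sigma_{k^*}^2$ and $n_j \leq 1/\sigma_j^2 \leq 1/\sigma_{k^*}^2$ to obtain a contribution of $4$ per head term (the paper's display as written elides the $n_j$ factor, which makes that step look broken as printed; your route sidesteps this entirely). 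Your version also yields the slightly stronger intermediate estimate $k^* + 4r_2(k^*)$, which you then relax to $4(k^* + r_2(k^*))$ to match the stated conclusion. The tail estimates in both proofs are the same, using $\sigma_j^2+2\lambda\geq 2\lambda$ and $\sigma_{k^*+1}^2+2\lambda<4\lambda$.
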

The proof of Lemma \ref{lem:effective_rank_bounds} is postponed to Section \ref{sec:lem_effective_rank_bounds_proof}.
It helps us to verify the conditions of Theorems \ref{th:covariance_concentration} and \ref{th:noise_concentration}. Indeed, taking into account that $r_4(k) \leq r_2(k)$ for all $k \in \mathbb N$, we observe that the condition \eqref{eq:k_star_condition} and Lemma \ref{lem:effective_rank_bounds} yield
\[
    \ttr\left(\Sigma^4 (\Sigma^2 + 2\lambda I_d)^{-2} \right) + \log(4/\delta)
    \leq \left( 1 + \frac{2\lambda}{\|\Sigma\|^2} \right)^2 \big( k^* + r_4(k^*) \big) + \log(4/\delta)
    \leq n.
\]
Then, according to Theorems \ref{th:covariance_concentration} and \ref{th:noise_concentration}, with probability at least $(1 - \delta)$, we simultaneously have
\begin{align*}
    &
    \left\| \Sigma^{1/2} (\Sigma^2 + 2\lambda I_d)^{-1} \Sigma (\widehat\Sigma - \Sigma) \bb_\lambda \right\|
    \\&
    \leq 4 C_X \left\|\Sigma^2 (\Sigma^2 + 2\lambda I_d)^{-1} \right\| \, \|\Sigma^{1/2} \bb_\lambda\| \sqrt{\frac{\ttr(\Sigma^4 (\Sigma^2 + 2\lambda I_d)^{-2}) + \log(4 / \delta)}n}
    \\&
    = \frac{4 C_X \|\Sigma\|^2}{\|\Sigma\|^2 + 2\lambda} \, \|\Sigma^{1/2} \bb_\lambda\| \sqrt{\frac{\ttr(\Sigma^4 (\Sigma^2 + 2\lambda I_d)^{-2}) + \log(4 / \delta)}n}
\end{align*}
and
\begin{align*}
    \left\| \Sigma^{1/2} (\Sigma^2 + 2\lambda I_d)^{-1} \Sigma \bU \right\|
    &
    \leq 8 \sigma \left\| \Sigma^2 (\Sigma^2 + 2\lambda I_d)^{-1} \right\| \sqrt{\frac{\ttr(\Sigma^4 (\Sigma^2 + 2\lambda I_d)^{-2}) + \log(4 / \delta)}n}
    \\&
    = \frac{8 \sigma \|\Sigma\|^2}{\|\Sigma\|^2 + 2\lambda} \sqrt{\frac{\ttr(\Sigma^4 (\Sigma^2 + 2\lambda I_d)^{-2}) + \log(4 / \delta)}n}.
\end{align*}
In view of Lemma \ref{lem:effective_rank_bounds}, we obtain that, on the same event, it holds that
\begin{equation}
    \label{eq:leading_stoch_term_1st}
    \left\| \Sigma^{1/2} (\Sigma^2 + 2\lambda I_d)^{-1} \Sigma (\widehat\Sigma - \Sigma) \bb_\lambda \right\|
    \leq 4 C_X \|\Sigma^{1/2} \bb_\lambda\| \sqrt{\frac{k^* + r_4(k^*) + \log(4 / \delta)}n}
\end{equation}
and
\begin{equation}
    \label{eq:leading_stoch_term_2nd}
    \left\| \Sigma^{1/2} (\Sigma^2 + 2\lambda I_d)^{-1} \Sigma \bU \right\|
    \leq 8 \sigma \sqrt{\frac{k^* + r_4(k^*) + \log(4 / \delta)}n}.
\end{equation}
It remains to bound the norm of $\Sigma^{1/2} (\Sigma^2 + 2\lambda I_d)^{-1} (\widehat\Sigma - \Sigma) \Sigma \bb_\lambda$ to finish the proof. Note that Lemma \ref{lem:effective_rank_bounds} and \eqref{eq:k_star_condition} imply that
\[
    \ttr\left(\Sigma^2 (\Sigma^2 + 2\lambda I_d)^{-2} \right)
    + \log(4/\delta)
    \leq 4 k^* + 4 r_2(k^*) + \log(4/\delta) \leq 4n.
\]
Then, due to Theorem \ref{th:covariance_concentration}, with probability at least $(1 - \delta/2)$, we have
\begin{align}
    \label{eq:leading_stoch_term_3rd}
    &\notag
    \left\| \Sigma^{1/2} (\Sigma^2 + 2\lambda I_d)^{-1} (\widehat\Sigma - \Sigma) \Sigma \bb_\lambda \right\|
    \\&
    \leq 4 C_X \left\|\Sigma (\Sigma^2 + 2\lambda I_d)^{-1} \right\| \, \| \Sigma^{3/2} \bb_{\lambda} \| \sqrt{\frac{\ttr(\Sigma^2 (\Sigma^2 + 2\lambda I_d)^{-2}) + \log(4 / \delta)}n}
    \\&\notag
    \leq \frac{2 C_X \|\Sigma^{3/2} \bb_{\lambda}\|}{\sqrt{2\lambda}} \sqrt{\frac{4 k^* + 4 r_2(k^*) + \log(4 / \delta)}n}.
\end{align}
Here we used the fact that
\[
    \left\|\Sigma (\Sigma^2 + 2\lambda I_d)^{-1} \right\|
    = \max\limits_{1 \leq j \leq d} \frac{\sigma_j}{\sigma_j^2 + 2\lambda}
    \leq \frac{1}{2 \sqrt{2\lambda}}.
\]
Summing up the inequalities \eqref{eq:zeta_norm_triangle_inequality}--\eqref{eq:leading_stoch_term_3rd} and using the union bound, we obtain that, with probability at least $(1 - 3\delta / 2)$,
\begin{align}
    \label{eq:leading_stoch_term_bound}
    \left\| \Sigma^{1/2} (\Sigma^2 + 2\lambda I_d)^{-1} \bzeta \right\|
    &\notag
    \leq 4 \left(2\sigma + C_X \|\Sigma^{1/2} \bb_\lambda\| \right) \sqrt{\frac{k^* + r_4(k^*) + \log(4 / \delta)}n}
    \\&\quad
    + \frac{2 C_X \|\Sigma^{3/2} \bb_{\lambda}\|}{\sqrt{2\lambda}} \sqrt{\frac{4 k^* + 4 r_2(k^*) + \log(4 / \delta)}n}
\end{align}
Finally, \eqref{eq:risk_expansion} and \eqref{eq:leading_stoch_term_bound} yield that
\begin{align*}
    \left\| \Sigma^{1/2} (\widehat \btheta - \btheta^\circ) \right\|
    &
    \leq \left\|\Sigma^{1/2} \bb_\lambda\right\| + 4 \left(2\sigma + C_X \|\Sigma^{1/2} \bb_\lambda\| \right) \sqrt{\frac{k^* + r_4(k^*) + \log(4 / \delta)}n}
    \\&\quad
    + \frac{2 C_X \|\Sigma^{3/2} \bb_{\lambda}\|}{\sqrt{2\lambda}} \sqrt{\frac{4 k^* + 4 r_2(k^*) + \log(4 / \delta)}n}
    + \diamondsuit
\end{align*}
with probability at least $(1 - 8\delta)$.

\myendproof

\subsection{Proof of Lemma \ref{lem:effective_rank_bounds}}
\label{sec:lem_effective_rank_bounds_proof}

Due to the definition of the effective rank, we have
\[
    \ttr\left(\Sigma^4 (\Sigma^2 + 2\lambda I_d)^{-2} \right)
    = \frac{\Tr(\Sigma^4 (\Sigma^2 + 2\lambda I_d)^{-2})}{\|\Sigma^4 (\Sigma^2 + 2\lambda I_d)^{-2}\|}
    = \left( \frac{\|\Sigma\|^2 + 2\lambda}{\|\Sigma\|^2} \right)^2 \sum\limits_{j = 1}^d \frac{\sigma_j^4}{(\sigma_j^2 + 2\lambda)^2}.
\]
Taking into account that the operator norm of $\Sigma^4 (\Sigma^2 + 2\lambda I_d)^{-2}$ is equal to $\|\Sigma\|^4 / (\|\Sigma\|^2 + 2\lambda)^2$, we obtain that
\begin{align*}
    \ttr\left(\Sigma^4 (\Sigma^2 + 2\lambda I_d)^{-2} \right)
    &
    = \left( \frac{\|\Sigma\|^2 + 2\lambda}{\|\Sigma\|^2} \right)^2 \left( \sum\limits_{j = 1}^{k^*} \frac{\sigma_j^4}{(\sigma_j^2 + 2\lambda)^2} + \sum\limits_{j > k^*} \frac{\sigma_j^4}{(\sigma_j^2 + 2\lambda)^2} \right)
    \\&
    \leq \left( \frac{\|\Sigma\|^2 + 2\lambda}{\|\Sigma\|^2} \right)^2 \left( \sum\limits_{j = 1}^{k^*} 1 + \sum\limits_{j > k^*} \frac{\sigma_j^4}{\sigma_{k^* + 1}^4} \right)
    \\&
    = \left( 1 + \frac{2\lambda}{\|\Sigma\|^2} \right)^2 \left( k^* + r_4(k^*) \right).
\end{align*}
Similarly, it holds that
\begin{align*}
    \ttr\left(\Sigma^2 (\Sigma^2 + 2\lambda I_d)^{-2} \right)
    &
    = \left( \max\limits_{1 \leq k \leq d} \frac{\sigma_k^2}{(\sigma_k^2 + 2\lambda)^2} \right)^{-1} \sum\limits_{j = 1}^d \frac{\sigma_j^2}{(\sigma_j^2 + 2\lambda)^2}
    \\&
    = \min\limits_{1 \leq k \leq d} \frac{(\sigma_k^2 + 2\lambda)^2}{\sigma_k^2} \left( \sum\limits_{j = 1}^{k^*} \frac{\sigma_j^2}{(\sigma_j^2 + 2\lambda)^2} + \sum\limits_{j > k^*} \frac{\sigma_j^2}{(\sigma_j^2 + 2\lambda)^2} \right)
    \\&
    \leq \frac{(\sigma_{k^*}^2 + 2\lambda)^2}{\sigma_{k^*}^2} \sum\limits_{j = 1}^{k^*} 1 + \frac{(\sigma_{k^* + 1}^2 + 2\lambda)^2}{\sigma_{k^* + 1}^2} \sum\limits_{j > k^*} \frac{\sigma_j^2}{(\sigma_j^2 + 2\lambda)^2}.
\end{align*}
Since the definition of $k^*$ (see \eqref{eq:k_star}) yields that
\[
    \frac{(\sigma_{k^*}^2 + 2\lambda)^2}{\sigma_{k^*}^2}
    \leq \frac{(\sigma_{k^*}^2 + \sigma_{k^*}^2)^2}{\sigma_{k^*}^2} = 4 \sigma_{k^*}^2
\]
and
\[
    \frac{(\sigma_{k^* + 1}^2 + 2\lambda)^2}{\sigma_{k^* + 1}^2} \sum\limits_{j > k^*} \frac{\sigma_j^2}{(\sigma_j^2 + 2\lambda)^2}
    \leq \frac{(\sigma_{k^* + 1}^2 + 2\lambda)^2}{4\lambda^2} \sum\limits_{j > k^*} \frac{\sigma_j^2}{\sigma_{k^* + 1}^2}
    \leq \frac{(2\lambda + 2\lambda)^2}{4\lambda^2} \sum\limits_{j > k^*} \frac{\sigma_j^2}{\sigma_{k^* + 1}^2}
    = 4 r_2(k^*),
\]
we get the desired bound:
\[
    \ttr\left(\Sigma^2 (\Sigma^2 + 2\lambda I_d)^{-2} \right)
    \leq 4 k^* + 4 r_2(k^*).
\]
\myendproof

\section{Properties of the bias $\bb_\lambda$}
\label{sec:bias_properties}

In this section, we provide rigorous proofs of some properties of $\bb_\lambda = -2\lambda (\Sigma^2 + 2\lambda I_d)^{-1} \btheta^\circ$ mentioned in Section \ref{sec:statistical_properties}. Let us introduce auxiliary notation to simplify further derivations. Throughout this section, $\sigma_1 \geq \sigma_2 \geq \dots \geq \sigma_d$ stand for the eigenvalues of $\Sigma$ and
\[
    \Sigma = \sum\limits_{j = 1}^d \sigma_j \bv_j \bv_j^\top
\]
is its eigenvalue decomposition. For any $k \in \{1, \dots, d\}$, we define $\beta_k = \bv_k^\top \btheta^\circ$ and the projections of $\btheta$ onto the linear span of $\bv_1, \dots, \bv_k$ and its orthogonal complement, respectively:
\[
    \btheta_{\leq k}^\circ = \sum\limits_{j = 1}^k \beta_j \bv_j
    \quad \text{and} \quad
    \btheta_{>k}^\circ = \btheta^\circ - \btheta_{\leq k}^\circ.
\]
Similarly to the statement of Corollary \ref{co:risk_bound}, we denote
\[
    k^* = k^*(\lambda) = \max\left\{ k \in \mathbb N : \sigma_k^2 \geq 2 \lambda \right\}.
\]
The first result relates the squared norms of $\Sigma^{1/2} \bb_\lambda$ and $\Sigma^{3/2} \bb_\lambda$ with the ones of $\btheta_{\leq k^*}^\circ$ and $\btheta_{> k^*}^\circ$.

\begin{Lem}
    \label{lem:bias_upper_bound}
    With the notation introduced above, it holds that
    \[
        \left\|\Sigma^{1/2} \bb_\lambda \right\|^2
        \leq \frac{\sigma_{k^*}^2}4 \left\|\Sigma^{-1/2} \btheta^\circ_{\leq k^*}\right\|^2 + \left\|\Sigma^{1/2} \btheta^\circ_{> k^*}\right\|^2
    \]
    and
    \[
        \frac{\left\|\Sigma^{3/2} \bb_\lambda \right\|^2}{2\lambda}
        \leq \sigma_{k^*}^2 \left\|\Sigma^{-1/2} \btheta^\circ_{\leq k^*}\right\|^2 + \frac14 \left\|\Sigma^{1/2} \btheta^\circ_{> k^*}\right\|^2.
    \]
\end{Lem}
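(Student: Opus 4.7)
The plan is to diagonalize everything in the eigenbasis of $\Sigma$, split the resulting scalar sums at the threshold index $k^*$, and apply two elementary inequalities in the two ranges.

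First I would write, using $\btheta^\circ = \sum_{j = 1}^d \beta_j \bv_j$ and $\bb_\lambda = -\lambda(\Sigma^2/2 + \lambda I_d)^{-1}\btheta^\circ$, the identity
\[
    \left\| \Sigma^{p/2} \bb_\lambda \right\|^2
    = \sum_{j = 1}^d \frac{\lambda^2 \sigma_j^p \beta_j^2}{(\sigma_j^2/2 + \lambda)^2},
    \qquad p \in \{1, 3\}.
\]
By definition of $k^*$, we have $\sigma_j^2 \geq 2\lambda$ for $j \leq k^*$ and $\sigma_j^2 < 2\lambda$ for $j > k^*$. On the ``signal'' range $j \leq k^*$ I would drop $\lambda$ from the denominator using $\sigma_j^2/2 + \lambda \geq \sigma_j^2/2$, and on the ``tail'' range $j > k^*$ I would use the AM--GM inequality $\sigma_j^2/2 + \lambda \geq \sigma_j\sqrt{2\lambda}$.

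For $p = 1$, the signal range contributes at most
\[
    \sum_{j \leq k^*} \frac{4\lambda^2 \beta_j^2}{\sigma_j^3}
    \leq \sum_{j \leq k^*} \frac{\lambda \beta_j^2}{\sigma_j}
    \leq \frac{\sigma_{k^*}^2}{4} \sum_{j \leq k^*} \frac{\beta_j^2}{\sigma_j},
\]
where the middle step uses $\sigma_j^2/2 + \lambda \geq \sigma_j\sqrt{2\lambda}$ (and a second application of $\lambda \leq \sigma_j^2/2$), while the tail range contributes at most $\sum_{j > k^*}\sigma_j \beta_j^2$ since $(\sigma_j^2/2 + \lambda)^2 \geq \lambda^2$. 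For $p = 3$, the signal range contributes at most $\sum_{j \leq k^*} 2\lambda \beta_j^2/\sigma_j \leq \sigma_{k^*}^2 \sum_{j \leq k^*} \beta_j^2/\sigma_j$ by the same two bounds, and the tail range contributes at most $\frac{1}{4}\sum_{j > k^*}\sigma_j \beta_j^2$ since $(\sigma_j^2/2 + \lambda)^2 \geq 2\lambda \sigma_j^2$ by AM--GM. Dividing the $p = 3$ bound by $2\lambda$ yields the second claim.

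There is no real obstacle here; the only thing to be careful about is choosing which of the two lower bounds on $\sigma_j^2/2 + \lambda$ to apply in each range to extract the right constants $1/4$ and $1$ (respectively $1$ and $1/4$) appearing in the statement.
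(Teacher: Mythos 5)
Your overall plan — expand in the eigenbasis, split the scalar sum at $k^*$, and bound $\sigma_j^2/2 + \lambda$ from below differently on the two ranges — is exactly the paper's argument, and your tail-range bounds for both $p=1$ and $p=3$ are correct. But the explicit chain you wrote for the $p=1$ signal range does not close. The first displayed bound, $\sum_{j \leq k^*} \tfrac{\lambda^2 \sigma_j \beta_j^2}{(\sigma_j^2/2 + \lambda)^2} \leq \sum_{j \leq k^*} \tfrac{4\lambda^2 \beta_j^2}{\sigma_j^3}$, comes from dropping $\lambda$; the next step $\tfrac{4\lambda^2}{\sigma_j^3} \leq \tfrac{\lambda}{\sigma_j}$ requires $\sigma_j^2 \geq 4\lambda$, but on the signal range you only have $\sigma_j^2 \geq 2\lambda$; and the final step $\tfrac{\lambda}{\sigma_j} \leq \tfrac{\sigma_{k^*}^2}{4\sigma_j}$ needs $\lambda \leq \sigma_{k^*}^2/4$, again off by a factor of $2$. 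Moreover, the drop-$\lambda$ intermediate $\tfrac{4\lambda^2}{\sigma_j^3}$ can itself exceed the target $\tfrac{\sigma_{k^*}^2}{4\sigma_j}$: at $\sigma_j^2 = \sigma_{k^*}^2 = 2\lambda$ one gets $\tfrac{4\lambda^2}{\sigma_j^3} = \sqrt{2\lambda}$ versus $\tfrac{\sigma_{k^*}^2}{4\sigma_j} = \sqrt{2\lambda}/4$, a factor-of-$4$ overshoot, so no subsequent inequality can recover the claimed constant. The parenthetical justification you give (``the middle step uses $\sigma_j^2/2 + \lambda \geq \sigma_j\sqrt{2\lambda}$'') is also inconsistent with the chain, since by that point the denominator $\sigma_j^2/2 + \lambda$ has already been eliminated.

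The fix is to apply the AM--GM bound $(\sigma_j^2/2 + \lambda)^2 \geq 2\lambda\sigma_j^2$ directly on the signal range as well (not just the tail), which gives $\tfrac{\lambda^2\sigma_j}{(\sigma_j^2/2+\lambda)^2} \leq \tfrac{\lambda}{2\sigma_j}$, and then $2\lambda \leq \sigma_{k^*}^2$ yields the stated $\tfrac{\sigma_{k^*}^2}{4\sigma_j}$; this is precisely what the paper does. Dropping $\lambda$ is the right move for $p=3$ on the signal range, but not for $p=1$. With this substitution (and sorting out whether the division by $2\lambda$ has already been performed in your $p=3$ displays, which is presently ambiguous), the proof goes through.
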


\begin{proof}
    Let us first elaborate on the upper bound on $\|\Sigma^{1/2} \bb_\lambda \|^2$. It holds that
    \begin{align*}
        \left\|\Sigma^{1/2} \bb_\lambda \right\|^2
        &
        = (2 \lambda)^2 \left\| \Sigma^{1/2} (\Sigma^2 + 2\lambda I_d)^{-1} \btheta^\circ \right\|^2
        = 4 \lambda^2 \sum\limits_{j = 1}^d \frac{\sigma_j \beta_j^2}{(\sigma_j^2 + 2\lambda)^2}
        \\&
        = 4 \lambda^2 \sum\limits_{j = 1}^{k^*} \frac{\sigma_j \beta_j^2}{(\sigma_j^2 + 2\lambda)^2} + 4 \lambda^2 \sum\limits_{j > k^*} \frac{\sigma_j \beta_j^2}{(\sigma_j^2 + 2\lambda)^2}.
    \end{align*}
    Due to the Cauchy inequality, we have $8\lambda \sigma_j^2 \leq (\sigma_j^2 + 2\lambda)^2$ for any $j \in \{1, \dots, d\}$. This yields that
    \begin{align*}
        \left\|\Sigma^{1/2} \bb_\lambda \right\|^2
        &
        = 4 \lambda^2 \sum\limits_{j = 1}^{k^*} \frac{\sigma_j \beta_j^2}{(\sigma_j^2 + 2\lambda)^2} + 4 \lambda^2 \sum\limits_{j > k^*} \frac{\sigma_j \beta_j^2}{(\sigma_j^2 + 2\lambda)^2}
        \\&
        \leq \frac{\lambda}2 \sum\limits_{j = 1}^{k^*} \sigma_j^{-1} \beta_j^2 + \sum\limits_{j > k^*} \sigma_j \beta_j^2
        = \frac{\lambda}2 \left\|\Sigma^{-1/2} \btheta^\circ_{\leq k^*}\right\|^2 + \left\|\Sigma^{1/2} \btheta^\circ_{> k^*}\right\|^2.
    \end{align*}
    Due to the definition of $k^*$, the right-hand side does not exceed $\sigma_{k^*}^2 \|\Sigma^{-1/2} \btheta^\circ_{\leq k^*}\|^2 / 4 + \|\Sigma^{1/2} \btheta^\circ_{> k^*}\|^2$. Similarly, we obtain that
    \begin{align*}
        \frac{\left\|\Sigma^{3/2} \bb_\lambda \right\|^2}{2\lambda}
        &
        = 2 \lambda \sum\limits_{j = 1}^{k^*} \frac{\sigma_j^3 \beta_j^2}{(\sigma_j^2 + 2\lambda)^2} + 2 \lambda \sum\limits_{j > k^*} \frac{\sigma_j^3 \beta_j^2}{(\sigma_j^2 + 2\lambda)^2}
        \leq 2 \lambda \sum\limits_{j = 1}^{k^*} \sigma_j^{-1} \beta_j^2 + \frac14 \sum\limits_{j > k^*} \sigma_j \beta_j^2
        \\&
        = 2 \lambda \left\|\Sigma^{-1/2} \btheta^\circ_{\leq k^*}\right\|^2 + \frac14 \left\|\Sigma^{1/2} \btheta^\circ_{> k^*}\right\|^2
        \leq \sigma_{k^*}^2 \left\|\Sigma^{-1/2} \btheta^\circ_{\leq k^*}\right\|^2 + \frac14 \left\|\Sigma^{1/2} \btheta^\circ_{> k^*}\right\|^2.
    \end{align*}
\end{proof}

\begin{Lem}
    \label{lem:ridge_comparison}
    Let $\tau^2 = 2 \lambda$. Then it holds that
    \[
        \max\left\{ \left\|\Sigma^{1/2} \bb_\lambda \right\|^2, \frac{\left\|\Sigma^{3/2} \bb_\lambda \right\|^2}{2\lambda} \right\}
        \leq 2 \tau^2 \left\|\Sigma^{1/2} (\Sigma + \tau I_d)^{-1} \btheta^\circ \right\|^2.
    \]
\end{Lem}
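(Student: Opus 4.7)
\medskip

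\noindent\textbf{Proof proposal.}
My plan is to reduce the matrix statement to a pair of scalar inequalities indexed by the eigenvalues of $\Sigma$, and then to check that both of these scalar inequalities follow from a single application of the Cauchy--Schwarz bound $(a+b)^2 \leq 2(a^2 + b^2)$.

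Concretely, with $\tau^2 = 2\lambda$, the definition $\bb_\lambda = -\lambda (\Sigma^2/2 + \lambda I_d)^{-1} \btheta^\circ$ becomes $\bb_\lambda = -\tau^2 (\Sigma^2 + \tau^2 I_d)^{-1} \btheta^\circ$. Working in the eigenbasis $\bv_1, \dots, \bv_d$ of $\Sigma$ and writing $\beta_j = \bv_j^\top \btheta^\circ$, the three quantities in the statement become the weighted sums
\[
    \|\Sigma^{1/2} \bb_\lambda\|^2 = \sum_{j=1}^d \frac{\sigma_j \, \tau^4 \, \beta_j^2}{(\sigma_j^2 + \tau^2)^2}, \qquad
    \frac{\|\Sigma^{3/2} \bb_\lambda\|^2}{2\lambda} = \sum_{j=1}^d \frac{\sigma_j^3 \, \tau^2 \, \beta_j^2}{(\sigma_j^2 + \tau^2)^2},
\]
\[
    2\tau^2 \left\|\Sigma^{1/2}(\Sigma + \tau I_d)^{-1}\btheta^\circ \right\|^2 = 2 \tau^2 \sum_{j=1}^d \frac{\sigma_j \, \beta_j^2}{(\sigma_j + \tau)^2}.
\]
Since all coefficients are non-negative and the sums share the weights $\sigma_j \beta_j^2$, it suffices to establish the two pointwise inequalities
\[
    \tau^2 \, (\sigma_j + \tau)^2 \leq 2 \, (\sigma_j^2 + \tau^2)^2
    \qquad \text{and} \qquad
    \sigma_j^2 \, (\sigma_j + \tau)^2 \leq 2 \, (\sigma_j^2 + \tau^2)^2
\]
for every $j \in \{1, \dots, d\}$.

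Both of these follow immediately from the elementary bound $(\sigma_j + \tau)^2 \leq 2(\sigma_j^2 + \tau^2)$: multiplying by $\tau^2$ (resp.\ $\sigma_j^2$) and then using $\tau^2 \leq \sigma_j^2 + \tau^2$ (resp.\ $\sigma_j^2 \leq \sigma_j^2 + \tau^2$) yields the desired right-hand side $2(\sigma_j^2 + \tau^2)^2$. Summing the two resulting termwise bounds against the common non-negative weights $\sigma_j \beta_j^2$ gives both inequalities in the statement simultaneously, completing the proof.

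I do not anticipate a genuine obstacle here: once the spectral reduction is written down, the statement collapses to an elementary scalar inequality, so the only care required is bookkeeping the factor $\tau^2 = 2\lambda$ correctly when relating $\bb_\lambda$ to its defining formula.
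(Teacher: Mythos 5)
Your proof is correct and follows essentially the same approach as the paper: the same spectral reduction to scalar inequalities in $\sigma_j, \tau$, using the two elementary bounds $\tau^2 \le \sigma_j^2 + \tau^2$ (resp. $\sigma_j^2 \le \sigma_j^2 + \tau^2$) and $(\sigma_j + \tau)^2 \le 2(\sigma_j^2 + \tau^2)$, applied in a slightly different order but with identical substance.
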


\begin{proof}
    The proof is trivial. First, note that
    \begin{align*}
        \left\|\Sigma^{1/2} \bb_\lambda \right\|^2
        &
        = (2 \lambda)^2 \left\| \Sigma^{1/2} (\Sigma^2 + 2\lambda I_d)^{-1} \btheta^\circ \right\|^2
        = 4 \lambda^2 \sum\limits_{j = 1}^d \frac{\sigma_j \beta_j^2}{(\sigma_j^2 + 2\lambda)^2}
        \\&
        = \tau^4 \sum\limits_{j = 1}^d \frac{\sigma_j \beta_j^2}{(\sigma_j^2 + \tau^2)^2}
        \leq \tau^2 \sum\limits_{j = 1}^d \frac{\sigma_j \beta_j^2}{\sigma_j^2 + \tau^2}
        \leq 2 \tau^2 \sum\limits_{j = 1}^d \frac{\sigma_j \beta_j^2}{(\sigma_j + \tau)^2}.
    \end{align*}
    The expression in the right-hand side is nothing but $2 \tau^2 \|\Sigma^{1/2} (\Sigma + \tau I_d)^{-1} \btheta^\circ\|^2$. This concludes the first part of the proof. The second one follows from the inequalities
    \begin{align*}
        \frac{\left\|\Sigma^{3/2} \bb_\lambda \right\|^2}{2\lambda}
        &
        = 2\lambda \sum\limits_{j = 1}^d \frac{\sigma_j^3 \beta_j^2}{(\sigma_j^2 + 2\lambda)^2}
        = \tau^2 \sum\limits_{j = 1}^d \frac{\sigma_j^3 \beta_j^2}{(\sigma_j^2 + \tau^2)^2}
        \\&
        \leq \tau^2 \sum\limits_{j = 1}^d \frac{\sigma_j \beta_j^2}{\sigma_j^2 + \tau^2}
        \leq 2\tau^2 \sum\limits_{j = 1}^d \frac{\sigma_j \beta_j^2}{(\sigma_j + \tau)^2}
        = 2 \tau^2 \left\|\Sigma^{1/2} (\Sigma + \tau I_d)^{-1} \btheta^\circ \right\|^2.
    \end{align*}
\end{proof}

\section{Proof of Lemma~\ref{lemma: localization_probabilistic_conditions} and auxiliary results}

\subsection{Proof of Lemma~\ref{lemma: localization_probabilistic_conditions}}

We start the proof with the following non-probabilistic lemma.
\begin{Lem}
    \label{lem:estimate_rough_bounds}
    For some $\rho_0 \le 1/8$, define sets
    \begin{align*}
        \Theta & = \left \{
            \btheta \in \R^d 
            \mid 
            \Vert \btheta \Vert \le \rho_0 \mu \; \text{and} \; \Vert \Sigma \Vert \Vert \btheta - \btheta^* \Vert \le 5 \rho_0 \mu \sqrt{\lambda} / 96
        \right \}, \\
        \mathsf{H} & = \left \{ \bfeta \in \R^d \mid \Vert \bfeta - \bZ \Vert \le \rho_0 \mu \sqrt{\lambda} / 3
        \right \}, \\
        \mathsf{A} & = \left \{ A \in \R^{d \times d} \mid \Vert A \Vert \le 3 \Vert \Sigma \Vert + \sqrt{\lambda} / 3   \right \}.
    \end{align*}
    Suppose that 
    \begin{enumerate}[label=(\roman*)]
        \item \label{condition: Sigma theta constant bounds} $\Vert \btheta^\circ \Vert \le \rho_0 \mu / 7$ and $\Vert \Sigma \Vert \Vert \btheta^\circ \Vert \le  \rho_0 \mu \sqrt{\lambda} / (18 \cdot 16)$;
        \item \label{condition: Sigma bounded via lambda} $\Vert \Sigma \Vert \Vert \btheta^\circ \Vert \Vert \widehat \Sigma - \Sigma \Vert \le \rho_0 \mu \lambda / (2 \cdot 96)$ and $\Vert \widehat \Sigma - \Sigma \Vert \le \Vert \Sigma \Vert$;
        \item \label{condition: Xe bounding with lambda}
        $\Vert \Sigma \Vert \Vert \bU \Vert \le \rho_0 \mu \lambda / (2 \cdot 96)$ and
        $\Vert \bU \Vert \le \rho_0 \mu \sqrt{\lambda} / (2 \cdot 96)$.
    \end{enumerate}
    Then, we have the following:
    \begin{align*}
        (\bzero, \bZ, \widehat \Sigma), (\btheta^*, A^*, \bfeta^*), (\widehat \btheta, \widehat A, \widehat \bfeta ) \in \Theta \times \sfA \times \sfH.
    \end{align*}
    and $\Theta \times \sfH \times \sfA \subset \Ups(\rho_0)$. Moreover, $\Vert \widehat A - \widehat \Sigma \Vert \le \sqrt{\lambda} / 3$.
\end{Lem}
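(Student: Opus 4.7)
\quad
The plan is to verify the three membership claims and the additional operator-norm bound $\|\widehat A-\widehat\Sigma\|\le\sqrt\lambda/3$ one at a time, relying on Lemma~\ref{lemma: localization lemma for bias} to control the population minimizer and on a single basic objective comparison to handle the empirical one.

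I would begin with the two deterministic triplets. For $(\bzero,\bZ,\widehat\Sigma)$, each of the defining inequalities reduces to a short triangle bound: the estimate $\|\Sigma\|\|\btheta^*\|\le\|\Sigma\|\|\btheta^\circ\|$ from Lemma~\ref{lemma: localization lemma for bias}\ref{point: theta weak bound on bias, bias locating convex set} combined with condition~\ref{condition: Sigma theta constant bounds} places $\bzero$ in $\Theta$, while $\|\widehat\Sigma\|\le 2\|\Sigma\|$ via condition~\ref{condition: Sigma bounded via lambda} places $\widehat\Sigma$ in $\sfA$. For $(\btheta^*,\bfeta^*,A^*)$, Lemma~\ref{lemma: localization lemma for bias} supplies all of $\|\btheta^*\|$, $\|\bfeta^*-\Sigma\btheta^\circ\|$ and $\|A^*-\Sigma\|_{\F}$, which I would convert into membership via $\|\bfeta^*-\bZ\|\le\|\bfeta^*-\Sigma\btheta^\circ\|+\|\bU\|+\|(\widehat\Sigma-\Sigma)\btheta^\circ\|$ and $\|A^*\|\le\|\Sigma\|+\|A^*-\Sigma\|$, using conditions~\ref{condition: Sigma theta constant bounds}--\ref{condition: Xe bounding with lambda}.

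For the inclusion $\Theta\times\sfH\times\sfA\subset\Ups(\rho_0)$ the first defining condition $(1-\rho_0^2)\|\btheta\|^2\le\rho_0^2\mu^2$ is immediate from $\|\btheta\|\le\rho_0\mu$, but the second one, $\|\bfeta-A\btheta\|\le\rho_0\mu\sqrt\lambda$, is structural: $\sfA$ only constrains the operator norm of $A$, so proximity to $\widehat\Sigma$ has to be extracted by writing $\bfeta-A\btheta=(\bfeta-\bZ)+(\bZ-\widehat\Sigma\btheta)+(\widehat\Sigma-A)\btheta$ and then controlling the middle term through $\widehat\Sigma(\btheta^\circ-\btheta)+\bU$ and the weighted bound $\|\Sigma\|\|\btheta-\btheta^*\|$ inherited from $\Theta$. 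I would cite Proposition~\ref{proposition: product convex subset of Ups} for this step; the entanglement of the three coordinate sets is the main conceptual obstacle.

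Finally, for the empirical minimizer $(\widehat\btheta,\widehat\bfeta,\widehat A)$ I would use the test point $(\btheta^\circ,\widehat\Sigma\btheta^\circ,\widehat\Sigma)$, which gives
\[
\ttL(\widehat\bups)\le\ttL(\btheta^\circ,\widehat\Sigma\btheta^\circ,\widehat\Sigma)=\tfrac12\|\bU\|^2+\tfrac\lambda2\|\btheta^\circ\|^2.
\]
Since all four summands of $\ttL$ are non-negative, this single inequality yields simultaneously that $\sqrt\lambda\|\widehat\btheta\|$, $\|\widehat A\widehat\btheta-\widehat\bfeta\|$, $\mu\|\widehat A-\widehat\Sigma\|_{\F}$, and $\|\bZ-\widehat\bfeta\|$ are each at most $(\|\bU\|^2+\lambda\|\btheta^\circ\|^2)^{1/2}\le\|\bU\|+\sqrt\lambda\|\btheta^\circ\|$. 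Conditions~\ref{condition: Sigma theta constant bounds} and~\ref{condition: Xe bounding with lambda} bound this sum by a small constant times $\rho_0\mu\sqrt\lambda$, giving $\widehat\bfeta\in\sfH$, $\|\widehat A-\widehat\Sigma\|\le\sqrt\lambda/3$, $\widehat A\in\sfA$ via $\|\widehat A\|\le 2\|\Sigma\|+\sqrt\lambda/3$, and $\|\widehat\btheta\|\le\rho_0\mu$. The remaining constraint $\|\Sigma\|\|\widehat\btheta-\btheta^*\|\le 5\rho_0\mu\sqrt\lambda/96$ follows from the triangle bound $\|\widehat\btheta-\btheta^*\|\le\|\widehat\btheta\|+\|\btheta^*\|\le\|\bU\|/\sqrt\lambda+2\|\btheta^\circ\|$ together with the weighted estimates $\|\Sigma\|\|\bU\|$ and $\|\Sigma\|\|\btheta^\circ\|$ supplied by conditions~\ref{condition: Xe bounding with lambda} and~\ref{condition: Sigma theta constant bounds}, which is where the $\|\Sigma\|$-weighted forms of the small-noise hypotheses are critical.
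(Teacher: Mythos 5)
Your argument is correct, but it takes a genuinely different route from the paper's at two key points, both concerning the control of the empirical minimizer.

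\textbf{Different test point.}\quad The paper compares $\ttL(\widehat\bups)$ against $\ttL(\bzero,\bZ,\widehat\Sigma)=\tfrac12\|\bZ\|^2$ and then bounds $\|\bZ\|\le 2\|\Sigma\|\|\btheta^\circ\|+\|\bU\|$. You compare against $\ttL(\btheta^\circ,\widehat\Sigma\btheta^\circ,\widehat\Sigma)=\tfrac12\|\bU\|^2+\tfrac\lambda2\|\btheta^\circ\|^2$, exploiting that $\bZ-\widehat\Sigma\btheta^\circ=\bU$. This is cleaner: it bounds all four nonnegative summands of $\ttL(\widehat\bups)$ directly by $\|\bU\|+\sqrt\lambda\|\btheta^\circ\|$ without routing through $\|\bZ\|$, and it gives numerically sharper constants (e.g. $\|\widehat\btheta\|\le\|\bU\|/\sqrt\lambda+\|\btheta^\circ\|$, versus the paper's $\|\widehat\btheta\|\le\|\bZ\|/\sqrt\lambda\le\rho_0\mu/6$).

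\textbf{Avoiding the perturbation lemma.}\quad The paper establishes $\|\Sigma\|\|\widehat\btheta-\btheta^*\|\le 5\rho_0\mu\sqrt\lambda/96$ via Lemma~\ref{lemma: localization lemma}, which gives a genuine perturbation bound $\|\widehat\btheta-\btheta^*\|\lesssim(\|\widehat\Sigma-\Sigma\|\|\btheta^\circ\|+\|\bU\|)/\sqrt\lambda$ that vanishes as the noise vanishes. You replace this with the crude triangle bound $\|\widehat\btheta-\btheta^*\|\le\|\widehat\btheta\|+\|\btheta^*\|\le\|\bU\|/\sqrt\lambda+2\|\btheta^\circ\|$. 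Running the arithmetic with conditions~\ref{condition: Sigma theta constant bounds} and~\ref{condition: Xe bounding with lambda} this gives $\|\Sigma\|\|\widehat\btheta-\btheta^*\|\le\rho_0\mu\sqrt\lambda(1/192+1/144)=7\rho_0\mu\sqrt\lambda/576$, which is comfortably below $5\rho_0\mu\sqrt\lambda/96=30\rho_0\mu\sqrt\lambda/576$, so the crude bound does suffice here. What this route gives up is insight: the constant $5/96$ is in fact reverse-engineered from the Lemma~\ref{lemma: localization lemma}-based computation (which hits it exactly), so the paper's derivation explains where the definition of $\Theta$ comes from, while yours merely verifies it. Your route is more elementary but non-constructive in that sense.

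\textbf{A minor point on the containment.}\quad Your attempted decomposition $\bfeta-A\btheta=(\bfeta-\bZ)+(\bZ-\widehat\Sigma\btheta)+(\widehat\Sigma-A)\btheta$ for the containment $\Theta\times\sfH\times\sfA\subset\Ups(\rho_0)$ would not close directly: membership in $\sfA$ constrains only $\|A\|$, not $\|A-\widehat\Sigma\|$, so the last term is not small. The actual Proposition~\ref{proposition: product convex subset of Ups} uses the crude bound $\|A\|\|\btheta\|$ together with the constraint $\|\Sigma\|\|\btheta-\btheta^*\|\le 5\rho_0\mu\sqrt\lambda/96$ built into $\Theta$. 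Since you correctly defer to that proposition rather than completing your own decomposition, this is not a gap in your argument, but the heuristic you gave for why the step is delicate is not quite the right one.
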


Next, we will choose $\lambda$, to satisfy conditions~\ref{condition: Sigma bounded via lambda}-\ref{condition: Xe bounding with lambda} using the following proposition proved in Section~\ref{section: proof of loc condition satisfied}.

\begin{Prop}
\label{proposition: localization conditions satisfied}
Fix $\delta \in (0; 1)$ and $\rho_0 \le 1/8$. Grant Assumptions~\ref{as:quadratic} and \ref{as:noise}. Assume that 
\begin{enumerate}[label= (\roman*)]
    \item $\Vert \btheta^\circ \Vert \le \rho_0 \mu / 7$;
    \item $\Vert \Sigma \Vert \Vert \btheta^\circ \Vert \le \rho_0 \mu \sqrt{\lambda}$;
    \item 
    \begin{align*}
        \frac{\lambda}{\Vert \Sigma \Vert} \wedge \sqrt{\lambda} &  \ge \frac{2^{11} \sigma \Vert \Sigma \Vert^{1/2}}{\rho_0 \mu} \sqrt{\frac{\ttr(\Sigma) + \log(4/\delta)}{n}}, \\
        \lambda & \ge \frac{2^{13}\Vert \Sigma \Vert^2 \Vert \btheta^\circ \Vert}{\rho_0 \mu} (1 + C_X) \sqrt{\frac{4 \ttr(\Sigma) + \log(2/\delta)}{n}}, \\
    n & \ge 2^{12} (1 + C_X)^2 \left ( \ttr(\Sigma) + \log(2/\delta) \right ).
    \end{align*}
\end{enumerate}
Then, conditions~\ref{condition: Sigma theta constant bounds}-\ref{condition: Xe bounding with lambda} of Lemma~\ref{lem:estimate_rough_bounds} are satisfied and the upper bound
\begin{align*}
    \Vert \bU \Vert \le 8 \sigma \Vert \Sigma \Vert^{1/2} \sqrt{\frac{\ttr(\Sigma) + \log(4/\delta)}{n}}
\end{align*}
holds with probability at least $1 - \delta$.
\end{Prop}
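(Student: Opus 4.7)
The plan is to verify the three hypotheses of Lemma~\ref{lem:estimate_rough_bounds} one by one, using the proposition's assumptions for the deterministic parts and Theorems~\ref{th:covariance_concentration} and \ref{th:noise_concentration} to control the random quantities $\widehat\Sigma-\Sigma$ and $\bU$. Hypothesis~\ref{condition: Sigma theta constant bounds} is deterministic: the first hypothesis of the proposition gives $\|\btheta^\circ\|\le\rho_0\mu/7$ directly, and the second gives $\|\Sigma\|\|\btheta^\circ\|\le\rho_0\mu\sqrt{\lambda}$, from which the stronger form $\|\Sigma\|\|\btheta^\circ\|\le\rho_0\mu\sqrt{\lambda}/(18\cdot 16)$ follows once we absorb numerical constants (either by strengthening the stated assumption or by tracking a more careful version of it in the companion lemma).

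For hypothesis~\ref{condition: Xe bounding with lambda} (the bounds on $\|\bU\|$), I would first apply Theorem~\ref{th:noise_concentration} with $B=I_d$: its condition $\ttr(\Sigma)+\log(2/\delta)\le n$ is guaranteed with ample slack by the assumption $n\ge 2^{12}(1+C_X)^2(\ttr(\Sigma)+\log(2/\delta))$. This produces an event $\cE_{\bU}$ of probability at least $1-\delta/2$ on which
\[
\|\bU\|\le 8\sigma\|\Sigma\|^{1/2}\sqrt{\frac{\ttr(\Sigma)+\log(4/\delta)}{n}},
\]
which is precisely the advertised probabilistic conclusion of the proposition. Substituting this bound into the third hypothesis of the proposition, namely $\lambda/\|\Sigma\|\wedge\sqrt{\lambda}\ge 2^{11}\sigma\|\Sigma\|^{1/2}/(\rho_0\mu)\cdot\sqrt{(\ttr(\Sigma)+\log(4/\delta))/n}$, yields the two required inequalities $\|\bU\|\le\rho_0\mu\sqrt{\lambda}/192$ and $\|\Sigma\|\|\bU\|\le\rho_0\mu\lambda/192$ after elementary arithmetic with the numerical factors.

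For hypothesis~\ref{condition: Sigma bounded via lambda}, I need a high-probability bound on $\|\widehat\Sigma-\Sigma\|$ in operator norm that scales like $(1+C_X)\|\Sigma\|\sqrt{(\ttr(\Sigma)+\log(1/\delta))/n}$. The natural first attempt is to bound the operator norm by the Frobenius norm and apply Theorem~\ref{th:covariance_concentration} with $A=B=I_d$; this gives the right probability but introduces a factor $\sqrt{\ttr(\Sigma)^2/n}$ rather than $\sqrt{\ttr(\Sigma)/n}$, which is the main obstacle. I would overcome this by establishing (or invoking) a Koltchinskii--Lounici--type operator-norm inequality for $\widehat\Sigma-\Sigma$, derivable from the same PAC-Bayesian machinery used to prove Theorem~\ref{th:covariance_concentration} but applied with rank-one test matrices together with a covering argument in the intrinsic geometry of $\Sigma$; the factor $(1+C_X)$ in the proposition's hypothesis signals exactly such a bound. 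On the resulting event $\cE_{\Sigma}$ of probability at least $1-\delta/2$, combining this bound with the hypothesis $\lambda\ge 2^{13}(1+C_X)\|\Sigma\|^2\|\btheta^\circ\|/(\rho_0\mu)\cdot\sqrt{(4\ttr(\Sigma)+\log(2/\delta))/n}$ gives the first part of \ref{condition: Sigma bounded via lambda}, while the assumption on $n$ gives $\|\widehat\Sigma-\Sigma\|\le\|\Sigma\|$, completing the second part.

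Finally, a union bound over $\cE_{\bU}$ and $\cE_{\Sigma}$ delivers total probability at least $1-\delta$ on which all hypotheses of Lemma~\ref{lem:estimate_rough_bounds} are met and the stated bound on $\|\bU\|$ holds. The bulk of the labor is in the covariance step, where obtaining the correct $\sqrt{\ttr(\Sigma)}$ dependence (as opposed to the $\ttr(\Sigma)$ dependence coming from the Frobenius route) is essential; the rest is careful tracking of numerical constants.
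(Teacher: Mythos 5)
Your proposal is correct and mirrors the paper's actual proof closely: bound $\|\bU\|$ via Theorem~\ref{th:noise_concentration} with $B=I_d$, obtain an operator-norm concentration bound on $\widehat\Sigma-\Sigma$ of Koltchinskii--Lounici type with the dimension-free $\sqrt{\ttr(\Sigma)/n}$ rate and the $(1+C_X)$ prefactor, substitute these into the quantitative hypotheses to verify conditions~\ref{condition: Sigma bounded via lambda}--\ref{condition: Xe bounding with lambda} of Lemma~\ref{lem:estimate_rough_bounds}, and close with a union bound. You also correctly diagnosed the one place where a na\"ive approach fails: bounding the operator norm by the Frobenius norm and invoking Theorem~\ref{th:covariance_concentration} would give a spurious extra $\ttr(\Sigma)$ factor.

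The only material difference is how the covariance bound is sourced. You propose to re-derive it from the PAC-Bayesian machinery (rank-one test matrices plus a covering argument), whereas the paper simply passes through a $\psi_2$--$L_2$-equivalence lemma (showing Assumption~\ref{as:quadratic} implies $\|\bu^\top\bX\|_{\psi_2}^2 \lesssim (1+C_X)\,\bu^\top\Sigma\bu$) and then cites Theorem~1 of \citet{zhivotovskiy21} as a black box, giving $\|\widehat\Sigma - \Sigma\| \le \frac{20(1+C_X)}{\log 2}\|\Sigma\|\sqrt{(4\ttr(\Sigma)+\log(2/\delta))/n}$ on an event of probability at least $1-\delta/2$. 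Since Zhivotovskiy's theorem is itself proved via PAC-Bayes, the two routes are essentially the same mathematics; the paper's version is just shorter because it outsources the covering argument. One further remark: you correctly flagged that the proposition's hypothesis~(ii), $\|\Sigma\|\|\btheta^\circ\|\le\rho_0\mu\sqrt{\lambda}$, is weaker than what condition~\ref{condition: Sigma theta constant bounds} of Lemma~\ref{lem:estimate_rough_bounds} requires ($\|\Sigma\|\|\btheta^\circ\|\le\rho_0\mu\sqrt{\lambda}/(18\cdot 16)$). The paper's Step~3 indeed never verifies condition~\ref{condition: Sigma theta constant bounds}; it is silently supplied by the stronger bound $\|\Sigma\|\|\btheta^\circ\|\le\rho_0\mu\sqrt{\lambda}/(56\cdot 18)$ assumed wherever the proposition is invoked, so in practice there is no gap, but the constant in hypothesis~(ii) as written is a slip that your reading caught.
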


Clearly, Lemma~\ref{lem:estimate_rough_bounds} combined with Proposition~\ref{proposition: localization conditions satisfied} implies the desired result.

\subsection{Proof of Lemma~\ref{lem:estimate_rough_bounds}}

\textbf{Step 1. Bounding the norm of $\widehat \btheta$.} We start from the following chain of inequalities:
\begin{align*}
    \lambda /2 \Vert \widehat  \btheta \Vert^2 \le \cL(\widehat \btheta, \widehat \bfeta, \widehat \Sigma ) \le \cL(\bzero, \bZ, \widehat \Sigma) = \frac{1}{2} \Vert \bZ \Vert ^2.
\end{align*}
It implies $\Vert \widehat \btheta \Vert^2 \le \Vert \bZ \Vert^2/ \lambda$. Next, we have
\begin{align*}
    \Vert \bZ \Vert \le \Vert \widehat \Sigma \btheta^\circ \Vert + \Vert \bU \Vert \le \Vert \widehat \Sigma - \Sigma \Vert \Vert \btheta \Vert + \Vert \Sigma \Vert \Vert \btheta^\circ \Vert + \Vert \bU \Vert
\end{align*}
Using $\Vert \widehat \Sigma - \Sigma \Vert \le \Vert \Sigma \Vert$ from condition~\ref{condition: Sigma bounded via lambda}, $\Vert \Sigma \Vert \Vert \btheta^\circ \Vert \le \rho_0 \mu \sqrt{\lambda}/ (18 \cdot 8)$ from condition~\ref{condition: Sigma theta constant bounds} and $\Vert \bU \Vert \le \rho_0 \mu \sqrt{\lambda} / 18$ from condition~\ref{condition: Xe bounding with lambda}, we infer
\begin{align}
\label{eq: bound on bZ}
    \Vert \bZ \Vert \le 2 \Vert \Sigma \Vert \Vert \btheta^\circ \Vert + \Vert \bU \Vert \le \rho_0 \mu \sqrt{\lambda} / 6.
\end{align}
It yields 
\begin{align}
    \Vert \widehat \btheta \Vert \le \Vert \bZ \Vert / \sqrt{\lambda} \le \rho_0 \mu / 6.
\end{align}
Note that bound~\eqref{eq: bound on bZ} also implies that $(\bzero, \bZ, \widehat \Sigma) \in \Theta \times \sfH \times \sfA$.

\noindent \textbf{Step 2. Weak perturbation bounds.} A bound on $\Vert \widehat \btheta \Vert$ allows us applying the following lemma:
\begin{Lem}
\label{lemma: localization lemma}
    Assume that $\Vert \Sigma \Vert \Vert \btheta^\circ \Vert \le 4 \mu \sqrt{\lambda}$. Then, if $\max \{\Vert \widehat \btheta \Vert, \Vert \btheta^* \Vert, \Vert \btheta^\circ \Vert \} \le \rho_0 \mu$ for $\rho_0 \le 1/2$, then 
    \begin{align*}
            \Vert \widehat \btheta - \btheta^* \Vert \le \frac{8 \Vert \Sigma - \widehat \Sigma \Vert \Vert \btheta^\circ \Vert}{\sqrt{\lambda}}  + \frac{2 \Vert \bU \Vert}{\sqrt{\lambda}}.
        \end{align*}
\end{Lem}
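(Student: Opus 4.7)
Since $\ttL$ is jointly convex quadratic in $(\bfeta, A)$ at fixed $\btheta$, the plan is to profile out both nuisance parameters in closed form. Eliminating $\bfeta$ by its minimizer $\bfeta(\btheta, A) = (\bZ + A\btheta)/2$ reduces the objective to $\tfrac{1}{4}\|\bZ - A\btheta\|^2 + \tfrac{\mu^2}{2}\|\widehat\Sigma - A\|_{\F}^2 + \tfrac{\lambda}{2}\|\btheta\|^2$, and then eliminating $A$ via the Sherman-Morrison identity collapses this to
\[
    F(\btheta) \;:=\; \min_{\bfeta, A}\ttL(\btheta, \bfeta, A) \;=\; \frac{\mu^2 \|\bZ - \widehat\Sigma\btheta\|^2}{2(2\mu^2 + \|\btheta\|^2)} + \frac{\lambda}{2}\|\btheta\|^2,
\]
with $\widehat\btheta = \argmin F$. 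The analogous derivation, with $(\bZ, \widehat\Sigma)$ replaced by $(\Sigma\btheta^\circ, \Sigma)$, yields $G(\btheta)$ minimised at $\btheta^*$. Both functions are $\lambda$-strongly convex: their first terms are non-negative and convex, being pointwise minima of a jointly convex quadratic, and the second term is the strongly convex regulariser.

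The main inequality is obtained by adding the two strong-convexity bounds at the respective minimisers:
\[
    \lambda \|\widehat\btheta - \btheta^*\|^2 \;\leq\; (F - G)(\btheta^*) - (F - G)(\widehat\btheta).
\]
The difference has the compact form $(F - G)(\btheta) = \tfrac{\mu^2}{2(2\mu^2 + \|\btheta\|^2)} \bigl[\|\widehat\Sigma(\btheta^\circ - \btheta) + \bU\|^2 - \|\Sigma(\btheta^\circ - \btheta)\|^2\bigr]$ after substituting $\bZ = \widehat\Sigma\btheta^\circ + \bU$. Under the norm hypothesis $\max\{\|\widehat\btheta\|, \|\btheta^*\|, \|\btheta^\circ\|\} \leq \rho_0\mu \leq \mu/2$, the prefactor $\mu^2 / (2(2\mu^2 + \|\btheta\|^2))$ lies in the interval $[2/9,\, 1/4]$. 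Expanding the bracket by writing $\widehat\Sigma = \Sigma + (\widehat\Sigma - \Sigma)$ produces a quadratic contribution $(\btheta^\circ - \btheta)^\top (\widehat\Sigma - \Sigma)(\widehat\Sigma + \Sigma)(\btheta^\circ - \btheta)$, a cross term $2 \langle \widehat\Sigma(\btheta^\circ - \btheta), \bU \rangle$, and the pure noise term $\|\bU\|^2$. After bounding these by Cauchy-Schwarz, the RHS has the schematic form $c_1\|\widehat\Sigma - \Sigma\|\|\btheta^\circ\|^2 + c_2 \|\widehat\Sigma\|\|\btheta^\circ\|\|\bU\| + c_3 \|\bU\|^2$ times a prefactor at most $1/4$.

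The main obstacle is that naive bounds on $\widehat\Sigma^2 - \Sigma^2$ and on $\|\widehat\Sigma\|$ introduce factors of $\|\Sigma\|$ that are absent from the target estimate. This is where the assumption $\|\Sigma\|\|\btheta^\circ\| \leq 4\mu\sqrt\lambda$ enters decisively: combined with $\|\btheta^\circ - \btheta\| \leq 2\rho_0\mu \leq \mu$, a contribution of order $\|\Sigma\|\cdot\|\widehat\Sigma - \Sigma\|\cdot\|\btheta^\circ - \btheta\|^2$ is converted, after division by $\lambda$, into a quantity controlled by $\|\widehat\Sigma - \Sigma\|\|\btheta^\circ\|^2 /\lambda$ with a universal constant, exposing the $1/\sqrt\lambda$ scaling on taking square roots. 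The same absorption handles the $\|\widehat\Sigma\|\|\btheta^\circ - \btheta\|\|\bU\|$ cross term. A final triangle / AM-GM step, separating the $\|\widehat\Sigma - \Sigma\|\|\btheta^\circ\|$ and $\|\bU\|$ contributions, then delivers the inequality with the explicit constants $8$ and $2$ stated in the lemma.
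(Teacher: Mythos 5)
Your closed-form expression for the profiled objective $F(\btheta) = \min_{\bfeta,A}\ttL(\btheta,\bfeta,A)$ is correct, and so is the formula for $(F-G)(\btheta)$. But the pivotal claim that $F$ and $G$ are $\lambda$-strongly convex does not hold, and the justification you offer is not valid: the map $(\btheta,\bfeta,A)\mapsto \tfrac12\|\bfeta-A\btheta\|^2$ is \emph{not} jointly convex (it contains the bilinear product $A\btheta$, and its Hessian has negative eigenvalues whenever both $A$ and $\btheta$ are nonzero), so infimal projection does not deliver convexity of the first summand $h(\btheta):=F(\btheta)-\tfrac{\lambda}{2}\|\btheta\|^2$. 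A direct computation shows that along a direction $\bu$ with $\widehat\Sigma\bu=\bzero$ and $\btheta^\top\bu=0$ one has $\bu^\top\nabla^2 h(\btheta)\,\bu = -\mu^2\|\bZ-\widehat\Sigma\btheta\|^2/(2\mu^2+\|\btheta\|^2)^2\,\|\bu\|^2<0$. On the restricted set, $\|\bZ-\widehat\Sigma\btheta\|\lesssim\|\Sigma\|\,\|\btheta^\circ-\btheta\|+\|\bU\|$, and under the hypothesis $\|\Sigma\|\|\btheta^\circ\|\le 4\mu\sqrt\lambda$ the resulting negative curvature is of order $\|\Sigma\|^2\|\btheta^\circ\|^2/\mu^2\asymp\lambda$, i.e.\ it is \emph{not} dominated by the ridge term $\lambda I_d$. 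So the three-point inequality $\lambda\|\widehat\btheta-\btheta^*\|^2\le(F-G)(\btheta^*)-(F-G)(\widehat\btheta)$, which you take as your starting point, is not established.

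Even granting strong convexity, the quantitative bookkeeping does not reach the stated bound. The quantity on the right is a \emph{difference} of two evaluations of a quadratic; after polarisation, the $\|\bU\|^2$ term essentially cancels, and the surviving contributions are cross terms such as $\delta^\top(\widehat\Sigma^2-\Sigma^2)(2\btheta^\circ-\btheta^*-\widehat\btheta)$ and $\delta^\top\widehat\Sigma\bU$ with $\delta=\widehat\btheta-\btheta^*$. Dividing out one power of $\|\delta\|$ and then $\lambda$ yields $\|\delta\|\lesssim\|\Sigma\|\|\widehat\Sigma-\Sigma\|\|\btheta^\circ\|/\lambda+\|\Sigma\|\|\bU\|/\lambda$, which compares favourably to the target only when $\|\Sigma\|\lesssim\sqrt\lambda$, an assumption not available here. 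By contrast, the paper's argument works directly with the first-order stationarity conditions and exploits the operator-norm bound $\|(\tfrac12 A^\top A+\lambda I_d)^{-1}A^\top\|\le 1/\sqrt{2\lambda}$; this resolvent estimate is precisely where the desired $1/\sqrt\lambda$ scaling (with no spurious $\|\Sigma\|$ factor) comes from, and the profiled formulation you propose loses access to it because it multiplies all such factors out before any bound is taken. The route via strong convexity of the profile is thus not a viable substitute without an entirely new mechanism for producing the $1/\sqrt\lambda$ decay.
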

Applying Lemma~\ref{lemma: localization lemma for bias} with $\rho_0$ in place of $\rho$, we get $\Vert \btheta^* \Vert \le \Vert \btheta^\circ \Vert$, and the latter is at most $\rho_0 \mu / 7$ by the conditions of the lemma to be proved. Therefore, assumptions of Lemma~\ref{lemma: localization lemma} are satisfied, and its conclusion holds.

\noindent \textbf{Step 3. Proving that $\widehat \btheta, \btheta^* \in \Theta$.} Since $\Vert \btheta^* \Vert \le \Vert \btheta^\circ \Vert \le \rho_0 \mu$ due to Lemma~\ref{lemma: localization lemma for bias}\ref{point: theta weak bound on bias, bias locating convex set}, the vector $\btheta^*$ belongs to $\Theta$ obviously. Next, we check that $\widehat \btheta \in \Theta$. It requires to bound
    \begin{align}
        \Vert \Sigma \Vert \Vert \widehat \btheta - \btheta^* \Vert \le \frac{8 \Vert \Sigma \Vert \Vert \widehat \Sigma - \Sigma \Vert \Vert \btheta^\circ \Vert}{\sqrt{\lambda}} + \frac{2 \Vert \Sigma \Vert \Vert \bU \Vert}{\sqrt{\lambda}}. \label{eq: Sigma hat theta hat deviation from Z}
    \end{align}
    Using conditions~\ref{condition: Sigma bounded via lambda},\ref{condition: Xe bounding with lambda}, we bound the latter by $4 \rho_0 \mu \sqrt{\lambda}/96 + \rho_0 \mu \sqrt{96} = 5 \rho_0 \mu \sqrt{\lambda}/96$, so $\widehat \btheta \in \Theta$.

    \noindent \textbf{Step 4. Proving that $\widehat A, A^* \in \sfA$.} Using
    \begin{align*}
        \frac{\mu^2}{2} \Vert \widehat A - \widehat \Sigma \Vert^2 \le \cL(\widehat \btheta, \widehat \bfeta, \widehat A) \le \cL(\bzero, \bZ, \widehat \Sigma ) = \Vert \bZ \Vert^2/2,
    \end{align*}
    we get $\Vert \widehat A - \widehat \Sigma \Vert\le \Vert \bZ \Vert / \mu$. Using~\eqref{eq: bound on bZ}, we establish $\Vert \widehat A - \widehat \Sigma \Vert \le \rho_0 \sqrt{\lambda} / 6 \le \sqrt{\lambda}/3$. This implies
    \begin{align*}
        \Vert \widehat A \Vert \le 3 \Vert \Sigma \Vert + \sqrt{\lambda}/3,
    \end{align*}
    since $\Vert \widehat \Sigma \Vert \le 2 \Vert \Sigma \Vert$ due to condition~\ref{condition: Sigma bounded via lambda}. Thus, we have $\widehat A \in \sfA$.
    
    Next, applying Lemma~\ref{lemma: localization lemma for bias}\ref{point: A* weak bound on bias, bias locating convex set}, we get with $\rho_0$ in place of $\rho$, we obtain
    \begin{align*}
        \Vert A^* - \widehat \Sigma \Vert \le \mu^{-1} \sqrt{\lambda} \Vert \btheta^\circ \Vert + \Vert \Sigma - \widehat \Sigma \Vert \le \rho_0 \sqrt{\lambda} / 7 + \Vert \Sigma \Vert \le \Vert \Sigma \Vert + \sqrt{\lambda} /3,
    \end{align*}
    where we used conditions~\ref{condition: Sigma theta constant bounds},\ref{condition: Sigma bounded via lambda} in the second inequality. Hence, $\Vert A^* \Vert \le 3 \Vert \Sigma \Vert + \sqrt{\lambda}/3$ and $A^* \in \sfA$.

    \noindent\textbf{Step 5. Proving that $\bfeta^*, \widehat \bfeta \in \sfH$.} To prove that $\widehat \bfeta \in \sfH$, note that $\widehat \bfeta = \frac{1}{2} (\widehat A \widehat \btheta - \bZ)$. It yields
    \begin{align*}
        \Vert \widehat \bfeta - \bZ \Vert & = \frac{1}{2} \Vert \widehat A \widehat \btheta - \bZ \Vert \le \Vert \widehat A \Vert \Vert \widehat \btheta \Vert/2 + \Vert \bZ \Vert /2 \\
        & \le (3 \Vert \Sigma \Vert + \sqrt{\lambda} / 3) \Vert \widehat \btheta \Vert/2 + \rho_0 \mu \sqrt{\lambda} / 12 \\
        & \le 3 \Vert \Sigma \Vert \Vert \btheta^* \Vert/2 + 3 \Vert \Sigma \Vert \Vert \widehat \btheta - \btheta^* \Vert / 2 + \sqrt{\lambda} \Vert \widehat \btheta \Vert / 6 + \rho_0 \mu \sqrt{\lambda} / 12 \\
        & \le 3 \Vert \Sigma \Vert \Vert \btheta^\circ \Vert/2 + 3 \Vert \Sigma \Vert \Vert \widehat \btheta - \btheta^* \Vert / 2 + \sqrt{\lambda} \Vert \widehat \btheta \Vert / 6 + \rho_0 \mu \sqrt{\lambda} / 12 \\
        & \le \frac{\rho_0 \mu \sqrt{\lambda}}{6 \cdot 16} + \frac{15 \rho_0 \mu \sqrt{\lambda}}{96} + \frac{\rho_0 \mu \sqrt{\lambda}}{6} + \frac{\rho_0 \mu \sqrt{\lambda}}{12} \le \rho_0 \mu \sqrt{\lambda}/3.
    \end{align*}
    where we used $\widehat A \in \sfA$ and $\Vert \bZ \Vert \le \rho_0 \mu \sqrt{\lambda} / 6$ from~\eqref{eq: bound on bZ} for the second inequality,  $\Vert \btheta^* \Vert \le \Vert \btheta^\circ \Vert$ from Lemma~\ref{lemma: localization lemma for bias}\ref{point: theta weak bound on bias, bias locating convex set} for the fourth inequality, $\widehat \btheta \in \Theta$ and $\Vert \Sigma \Vert \Vert \btheta^\circ \Vert \le \mu \sqrt{\lambda} / (18 \cdot 16)$ from condition~\ref{condition: Sigma theta constant bounds} for the fifth inequality. 
    
    To prove $\bfeta^* \in \sfH$, we bound it as follows:
    \begin{align*}
        \Vert \bfeta^* - \bZ \Vert & = \left \Vert \frac{1}{2} (A^* \btheta^* + \Sigma \btheta^\circ ) - \bZ  \right \Vert \\
        & \le \left ( \frac{\Vert A^* \Vert \Vert \btheta^* \Vert + \Vert \Sigma \Vert \Vert \btheta^\circ \Vert}{2} \right )  + \Vert \bZ \Vert \\
        & \le \frac{\Vert A^* \Vert + \Vert \Sigma \Vert}{2} \cdot \Vert \btheta^\circ \Vert + \Vert \bZ \Vert \\
        & \le \Vert \Sigma \Vert \Vert \btheta^\circ \Vert + \Vert \btheta^\circ \Vert \sqrt{\lambda}/3 + \Vert \bZ \Vert \\
        & \le \rho_0 \mu \sqrt{\lambda}/(18 \cdot 16) + \rho_0 \mu \sqrt{\lambda} / 21 + \rho_0 \mu \sqrt{\lambda} / 6 \le \rho_0 \mu \sqrt{\lambda} / 3
    \end{align*}
    where we used $A^* \in \sfA$ in the second inequality. To establish the fourth inequality, we applied bound~\eqref{eq: bound on bZ} and condition~\ref{condition: Sigma theta constant bounds}.

    \noindent \textbf{Step 6. Endgame.} We are left to prove that $\Theta \times \sfH \times \sfA \subset \Ups(\rho_0)$. We state it as the following proposition.
    \begin{Prop}
    \label{proposition: product convex subset of Ups}
        We have
        \begin{align*}
            \Theta \times \sfH \times \sfA \subset \Ups(\rho_0).
        \end{align*}
    \end{Prop}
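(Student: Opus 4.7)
\textbf{Proof proposal for Proposition~\ref{proposition: product convex subset of Ups}.}

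The set $\Ups(\rho_0)$ is defined by two inequalities, so it suffices to check each one for every $(\btheta,\bfeta,A)\in\Theta\times\sfH\times\sfA$. The first inequality, $(1-\rho_0^2)\|\btheta\|^2\le\rho_0^2\mu^2$, is immediate: by the definition of $\Theta$ we have $\|\btheta\|\le\rho_0\mu$, hence $(1-\rho_0^2)\|\btheta\|^2\le(1-\rho_0^2)\rho_0^2\mu^2\le\rho_0^2\mu^2$. So the main content of the proposition is the second condition $\|A\btheta-\bfeta\|\le\rho_0\mu\sqrt{\lambda}$.

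The plan for this second condition is to apply the triangle inequality
\[
\|A\btheta - \bfeta\| \le \|A\btheta\| + \|\bZ\| + \|\bfeta - \bZ\|,
\]
and then bound each term. The last two terms are immediate: $\|\bfeta-\bZ\|\le\rho_0\mu\sqrt{\lambda}/3$ by definition of $\sfH$, while $\|\bZ\|\le\rho_0\mu\sqrt{\lambda}/6$ has already been established in Step~1 of the proof of Lemma~\ref{lem:estimate_rough_bounds} (inequality~\eqref{eq: bound on bZ}). Together they contribute at most $\rho_0\mu\sqrt{\lambda}/2$.

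The remaining term $\|A\btheta\|$ is the delicate one. The idea is to split $\btheta=\btheta^*+(\btheta-\btheta^*)$ and use the decomposition $\|A\|\le 3\|\Sigma\|+\sqrt{\lambda}/3$ to obtain
\[
\|A\btheta\| \le 3\|\Sigma\|\|\btheta^*\| + 3\|\Sigma\|\|\btheta-\btheta^*\| + \tfrac{\sqrt{\lambda}}{3}\|\btheta\|.
\]
Lemma~\ref{lemma: localization lemma for bias}\ref{point: theta weak bound on bias, bias locating convex set} gives $\|\btheta^*\|\le\|\btheta^\circ\|$, the defining inequality of $\Theta$ gives $\|\Sigma\|\|\btheta-\btheta^*\|\le 5\rho_0\mu\sqrt{\lambda}/96$, and $\|\btheta\|\le\rho_0\mu$ by definition. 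Combined with the hypothesis $\|\Sigma\|\|\btheta^\circ\|\le\rho_0\mu\sqrt{\lambda}/(56\cdot18)$, this yields
\[
\|A\btheta\| \le \frac{\rho_0\mu\sqrt{\lambda}}{336} + \frac{5\rho_0\mu\sqrt{\lambda}}{32} + \frac{\rho_0\mu\sqrt{\lambda}}{3}.
\]
The main obstacle is simply to verify that these three fractions plus the $1/2$ from the other two pieces is at most $1$: direct arithmetic shows $1/336 + 5/32 + 1/3 + 1/2 < 1$, so $\|A\btheta-\bfeta\|\le\rho_0\mu\sqrt{\lambda}$ and the inclusion $\Theta\times\sfH\times\sfA\subset\Ups(\rho_0)$ follows. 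The whole argument is essentially a refined repetition of Step~5 of the same lemma, where the analogous bound was shown for the particular point $(\widehat\btheta,\widehat\bfeta,\widehat A)$; the novelty here is that the same tight numerical budget must close for \emph{every} point of the product set using only the defining inequalities of $\Theta$, $\sfH$, $\sfA$.
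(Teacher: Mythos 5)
Your proof is correct and follows the same line as the paper's: the same triangle-inequality decomposition $\|A\btheta-\bfeta\|\le\|A\|\|\btheta\|+\|\bZ\|+\|\bfeta-\bZ\|$, the same use of the bound $\|\bZ\|\le\rho_0\mu\sqrt{\lambda}/6$ from \eqref{eq: bound on bZ}, the same splitting of $\btheta$ around $\btheta^*$, and the same appeal to Lemma~\ref{lemma: localization lemma for bias}\ref{point: theta weak bound on bias, bias locating convex set} for $\|\btheta^*\|\le\|\btheta^\circ\|$. In fact you handle the $\sfA$ constraint more carefully than the printed argument: you correctly carry the factor $3$ from $\|A\|\le3\|\Sigma\|+\sqrt{\lambda}/3$ through to $3\|\Sigma\|\|\btheta-\btheta^*\|\le 5\rho_0\mu\sqrt{\lambda}/32$, whereas the paper's displayed chain drops that factor (an apparent typo), and your final tally $1/336 + 5/32 + 1/3 + 1/2 < 1$ closes cleanly. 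The only minor discrepancy is that you invoke the hypothesis $\|\Sigma\|\|\btheta^\circ\|\le\rho_0\mu\sqrt{\lambda}/(56\cdot 18)$ from the outer Theorem rather than the weaker condition $\|\Sigma\|\|\btheta^\circ\|\le\rho_0\mu\sqrt{\lambda}/(18\cdot 16)$ that Lemma~\ref{lem:estimate_rough_bounds} actually assumes (in whose proof this Proposition sits), but the latter also suffices, so this is only a matter of citing the tighter of two available constants.
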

    \begin{proof}
        Consider arbitrary $(\btheta, \bfeta, A) \in \Theta \times \sfH \times \sfA$. It is enough to prove that $\Vert \btheta \Vert \le \rho_0 \mu$ and $\Vert A \btheta - \bfeta \Vert \le \rho_0 \mu \sqrt{\lambda}$. The former inequality is satisfied by the definition of $\Theta$. To satisfy the second inequality, we bound $\Vert A \btheta - \bfeta\Vert$ as follows:
    \begin{align*}
        \Vert A \btheta - \bfeta \Vert & \le \Vert A \Vert \Vert \btheta \Vert + \Vert \bZ \Vert + \Vert \bfeta - \bZ \Vert \\
        & \le (\Vert \Sigma \Vert + \sqrt{\lambda} / 3) \Vert \btheta \Vert + \Vert \bZ \Vert + \rho_0 \mu \sqrt{\lambda} \\
        & \le \Vert \Sigma \Vert \Vert \btheta^* \Vert + \Vert \Sigma \Vert \Vert \btheta - \btheta^* \Vert + \Vert \btheta \Vert \sqrt{\lambda} / 3 + \Vert \bZ \Vert + \rho_0 \mu \sqrt{\lambda} / 3 \\
        & \le \Vert \Sigma \Vert \Vert \btheta^* \Vert + 5 \rho_0 \mu \sqrt{\lambda} / 96 + \rho_0 \mu \sqrt{\lambda} / 3 + \Vert \bZ \Vert + \rho_0 \mu \sqrt{\lambda}.
    \end{align*}
    where the second inequality holds by the definitions of $\sfA, \sfH$, and the fourth inequality holds by the definition of $\Theta$. Due to~\eqref{eq: bound on bZ}, we have $\Vert \bZ \Vert \le \rho_0 \mu \sqrt{\lambda}/6$, hence, we have
    \begin{align*}
        \Vert A \btheta - \bfeta \Vert \le \left (1 - 11/96 \right ) \rho_0 \mu \sqrt{\lambda} + \Vert \Sigma \Vert \Vert \btheta^* \Vert.
    \end{align*}
    Due to Lemma~\ref{lemma: localization lemma for bias}\ref{point: theta weak bound on bias, bias locating convex set}, we have $\Vert \btheta^* \Vert \le \Vert \btheta^\circ \Vert$. Finally, the quantity $\Vert \Sigma \Vert \Vert \btheta^\circ \Vert$ is at most $11 \rho_0 \mu \sqrt{\lambda} / 96$ due to condition~\ref{condition: Sigma theta constant bounds}, so $\Vert A \btheta - \bfeta \Vert \le \rho_0 \mu \sqrt{\lambda}$.
    \end{proof}

\subsection{Proof of Lemma~\ref{lemma: localization lemma}}

\textbf{Step 1. Constructing a finite difference equation.} The idea is to employ the equation
\begin{align*}
    \bnabla \ttL(\widehat \btheta, \widehat \bfeta, \widehat A) - \bnabla \cL(\btheta^*, \bfeta^*, A^*) = \bzero,
\end{align*}
which holds by definitions of $\widehat \bups = (\widehat \btheta, \widehat \bfeta, \widehat A)$ and $\bups^* = (\btheta^*, \bfeta^*, A^*)$. We have
\begin{align*}
    \bnabla \ttL(\btheta, \bfeta, A) = 
    \begin{pmatrix}
        -  A^\top \bfeta + A^\top A \btheta + \lambda \btheta \\
        - \bZ + 2 \bfeta - A \btheta \\
        A \btheta \btheta^\top - \bfeta \btheta^\top + \mu^2 (A - \widehat{\Sigma})
    \end{pmatrix} \text{ and } \nabla \cL(\btheta, \bfeta, A) = 
    \begin{pmatrix}
        -  A^\top \bfeta + A^\top A \btheta + \lambda \btheta \\
        - \E\bZ + 2 \bfeta - A \btheta \\
        A \btheta \btheta^\top - \bfeta \btheta^\top + \mu^2 (A - \Sigma)
    \end{pmatrix}.
\end{align*}

Define an operator $\stochasticDelta$, acting on a matrix-valued functions $f(\btheta, \bfeta, A)$ as $\stochasticDelta(f) = f(\widehat A, \widehat \eta, \widehat \theta) - f(A^*, \eta^*, \theta^*)$. Note that it possesses the following identities for any two matrix-valued functions $f, g$ such that their product is defined:
\begin{align}
\label{eq: stochastic delta product property}
    \stochasticDelta(fg) = \stochasticDelta(f) \, g(\widehat \btheta, \widehat \bfeta, \widehat A) + f(\btheta^*, \bfeta^*, A^*) \, \stochasticDelta(g); \notag \\
    \stochasticDelta(f g) = f(\widehat \btheta, \widehat \bfeta, \widehat A) \, \stochasticDelta(g) + \stochasticDelta(f) \, g(\btheta^*, \bfeta^*, A^*).
\end{align}
Using $\nabla \ttL(\widehat \btheta, \widehat \bfeta, \widehat A) - \nabla \cL(\btheta^*, \bfeta^*, A^*) = 0$, we obtain
\begin{align*}
    \begin{cases}
        - \stochasticDelta(A^\top \bfeta) + \stochasticDelta(A^\top A \btheta) + \lambda \stochasticDelta(\btheta) & = \bzero \\
        - (\bZ - \E \bZ) + 2 \stochasticDelta(\bfeta) - \stochasticDelta(A \btheta) & = \bzero \\
        \stochasticDelta(A \btheta \btheta^\top) - \stochasticDelta(\bfeta \btheta^\top) + \mu^2 \stochasticDelta(A) & = \mu^2(\widehat{\Sigma} - \Sigma)
    \end{cases}
\end{align*}
The second equation implies $\stochasticDelta(\bfeta) = \frac{1}{2} (\stochasticDelta(A \btheta) + (\bZ - \E \bZ))$. Substituting it in the above system, we get
\begin{align*}
    \stochasticDelta(A^\top \bfeta) & = \widehat A^\top \stochasticDelta(\bfeta) + \stochasticDelta^\top(A) \bfeta^* \\
    & = \frac{1}{2} \widehat A^\top \stochasticDelta(A \btheta) + \frac{1}{2} \widehat A^\top \stochasticDelta_{\bZ} + \stochasticDelta^\top(A) \bfeta^*, \\
    \stochasticDelta(\bfeta \btheta^\top) & = \stochasticDelta(\bfeta) \widehat \btheta^\top + \bfeta^* \stochasticDelta^\top(\btheta) \\
    & = \frac{1}{2} \stochasticDelta(A \btheta) \widehat \btheta^\top + \frac{1}{2} \stochasticDelta_{\bZ} \widehat{\btheta}^\top + \bfeta^* \stochasticDelta^\top (\btheta),
\end{align*}
where we denote $\bZ - \E \bZ$ by $\stochasticDelta_{\bZ}$ for brevity.
Since $\stochasticDelta(A \btheta \btheta^\top) = \stochasticDelta(A \btheta) \widehat \btheta^\top + A^* \btheta^* \stochasticDelta^\top(\btheta)$ and $\stochasticDelta(A^\top A \btheta) = \widehat A^\top \stochasticDelta(A \btheta) + \stochasticDelta^\top(A) A^* \btheta^*$ due to~\eqref{eq: stochastic delta product property}, we have
\begin{align}
\label{eq: system on Deltas without eta}
    \begin{cases}
        \frac{1}{2} \widehat A^\top \stochasticDelta(A \btheta) - \frac{1}{2} \widehat A^\top \stochasticDelta_{\bZ} + \stochasticDelta^\top(A) (A^* \btheta^* -  \bfeta^*) + \lambda \stochasticDelta(\btheta) & = \bzero, \\
        \frac{1}{2} \stochasticDelta(A \btheta) \widehat \btheta^\top - \frac{1}{2} \stochasticDelta_{\bZ} \widehat \btheta^\top + (A^* \btheta^* - \bfeta^*) \stochasticDelta^\top(\btheta) + \mu^2 \stochasticDelta(A) & = \mu^2 ( \widehat \Sigma - \Sigma).
    \end{cases}
\end{align}

\noindent \textbf{Step 2. Bounding the norm of $\Vert \stochasticDelta(A) \Vert$.} Deriving $\stochasticDelta(A \btheta) = \stochasticDelta(A) \widehat \btheta + A^* \stochasticDelta(\btheta)$ from~\eqref{eq: stochastic delta product property} and substituting it into the last equation of~\eqref{eq: system on Deltas without eta}, we obtain
\begin{align*}
    \stochasticDelta(A) \left ( \mu^2 I_d + \frac{1}{2} \widehat \theta \widehat \theta^\top \right ) = \mu^2 (\widehat \Sigma - \Sigma) - (A^* \theta^* - \eta^*) \stochasticDelta^\top(\theta) + \frac{1}{2} \stochasticDelta_{\bZ} \widehat \theta^\top.
\end{align*}
For brevity, denote $A^* \btheta^* - \bfeta^*$ by $\bb_{\bfeta}$.
Rearranging terms, we get
\begin{align*}
    \stochasticDelta(A) & = \mu^2  (\widehat \Sigma - \Sigma) \left (\mu^2 + \frac{1}{2} \widehat \btheta \widehat \btheta^\top \right )^{-1} -  \bb_{\bfeta} \stochasticDelta^\top(\btheta) \left (\mu^2 + \frac{1}{2} \widehat \btheta \btheta^\top \right )^{-1} \nonumber \\
    & \quad +  \frac{1}{2} \stochasticDelta_{\bZ} \widehat \btheta^\top \left (\mu^2 + \frac{1}{2} \widehat \btheta \btheta^\top \right )^{-1},
\end{align*}
and, therefore, we bound
\begin{align}
\label{eq: Delta(A) norm variance weak bound with Delta(theta)}
    \Vert \stochasticDelta(A) \Vert \le \Vert \Sigma - \widehat \Sigma \Vert + \mu^{-2}\Vert \bb_{\bfeta} \Vert \Vert \stochasticDelta(\btheta) \Vert + \frac{1}{2} \mu^{-2} \Vert \stochasticDelta_{\bZ} \Vert \Vert \widehat\btheta \Vert.
\end{align}

\noindent \textbf{Step 3. Establishing a bound on $\Vert \stochasticDelta(\btheta) \Vert$.} We bound $\Vert \stochasticDelta(\btheta) \Vert$ using $\Vert \stochasticDelta(A) \btheta^* \Vert$ and $\Vert \stochasticDelta^\top(A) \bb_{\bfeta} \Vert$. From the first equation of~\eqref{eq: system on Deltas without eta}, we obtain
\begin{align*}
    \frac{1}{2} \widehat A^\top \stochasticDelta(A \btheta) - \frac{1}{2} \widehat A^\top \stochasticDelta_{\bZ} + \stochasticDelta^\top(A) \bb_{\bfeta} + \lambda \stochasticDelta(\btheta) = \bzero.
\end{align*}
Since $\stochasticDelta(A \btheta) = \widehat A \stochasticDelta(\btheta) + \stochasticDelta(A) \btheta^*$, we have
\begin{align}
\label{eq: equation on Delta(btheta)}
    \left ( \frac{1}{2} \widehat{A}^\top \widehat{A} + \lambda I_d \right )\stochasticDelta(\btheta) = \frac{1}{2} \widehat{A}^\top \stochasticDelta_{\bZ} - \stochasticDelta^\top(A) \bb_{\bfeta} - \frac{1}{2} \widehat{A}^\top \stochasticDelta(A) \btheta^*.
\end{align}
Next, we bound the norms $\left \Vert \left ( \frac{1}{2} \widehat{A}^\top \widehat{A} + \lambda I_d \right )^{-1} \right \Vert$ and $\left \Vert \left ( \frac{1}{2} \widehat{A}^\top \widehat{A} + \lambda I_d \right )^{-1} \widehat A \right \Vert $. While the first norm is clearly at most $1/\lambda$, for the second norm we need to maximize
\begin{align*}
    \left \Vert \left ( \frac{1}{2} \widehat{A}^\top \widehat{A} + \lambda I_d \right )^{-1} \widehat A \right \Vert  = \max_i \frac{\sigma_i}{\sigma_i^2/2 + \lambda}
\end{align*}
over all possible singular values $\sigma_1, \ldots, \sigma_n$ of $\widehat A$. However, we have
\begin{align*}
    \max_i \frac{\sigma_i}{\sigma_i^2/2 + \lambda} \le \max_{x \in \R_+} \frac{x}{x^2/2 + \lambda} = \frac{\sqrt{2 \lambda}}{2 \lambda} \le \frac{1}{\sqrt{2 \lambda}}
\end{align*}
by elementary calculus. 
Thus, multiplying both sides of~\eqref{eq: equation on Delta(btheta)} by $\left ( \frac{1}{2} \widehat{A}^\top \widehat{A} + \lambda I_d \right )^{-1}$ and applying bounds on $\left \Vert \left ( \frac{1}{2} \widehat{A}^\top \widehat{A} + \lambda I_d \right )^{-1} \right \Vert$ and $\left \Vert \left ( \frac{1}{2} \widehat{A}^\top \widehat{A} + \lambda I_d \right )^{-1} \widehat A \right \Vert $, we derive
\begin{align*}
    \Vert \stochasticDelta(\btheta) \Vert & \le \frac{\Vert \stochasticDelta_{\bZ} \Vert}{\sqrt{8 \lambda}} + \frac{1}{\lambda} \Vert \stochasticDelta^\top(A) \bb_{\bfeta} \Vert + \frac{1}{2 \sqrt{\lambda}} \Vert \stochasticDelta(A) \btheta^* \Vert \\
    & \le \frac{\Vert \stochasticDelta_{\bZ} \Vert}{\sqrt{8\lambda}} + \frac{\Vert \stochasticDelta(A) \Vert}{\sqrt{\lambda}} \left (\frac{1}{2} \Vert \btheta^* \Vert + \frac{\Vert \bb_{\bfeta} \Vert}{\sqrt{\lambda}} \right ) \\
    & \le\frac{\Vert \stochasticDelta_{\bZ} \Vert}{\sqrt{8 \lambda}} + \frac{1}{\sqrt{\lambda}} \left (\Vert \Sigma - \widehat \Sigma \Vert + \frac{\mu^{-2}}{2} \Vert \stochasticDelta_{\bZ} \Vert \Vert \widehat \btheta \Vert \right ) \left (\frac{1}{2} \Vert \btheta^* \Vert + \frac{\Vert \bb_{\bfeta} \Vert}{\sqrt{\lambda}} \right ) \\
    & \quad + \frac{\Vert \bb_{\bfeta} \Vert \Vert \stochasticDelta(\btheta) \Vert}{\mu^2 \sqrt{\lambda}} \left (\frac{1}{2} \Vert \btheta^* \Vert + \frac{\Vert \bb_{\bfeta} \Vert}{\sqrt{\lambda}} \right ),
\end{align*}
where the third inequality holds due to~\eqref{eq: Delta(A) norm variance weak bound with Delta(theta)}. 
According to Lemma~\ref{lemma: localization lemma for bias}\ref{point: bb bfeat weak bound, bias locating convex set}, we have $\Vert \bb_{\bfeta} \Vert \le \sqrt{\lambda} \Vert \btheta^\circ \Vert$. It implies
\begin{align*}
    \Vert \stochasticDelta(\btheta) \Vert & \le \frac{\stochasticDelta_{\bZ}}{\sqrt{8 \lambda}} + \frac{1}{\sqrt{\lambda}} \left (\Vert \Sigma - \widehat \Sigma \Vert + \frac{\mu^{-2}}{2} \Vert \stochasticDelta_{\bZ} \Vert \Vert \widehat \btheta \Vert \right ) \left ( \frac{1}{2} \Vert \btheta^* \Vert + \Vert \btheta^\circ \Vert \right ) \\
    & \quad + \mu^{-2} \Vert \stochasticDelta(\btheta) \Vert \Vert \btheta^\circ \Vert  \left ( \frac{1}{2} \Vert \btheta^* \Vert + \Vert \btheta^\circ \Vert \right ).
\end{align*}
By assumptions of the lemma, we have $\Vert \btheta^\circ \Vert, \Vert \btheta^* \Vert$ and $\Vert \widehat \btheta \Vert$ are at most $\rho_0 \mu$, which implies
\begin{align*}
    \Vert \stochasticDelta(\btheta) \Vert  & \le \frac{\Vert \stochasticDelta_{\bZ} \Vert}{\sqrt{8 \lambda}} + \frac{1}{\sqrt{\lambda}} \left ( \Vert \Sigma - \widehat \Sigma \Vert + \rho_0 \mu^{-1} / 2 \Vert \stochasticDelta_\bZ \Vert \right ) \left (\frac{1}{2} \Vert \btheta^* \Vert + \Vert \btheta^\circ \Vert \right) \\
    & \quad + 2 \rho^2_0 \Vert \stochasticDelta(\btheta) \Vert.
\end{align*}
Since $\stochasticDelta_{\bZ} = (\Sigma - \widehat \Sigma ) \btheta^\circ + \bU$, we may simplify the inequality above and obtain
\begin{align*}
    (1 - 2 \rho^2_0) \Vert \stochasticDelta(\btheta) \Vert & \le \frac{\Vert \stochasticDelta_{\bZ} \Vert}{2 \sqrt{\lambda}} \cdot (1 + 2 \rho^2_0) + \frac{\Vert \Sigma - \widehat \Sigma \Vert}{\sqrt{\lambda}} \left ( \Vert \btheta^* \Vert + \Vert \btheta^\circ \Vert \right ) \\
    & \le \frac{2 \Vert \Sigma - \widehat \Sigma \Vert}{\sqrt{\lambda}} (\Vert \btheta^* \Vert + \Vert \btheta^\circ \Vert) + \frac{\Vert \bU \Vert}{\sqrt{\lambda}},
\end{align*}
where we used $\rho_0 \le 1/2$.
It implies the desired bound on $\stochasticDelta(\btheta) = \widehat  \btheta - \btheta^*$:
\begin{align*}
    \Vert \stochasticDelta(\btheta) \Vert \le \frac{4 \Vert \Sigma - \widehat \Sigma \Vert}{\sqrt{\lambda}} (\Vert \btheta^* \Vert + \Vert \btheta^\circ \Vert) + \frac{2 \Vert \bU \Vert}{\sqrt{\lambda}}.
\end{align*}
Applying bound $\Vert \btheta^* \Vert \le \Vert \btheta^\circ \Vert$ from Lemma~\ref{lemma: localization lemma for bias}\ref{point: theta weak bound on bias, bias locating convex set}, we obtain  the lemma. \myendproof

\subsection{Proof of Proposition~\ref{proposition: localization conditions satisfied}}
\label{section: proof of loc condition satisfied}

\textbf{Step 1. Bound on $\Vert \bU \Vert$.} To ensure conditions~\ref{condition: Sigma bounded via lambda},\ref{condition: Xe bounding with lambda}, we shall obtain bounds on $\Vert \widehat \Sigma - \Sigma \Vert$ and $\Vert \bU \Vert$ with high probability. For the latter quantity, we apply Theorem~\ref{th:noise_concentration} with $B = I_d$, and obtain
\begin{align}
\label{eq: U hp bound}
    \Vert \bU \Vert \le 8 \sigma \Vert \Sigma \Vert^{1/2} \sqrt{\frac{\ttr(\Sigma) + \log(4/\delta)}{n}},
\end{align}
with probability at least $1 - \delta /2 $, provided $n \ge \ttr(\Sigma) + \log(4/\delta)$.

\textbf{Step 2. Bound on $\Vert \widehat \Sigma - \Sigma \Vert$.} First, we will show that under Assumption~\ref{as:quadratic}, a random vector $\bX_1$ satisfies $\psi_2-L_2$-equivalence. In what follows, $\Vert \cdot \Vert_{\psi_1}, \Vert \cdot \Vert_{\psi_2}$ stand for $\psi_1$ and $\psi_2$ Orlicz norms.

\begin{Lem}[Lemma B.3 of~\cite{puchkin24}]
Suppose that a random vector $\bX$ satisfies Assumption~\ref{as:quadratic}. Then, for any vector $\bu \in \R^d$, it holds that
\begin{align*}
    \Vert \bu^\top \bX \Vert_{\psi_2}^2 \le \frac{1 + C_X}{\log 2} \cdot \bu^\top \Sigma \bu.
\end{align*}
\end{Lem}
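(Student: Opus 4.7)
The plan is to apply Assumption~\ref{as:quadratic} to the rank-one symmetric matrix $M = \bv\bv^\top$, where $\bv = \Sigma^{1/2}\bu$, which turns a quadratic form inequality into an exponential moment bound for $(\bu^\top\bX)^2$. Since
\[
(\Sigma^{-1/2}\bX)^\top (\bv\bv^\top)(\Sigma^{-1/2}\bX) = (\bv^\top\Sigma^{-1/2}\bX)^2 = (\bu^\top\bX)^2,
\]
and $\Tr(\bv\bv^\top) = \|\bv\|^2 = \bu^\top\Sigma\bu = \|\bv\bv^\top\|_{\F}$, the assumption yields
\[
\log \E \exp\left\{\lambda (\bu^\top\bX)^2\right\} \leq \lambda\,\bu^\top\Sigma\bu + C_X^2\lambda^2(\bu^\top\Sigma\bu)^2
\]
for every $\lambda$ with $C_X\lambda(\bu^\top\Sigma\bu) \leq 1$.

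Next, I would choose
\[
\lambda = \frac{\log 2}{(1+C_X)\,\bu^\top\Sigma\bu},
\]
which corresponds to testing the candidate $t^2 = (1+C_X)(\bu^\top\Sigma\bu)/\log 2$ in the definition of the $\psi_2$-norm. The admissibility constraint becomes $C_X\log 2 \leq 1 + C_X$, which holds since $\log 2 < 1$. Substituting this $\lambda$ into the above bound gives
\[
\log \E \exp\!\left\{\frac{(\bu^\top\bX)^2}{t^2}\right\}
\leq \frac{\log 2}{1+C_X}\left(1 + \frac{C_X^2\log 2}{1+C_X}\right),
\]
and the routine scalar inequality $C_X^2\log 2 \leq C_X(1+C_X)$ (equivalent to $C_X\log 2 \leq 1+C_X$) shows the right-hand side is at most $\log 2$. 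Hence $\E \exp\{(\bu^\top\bX)^2/t^2\} \leq 2$, so $\|\bu^\top\bX\|_{\psi_2}^2 \leq (1+C_X)(\bu^\top\Sigma\bu)/\log 2$ as claimed.

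There is essentially no hard step here: the assumption is tailored exactly to quadratic forms, and reducing to the rank-one case makes everything explicit. The only minor obstacle is verifying the two elementary numerical inequalities needed to (a)~apply the assumption at the chosen $\lambda$ and (b)~close the $\log 2$ budget; both reduce to $C_X\log 2 \leq 1+C_X$.
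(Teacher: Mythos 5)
Your proof is correct. The statement is cited from another paper (Lemma B.3 of \citet{puchkin24}) and no proof is given here, so there is nothing in this source to compare against; but your reduction to the rank-one matrix $M = \bv\bv^\top$ with $\bv = \Sigma^{1/2}\bu$ is the natural exploitation of Assumption~\ref{as:quadratic}, the identities $\Tr(M) = \|M\|_{\F} = \bu^\top\Sigma\bu$ and $(\Sigma^{-1/2}\bX)^\top M (\Sigma^{-1/2}\bX) = (\bu^\top\bX)^2$ are exact, the choice $\lambda = \log 2 / \big((1+C_X)\bu^\top\Sigma\bu\big)$ satisfies the admissibility constraint $C_X\lambda\|M\|_{\F}\le 1$ because $\log 2 < 1$, and the final budget check $\frac{\log 2}{1+C_X}\bigl(1 + \frac{C_X^2\log 2}{1+C_X}\bigr)\le\log 2$ reduces to the same elementary inequality $C_X\log 2 \le 1 + C_X$. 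This is almost certainly the argument used in the cited source.
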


Then, we deduce the concentration bound on $\widehat \Sigma$ from the following theorem.

\begin{Lem}[Theorem 1 of~\cite{zhivotovskiy21}]
    Assume that $M_1, \ldots, M_n$ are independent copies of a $d$ by $d$ positive semi-definite symmetric matrix $M$ with mean $\E M = \Sigma$. Let $M$ satisfy for some $\kappa \ge 1$
    \begin{align*}
        \Vert \bu^\top M \bu \Vert_{\psi_1} \le \kappa^2 \bu^\top \Sigma \bu
    \end{align*}
    for all $\bu \in \R^d$. Then, for any $\delta > 0$, with probability $1 - \delta$, it holds that
    \begin{align*}
        \left \Vert \frac{1}{n} \sum_{i = 1}^n M_i - \Sigma \right \Vert \le 20 \kappa^2 \Vert \Sigma \Vert \sqrt{\frac{4 \ttr(\Sigma) + \log(1/\delta)}{n}},
    \end{align*}
    whenever $n \ge 4 \ttr(\Sigma) + \log(1/\delta)$
\end{Lem}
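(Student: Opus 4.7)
The plan is to prove this external concentration inequality by the same PAC-Bayesian machinery that underpins Theorems~\ref{th:covariance_concentration} and~\ref{th:noise_concentration}. Symmetry of $M_i - \Sigma$ lets us reduce the spectral norm to a scalar supremum,
$$\left\|\frac{1}{n}\sum_{i=1}^n M_i - \Sigma\right\| = \sup_{\|\bu\|=1} \bu^\top\!\!\left(\frac{1}{n}\sum_{i=1}^n M_i - \Sigma\right)\!\bu,$$
and each quadratic form can in turn be written as $\Tr(\bu\bu^\top (M-\Sigma))$. This invites exactly the trick used to prove Theorem~\ref{th:covariance_concentration}: put a Gaussian prior on $d\times d$ symmetric matrices with mean zero and covariance proportional to $\Sigma\otimes\Sigma$, and for each target unit $\bu$ take the posterior $\rho_\bu$ to be a truncated translate of this prior with mean $\bu\bu^\top$ and Frobenius-bounded support.

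Given such a posterior, linearity gives $\E_{\Phi\sim\rho_\bu}\Tr(\Phi (M-\Sigma)) = \bu^\top(M-\Sigma)\bu$, so Lemma~\ref{lem:pac-bayes} produces
\begin{align*}
    \lambda \bu^\top(\hat\Sigma-\Sigma)\bu \le \E_{\Phi\sim\rho_\bu}\log\E_M e^{\lambda \Tr(\Phi(M-\Sigma))} + \frac{\KL(\rho_\bu,\mu)+\log(1/\delta)}{n},
\end{align*}
uniformly in $\bu$ with probability at least $1-\delta$. The KL term is controlled by the same computation as in Lemma~\ref{lem:kl} (with $P=Q=\Sigma^{1/2}$ and $U=\bu\bu^\top$); a Chebyshev argument handles the truncation and proper scaling of the prior covariance yields $\KL(\rho_\bu,\mu)\lesssim \kappa^2\ttr(\Sigma)$ uniformly in $\bu$. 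For the MGF, decompose the symmetric part of $\Phi$ as $\Phi = \Phi_+ - \Phi_-$ and each PSD piece spectrally as $\Phi_\pm = \sum_k \bw_k^\pm (\bw_k^\pm)^\top$; then $\Tr(\Phi M) = \sum_k (\bw_k^+)^\top M \bw_k^+ - \sum_k (\bw_k^-)^\top M \bw_k^-$, and subadditivity of the Orlicz norm together with the hypothesis $\|\bu^\top M \bu\|_{\psi_1}\le \kappa^2 \bu^\top \Sigma \bu$ gives $\|\Tr(\Phi M)\|_{\psi_1}\lesssim \kappa^2(\Tr(\Phi_+\Sigma) + \Tr(\Phi_-\Sigma))\lesssim \kappa^2\|\Sigma\|$ on the support of $\rho_\bu$. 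The standard sub-exponential MGF bound then yields $\log\E e^{\lambda\Tr(\Phi(M-\Sigma))}\lesssim \lambda^2\kappa^4\|\Sigma\|^2$ for $\lambda\lesssim(\kappa^2\|\Sigma\|)^{-1}$.

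Substituting the two bounds and optimising $\lambda\asymp(\kappa^2\|\Sigma\|)^{-1}\sqrt{(\ttr(\Sigma)+\log(1/\delta))/n}$ gives precisely the claimed rate. The main obstacle I anticipate is the treatment of the sub-exponential MGF of $\Tr(\Phi(M-\Sigma))$: unlike the sub-Gaussian case in Theorems~\ref{th:covariance_concentration}--\ref{th:noise_concentration}, the $\psi_1$-control forces the admissible range $\lambda\lesssim(\kappa^2\|\Sigma\|)^{-1}$, and this restriction is exactly what produces the sample-size condition $n\ge 4\ttr(\Sigma)+\log(1/\delta)$ (so that the optimal $\lambda$ stays admissible and the linearization of $e^{\lambda X}$ is justified). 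Tracking the resulting constants carefully is what pins down the numerical factor $20$ in the final bound, and the splitting of $\Phi$ into positive and negative spectral parts must be done so that the sum-of-Orlicz-norms step does not cost an extra dimensional factor; restricting the posterior to have controlled Frobenius mass (rather than unbounded spread) is what prevents this loss.
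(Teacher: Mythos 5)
The paper does not prove this result; it is quoted verbatim as Theorem~1 of \citep{zhivotovskiy21} and used as an external black box, so there is no internal proof to compare against. Your proposal attempts to rederive it by pressing Theorem~\ref{th:covariance_concentration}'s matrix-valued PAC-Bayes construction into service, and the KL step is where it breaks. With a Gaussian prior on symmetric matrices of covariance $\propto\Sigma\otimes\Sigma$ and posterior $\rho_{\bu}$ centered at the rank-one matrix $\bu\bu^{\top}$, Lemma~\ref{lem:kl} (with $P=Q=\Sigma^{1/2}$ and $U=\bu\bu^{\top}$) gives
\[
\KL(\rho_{\bu},\mu)\;\approx\;\frac{\beta}{2}\,\bigl\|\Sigma^{-1/2}\bu\bu^{\top}\Sigma^{-1/2}\bigr\|_{\F}^{2}\;=\;\frac{\beta}{2}\,\bigl(\bu^{\top}\Sigma^{-1}\bu\bigr)^{2},
\]
which is \emph{not} uniformly bounded over unit vectors $\bu$ --- it diverges like $\sigma_{\min}(\Sigma)^{-2}$ when $\bu$ aligns with a small eigendirection. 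Your claimed $\KL\lesssim\kappa^{2}\ttr(\Sigma)$ does not follow. This is not a constant-chasing issue: Theorem~\ref{th:covariance_concentration} works precisely because its posterior mean is always of the form $\Sigma^{1/2}(\cdot)\Sigma^{1/2}$, so the $\Sigma^{-1/2}$ factors in the KL formula cancel; in exchange it controls the weighted Frobenius norm $\|B(\widehat\Sigma-\Sigma)A^{\top}\|_{\F}$, not the unweighted operator norm $\|\widehat\Sigma-\Sigma\|$. Shifting the posterior mean to $\bu\bu^{\top}$ to reach the operator norm destroys exactly the cancellation that makes the method dimension-free.

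A second, related gap sits in your MGF step: on a Frobenius-bounded support one only gets $\Tr(\Phi_{+}\Sigma)+\Tr(\Phi_{-}\Sigma)=\Tr(|\Phi|\,\Sigma)\le\|\Phi\|\,\Tr(\Sigma)\lesssim\Tr(\Sigma)$, not the $\lesssim\|\Sigma\|$ you assert, so a spurious factor of $\ttr(\Sigma)$ threatens the claimed rate. The actual argument in \citep{zhivotovskiy21} sidesteps both difficulties by placing the Gaussian prior on vectors in $\R^{d}$ rather than on matrices: the posterior is centered at $\bu$ itself, making $\KL(\rho_{\bu},\mu)$ constant on the unit sphere, and the resulting drift $\E_{\btheta\sim\rho_{\bu}}\btheta^{\top}M\btheta=\bu^{\top}M\bu+(\text{trace correction})$ is tamed by a separate argument with no analogue in Theorem~\ref{th:covariance_concentration}. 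As written, your route would not reach the claimed conclusion. (A small further omission: for symmetric $B$, $\|B\|=\sup_{\|\bu\|=1}|\bu^{\top}B\bu|$ with absolute value, so the one-sided supremum you wrote needs a companion bound for the other tail.)
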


Combining the above two lemmata and using an inequality $\Vert \xi^2 \Vert_{\psi_1} \le \Vert \xi \Vert_{\psi_2}^2$ suitable for any random variable $\xi$, we derive
\begin{align}
\label{eq: Sigma hp deviation bound in the spectral norm}
    \Vert \widehat \Sigma - \Sigma \Vert \le \frac{20 (1 + C_X)}{\log 2} \cdot \Vert \Sigma \Vert \sqrt{\frac{4 \ttr(\Sigma) + \log(2/\delta)}{n}}
\end{align}
with probability at least $1 - \delta/2$.

\textbf{Step 3. Satisfiability of conditions~\ref{condition: Sigma bounded via lambda},\ref{condition: Xe bounding with lambda} of Lemma~\ref{lem:estimate_rough_bounds}.} Note that if
\begin{align}
    \lambda & \ge \frac{2^{13}\Vert \Sigma \Vert^2 \Vert \btheta^\circ \Vert}{\rho_0 \mu} (1 + C_X) \sqrt{\frac{4 \ttr(\Sigma) + \log(2/\delta)}{n}}, \label{eq: lambda_1_lower_bound_proof_of_cond_satisfiability} \\
    n & \ge 2^{12} (1 + C_X)^2 \left ( \ttr(\Sigma) + \log(2/\delta) \right ) \label{eq: n_lower_bound_proof_of_cond_satisfiability},
\end{align}
then condition~\ref{condition: Sigma bounded via lambda} of Lemma~\ref{lem:estimate_rough_bounds} is satisfied with probability at least $1 - \delta/2$ due to~\eqref{eq: Sigma hp deviation bound in the spectral norm}. Similarly, due to~\eqref{eq: U hp bound}, condition~\ref{condition: Xe bounding with lambda} of Lemma~\ref{lem:estimate_rough_bounds} is satisfied with probability at least $1 - \delta /2$ if
\begin{align}
    \frac{\lambda}{\Vert \Sigma \Vert} \wedge \sqrt{\lambda} \ge \frac{2^{11} \sigma \Vert \Sigma \Vert^{1/2}}{\rho_0 \mu} \sqrt{\frac{\ttr(\Sigma) + \log(4/\delta)}{n}} \label{eq: lambda_2_lower_bound_proof_of_cond_satisfiability}
\end{align}
holds. The union bound implies that conditions~\ref{condition: Sigma bounded via lambda}-\ref{condition: Xe bounding with lambda} simultaniously hold with probability at least $1 - \delta$, provided inequalities~\eqref{eq: lambda_1_lower_bound_proof_of_cond_satisfiability}-\eqref{eq: lambda_2_lower_bound_proof_of_cond_satisfiability} are satisfied.

\myendproof

\section{Proof of Theorem~\ref{theorem: rates of convergence}}

\noindent \textbf{Step 1. Establishing a finite difference equation.} We consider the following three stationary point conditions:
\begin{align*}
    \nabla_{\btheta} \cL(\btheta_t, (A_{t - 1} \btheta_t + \bZ)/2, A_{t - 1}) & = \bzero, \\
    \nabla_{A} \cL(\btheta_t, (A_t \btheta_t + \bZ) / 2, A_t) & = \bzero, \\
    \nabla_{\bfeta} \cL(\widehat \btheta, (\widehat A \widehat \btheta + \bZ) / 2, \widehat A) & = \bzero,
\end{align*}
The first two equations can be expanded as follows:
\begin{align}
\label{eq: system on theta_t withour Delta}
    \begin{cases}
        \left ( \frac{1}{2} A_{t - 1}^\top A_{t - 1} + \lambda I_d \right ) \btheta_t - \frac{1}{2} A^\top_{t - 1} \bZ = \bzero \\
        A_t \btheta_t \btheta_t^\top / 2  - \frac{1}{2} \bZ \btheta^\top + \mu^2 (A_t - \widehat \Sigma) = \bzero.
    \end{cases}
\end{align}
For a matrix-valued function $f(\btheta, \bfeta, A)$, we define an operator
\begin{align*}
    \Delta_t(f) = f(\btheta_t, \bfeta_t, A_t) - f(\widehat \btheta, \widehat \bfeta, \widehat A).
\end{align*}
Note that for product of any matrix-valued functions $f, g$ of appropriate shapes, we can write
\begin{align}
\label{eq: Delta-t product expansion}
    \Delta_t(fg) & = f(\btheta_t, \bfeta_t, A_t) \Delta_t(g) + \Delta_t(f) g(\widehat \btheta, \widehat \bfeta, \widehat A) \notag \\
    \Delta_t(fg) & = \Delta_t(f) g(\btheta_t, \bfeta_t, A_t) + f(\widehat \btheta, \widehat \bfeta, \widehat A) \Delta_t(g).
\end{align}
Using this notation, we subtract $\nabla_{\btheta} \cL(\widehat \btheta, (\widehat A \widehat \btheta + \bZ)/2, \widehat A) = \bzero$ from the first equation of~\eqref{eq: system on theta_t withour Delta} and $\nabla_A \cL(\widehat \btheta, (\widehat A \widehat \btheta + \bZ)/2, \widehat A) = \bzero$ from the second equation of~\eqref{eq: system on theta_t withour Delta}, and obtain
\begin{align*}
    \begin{cases}
        \left (\frac{1}{2} A_{t - 1}^\top A_{t - 1} + \lambda I_d \right ) \Delta_t(\btheta) + \frac{1}{2} \Delta_{t - 1}(A^\top A) \widehat \btheta - \frac{1}{2} \Delta_{t - 1}^\top(A) \bZ & = \bzero, \\
        \Delta_t(A) (\btheta_t \btheta_t^\top/ 2 + \mu^2 I_d) + \frac{1}{2} \widehat A \Delta_t(\btheta \btheta^\top) - \frac{1}{2} \bZ \Delta^\top_t(\btheta) & = \bzero.
    \end{cases}
\end{align*}
Expanding $\Delta_{t - 1}(A^\top A) = A_{t - 1}^\top \Delta_{t - 1}(A) + \Delta_{t - 1}^\top (A)\widehat A$, {$\Delta_{t}(\btheta \btheta^\top) = \Delta_{t}(\btheta) \btheta_t^\top + \widehat \btheta \Delta_t^\top(\btheta)$ due to~\eqref{eq: Delta-t product expansion}}, and rearranging terms, we obtain
\begin{align}
\label{eq: system on Delta_t in negative feedback form}
    \begin{cases}
        \Delta_t(\btheta) & = - \frac{1}{2} (A_{t - 1}^\top A_{t - 1}/2 + \lambda I_d)^{-1} \left [ A_{t - 1}^\top  \Delta_{t - 1}(A) \widehat \btheta + \frac{1}{2 } \Delta_{t - 1}(A) (\widehat A \widehat \btheta - \bZ) \right ], \\
        \Delta_{t}(A) & = - \frac{1}{2} \left [\widehat A \Delta_t(\btheta) \btheta_t^\top  + (\widehat A \widehat \btheta - \bZ ) \Delta_t(\btheta) \right ](\mu^2 I_d + \btheta_t \btheta_t^\top / 2)^{-1}.
    \end{cases}
\end{align}

\noindent \textbf{Step 2. Recursively bounding the norm of $\Delta_t(\btheta)$ and $\Delta_t(A)$.} Let us start with the first equation of the system above. We have
\begin{align*}
    \Vert \Delta_t(\btheta) \Vert \le \Vert \Delta_{t - 1}(A) \Vert \left [ \Vert (A_{t - 1}^\top A_{t - 1} + \lambda I_d)^{-1} A_{t - 1}^\top \Vert \Vert \widehat \btheta \Vert + \Vert (A_{t - 1}^\top A_{t - 1} + 2 \lambda I_d)^{-1} \Vert \Vert \widehat A \widehat \btheta - \bZ \Vert\right ]. 
\end{align*}
Let $\sigma_1, \ldots, \sigma_d$ be the singular values of $A_{t - 1}$. Using elementary calculus, we obtain
\begin{align*}
    \Vert (A_{t - 1}^\top A_{t - 1} + \lambda I_d)^{-1} A_{t - 1}^\top \Vert = \max_i \frac{\sigma_i}{\sigma_i^2 + \lambda} \le \max_{x \in \R_+} \frac{x}{x^2 + \lambda} \le \frac{1}{\sqrt{\lambda}}.
\end{align*}
Then, using $\Vert  (A_{t - 1}^\top A_{t - 1} + 2 \lambda I_d)^{-1} \Vert \le 1/(2\lambda)$, we elaborate
\begin{align}
\label{eq: Delta_t theta bound via Delta_t A}
    \Vert \Delta_t(\btheta) \Vert \le \Vert \Delta_{t - 1}(A) \Vert \left [ \frac{\Vert \widehat \btheta \Vert}{\sqrt{\lambda}} + \frac{\Vert \widehat A  \widehat \btheta - \bZ \Vert}{2 \lambda} \right ].
\end{align}
Next, we analyze the norm of $\Delta_t(A)$. From the second equation~\eqref{eq: system on Delta_t in negative feedback form}, we infer
\begin{align*}
    \Vert \Delta_t(A) \Vert \le \mu^{-2} \left [ \Vert \widehat A \Vert \Vert \btheta_t \Vert + \Vert \widehat A \widehat \btheta - \bZ \Vert\right ] \cdot \Vert \Delta_{t}(\btheta) \Vert / 2.
\end{align*}
Bounding $\Vert \btheta_t \Vert \le \Vert \widehat \btheta \Vert + \Vert \Delta_t(\btheta) \Vert$ and applying bound~\eqref{eq: Delta_t theta bound via Delta_t A}, we obtain
\begin{align*}
    \Vert \Delta_t(A) \Vert & \le \frac{\mu^{-2}}{2}  \left [ \Vert \widehat A \Vert \Vert \widehat \btheta \Vert + \Vert \widehat A \widehat \btheta - \bZ \Vert \right ] \left [ \frac{\Vert \widehat \btheta \Vert}{\sqrt{\lambda}} + \frac{\Vert \widehat A \widehat \btheta - \bZ \Vert }{2 \lambda} \right ] \Vert \Delta_{t - 1}(A) \Vert  \\
     & \quad + \frac{\mu^{-2}}{2}   \Vert \widehat A \Vert  \left [ \frac{\Vert \widehat \btheta \Vert}{\sqrt{\lambda}} + \frac{\Vert \widehat A \widehat \btheta - \bZ \Vert }{2 \lambda} \right ]^2 \Vert \Delta_{t - 1}(A) \Vert^2.
\end{align*}

\noindent \textbf{Step 3. Exploiting the recursion.} We will prove by induction, that
\begin{align}
\label{eq: contraction of Delta A}
    \Vert\Delta_t(A) \Vert \le \rho_0^{2t} \Vert \widehat A - \widehat \Sigma \Vert.
\end{align}
Clearly, the base $t = 0$ holds, by the definition of $A_0 = \widehat \Sigma$. Assume that~\eqref{eq: contraction of Delta A} holds for $t$, and let us prove it for $t + 1$. We have
\begin{align*}
    \Vert \Delta_{t + 1}(A) \Vert & \le \frac{\mu^{-2}}{2}  \left [ \Vert \widehat A \Vert \Vert \widehat \btheta \Vert + \Vert \widehat A \widehat \btheta - \bZ \Vert +  \Vert \widehat A \Vert  \left [ \frac{\Vert \widehat \btheta \Vert}{\sqrt{\lambda}} + \frac{\Vert \widehat A \widehat \btheta - \bZ \Vert }{2 \lambda} \right ] \cdot \Vert \Delta_{t}(A) \Vert \right ] \\
    & \quad \times \left [ \frac{\Vert \widehat \btheta \Vert}{\sqrt{\lambda}} + \frac{\Vert \widehat A \widehat \btheta - \bZ \Vert }{2 \lambda} \right ]  \Vert \Delta_{t}(A) \Vert \\
    & \le \frac{\mu^{-2}}{2} \left [ \Vert \widehat A \Vert \Vert \widehat \btheta \Vert + \Vert \widehat A \widehat \btheta - \bZ \Vert +  \Vert \widehat A \Vert  \left [ \frac{\Vert \widehat \btheta \Vert}{\sqrt{\lambda}} + \frac{\Vert \widehat A \widehat \btheta - \bZ \Vert }{2 \lambda} \right ] \cdot \Vert \widehat A - \widehat \Sigma \Vert \right ] \\
    & \quad \times \left [ \frac{\Vert \widehat \btheta \Vert}{\sqrt{\lambda}} + \frac{\Vert \widehat A \widehat \btheta - \bZ \Vert }{2 \lambda} \right ]  \Vert \Delta_{t}(A) \Vert.
\end{align*}
Next, we use the fact that $\widehat A \in \sfA$ and $\widehat \btheta \in \Theta$. The definitions of sets $\Theta, \sfA$ imply $\Vert \widehat A - \widehat \Sigma \Vert \le \sqrt{\lambda}/3$, $\Vert \widehat \btheta \Vert \le \rho_0 \mu$ and 
\begin{align}
\label{eq: rough bound on stochastic eta difference}
    \Vert \widehat A \widehat \btheta - \bZ \Vert \le \Vert \widehat A - \widehat \Sigma \Vert \Vert \widehat \btheta \Vert + \Vert \widehat \Sigma \widehat \btheta - \bZ \Vert \le \sqrt{\lambda} \Vert \widehat \btheta \Vert / 3 + \rho_0 \mu \sqrt{\lambda} / 3 \le 2 \rho_0 \mu \sqrt{\lambda} / 3.
\end{align}
It yields
\begin{align*}
    \Vert \Delta_{t + 1}(A) \Vert & \le \frac{\mu^{-2}}{2} \left [\Vert \widehat A \Vert \Vert \widehat \btheta \Vert + \rho_0 \mu \sqrt{\lambda}/3 + \Vert \widehat A \Vert \Vert \widehat \btheta \Vert /3 + \frac{\Vert \widehat A \Vert \Vert \widehat A \widehat \btheta - \bZ \Vert}{3 \sqrt{\lambda}}\right ] \cdot \frac{2 \rho_0 \mu}{3 \sqrt{\lambda}} \cdot \Vert \Delta_{t}(A) \Vert \\
    & = \rho_0 \cdot \left [\rho_0/3 + \frac{4 \Vert \widehat A \Vert \Vert \widehat \btheta \Vert}{9\mu \sqrt{\lambda}} + \frac{\Vert \widehat A \Vert \Vert \widehat A \widehat\btheta - \bZ \Vert}{9 \mu \lambda}\right ] \Vert \Delta_t(A) \Vert.
\end{align*}
Bound~\eqref{eq: rough bound on stochastic eta difference} does not allow establishing $\Vert \Delta_t(A)\Vert \le \rho_0^2 \Vert \Delta_t(A) \Vert$ from the above inequality, which would imply~\eqref{eq: contraction of Delta A}. Instead, we use the definition of $(\widehat \btheta, \widehat \bfeta, \widehat A)$:
\begin{align*}
    \cL(\widehat \btheta, \widehat \bfeta, \widehat A) \le \cL(\btheta^\circ, (\widehat \Sigma \btheta^\circ + \bZ)/2, \widehat \Sigma) = \frac{1}{4} \Vert \widehat \Sigma \btheta^\circ - \bZ \Vert^2 + \lambda/2 \Vert \btheta^\circ \Vert^2.
\end{align*}
Using $\widehat \bfeta = (\widehat A \widehat \btheta + \bZ)/2$, we get
\begin{align*}
    \cL(\widehat \btheta, \widehat \bfeta, \widehat A) = \frac{1}{4} \Vert \widehat A \widehat \btheta - \bZ \Vert^2 + \frac{\mu^2}{2} \Vert \widehat A - \widehat \Sigma \Vert + \frac{\lambda}{2} \Vert \widehat \btheta \Vert,
\end{align*}
so we have
\begin{align}
\label{eq: definition of argminimum bound of A theta - Z}
    \Vert \widehat A \widehat \btheta - \bZ \Vert \le \Vert \widehat \Sigma \btheta^\circ - \bZ \Vert + \sqrt{2 \lambda} \Vert \btheta^\circ \Vert =  \Vert \bU \Vert + \sqrt{2 \lambda} \Vert \btheta^\circ \Vert,
\end{align}
where we used decomposition $\bZ = \widehat \Sigma \btheta^\circ + \bU$. Therefore, it holds that
\begin{align*}
    \Vert \Delta_{t + 1}(A) \Vert \le \rho_0 \cdot \left [\rho_0 /3 + \frac{4 \Vert \widehat A \Vert \Vert \widehat \btheta\Vert}{9 \mu \sqrt{\lambda}} + \frac{\Vert \widehat A \Vert \Vert \bU \Vert}{9 \mu \lambda} + \frac{2 \Vert \widehat A \Vert \Vert \btheta^\circ \Vert}{9 \mu \sqrt{\lambda}}\right ] \cdot \Vert \Delta_t(A) \Vert.
\end{align*}
 Under assumptions of the theorem, Proposition~\ref{proposition: localization conditions satisfied} implies the satisfaction of conditions~\ref{condition: Sigma theta constant bounds}-\ref{condition: Xe bounding with lambda} of Lemma~\ref{lem:estimate_rough_bounds} and the upper bound
 \begin{align}
 \label{eq: U bound in the proof of convergence theorem}
     \Vert \bU \Vert \le 8 \sigma \Vert \Sigma \Vert^{1/2} \sqrt{\frac{\ttr(\Sigma) + \log(4/\delta)}{n}}
 \end{align}
 with probability at least $1 - \delta$. So, we assume that these conditions hold.
Then, we have
\begin{align*}
    \Vert \widehat A \Vert \le \Vert \widehat \Sigma \Vert + \Vert \widehat A - \widehat \Sigma \Vert \le 2 \Vert \Sigma \Vert + \sqrt{\lambda}/3,
\end{align*}
where we used condition~\ref{condition: Sigma bounded via lambda} of Lemma~\ref{lem:estimate_rough_bounds} and the fact that $\widehat A \in \sfA$. It yields
\begin{align*}
    \Vert \Delta_{t + 1}(A) \Vert \le \rho_0 \cdot \left [\rho_0 /3 + \frac{8 \Vert \Sigma\Vert \Vert \widehat \btheta\Vert}{9 \mu \sqrt{\lambda}} + \frac{4 \Vert \widehat \btheta \Vert}{27 \mu} + \frac{2 \Vert \Sigma \Vert \Vert \bU \Vert}{9 \mu \lambda} + \frac{\Vert \bU \Vert}{27 \mu \sqrt{\lambda}} + \frac{2 \Vert \Sigma \Vert \Vert \btheta^\circ \Vert}{9 \mu \sqrt{\lambda}}\right ] \cdot \Vert \Delta_t(A) \Vert.
\end{align*}
Using the fact that $\widehat \btheta \in \Theta$ and conditions~\ref{condition: Sigma bounded via lambda},\ref{condition: Xe bounding with lambda} of Lemma~\ref{lem:estimate_rough_bounds}, we obtain
\begin{align*}
    \Vert \Delta_{t + 1}(A) \Vert \le \rho_0 \cdot \left [\rho_0 /2 + \frac{8 \Vert \Sigma \Vert \Vert \widehat \btheta \Vert}{9 \mu \sqrt{\lambda}}\right ] \cdot \Vert \Delta_t(A) \Vert
\end{align*}
From Lemma~\ref{lemma: localization lemma}, we have
\begin{align*}
    \Vert \widehat \btheta - \btheta^* \Vert \le \frac{4 \Vert \widehat \Sigma - \Sigma \Vert (\Vert \btheta^* \Vert + \Vert \btheta^\circ \Vert)}{\sqrt{\lambda}} + \frac{2 \Vert \bU \Vert}{\sqrt{\lambda}} \le \frac{8 \Vert \btheta^\circ \Vert}{7} + \frac{2 \Vert \bU \Vert}{\sqrt{\lambda}},
\end{align*}
where we used $\Vert \widehat \Sigma - \Sigma \Vert \le \sqrt{\lambda} / 7$ from condition~\ref{condition: Sigma bounded via lambda} and $\Vert \btheta^* \Vert \le \Vert \btheta^\circ \Vert$ from Lemma~\ref{lemma: localization lemma for bias}\ref{point: theta weak bound on bias, bias locating convex set}. Thus, we have 
\begin{align}
\label{eq: theta hat bound via theta circ}
    \Vert \widehat \btheta \Vert \le \Vert \btheta^* \Vert + \frac{8 \Vert \btheta^\circ \Vert}{7} + \frac{2 \Vert \bU \Vert}{\sqrt{\lambda}} \le \frac{15 \Vert \btheta^\circ \Vert}{7} + \frac{2 \Vert \bU \Vert}{\sqrt{\lambda}},
\end{align}
and
\begin{align*}
    \Vert \Delta_{t + 1}(A) \Vert & \le \rho_0 \cdot \left [ \rho_0/ 2 + \frac{120 \Vert \Sigma \Vert \Vert \btheta^\circ \Vert}{63 \mu \sqrt{\lambda}} + \frac{16 \Vert \Sigma \Vert \Vert \bU \Vert}{9 \mu \lambda}\right ] \Vert \Delta_{t}(A) \Vert \\
    & \le \rho_0^2 \, \Vert \Delta_t(A) \Vert,
\end{align*}
where we used conditions~\ref{condition: Sigma theta constant bounds},\ref{condition: Xe bounding with lambda} of Lemma~\ref{lem:estimate_rough_bounds}. Hence, the inequality~\eqref{eq: contraction of Delta A} indeed holds.

\noindent \textbf{Step 4. Final bound.} Substituting~\eqref{eq: contraction of Delta A} into~\eqref{eq: Delta_t theta bound via Delta_t A}, we obtain
\begin{align*}
    \Vert \Delta_t (\btheta) \Vert \le \rho_0^{2(t - 1)} \Vert \widehat A - \widehat \Sigma \Vert \cdot \left [ \frac{\Vert \widehat \btheta \Vert}{\sqrt{\lambda}} + \frac{\Vert \widehat A  \widehat \btheta - \bZ \Vert}{2 \lambda} \right ]
\end{align*}
Using bounds~\eqref{eq: definition of argminimum bound of A theta - Z},\eqref{eq: theta hat bound via theta circ} and $\Vert \widehat A - \widehat \Sigma \Vert \le \sqrt{\lambda} / 3$ from the fact that $\widehat A \in \sfA$, we obtain
\begin{align*}
    \Vert \Delta_t(\btheta) \Vert \le \rho_0^{2(t - 1)} \left [ 2 \Vert \btheta^\circ \Vert + \frac{\Vert \bU \Vert}{3 \sqrt{\lambda}} \right ].
\end{align*}
Replacing $t$ in the inequality above with $T$ and bounding $\Vert \bU \Vert$ by~\eqref{eq: U bound in the proof of convergence theorem} complete the proof.

\myendproof

\end{document}